\newtheorem{theorem}{Theorem}
\newtheorem{remark}[theorem]{Remark}
\newtheorem{definition}[theorem]{Definition}
\newtheorem{proposition}[theorem]{Proposition}
\newtheorem{corollary}[theorem]{Corollary}
\newtheorem{lemma}[theorem]{Lemma}
\newcommand{\bP}{\mathbb{P}}
\numberwithin{theorem}{section}
\numberwithin{equation}{section}
\begin{document}
\title[]{Determinantal structures for Bessel fields}
%\author{Lucas Benigni, Pei-Ken Hung \and Xuan Wu}
\author[L. Benigni]{Lucas Benigni}
\address{Department of Mathematics, University of Chicago, 
Chicago IL, 60637}
 \email{lbenigni@uchicago.edu}

\author[P.-K. Hung]{Pei-Ken Hung}
\address{School of Mathematics, University of Minnesota, 
Minneapolis, MN 55455}
 \email{pkhung@umn.edu}

\author[X. Wu]{Xuan Wu}
\address{Department of Mathematics, University of Chicago, 
Chicago IL, 60637} 
\email{xuanw@uchicago.edu}
\begin{abstract}
A Bessel field $\mathcal{B}=\{\mathcal{B}(\alpha,t), \alpha\in\mathbb{N}_0, t\in\mathbb{R}\}$ is a two-variable random field such that for every $(\alpha,t)$, $\mathcal{B}(\alpha,t)$ has the law of a Bessel point process with index $\alpha$. The Bessel fields arise as hard edge scaling limits of the Laguerre field, a natural extension of the classical Laguerre unitary ensemble. It is recently proved in \cite{LW} that for fixed $\alpha$, $\{\mathcal{B}(\alpha,t), t\in\mathbb{R}\}$ is a squared Bessel Gibbsian line ensemble. In this paper, we discover rich integrable structures for the Bessel fields: along a time-like or a space-like path,  $\mathcal{B}$ is a determinantal point process with an explicit correlation kernel; for fixed $t$, $\{\mathcal{B}(\alpha,t),\alpha\in\mathbb{N}_0\}$ is an exponential Gibbsian line ensemble. 
\end{abstract}
\maketitle

\section{Introduction and main results}

The Laguerre Unitary Ensemble (LUE) or complex Wishart ensemble has been extensively studied since its introduction in \cite{Wis} which is considered to be the first occurrence of random matrices in the scientific literature. Over the last six decades, this model has been shown to be exactly solvable and many statistics such as the joint eigenvalue distribution and its scaling limit can be computed. A rich behavior has been unearthed as the \emph{Airy}, \emph{Sine} and \emph{Bessel} kernels arise respectively for the largest eigenvalues (and possibly the smallest in the \emph{soft edge} scaling), for bulk eigenvalues and for the smallest eigenvalues under the \emph{hard edge} scaling. A dynamical model of the LUE has been introduced in \cite{bru1, bru} (though for real variables) to study dynamically the Principal Component Analysis in high-dimensional multi-variate statistics. It has later been showed in \cite{KO} that this stochastic process has the same law as $N$ independent squared Bessel processes conditioned to never collide, a similar representation of a dynamical eigenvalue process as the Dyson Brownian motion \cite{Dys} for the Gaussian Unitary Ensemble.

The LUE has also been related to combinatorial stochastic processes not only asymptotically, as the kernels mentioned above are ubiquitous within the Kardar--Parisi--Zhang universality class, but also at any finite dimension since it was shown in the seminal work \cite{johansson} that the largest eigenvalue of the LUE has exactly the same distribution as the \emph{exponential last passage percolation} (ELPP). In this paper, we introduce a process on $\mathbb{N}_0\times \mathbb{R}_+$ which we call the \emph{Laguerre field} as well as its renormalization under the hard edge scaling: the \emph{Bessel field}. This model encompasses both dynamical models such as Wishart processes and combinatorial models such as non-intersecting exponential random walks. It is a Markov process which enjoys many properties such as a determinantal structure, a form of Gibbs property (which we call \emph{exponential} Gibbs property) and a scaling limit for its correlation kernels. The scaling limit of its actual trajectories has also been studied and the convergence to the \emph{Airy line ensemble} or the \emph{Bessel line ensemble} has been proved \cite{dauvergne2019uniform, LW} along certain paths in the process.    
\subsection{The Laguerre field}
Let $B_{ij}(t)$, $i,j\in\mathbb{N}$, be a collection of i.i.d. complex Brownian motions whose real and imaginary parts both have mean zero and variance $t/2$. That is,
\begin{align*}
B_{ij}(t)=\frac{1}{\sqrt{2}}b_{ij}(t)+\frac{\textrm{i}}{\sqrt{2}}\tilde{b}_{ij}(t), 
\end{align*}
where $b_{ij}(t)$ and $\tilde{b}_{ij}(t)$ are independent Brownian motions with mean zero and diffusive parameter one. For $N\in\mathbb{N}$, $\alpha\in \mathbb{N}_0$ and $t\geq 0$, let $A^{N}(\alpha, t)$ be the $(N+\alpha)\times N$ matrix defined by
\begin{equation}\label{def:A}
 A^N(\alpha,t)\coloneqq \{B_{ij}(t)\}_{1\leq i\leq N+\alpha,1\leq j\leq N}.   
\end{equation}

Consider the Hermitian matrix $\big(A^{N}(\alpha,t) \big)^{*}A^{N}(\alpha,t)$, with $A^{*}$ being the conjugate transpose of $A$. Let $0\leq X_{1}^N(\alpha,t)\leq X_2^N(\alpha,t)\leq \cdots \leq X_N^N(\alpha,t)$ be the ordered eigenvalues of$\big(A^{N}(\alpha,t) \big)^{*}A^{N}(\alpha,t)$. We denote 
\begin{equation}\label{def:X}
 \vec{X}^N(\alpha,t):=\{X^N_i(\alpha,t)\}_{j=1}^N. 
\end{equation}
For fixed $\alpha \in\mathbb{N}_0$ and $t=1$, $\vec{X}^N(\alpha,1)$ is the Laguerre ensemble (or complex Wishart ensemble). In this paper, we are interested in the dynamical and asymptotic behavior of $\vec{X}^N(\alpha,t)$ when both $\alpha$ and $t$ vary. We refer to it as the {\em Laguerre field}.

\begin{figure}[!ht]
	\centering
	\includegraphics[width=.6\linewidth]{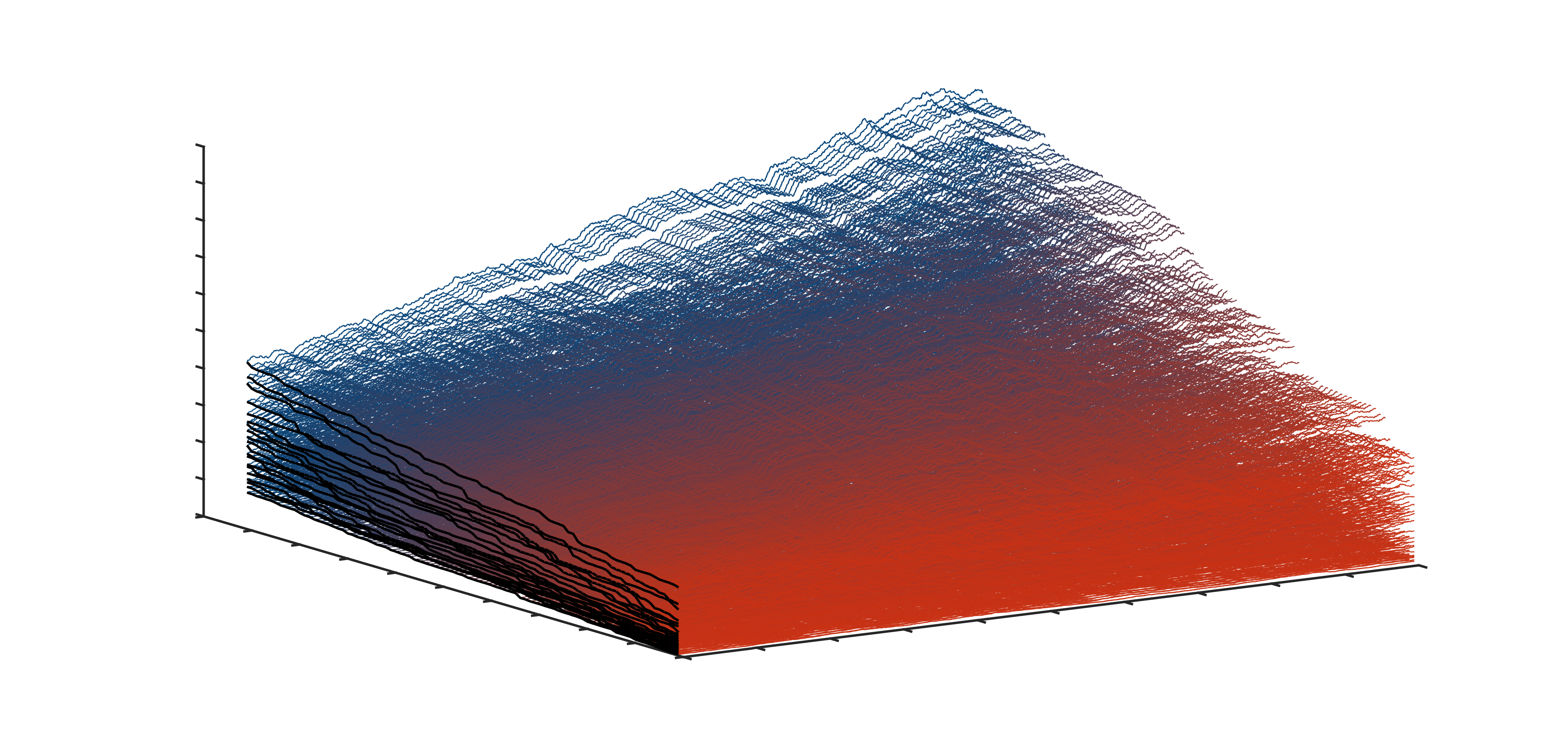}\\[-3ex]
	\begin{minipage}{.4\linewidth}
		\centering
		\includegraphics[width=.8\linewidth]{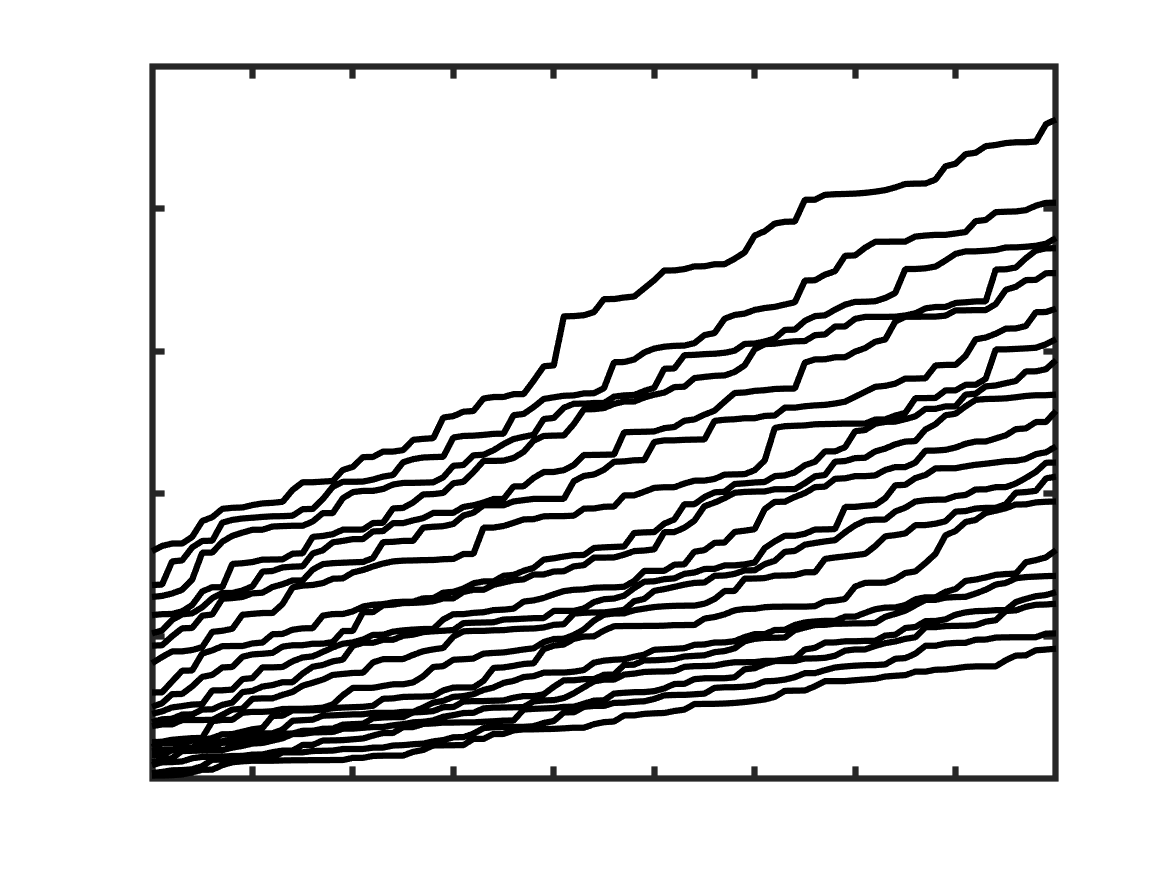}
	\end{minipage}
\begin{minipage}{.4\linewidth}
	\centering
	\includegraphics[width=.8\linewidth]{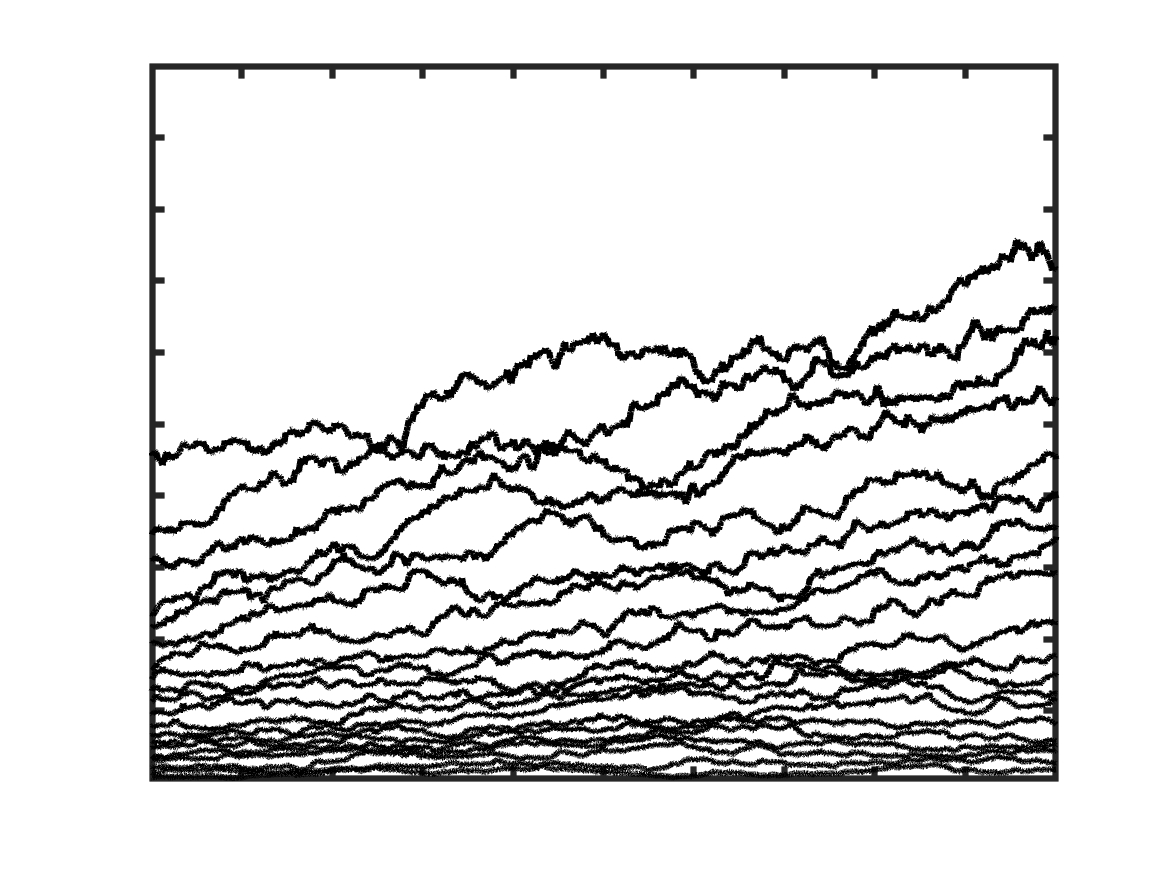}
\end{minipage}
\caption{Illustration of the Laguerre field for $N=20$, $\alpha\in[10,100]$ and $t\in[0,1]$. For $t$ fixed (bottom left), the process consists in non-intersecting exponential random walks while for $\alpha$ fixed (bottom right), the process consists in $N$ non-intersecting squared Bessel processes}
\end{figure}

\subsection{The hard edge scaling}
The study of the asymptotic behavior of the eigenvalues of the Laguerre ensemble and its
variants has been an important topic in the random matrix community, see for instance \cite{For93, For94, FT, JV, TW}. There are three natural scaling regimes, the {\em bulk scaling}, the {\em soft edge} scaling
and the {\em hard edge} scaling. 

In this paper we investigate the Laguerre field $\vec{X}^N(\alpha,t)$ under the hard edge scaling. For $\alpha\in\mathbb{N}_0$ and $t\in [-4N, \infty)$, define
\begin{align}\label{eqn:scaledBfield}
\mathcal{B}^N_i(\alpha,t):= 4N\cdot X^N_i\left(\alpha,1+\frac{t}{4N}\right).
\end{align}
%We extend $\mathcal{B}^N_i(\alpha,t)$ to $\mathbb{R}$ by setting $\mathcal{B}^N_i(\alpha,t)=\mathcal{B}^N_i(\alpha,-4N)$ for $t<-4N$.

Define the {\em scaled Laguerre field} $\mathcal{B}^N:=\left\{\mathcal{B}^N_i(\alpha,t), i\in\mathbb{N},  \alpha\in\mathbb{N}_0, t\in\mathbb{R}\right\}$. We first examine some known results about the asymptotic behavior of $\mathcal{B}^N$. For fixed $\alpha\in\mathbb{N}_0, t\in\mathbb{R}$, \cite{For93} showed that the correlation kernel of $\mathcal{B}^N(\alpha,t)$ converges locally uniformly to the Bessel kernel. This implies the convergence to the Bessel point process, for which the distribution of the smallest particle was studied in \cite{TW}. When $\alpha$ is being fixed, a very recent result by the third author and Lawler \cite{LW}, constructed the Bessel line ensemble with index $\alpha$ through taking a functional limit along hard edge scaling. The key to their argument is the squared Bessel Gibbs property.

Let us remark that none of the above results have investigated the dynamics in $\alpha$ and $t$ simultaneously. In this paper, we are interested in understanding the limit of $\mathcal{B}^N$ to the full generality, i.e. as a sequence of random fields in both of $\alpha\in\mathbb{N}_0$ and $t\in\mathbb{R}$. For fixed $t$, we identify the evolution of the Laguerre field $\vec{X}(\cdot,t)$ as the evolution of interlacing exponential random walks. We expect this is well-known by experts in the field but we did not find a reference which has made explicit this identification.

\subsection{Main results}\label{sec:1.3}
Because the variable $\alpha$ is discrete, from the functional limit result of \cite{LW} on can easily show that a subsequence of $\mathcal{B}^N(\alpha,t)$ converges in distribution to a limiting random field. Potentially there might be more than one limit depending on the subsequence, but we believe the limit is unique. From now on, we write $\mathcal{B}(\alpha,t)$ for a limiting random field and refer to it as a \textit{Bessel field}.

In this paper, we show that $\mathcal{B}(\alpha,t)$ enjoys two integrable structures: determinantal point processes and Gibbsian line ensembles. To state our result, let us first introduce two definitions, the space-like and time-like paths.

\begin{definition}[Space-like paths]\label{def:spacelike}
Let $(\alpha,t)$ and $(\beta,s)$ be two pairs in $\mathbb{N}_0\times \mathbb{R}$. We denote $(\alpha,t)\prec_{\textup{s}} (\beta,s)$ if it holds that
\begin{align}\label{spacelike}
\alpha\geq \beta,\ t\leq s\ \textup{and}\ (\alpha,t)\neq (\beta,s).
\end{align}
Also, we denote $(\alpha,t)\preceq_{\textup{s}} (\beta,s)$ if $(\alpha,t)\prec_{\textup{s}} (\beta,s)$ or $(\alpha,t)= (\beta,s)$ holds.

We say a path $\left\{\left(\alpha^{(k)},t^{(k)}\right)\right\}_{k=1}^m \subset \mathbb{N}_0\times \mathbb{R}$ is space-like if $$(\alpha^{(1)},t^{(1)})\prec_{\textup{s}} (\alpha^{(2)},t^{(2)})\prec_{\textup{s}} \dots\prec_{\textup{s}} (\alpha^{(m)},t^{(m)}).$$
\end{definition} 

The notion of space-like paths is introduced by Borodin and Ferrari \cite{BF08} for the PushASEP model. We further consider the following analogous paths which we call \textit{time-like}. 

\begin{definition}[Time-like paths]\label{def:timelike}
Let $(\alpha,t)$ and $(\beta,s)$ be two pairs in $\mathbb{N}_0\times \mathbb{R}$. We denote $(\alpha,t)\prec_{\textup{t}} (\beta,s)$ if it holds that
\begin{align}\label{timelike}
\alpha\leq \beta,\ t\leq s\ \textup{and}\  (\alpha,t)\neq (\beta,s).
\end{align}
Also, we denote $(\alpha,t)\preceq_{\textup{t}} (\beta,s)$ if $(\alpha,t)\prec_{\textup{t}} (\beta,s)$ or $(\alpha,t)= (\beta,s)$ holds.

We say a path $\left\{\left(\alpha^{(k)},t^{(k)}\right)\right\}_{k=1}^m \subset \mathbb{N}_0\times \mathbb{R}$ is time-like provided $$(\alpha^{(1)},t^{(1)})\prec_{\textup{t}} (\alpha^{(2)},t^{(2)})\prec_{\textup{t}} \dots \prec_{\textup{t}} (\alpha^{(m)},t^{(m)}).$$
\end{definition}

The following is our main theorem which describes various integrable structures of the Bessel field.
\begin{theorem}\label{thm:main}
Let $\mathcal{B}=\{\mathcal{B}(\alpha,t), \alpha\in\mathbb{N}_0, t\in\mathbb{R}\}$ be a Bessel field. Then $\mathcal{B}$ enjoys the following properties. 
\begin{enumerate}[label=(\roman*)]
%\item For fixed $(\alpha,t)$, $\mathcal{B}(\alpha,t)$ is a Bessel point process with index $\alpha$;
\item Let $\left\{\left(\alpha^{(k)},t^{(k)}\right)\right\}_{k=1}^m$ be a time-like path. Then $\left\{\mathcal{B}\left(\alpha^{(k)},t^{(k)}\right)\right\}_{k=1}^m$ is a determinantal point process. The correlation kernel is given by $K^{\textup{Bes}}((\alpha^{(k)},t^{(k)},x);(\alpha^{(\ell)},t^{(\ell)},y))$  such that\\
$K^{\textup{Bes}}((\alpha,t,x);(\beta,s,y))$ is defined through 
\begin{equation}\label{def:KBes}
 \left\{ \begin{array}{cc}
-\displaystyle\int_{1/4}^{\infty} du\, u^{-(\beta-\alpha)/2}e^{-(s-t)u}J_\alpha(2\sqrt{xu})J_\beta(2\sqrt{yu}), & (\alpha,t)\prec_{\textup{t}} (\beta,s),\\[0.5cm]
\displaystyle \int_{0}^{1/4} du\, u^{-(\beta-\alpha)/2}e^{-(s-t)u}J_\alpha(2\sqrt{xu})J_\beta(2\sqrt{yu}), & (\beta,s)\preceq_{\textup{t}}   (\alpha,t).
\end{array}\right.
\end{equation}  
Here $J_\alpha(z)$ is the Bessel function of the first kind;
\item Let $\left\{\left(\alpha^{(k)},t^{(k)}\right)\right\}_{k=1}^m$ be a space-like path. Then $\left\{\mathcal{B}\left(\alpha^{(k)},t^{(k)}\right)\right\}_{k=1}^m$ is a determinantal point process. The correlation kernel is given by $\overline{K}^{\textup{Bes}}((\alpha^{(k)},t^{(k)},x);(\alpha^{(\ell)},t^{(\ell)},y))$ such that\\
$\overline{K}^{\textup{Bes}}((\alpha,t,x);(\beta,s,y))$ is defined through
\begin{equation}\label{def:KBes-s}
 \left\{ \begin{array}{cc}
-\displaystyle\int_{1/4}^{\infty} du\, u^{-(\alpha-\beta)/2}e^{-(s-t)u}J_\alpha(2\sqrt{xu})J_\beta(2\sqrt{yu}), & (\alpha,t)\prec_{\textup{s}} (\beta,s),\\[0.5cm]
\displaystyle \int_{0}^{1/4} du\, u^{-(\alpha-\beta)/2}e^{-(s-t)u}J_\alpha(2\sqrt{xu})J_\beta(2\sqrt{yu}), & (\beta,s)\preceq_{\textup{s}}   (\alpha,t).
\end{array}\right.
\end{equation}
Here $J_\alpha(z)$ is the Bessel function of the first kind; 
\item For fixed $t$, $\{\mathcal{B}(\alpha , t),\ \alpha\in\mathbb{N}_0\}$ satisfies the exponential Gibbs property.
\end{enumerate} 
\end{theorem}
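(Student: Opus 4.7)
The strategy is to prove an analogous statement at the prelimit level for the Laguerre field $\mathcal{B}^N$ and then transfer it to the Bessel field using the subsequential convergence $\mathcal{B}^{N_k} \to \mathcal{B}$ alluded to in the introduction. Concretely, at finite $N$ the joint process $(\alpha, t) \mapsto \mathcal{B}^N(\alpha, t)$ fits into the $2+1$-dimensional interlacing-particle framework of Borodin--Ferrari: in the $t$-direction with $\alpha$ fixed it evolves as non-intersecting squared-Bessel paths (the Wishart process), and in the $\alpha$-direction with $t$ fixed it forms an interlacing system of exponential random walks equivalent to the RSK-type dynamics on ELPP. Both marginal dynamics are determinantal with explicit kernels, and they intertwine in a manner compatible with the Eynard--Mehta theorem.

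For parts (i) and (ii), I would use Eynard--Mehta (or the Borodin--Ferrari formula for interlacing arrays) to write down an explicit finite-$N$ prekernel $K_N^{\mathrm{Lag}}$ for $\mathcal{B}^N$ along any time-like or space-like path. This prekernel is assembled from the squared-Bessel transition semigroup $e^{-(s-t)D}$ in $t$, the exponential-walk transition in $\alpha$, and a rank-$N$ projection onto the span of Laguerre functions $L_n^{(\alpha)}(x)e^{-x/2}$; the splitting into a pure integral part and a transition-semigroup part distinguishes the two cases in \eqref{def:KBes} and \eqref{def:KBes-s}. Applying the hard edge rescaling \eqref{eqn:scaledBfield} and the classical asymptotic
\[
n^{-\alpha} L_n^{(\alpha)}\!\left(\tfrac{x}{n}\right) \;\longrightarrow\; x^{-\alpha/2} J_\alpha(2\sqrt{x}),
\]
the discrete Laguerre sums turn into integrals in the variable $u$ running over $(0,\infty)$, with the split at $u=1/4$ arising precisely from the location of the rescaled spectral edge (since the Marchenko--Pastur edge sits at $x = 4N$ before the scaling in \eqref{eqn:scaledBfield}). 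Matching the sign and the exponent $(\beta-\alpha)/2$ vs.\ $(\alpha-\beta)/2$ in the two kernels reduces to checking which of the two variables indexes the larger $\alpha$, which is exactly the distinction between time-like and space-like paths.

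For part (iii), the interlacing-exponential-walk identification already supplies an exponential Gibbs property at the prelimit level: conditionally on $\mathcal{B}^N(\alpha_0,t)$, $\mathcal{B}^N(\alpha_1,t)$, and on the trajectories outside $[\alpha_0,\alpha_1]$, the curves $\{\mathcal{B}^N(\alpha,t)\}_{\alpha_0<\alpha<\alpha_1}$ are distributed as interlacing exponential random walks conditioned to avoid the two neighboring curves. Under \eqref{eqn:scaledBfield} this Gibbs property scales to the exponential Gibbs property introduced in the paper, and the Radon--Nikodym derivative realizing it is a continuous bounded functional of the paths, so the property descends to any subsequential limit $\mathcal{B}$.

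The main obstacle is controlling $K_N^{\mathrm{Lag}}$ uniformly in $(\alpha,t)$ as $N\to\infty$ in order to justify the passage to the kernels in \eqref{def:KBes} and \eqref{def:KBes-s}, since $\alpha$ is discrete and does not scale with $N$: the $\alpha$-dependence of the kernel must be extracted through a Mellin--Barnes type contour manipulation rather than by a saddle-point expansion, and the correct identification of the contour and of the cut at $u=1/4$ coming from the equilibrium measure of the rescaled Wishart distribution is the delicate step. A secondary, but technical, difficulty is propagating the prelimit Gibbs property through the subsequential limit while simultaneously keeping the determinantal correlation functions continuous; this is likely handled by combining the convergence of finite-dimensional distributions implied by parts (i)--(ii) with the tightness provided by the functional convergence result of \cite{LW}.
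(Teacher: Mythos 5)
Your overall architecture matches the paper's: establish a prelimit determinantal structure for the Laguerre field via Eynard--Mehta, pass to the hard-edge limit of the correlation kernel, and transfer the Gibbs property to the limit. That said, there are several places where your sketch either glosses over a necessary step or proposes the wrong tool.

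First, you take the Markov property of $\vec{X}^N$ along time-like and space-like paths for granted by invoking a Borodin--Ferrari $2+1$-dimensional framework, but this is a nontrivial fact for the Laguerre field and requires proof. The paper establishes it (Lemmas~\ref{lem:Markov} and \ref{lem:Markov-spacelike}) by decomposing $A^N(\alpha,t)$ via SVD into singular values and a point of a quotient of $\mathcal{U}(N+\alpha)\times\mathcal{U}(N)$, and exploiting the unitary invariance of complex Brownian increments to show the singular-value marginal is itself Markov. Without this, you cannot apply Eynard--Mehta, because you do not yet know the joint density of $\{\vec{X}^{(k)}\}$ factors into a product of transition determinants.

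Second, your remark that the $N\to\infty$ asymptotics of the kernel must be ``extracted through a Mellin--Barnes type contour manipulation'' is incorrect and would lead you astray. The prekernel is a finite sum $\sum_{j=1}^N\psi_j\phi_j$ of products of Laguerre polynomials of degree $j-1$. After the hard-edge rescaling the Szeg\H{o} asymptotic $L^\alpha_{j-1}(x/(4N))\approx j^\alpha g_\alpha(jx/(4N))$ applies for each $j$, and the sum over $j$ becomes a Riemann sum for $\int_0^{1/4}du\,e^{-(s-t)u}u^{\mp(\beta-\alpha)/2}J_\alpha(2\sqrt{xu})J_\beta(2\sqrt{yu})$ with $u\leftrightarrow j/(4N)$. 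The cut at $u=1/4$ is thus the image of the upper index $j=N$ under $j/(4N)$, not a feature of the Marchenko--Pastur equilibrium measure. No contour integration is needed or used.

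Third, you do not address the fact that the off-diagonal blocks of the limiting kernel are \emph{not} locally trace class (see Remark~\ref{rmk:nontrace}: for $\alpha\neq\beta$ the kernel involves $x^{\alpha/2}y^{-\beta/2}\mathbbm{1}(x\le y)$ whose singular values decay like $1/n$). This means the standard machinery relating Fredholm determinants to correlation functions (e.g., Theorem~2 of Soshnikov) does not apply directly, and one must work with the $\mathcal{L}_{1|2}$ (trace class on diagonal blocks, Hilbert--Schmidt off-diagonal) framework of Proposition~\ref{pro:Kregularity} and Corollaries~\ref{cor:Fredet}--\ref{cor:det}. Your proof of convergence of kernels to \eqref{def:KBes} and \eqref{def:KBes-s} would not translate to convergence of correlation functions without this.

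Finally, for part (iii) your claim that the ``Radon--Nikodym derivative realizing'' the Gibbs property ``is a continuous bounded functional of the paths'' is not the right framing. The conditional law $\mathbb{P}^{k,\llbracket a+1,b-1\rrbracket,\vec{x},\vec{y},\vec{z}}$ is uniform Lebesgue measure on an interlacing polytope $I_{a,b}(\vec{x},\vec{y},\vec{z})$, and the issue under passage to the limit is whether the normalizing volume $Z_{a,b}(\vec{x},\vec{y},\vec{z})$ stays strictly positive, not boundedness of a density. The paper handles this with a coupling: it uses the Skorohod representation theorem to get almost sure entrywise convergence, and then runs an acceptance--rejection resampling with an i.i.d.\ supply of candidate bridges, showing the acceptance index $\ell(N)$ converges almost surely to $\ell(\infty)$ (Lemma~\ref{lem:ellconvergence}). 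This is a cleaner argument than trying to control a Radon--Nikodym derivative and you should adopt it.
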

\begin{remark}
By putting $m=1$ in Theorem~\ref{thm:main} (i) or (ii), we see that $\mathcal{B}(\alpha,t)$ is the \textit{Bessel point process} with index $\alpha$ \cite{For93}. Furthermore, if index $\alpha$ is unchanged along a time-like or space-like path, Theorem~\ref{thm:main} (i) or (ii) recovers the \textit{extended Bessel point process} with index $\alpha$ \cite{KT11}.
\end{remark}
\begin{remark}
For fixed $\alpha\in\mathbb{N}_0$, $\{\mathcal{B}(\alpha,t),\ t\in\mathbb{R}\}$ is the Bessel line ensemble studied in \cite{LW} and satisfies the \textit{squared Bessel Gibbs property}. See \cite{LW} for more details.
\end{remark}
We refer readers to Section~\ref{sec:pointprocess} and Section~\ref{sec:Gibbs} for the definitions of determinantal point processes and exponential Gibbs property. 

\subsection{The proof strategies}
A key property we develop is that along a time-like or space-like path, the Laguerre field $\vec{X}^N$ forms a Markov process. The Markov property permits explicit expressions of the joint density functions. In particular, the transition probabilities have an integral representation which involves the Bessel function $J_\alpha(z)$. Having the joint densities, we apply the Eynard--Mehta theorem \cite{EM,BR} to show that $\vec{X}$ along a time-like for space-like path is determinantal and compute the correlation kernels. We then obtain the determinantal structures of $\mathcal{B}(\alpha,t)$  by evaluating the limit of correlation kernels under the hard edge scaling \eqref{eqn:scaledBfield}.

To show that for fixed $t$, $\mathcal{B}(\alpha,t)$ satisfies the exponential Gibbs property, we first show that the Laguerre field $\vec{X}^N(\alpha,t)$ enjoys the same structure. The origin of the exponential Gibbs property of $\vec{X}^N(\alpha,t)$ comes from the feature that $\vec{X}^N(\alpha,t)$ can be identified as exponential last passage percolation (ELPP). However, we avoid introducing the ELPP model and give a self-contained proof based on Sasamoto's trick. We then show that the exponential Gibbs property survives under the limit through a coupling argument.

In \cite{FF}, Ferrari and Frings consider the range $\alpha\in \llbracket -N+1,0 \rrbracket$. In this regime, the number of eigenvalues depends on $\alpha$ and decreases along space-like paths. They show that in this range, eigenvalues along the space-like paths are both Markov and determinantal. In this paper, we focus on the range $\alpha\in \mathbb{N}_0$. 

A determinantal structure has also been unearthed on a specific time-like path in \cite{ipsen2019laguerre} where the \emph{Laguerre unitary process} has been introduced.  It consists in taking a path of the form $\{(N(\alpha+k),b)\}_{k=0}^m$ for a fixed $b$. Note that the Bessel scaling we are considering in this paper is not interesting in this context since the smallest eigenvalue is in the \emph{soft edge} scaling.

\subsection*{Outline} Section~\ref{sec:2} gives the definition and properties of determinantal processes as well as defining the exponential Gibbs property. In Section~\ref{sec:3}, we study the Bessel field along time-like paths and perform a similar analysis in Section~\ref{sec:4} along space-like paths under the assumption that the field along those paths are Markovian. We prove that the Bessel fields enjoys the exponential Gibbs property in Section~\ref{sec:5} and prove the Markov property of our model in Section~\ref{sec:A}. Finally, we give some properties of Bessel functions and the Hankel transform in Appendix~\ref{sec:B}.

\subsection*{Notation} The natural numbers are defined to be $\mathbb{N}=\{1,2,\dots\}$ and we write $\mathbb{N}_0=\mathbb{N}\cup\{0\}$. We use the notation $\mathbb{R}_+\coloneqq [0,\infty)$. For $N\in\mathbb{N}$, the $N$-dimensional Weyl chamber is defined by
\begin{equation}
\mathbb{W}^N \coloneqq \left\{ \vec{x}=(x_1,x_2,\dots,x_N)\in \mathbb{R}^N\,|\,   x_1< x_2< \dots< x_N\right\}.
\end{equation} 
We are mainly concerned with the Weyl chamber restricted on the positive reals
\begin{align}\label{def:Weylchamber}
\mathbb{W}^N_+\coloneqq \left\{ \vec{x}=(x_1,x_2,\dots,x_N)\in \mathbb{R}^N\,|\,0< x_1< x_2< \dots< x_N\right\}.
\end{align}
For $\vec{x}=(x_1,x_2,\dots,x_N)\in\mathbb{R}^N$, we use $\Delta(\vec{x})$ to denote the Vandermonde determinant
\begin{equation}\label{def:Vandermonde}
\Delta(\vec{x})\coloneqq \prod_{1\leq i<j\leq N} (x_j-x_i).
\end{equation} 
For two vectors $\vec{x}, \vec{y}\in \mathbb{W}^N$, we denote by $\vec{x}\prec\vec{y}$ if
\begin{equation}\label{def:interlacing}
x_1<y_1<x_2<\cdots<x_N<y_N.
\end{equation}
Also, we define the \textit{weakly interlacing} relation, $\vec{x}\preceq\vec{y}$, by
\begin{equation}\label{def:interlacingweak}
 x_1\leq  y_1\leq  x_2\leq \cdots\leq  x_N\leq  y_N. 
\end{equation}
For a set $E$, we use $\mathbbm{1}_E(x)$ to denote the indicator function of $E$. We may also use the defining property of $E$ to express the indicator function. For instance, $\mathbbm{1}(x\leq y )$ stands for the indicator function of the set $\{(x,y)\in\mathbb{R}^2\, |\, x\leq y \}$.

\section{Preliminaries}\label{sec:2}
In this section we introduce two integrable structures we use to describe a Bessel field. In Section~\ref{sec:pointprocess}, we give the definition of a determinantal point process. The exponential Gibbs property is discussed in Section~\ref{sec:Gibbs}.

\subsection{Determinantal Point Process}\label{sec:pointprocess} In this section we give a brief introduction to \textit{determinantal point processes}. We closely follow \cite{Soshnikov} and make several specializations for our needs. We refer readers to \cite{Soshnikov} for a more general treatment. 

Let $\mathbb{R}_+=[0,\infty)$ endowed with the standard topology. The one-particle space we consider is of the form
\begin{equation}\label{def:statespace}
\mathfrak{X}=\sqcup_{k=1}^m \mathbb{R}_+^{(k)}. 
\end{equation}
Here $\sqcup$ stand for the disjoint union, $m\in\mathbb{N}$ and $\mathbb{R}_+^{(k)}$ are distinct copies of $\mathbb{R}_+$. We equip $\mathfrak{X}$ with the disjoint union topology. We remark that the difference between $m=1$ and $m\geq 2$ is inconsequential until we specify the regularity of the correlation kernel. We will simply use $x,y$ to stand for variables in $\mathfrak{X}$ and don't specify which $\mathbb{R}_+^{(k)}$ $x$ lies in if not necessary.

 A \textit{point configuration} is a locally finite subset of $\mathfrak{X}$. That is, $X\subset\mathfrak{X}$ is a point configuration if for any compact set $F\subset \mathfrak{X}$, we have $|X\cap F|<\infty$. We write $\textup{Conf}(\mathfrak{X})$ the collection of point configurations. For any bounded borel set $E\subset \mathfrak{X}$, define the function $M_E:\textup{Conf}(\mathfrak{X})\to\mathbb{N}_0$ by $M_E(X)\coloneqq |X\cap E|$. A set of the form $\{M_E=n\}\subset\textup{Conf}( \mathfrak{X})$ is called a \textit{cylindrical set}. Let $\mathcal{F}_{\textup{cyl}} $ be the sigma algebra generated by all cylindrical sets. 
\begin{definition}
A random point process on $\mathfrak{X}$ is a probability measure $\mathcal{P}$ on $(\textup{Conf}(\mathfrak{X}),\mathcal{F}_{\textup{cyl}})$. We write $\mathcal{E}$ for the corresponding expectation.
\end{definition} 

An ordered random vector can be identified with a point process. Let $\vec{X}=(\vec{X}^{(1)},\dots, \vec{X}^{(m)})$ be random vectors such that $\vec{X}^{(k)}$ takes values in $\in \mathbb{W}_+^{N_k}$ for all $k\in\llbracket 1,m \rrbracket$ and for some $N_k\in\mathbb{N}\cup\{\infty\}$. We can view a realization of $\vec{X}$ as a subset of $\mathfrak{X}$. Assume that $\vec{X}\in\textup{Conf}(\mathfrak{X})$ almost surely. Then we define
\begin{align}\label{def:ppid}
\mathcal{P} (M_{E_1}=n_1, \dots,M_{E_\ell}=n_\ell)\coloneqq \mathbb{P}(M_{E_1}(\vec{X})=n_1,\dots,M_{E_\ell}(\vec{X})=n_\ell).
\end{align}
The right hand side of \eqref{def:ppid} is evaluated through the law of $\vec{X}$. Through the Carath\'eodory's extension theorem, the left hand side of \eqref{def:ppid} can be extended to a probability measure on $(\textup{Conf}(\mathfrak{X}),\mathcal{F}_{\textup{cyl}})$. This defines a point process corresponding to $\vec{X}$.

Now we define the \textit{correlation function} of a point process. 
\begin{definition}\label{def:correlation}
Let $\mathcal{P}$ be a point process on $(\textup{Conf}(\mathfrak{X}),\mathcal{F}_{\textup{cyl}})$. For $k\in \mathbb{N}$, a symmetric, locally integrable function $\rho_k:\mathfrak{X}^k\to [0,\infty)$ is called the $k$-point correlation function if the following holds. For any $\ell\in\mathbb{N}$, bounded Borel sets $E_1,E_2,\dots, E_\ell$ and $k_i\in\mathbb{N}$ with $\sum_{i=1}^\ell k_i=k$, it holds that
\begin{align}
\mathcal{E}\left[ \prod_{i=1}^\ell \frac{M_{E_i}!}{(M_{E_i}-k_i)!} \right]=\int_{E_1^{k_1}\times E_2^{k_2}\times\dots\times E_\ell^{k_\ell}} d x_1 \dots d x_k\, \rho_k(x_1,x_2,\dots,x_k)  .
\end{align}
Here we use $dx$ to denote the Lebesgue measure on $\mathfrak{X}=\sqcup_{k=1}^m\mathbb{R}^{(k)}_+$. 
\end{definition}

\begin{definition}\label{def:detPP}
Let $\mathcal{P}$ be a point process on $(\textup{Conf}(\mathfrak{X}),\mathcal{F}_{\textup{cyl}})$. $\mathcal{P}$ is called determinantal if the following statements hold. For all $k\in\mathbb{N}$, the $k$-point correlation function $\rho_k:\mathfrak{X}^k\to[0,\infty) $ exists. Moreover, there exists a function $K:\mathfrak{X}\times\mathfrak{X}\to\mathbb{R}$ such that for all $k\in\mathbb{N}$, we have
\begin{equation}
\rho_k(x_1,x_2,\dots,x_k)=\det[ K(x_i,x_j) ]_{1\leq i,j\leq k}.
\end{equation}
A random vector $\vec{X}=(\vec{X}^{(1)},\vec{X}^{(2)},\dots, \vec{X}^{(m)})$ is called determinantal if the corresponding point process given in \eqref{def:ppid} is determinantal. The function $K(x,y)$ is called the correlation kernel. 
\end{definition}

A lot of information of a determinantal point process is encoded in the correlation kernel $K(x,y)$ and can be extracted through Fredholm determinants. Usually, for Fredholm determinants to be well-defined, one requires $K(x,y)$ to be in the (locally) trace class. Moreover, to relate the Fredholm determinants with the correlation functions, one often assumes $K(x,y)$ is a continuous function. For example, see Theorem 2 in \cite{Soshnikov}. However, \textbf{neither} of these holds for the Laguerre field, see Remark~\ref{rmk:nontrace}. Therefore, we need a more general setup which requires less regularity for ``off-diagonal" terms.

Let $E\subset \mathfrak{X}$ be a bounded Borel set. $E$ can be divided into $E^{(k)}=E\cap\mathbb{R}_+^{(k)}$. We express an operator $A:L^2(E)\to L^2(E)$ in the matrix form
\begin{align}\label{equ:Asep}
A=\left[ \begin{array}{cccc}
A_{(1)(1)} & A_{(1)(2)} & \cdots & A_{(1)(m)}\\
A_{(2)(1)} & A_{(2)(2)} & \cdots & A_{(2)(m)}\\
\vdots & \vdots & \ddots & \vdots\\
A_{(m)(1)} & A_{(m)(2)} & \cdots & A_{(m)(m)}
\end{array} \right].
\end{align}
Here $A_{(i)(j)}$ is an operator from $:L^2(E^{(j)})$ to $L^2(E^{(i)})$. We denote by $\mathcal{L}_{1|2}(E)$ the collection of $A$ such that $A_{(i)(i)}$ are of trace class and $A_{(i)(j)}$ are Hilbert-Schmidt operators. We equip $\mathcal{L}_{1|2}(E)$ with the trace norm on the diagonals and Hilbert-Schmidt norm on the off-diagonals. That is
\begin{align*}
\|A\|_{1|2}\coloneqq \sum_{1\leq i\leq m } \|A_{(i)(i)}\|_1+\sum_{1\leq i,j\leq m, i\neq j} \|A_{(i)(j)}\|_{2}.
\end{align*} 
\cite[(A.1)]{BOO} introduced a generalization of the Fredholm determinant, still denoted as $\det$, such that  for $A\in\mathcal{L}_{1|2}(E)$ the following properties hold. Firstly, if $A$ is in the trace class, then $\det(1+A)$ agrees with the standard Fredholm determinant. Secondly,
\begin{equation}\label{equ:detconti}
 \det(1+A)\ \textup{  is continuous with respect to the}\  \mathcal{L}_{1|2}(E)\ \textup{  norm}.
\end{equation}
Thirdly, there exists a constant $C$ such that
 \begin{equation}\label{equ:detbound}
  |\det(1+A)|\leq \exp(C\|A\|^3_{1|2}). 
 \end{equation}

The first and the second properties are proved in \cite[Proposition A.1]{BOO}. The third one follows by combining \cite[Theorem 6.4]{Simon} and \cite[(A.1)]{BOO}. 

Let $K:\mathfrak{X}\times\mathfrak{X}\to\mathbb{R}$ be a function and let $E\subset \mathfrak{X}$ be a bounded Borel set. We write $\mathbbm{1}_E(x)$ for the indicator function of $E$. We identify $\mathbbm{1}_E(x)   K(x,y)\mathbbm{1}_E(y)$ with an operator on $L^2(E)$ defined by
\begin{align*}
f(y)\mapsto \int_E dy\, \mathbbm{1}_E(x) K(x,y) f(y).
\end{align*}
Moreover, similar to \eqref{equ:Asep}, we denote by $K_{(i)(j)}(x,y)$ for the restriction of $K(x,y)$ on $\mathbb{R}_+^{(i)}\times \mathbb{R}_+^{(j)}$. 
\begin{definition}\label{def:L12}
For a function $K:\mathfrak{X}\times\mathfrak{X}\to\mathbb{R}$, we say $K$ is of locally $\mathcal{L}_{1|2}$ if  $\mathbbm{1}_E(x)   K(x,y)\mathbbm{1}_E(y)$ belongs to $ \mathcal{L}_{1|2}(E)$ for all bounded Borel set $E\subset\mathfrak{X}$. 
\end{definition}
\begin{definition}\label{def:Kconti}
For a function $K:\mathfrak{X}\times\mathfrak{X}\to\mathbb{R}$, we say $K$ is continuous on the diagonal blocks if $K_{(i)(i)}(x,y)$ is continuous for all $i\in\llbracket 1,m \rrbracket$. 
\end{definition}

Definitions~\ref{def:L12} and \ref{def:Kconti} provide the regularity requirements of $K(x,y)$ in this paper. For this class of $K(x,y)$, the next proposition relates moment generating functions with the Fredholm determinant.
%Definition~\ref{def:L12} describes our requirement for $K(x,y)$ as an integral operator. Next, we discuss the regularity of $K(x,y)$ as a two variable function. We decompose $K(x,y)$ into $K_{(i)(j)}(x,y)$, $i,j\in\llbracket 1,m \rrbracket$ as in \eqref{equ:Asep}. $K_{(i)(j)}(x,y)$ are viewed as functions on $\mathbb{R}_+\times\mathbb{R}_+$.
%\begin{definition}\label{def:C0Lp}
%For a function $K:\mathfrak{X}\times\mathfrak{X}\to\mathbb{R}$. we say $K$ is of locally $C^0|L^{\infty}$ if  $K_{(i)(i)}(x,y)$ are continuous functions on $\mathbb{R}_+\times\mathbb{R}_+$ and $K_{(i)(j)}(x,y)$ are of locally $L^p$ for all $1\leq p<\infty$.   
%\end{definition}

\begin{proposition}\label{pro:Kregularity}
Let $\mathcal{P}$ be a determinantal point process on $(\textup{Conf}(\mathfrak{X}),\mathcal{F}_{\textup{cyl}})$ with correlation kernel $K:\mathfrak{X}\times\mathfrak{X}\to\mathbb{R}$. Suppose that $K(x,y)$ is of locally $\mathcal{L}_{1|2}$ and is continuous on the diagonal blocks. Let $E_1,E_2,\dots, E_\ell$ be disjoint bounded Borel subsets of $\mathfrak{X}$ and $E=\sqcup_{j=1}^\ell E_j$. Then
\begin{align}\label{equ:moment}
\mathcal{E}\bigg[ \prod_{j=1}^\ell z_j^{M_{E_j}} \bigg]=\det\bigg(1+\mathbbm{1}_E\cdot \sum_{j=1}^\ell (z_j-1)\cdot K\cdot\mathbbm{1}_{E_j}\bigg).
\end{align}
\end{proposition}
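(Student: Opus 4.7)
The plan is to expand both sides of \eqref{equ:moment} as power series in $w_j \coloneqq z_j - 1$ and to match them term by term. First I would apply the binomial identity $z_j^{M_{E_j}} = \sum_{k_j \geq 0} \binom{M_{E_j}}{k_j} w_j^{k_j}$ and interchange the expectation with the sum to write
\begin{align*}
\mathcal{E}\bigg[\prod_{j=1}^\ell z_j^{M_{E_j}}\bigg] = \sum_{k_1,\dots,k_\ell \geq 0} \prod_{j=1}^\ell \frac{w_j^{k_j}}{k_j!} \cdot \mathcal{E}\bigg[\prod_{j=1}^\ell \frac{M_{E_j}!}{(M_{E_j}-k_j)!}\bigg],
\end{align*}
then invoke Definition~\ref{def:correlation} to rewrite the inner expectation as an integral of the correlation function $\rho_k$ (with $k = k_1 + \cdots + k_\ell$) over $E_1^{k_1} \times \cdots \times E_\ell^{k_\ell}$, and finally substitute $\rho_k(x_1,\dots,x_k) = \det[K(x_i,x_j)]_{1\leq i,j\leq k}$ from Definition~\ref{def:detPP}.

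Next I would rewrite the right-hand side. Set $A \coloneqq \mathbbm{1}_E \cdot \sum_j w_j K \cdot \mathbbm{1}_{E_j}$, whose integral kernel is $A(x,y) = \mathbbm{1}_E(x) K(x,y) w(y)$ with $w(y) \coloneqq \sum_j w_j \mathbbm{1}_{E_j}(y)$. Assuming momentarily that $A$ is trace class, the classical Fredholm expansion gives
\begin{align*}
\det(1+A) = 1 + \sum_{n \geq 1} \frac{1}{n!} \int_{E^n} \det\bigl[A(x_i,x_j)\bigr]_{1\leq i,j \leq n}\, dx_1 \cdots dx_n.
\end{align*}
Pulling the row factors $\mathbbm{1}_E(x_i)$ and the column factors $w(x_j)$ out of the determinant, expanding $\prod_i w(x_i)$ into monomials, and using the invariance of $\det[K(x_i,x_j)]$ under simultaneous permutations of its variables together with the multinomial count $n!/\prod_j k_j!$ of assignments of type $(k_1,\dots,k_\ell)$, the double series collapses to the exact expression obtained from the left-hand side; this establishes \eqref{equ:moment} in the trace class case.

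The main obstacle is that Definitions~\ref{def:L12} and \ref{def:Kconti} only guarantee $A \in \mathcal{L}_{1|2}(E)$, so the classical Fredholm series need not apply directly. To handle this I would approximate $K$ by a sequence $K^{(\varepsilon)}$ obtained by mollifying each block of $K$ with a smooth compactly supported bump, chosen so that $\mathbbm{1}_E K^{(\varepsilon)} \mathbbm{1}_E$ is smooth (hence trace class on the bounded set $E$) and so that $\mathbbm{1}_E K^{(\varepsilon)} \mathbbm{1}_E \to \mathbbm{1}_E K \mathbbm{1}_E$ in the $\mathcal{L}_{1|2}(E)$ norm as $\varepsilon \to 0$. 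This convergence uses continuity of $K$ on the diagonal blocks (for the trace norm) and the Hilbert--Schmidt property off-diagonal. For each $\varepsilon > 0$ the trace class version of \eqref{equ:moment} applies, and the $\varepsilon \to 0$ limit of the resulting determinant equals the right-hand side of \eqref{equ:moment} by the continuity property \eqref{equ:detconti}; term-by-term convergence of the correlation expansion is controlled by Hadamard's inequality applied to $\det[K^{(\varepsilon)}(x_i,x_j)]$ on bounded sets, and the interchange of limit with the infinite series is justified by the uniform bound \eqref{equ:detbound}. A parallel Hadamard-type bound on $\det[K(x_i,x_j)]$ justifies the Fubini step in the very first expansion on the left-hand side, completing the proof.
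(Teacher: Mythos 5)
Your overall strategy mirrors the paper's: expand both sides as power series in $z_j-1$, reduce to the trace-class case via Soshnikov, and then pass to the general case by mollification using the continuity property \eqref{equ:detconti}. However, two steps in the approximation scheme do not hold up as written.

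First, you mollify \emph{every} block of $K$, including the diagonal ones, and assert that the mollified diagonal block converges in trace norm ``using continuity of $K$ on the diagonal blocks.'' Continuity of a kernel does not control the trace norm of the associated operator, so this is not a valid justification. (Trace-norm convergence of $P^\varepsilon K P^\varepsilon$ for a trace-class $K$ can be salvaged from the strong convergence $P^\varepsilon\to I$, but that is a different argument, and the hypothesis of continuity plays no role in it.) The paper avoids this issue entirely by leaving the diagonal blocks untouched ($K^\varepsilon_{(i)(i)}=K_{(i)(i)}$) and mollifying only the off-diagonal blocks; continuity of the diagonals is then used only to ensure $K^\varepsilon$ is continuous everywhere so that Soshnikov's theorem applies.

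Second, and more seriously, your justification of the term-by-term convergence
\begin{align*}
\int_{E_1^{k_1}\times\dots\times E_\ell^{k_\ell}} dx_1\dots dx_k\,\det\bigl[K^{(\varepsilon)}(x_i,x_j)\bigr]
\xrightarrow[\varepsilon\to 0]{}
\int_{E_1^{k_1}\times\dots\times E_\ell^{k_\ell}} dx_1\dots dx_k\,\det\bigl[K(x_i,x_j)\bigr]
\end{align*}
via Hadamard's inequality and dominated convergence fails. Hadamard's bound requires pointwise control of the kernel, but the off-diagonal blocks $K_{(i)(j)}$, $i\neq j$, are only assumed to be locally $L^2$ (Hilbert--Schmidt), and a mollification of an $L^2$ kernel is not uniformly bounded in sup-norm as $\varepsilon\to 0$, so there is no dominating function in sight, nor do you have pointwise a.e.\ convergence of the determinant. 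The paper handles this by expanding the determinant into permutation cycles, factoring out the fixed-point (diagonal) factors, and then bounding the remaining cycle products via Minkowski's inequality for integrals so that the whole argument runs on local $L^2$ convergence alone. This is exactly the extra input needed in the non-trace-class regime, and your proposal does not supply an equivalent mechanism.
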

\begin{proof}
Following the proof of \cite[Theorem 2]{Soshnikov}, the Taylor expansion of the left hand side of \eqref{equ:moment} at $z_1=z_2=\dots=z_\ell=1$ equals
\begin{align}\label{equ:momnet-mid}
1+\sum_{k=1}^\infty \sum_{k_1+k_2+\dots+k_\ell=k} \prod_{j=1}^\ell \frac{1}{k_j!}  \left( \int_{E_1^{k_1}\times E_2^{k_2}\times\dots\times E_\ell^{k_\ell}} d x_1 \dots d x_k\, \det[K(x_i,x_j)]_{1\leq i,j\leq k} \right) \prod_{j=1}^\ell (z_j-1)^{k_j}.
\end{align}
To simplify the notation, we write $D(\mathbf{z})$ for the right hand side of \eqref{equ:moment} and  $D'(\mathbf{z})$ for \eqref{equ:momnet-mid}. Here $\mathbf{z}=(z_1,\dots, z_\ell)$. We remark that for a moment $D'(\mathbf{z})$ is a formal power series.
\vspace{\baselineskip}

If $K(x,y)$ is of locally trace class and is continuous, then $D'(\mathbf{z})=D(\mathbf{z})$ \cite[Theorem 2]{Soshnikov}. In particular, $D'(\mathbf{z})$ and $D(\mathbf{z})$ are both entire functions. We proceed through approximation. Let $K_{(i)(j)}(x,y)$, $i\neq j$ be the off-diagonal terms of $K(x,y)$. For $\epsilon>0$ , let $K^{\epsilon}_{(i)(j)}(x,y)$ be a mollification of $K_{(i)(j)}(x,y)$ such that
\begin{enumerate}
\item $K^\epsilon_{(i)(j)} (x,y)$ is a smooth function on $\mathbb{R}_+\times\mathbb{R}_+$.
\item $K^\epsilon_{(i)(j)} (x,y)$ converges to $K_{(i)(j)} (x,y)$ locally in $L^2$.
\end{enumerate}
Let $K^\epsilon$ be obtained from $K$ by replacing $K_{(i)(j)}$ with $K^\epsilon_{(i)(j)}$ for all $i\neq j$. For consistency, we also let $K^{ \epsilon }_{(i)(i)}=K_{(i)(i)}$. We use $D_\epsilon(\mathbf{z})$ and $D_\epsilon'(\mathbf{z})$ to denote $D(\mathbf{z})$ and $D'(\mathbf{z})$ with $K$ replaced by $K^\epsilon$ respectively. For $i\neq j$, since $K^\epsilon_{(i)(j)}(x,y)$ is a smooth function, it is of locally trace class \cite[(10.18)]{GK}. This implies $K^{\epsilon}$ is of locally trace class and is continuous. From \cite[Theorem 2]{Soshnikov}, we have $D'_\epsilon (\mathbf{z})=D_\epsilon(\mathbf{z}).$
\vspace{\baselineskip}

For $i\in\llbracket 1,m \rrbracket $, $K^{\epsilon}_{(i)(i)}=K_{(i)(i)}$. For $i\neq j$ in $\llbracket 1,m \rrbracket$, $K^{\epsilon}_{(i)(j)}$ converges to $K_{(i)(j)}$ locally in the Hilbert-Schmidt norm. These imply $K^\epsilon (x,y)$ converges to $K(x,y)$ locally in $\mathcal{L}_{1|2}$. By the continuity property \eqref{equ:detconti}, $D_\epsilon(\mathbf{z})$ converges to $D(\mathbf{z})$ pointwisely. By \eqref{equ:detbound}, $D_\epsilon(\mathbf{z})$ are locally uniformly bounded in $\epsilon$. This implies $D(\mathbf{z})$ is also an entire function and $D_\epsilon(\mathbf{z})$ converges to $D(\mathbf{z})$ locally in $C^k$ for all $k\in\mathbb{N}$. To show that $D'(\mathbf{z})$ equals $D(\mathbf{z})$, it remains check that the coefficients in the Taylor expansions agree. That is, we need to show 
\begin{align*}
 \int_{E_1^{k_1}\times E_2^{k_2}\times\dots\times E_\ell^{k_\ell}} d x_1 \dots d x_k\, \det[K^\epsilon (x_i,x_j)]_{1\leq i,j\leq k}
\end{align*}
converges to
\begin{align*}
\int_{E_1^{k_1}\times E_2^{k_2}\times\dots\times E_\ell^{k_\ell}} d x_1 \dots d x_k\, \det[K(x_i,x_j)]_{1\leq i,j\leq k}.
\end{align*}

By expanding the determinant, 
\begin{align*}
 \det[K^\epsilon (x_i,x_j)]_{1\leq i,j\leq k} =\sum_{\sigma\in S_k} \textup{sgn}(\sigma) \sum_{i=1}^k K^\epsilon (x_i,x_{\sigma(i)}). 
\end{align*}
Here $S_k$ is the permutation group. For fixed $\sigma\in S_k$, let $I^\sigma_1\coloneqq\{i\in\llbracket 1,k \rrbracket\,|\, \sigma(i)=i\}$. Then
 \begin{align*}
 \prod_{i=1}^k K^\epsilon(x_i,x_{\sigma(i)})=\prod_{i\in I^\sigma_1} K^\epsilon(x_i,x_i) \prod_{i\notin I^\sigma_1} K^\epsilon(x_i,x_{\sigma(i)})
 =\prod_{i\in I^\sigma_1} K (x_i,x_i) \prod_{i\notin I^\sigma_1} K^\epsilon(x_i,x_{\sigma(i)}).
 \end{align*}
In the second equality, we used that $K=K^\epsilon$ on the diagonals. Let $(i_1i_2\dots i_q)$ be an arbitrary cycle of $\sigma$ with $q\geq 2$ and let $F_1,F_2,\dots, F_q$ be bounded Borel subsets of $\mathfrak{X}$. It suffices to show that
\begin{align*}
\int_{F_1\times F_2\dots \times F_q} dx_{i_1} dx_{i_2}\dots d x_{i_q}  K^\epsilon (x_{i_1},x_{i_2})K^\epsilon (x_{i_2},x_{i_3}) \dots K^\epsilon({x_{i_q},x_{i_1}})
\end{align*}
converges to
\begin{align*}
\int_{F_1\times F_2\dots \times F_q} dx_{i_1} dx_{i_2}\dots d x_{i_q}  K  (x_{i_1},x_{i_2})K  (x_{i_2},x_{i_3}) \dots K ({x_{i_q},x_{i_1}}).
\end{align*}
Because $K^\epsilon(x_{i_1},x_{i_2})$ converges to $K^\epsilon(x_{i_1},x_{i_2})$ locally in $L^2$, it remains to show that 
\begin{align*}
\int_{F_3\times F_4\dots \times F_q} dx_{i_3}\dots dx_{i_q}  K^\epsilon (x_{i_2},x_{i_3})K^\epsilon (x_{i_3},x_{i_4})\dots K^\epsilon({x_{i_q},x_{i_1}})
\end{align*}
converges to
\begin{align*}
\int_{F_3\times F_4\dots \times F_q} dx_{i_3}\dots dx_{i_q}  K (x_{i_2},x_{i_3})K (x_{i_3},x_{i_4})\dots K({x_{i_q},x_{i_1}})
\end{align*}
in $L^2(F_2\times F_1)$. Using Minkowski's inequality for integrals inductively on $q$, we have
\begin{multline*}
\left\| \int_{F_3\times F_4\dots \times F_q} dx_{i_3}\dots d_{i_q}  K (x_{i_2},x_{i_3})K (x_{i_3},x_{i_4})\dots K({x_{i_q},x_{i_1}}) \right\|_{L^2(F_2\times F_1)}\\
\leq \left\| K (x_{i_2},x_{i_3}) \right\|_{L^2(F_2\times F_3)}\left\| K (x_{i_3},x_{i_4}) \right\|_{L^2(F_3\times F_4)}\dots \left\| K (x_{i_q},x_{i_1}) \right\|_{L^2(F_q\times F_1)}. 
\end{multline*}
Therefore the assertion follows the local $L^2$ convergence of $K^\epsilon$.
\end{proof}

\begin{corollary}\label{cor:Fredet}
Under the same setting as in Proposition~\ref{pro:Kregularity}, we have
\begin{multline}\label{equ:Freddet}
   {n_1!n_2!\dots n_\ell !} \mathcal{P}(M_{E_1}=n_1,M_{E_2}=n_2,\dots,M_{E_\ell}=n_\ell )\\
= \frac{\partial^{n_1+n_2+\dots +n_\ell}}{\partial z_1^{n_1}\dots \partial z_\ell^{n_\ell}}\det\bigg(1+\mathbbm{1}_E\cdot \sum_{j=1}^\ell (z_j-1) K\cdot\mathbbm{1}_{E_j}\bigg)\bigg|_{z_1=z_2=\dots=z_\ell=0}.
\end{multline}
Moreover, the right hand side of \eqref{equ:Freddet} is continuous with respect to the locally $\mathcal{L}_{1|2}$ convergence. 
\end{corollary}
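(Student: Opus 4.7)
The plan is to view \eqref{equ:moment} from Proposition~\ref{pro:Kregularity} as an identity between two analytic functions of $\mathbf{z} = (z_1, \ldots, z_\ell)$ and then extract Taylor coefficients at $\mathbf{z} = 0$. First I would unpack the left-hand side. Since the configuration is locally finite and $E$ is a bounded Borel set, the random variables $M_{E_1}, \ldots, M_{E_\ell}$ take finite integer values almost surely, so by dominated convergence
\[
\mathcal{E}\bigg[\prod_{j=1}^\ell z_j^{M_{E_j}}\bigg] = \sum_{n_1, \ldots, n_\ell \geq 0} \mathcal{P}(M_{E_1} = n_1, \ldots, M_{E_\ell} = n_\ell)\, z_1^{n_1} \cdots z_\ell^{n_\ell}
\]
is absolutely convergent for $|z_j| \leq 1$ and defines an analytic function on that polydisc whose Taylor coefficients at the origin are precisely the joint probabilities. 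The right-hand side $D(\mathbf{z})$ of \eqref{equ:moment} is entire in $\mathbf{z}$ (this is shown during the proof of Proposition~\ref{pro:Kregularity}, via the convergent Fredholm expansion and the bound \eqref{equ:detbound}). The two analytic functions agree on the polydisc by Proposition~\ref{pro:Kregularity}, so their Taylor expansions at $\mathbf{z} = 0$ coincide. Applying $\partial^{n_1}_{z_1} \cdots \partial^{n_\ell}_{z_\ell}$ and evaluating at $\mathbf{z} = 0$ on both sides then yields \eqref{equ:Freddet} directly.

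For the continuity statement, suppose $K^{(n)} \to K$ locally in $\mathcal{L}_{1|2}$. For $\mathbf{z}$ in any compact polydisc $\{|z_j| \leq R\}$, the operators
\[
A^{(n)}(\mathbf{z}) := \mathbbm{1}_E \cdot \sum_{j=1}^\ell (z_j - 1)\, K^{(n)} \cdot \mathbbm{1}_{E_j}
\]
converge to $A(\mathbf{z})$ in $\mathcal{L}_{1|2}(E)$ for each fixed $\mathbf{z}$, and $\|A^{(n)}(\mathbf{z})\|_{1|2}$ is bounded uniformly in $n$ and in $\mathbf{z}$ over the polydisc. The continuity property \eqref{equ:detconti} gives the pointwise convergence $\det(1 + A^{(n)}(\mathbf{z})) \to \det(1 + A(\mathbf{z}))$, and \eqref{equ:detbound} shows that this family of entire functions of $\mathbf{z}$ is locally uniformly bounded. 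Weierstrass's theorem on locally uniform convergence of holomorphic functions (or equivalently Vitali--Porter applied with the pointwise limit) upgrades this to locally uniform convergence together with convergence of all mixed partial derivatives in $\mathbf{z}$. Taking derivatives at $\mathbf{z} = 0$ gives the asserted continuity of the right-hand side of \eqref{equ:Freddet}.

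The main obstacle is essentially bookkeeping: one must keep track of two distinct domains, namely the polydisc $|z_j| \leq 1$ on which the probability-generating-function representation is valid and the full $\mathbb{C}^\ell$ on which the generalized Fredholm determinant is entire, and one must invoke the analytic-function viewpoint to transfer identities and continuity statements between them. Once this is done, no further combinatorial work is needed because the Taylor-coefficient extraction and the continuity both follow mechanically from the built-in properties \eqref{equ:detconti}--\eqref{equ:detbound} of the $\mathcal{L}_{1|2}$ Fredholm determinant.
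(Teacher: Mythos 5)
Your proof is correct and follows essentially the same route as the paper's: extract Taylor coefficients at $\mathbf{z}=0$ from the identity \eqref{equ:moment}, then use \eqref{equ:detconti} for pointwise convergence and \eqref{equ:detbound} together with Vitali/Weierstrass to upgrade to locally $C^k$ convergence of the Fredholm determinants, hence convergence of the derivatives. Your write-up is simply more explicit about the domain bookkeeping (the polydisc $|z_j|\leq 1$ on which the probability generating function is defined versus the full plane where the generalized Fredholm determinant is entire), which the paper leaves implicit.
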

\begin{proof}
The equality \eqref{equ:Freddet} can be derived by taking derivatives in \eqref{equ:moment}. Suppose $K^N$, $N\in\mathbb{N}$ converges to $K$ locally in $\mathcal{L}_{1|2}$.  By the continuity property \eqref{equ:detconti}, 
\begin{equation*}
\det\bigg(1+\mathbbm{1}_E\cdot \sum_{j=1}^\ell (z_j-1) K^N\cdot\mathbbm{1}_{E_j}\bigg)
\xrightarrow[N\to\infty]{}
\det\bigg(1+\mathbbm{1}_E\cdot \sum_{j=1}^\ell (z_j-1) K\cdot\mathbbm{1}_{E_j}\bigg)
\end{equation*}
By \eqref{equ:detbound} , the convergence is locally in $C^k$. Then the assertion follows.
\end{proof}

%For a correlation kernel $K$, we say $K$ is locally trace class on the diagonal if  $\mathbbm{1}_E(x)\cdot K(x,y)\cdot\mathbbm{1}_E(y)$ is of trace class on the diagonal for all bounded measurable set $E\subset\mathfrak{X}$. 

%Most of the time, we identify $\mathfrak{X}$ with $\llbracket 1,m \rrbracket\times \mathbb{R}_+$ and write $K$ as $K((k,x);(\ell,y))$. The next proposition concerns the convergence of determinantal point processes.

We end this section with a corollary which will be used to prove Theorem~\ref{thm:main} (ii) and (iii). It is a direct consequence of Corollary~\ref{cor:Fredet}. 

\begin{corollary}\label{cor:det}
Let $\vec{X}^{N}=(\vec{X}^{(1),N},\dots,\vec{X}^{(m),N})$ be a sequence of random vectors with $\vec{X}^{(k),N}\in\mathbb{W}^N_+$ for all $k\in\llbracket 1,m \rrbracket$. Assume that $\vec{X}^N$ is determinantal with a correlation kernel $K^N(x,y)$ and that for all $\ell\in\mathbb{N}$, $\{X^{(k),N}_j \},\ (j,k)\in \llbracket 1,\ell \rrbracket\times \llbracket 1,m \rrbracket$ converges in distribution when $N$ goes to infinity. We write $\vec{X}=\{X^{(k)}_j \},\ (j,k)\in \mathbb{N} \times \llbracket 1,m \rrbracket$ for the limit of $\vec{X}^N$. Further suppose that there exists a function $K(x,y)$ such that $K^N(x,y)$ converges to $K(x,y)$ locally in $\mathcal{L}_{1|2}$ and that $K^N(x,y)$ and $K(x,y)$ are continuous on the diagonal blocks. Then $\vec{X}$ is determinantal with correlation kernel $K(x,y)$.
\end{corollary}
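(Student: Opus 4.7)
The plan is to derive the determinantal structure of $\vec{X}$ by passing to the limit in the Fredholm determinant formula for cylinder probabilities furnished by Corollary~\ref{cor:Fredet}. Fix disjoint bounded Borel sets $E_1,\dots,E_\ell\subset\mathfrak{X}$, nonnegative integers $n_1,\dots,n_\ell$, and set $E=\sqcup_{j=1}^\ell E_j$. Because $\vec{X}^N$ is determinantal with kernel $K^N$, Corollary~\ref{cor:Fredet} expresses $n_1!\cdots n_\ell!\,\mathbb{P}\bigl(M_{E_1}(\vec{X}^N)=n_1,\dots,M_{E_\ell}(\vec{X}^N)=n_\ell\bigr)$ as the mixed partial derivative at $\mathbf{z}=\mathbf{0}$ of $\det\bigl(1+\mathbbm{1}_E\cdot\sum_{j}(z_j-1)K^N\cdot\mathbbm{1}_{E_j}\bigr)$. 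The locally $\mathcal{L}_{1|2}$ convergence $K^N\to K$, combined with the continuity assertion at the end of Corollary~\ref{cor:Fredet}, implies that this derivative converges, as $N\to\infty$, to the corresponding expression with $K^N$ replaced by $K$.

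The remaining task is to match this analytic limit with a probability computed under the limiting law of $\vec{X}$, using only the finite-dimensional convergence hypothesis. I would first truncate. Since $K^N\to K$ in $\mathcal{L}_{1|2}$, the diagonal trace norms are uniformly bounded, so for each $R>0$ and each level $k\in\llbracket 1,m\rrbracket$ there is a constant $C_{R,k}$ with $\mathbb{E}[M_{[0,R]^{(k)}}(\vec{X}^N)]=\int_0^R K^N_{(k)(k)}(x,x)\,dx\le C_{R,k}$. Because the coordinates are ordered, $\{X^{(k),N}_\nu\le R\}\subset\{M_{[0,R]^{(k)}}(\vec{X}^N)\ge\nu\}$, and Markov's inequality gives $\mathbb{P}(X^{(k),N}_\nu\le R)\le C_{R,k}/\nu$ uniformly in $N$. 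Choosing $R$ larger than every $E_j$ and $\nu$ sufficiently large then ensures, with probability at least $1-\varepsilon$ uniformly in $N$, that only the first $\nu$ coordinates of each level contribute to any $M_{E_j}(\vec{X}^N)$.

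With this truncation in place, the integer vector $(M_{E_1}(\vec{X}^N),\dots,M_{E_\ell}(\vec{X}^N))$ is a bounded function of the first $\nu$ coordinates of $\vec{X}^N$, and the counting functional fails to be continuous only on configurations with a particle exactly on some $\partial E_j$. Continuity of $K$ on the diagonal blocks makes the limiting one-point intensity locally bounded, so the limit law assigns no mass to such exceptional configurations and the functional is almost surely continuous at the limit. Combined with the assumed joint convergence of $\{X^{(k),N}_j\}_{(j,k)\in\llbracket 1,\nu\rrbracket\times\llbracket 1,m\rrbracket}$, the continuous mapping theorem (or a Portmanteau argument applied to slight dilations of the $E_j$) yields
\begin{equation*}
\mathbb{P}\bigl(M_{E_1}(\vec{X}^N)=n_1,\dots,M_{E_\ell}(\vec{X}^N)=n_\ell\bigr)\xrightarrow[N\to\infty]{}\mathbb{P}\bigl(M_{E_1}(\vec{X})=n_1,\dots,M_{E_\ell}(\vec{X})=n_\ell\bigr).
\end{equation*}
Equating this probabilistic limit with the Fredholm limit from the first paragraph shows that the cylinder probabilities of $\vec{X}$ coincide with those of a determinantal process with kernel $K$; since these cylinder probabilities generate $\mathcal{F}_{\mathrm{cyl}}$, the law of $\vec{X}$ is thereby identified, and reading off correlation functions from \eqref{equ:Freddet} gives $\rho_k(x_1,\dots,x_k)=\det[K(x_i,x_j)]_{1\le i,j\le k}$.

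The main obstacle I anticipate is the uniform truncation step, which converts a purely analytic hypothesis (local $\mathcal{L}_{1|2}$ convergence of the kernels) into a probabilistic tightness statement about how deep into the index $\nu$ one must go before the coordinates $X^{(k),N}_\nu$ escape a fixed bounded set, uniformly in $N$. Once the Markov-type tail bound above is secured, the boundary issue at $\partial E_j$ follows from the absolute continuity of the limiting intensity, and the entire argument reduces to an invocation of the continuity clause of Corollary~\ref{cor:Fredet} with no further heavy machinery required.
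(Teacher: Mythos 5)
The paper records no proof of this corollary, merely stating that it is a direct consequence of Corollary~\ref{cor:Fredet}, so there is nothing to compare against line by line; your argument is a correct and reasonably thorough realization of what ``direct consequence'' must mean here, and it follows exactly the route the paper signposts: apply \eqref{equ:Freddet} to $\vec{X}^N$, pass to the limit on the Fredholm side using the continuity clause of Corollary~\ref{cor:Fredet}, and pass to the limit on the probability side. Your real contribution is to notice that the probability side does not converge for free from finite-dimensional convergence and to supply the needed uniform tail bound: from $\mathbb{E}[M_{[0,R]^{(k)}}(\vec{X}^N)]=\int_0^R K^N_{(k)(k)}(x,x)\,dx$ (bounded in $N$ by the trace-norm convergence of the positive diagonal blocks) and the ordering of the coordinates you get $\mathbb{P}\bigl(X^{(k),N}_\nu\le R\bigr)\le C_{R,k}/\nu$, which both shows that the limit field is a.s.\ locally finite and lets you truncate the counting functionals to finitely many coordinates before invoking finite-dimensional convergence. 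This is the substantive step; the paper glosses over it.

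Two points deserve a touch more care, though neither is a real gap. First, the boundary issue: you cannot literally appeal to the ``one-point intensity of $\vec{X}$'' before $\vec{X}$ is known to be determinantal; the clean argument is that the marginal density of each $X^{(k),N}_j$ is dominated by $K^N_{(k)(k)}(x,x)$, which is locally bounded uniformly in $N$, so the limiting marginals are atomless and, more generally, the limiting distribution of each coordinate charges no Lebesgue-null set. This settles the Portmanteau step for $E_j$ that are finite unions of intervals, and extending the resulting factorial-moment identity to all bounded Borel sets is then a routine monotone-class/approximation argument (both sides are measures in each $E_j$). Second, the passage from ``\eqref{equ:Freddet} holds for $\vec{X}$'' to ``$\vec{X}$ is determinantal with kernel $K$'' is compressed in your last sentence; it requires identifying the probability generating function $\sum_n\mathbb{P}(M_E=n)z^n$ with the (entire) Fredholm determinant, then re-expanding around $z=1$ to read off the factorial moments and hence the correlation functions. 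This is exactly the reverse direction of the Taylor-expansion argument in the proof of Proposition~\ref{pro:Kregularity} and works without difficulty, but it is worth stating. With those two clarifications your proof is complete and is, as far as one can tell, what the authors had in mind.
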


\subsection{Exponential Gibbs Property}\label{sec:Gibbs} 
In this section we introduce the exponential Gibbs property. We start by recalling the {interlacing} relation between two vectors. Let $k\in\mathbb{N}$. For two vectors $\vec{x}, \vec{y}\in \mathbb{W}^k$, we denote by $\vec{x}\prec\vec{y}$ if
\begin{equation*} 
x_1<y_1<x_2<\cdots<x_k<y_k.
\end{equation*}
Also, we use $\vec{x}\preceq\vec{y}$ to denote
\begin{equation*} 
 x_1\leq  y_1\leq  x_2\leq \cdots\leq  x_k\leq  y_k. 
\end{equation*}

Next, we define an interlacing set $I_{a,b}(\vec{x},\vec{y})$ and $I_{a,b}(\vec{x},\vec{y},\vec{z})$ with prescribed boundary $(\vec{x},\vec{y},\vec{z})$.

\begin{definition}\label{def:interlacingset}
Fix $k\in\mathbb{N}$ and $a<b$ with $a,b\in \mathbb{N}_0$. For any $\vec{x},\vec{y}\in\mathbb{W}^{k}$ we denote by $I_{a,b}(\vec{x},\vec{y})$ the collection of $\vec{w}^{ \ell }\in\mathbb{W}^k$, $\ell\in \llbracket a+1,b-1  \rrbracket $ which satisfies 
\begin{align*}
    \vec{x}  \preceq  \vec{w}^{ a+1 } \preceq  \vec{w}^{ a+2 }  \preceq\dots  \vec{w}^{ b-1 } \preceq   \vec{y}.  
\end{align*}
We denote by $Z_{a,b}(\vec{x},\vec{y})$ the Lebesgue measure of $I_{a,b}(\vec{x},\vec{y})$. For $\vec{z}=(z^{a},z^{a+1},\dots,z^{ b })\in \mathbb{R}^{b-a+1}$, we denote by $I_{a,b}(\vec{x},\vec{y},\vec{z})$ the collection of $\vec{w}^{\ell}\in\mathbb{W}^k$, $\ell\in \llbracket a+1,b-1  \rrbracket $ which satisfies 
\begin{align*}
   (\vec{x},z^{ a })  \preceq (\vec{w}^{ a+ 1 },z^{ a+1 })\preceq (\vec{w}^{ a+2 },z^{ a+2 })  \preceq\cdots \preceq (\vec{w}^{ b-1 },z^{ b-1 }) \preceq ( \vec{y}, z^{ b } ).  
\end{align*}
We denote by $Z_{a,b}(\vec{x},\vec{y},\vec{z})$ the Lebesgue measure of $I_{a,b}(\vec{x},\vec{y},\vec{z})$. Similarly, 
\end{definition}

In other words, $I_{a,b}(\vec{x},\vec{y})$ consists of interlacing sequences from $ \vec{x} $ to $\vec{y}$.  $I_{a,b}(\vec{x},\vec{y},\vec{z})$ is defined similarly by adding the components in $\vec{z}$ on the top of $\vec{x}$, $\vec{y}$ and $\left\{  \vec{w}^{\ell}\right\}_{\ell=a+1}^{b-1}$.  

\begin{lemma}\label{lem:boundaryZ}
Fix $k\in\mathbb{N}$ and $a<b$ with $a,b\in \mathbb{N}_0$. We follow the notation as in Definition~\ref{def:interlacingset}. Then the sets
\begin{align*}
\{ (\vec{x},\vec{y} )\, |\, I_{a,b}(\vec{x},\vec{y} )\neq \emptyset\ \textup{and}\ Z_{a,b}(\vec{x},\vec{y} )= 0\}, 
\end{align*}
and
\begin{align*}
\{ (\vec{x},\vec{y},\vec{z})\, |\, I_{a,b}(\vec{x},\vec{y},\vec{z})\neq \emptyset\ \textup{and}\ Z_{a,b}(\vec{x},\vec{y},\vec{z})= 0\},
\end{align*}
as subsets in an Euclidean space, have Lebesgue measure zero. 
\end{lemma}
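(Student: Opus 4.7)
The plan is to realize $I_{a,b}(\vec{x},\vec{y})$ as the fiber, over $(\vec{x},\vec{y})$, of a convex polyhedron sitting in a larger Euclidean space, and to confine the vanishing locus of the fiber's Lebesgue measure to the topological boundary of the projected region, where a measure-zero bound is automatic. Concretely, let
\[
\mathcal{S}\coloneqq\{(\vec{x},\vec{y})\in\mathbb{W}^k\times\mathbb{W}^k\,|\,I_{a,b}(\vec{x},\vec{y})\neq\emptyset\}.
\]
Since $I_{a,b}(\vec{x},\vec{y})$ is cut out by finitely many linear weak inequalities in the coordinates of $(\vec{x},\vec{w}^{a+1},\dots,\vec{w}^{b-1},\vec{y})$, the set $\mathcal{S}$ is the image of a polyhedron under the coordinate projection to $(\vec{x},\vec{y})$, hence a semi-linear (polyhedral) subset of $\mathbb{W}^k\times\mathbb{W}^k$. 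In particular its topological boundary $\partial\mathcal{S}$ is contained in a finite union of affine hyperplanes and has Lebesgue measure zero.

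The crux of the argument is to verify that $Z_{a,b}(\vec{x},\vec{y})>0$ on the relative interior of $\mathcal{S}$; once this is in hand the bad set is forced to lie in $\partial\mathcal{S}$ and the conclusion follows. I would prove the positivity by induction on $b-a$. The base case $b-a=2$ is explicit: the fiber is the box $\prod_{i=1}^{k}[\max(x_i,y_{i-1}),\min(x_{i+1},y_i)]$ (with the conventions $y_0=-\infty$, $x_{k+1}=+\infty$), whose Lebesgue measure is strictly positive exactly when every constituent interlacing inequality is strict, a condition that holds on the interior of $\mathcal{S}$. For $b-a>2$ I would choose an interior point of $\mathcal{S}$, insert an intermediate vector $\vec{w}^{\ell}$ in the relative interior of the admissible chamber so that both $(\vec{x},\vec{w}^{\ell})$ and $(\vec{w}^{\ell},\vec{y})$ land in the interiors of the corresponding shorter $\mathcal{S}$, and then apply the inductive hypothesis on each half to produce a full-dimensional open family of valid chains.

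The triple version reduces to the same argument: one enlarges the polyhedron by appending the linear constraints coupling each $\vec{w}^{\ell}$ with the prescribed components $z^{\ell}$, projects onto $(\vec{x},\vec{y},\vec{z})$, and notes that the resulting set is again polyhedral with boundary lying in a finite union of affine hyperplanes. Since the additional $\vec{z}$-constraints remain strict on the interior and impose only linear conditions that do not obstruct the inductive insertion of strict intermediate vectors, the positivity step transfers verbatim.

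The main obstacle is the inductive step: one must confirm that the intermediate vector can be chosen so that strictness propagates to both halves simultaneously. The subtlety is that the relation $\preceq$ is not transitive, so the defining inequalities of $\mathcal{S}$ are not merely $\vec{x}\preceq\vec{y}$ but an iterated interlacing condition of length $b-a$ (including implicit constraints of the form $y_i\leq x_{i+(b-a)}$). Once this polyhedral description of $\mathcal{S}$ is pinned down, the existence of a strict intermediate vector in a prescribed open chamber is a routine convex-geometry exercise; the bulk of the work is the clean bookkeeping of those inequalities.
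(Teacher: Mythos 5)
Your approach is correct in outline and, at a high level, coincides with the paper's: in both cases one passes to the closed convex polytope
\[
S=\{(\vec{x},\vec{y},\vec{w})\ :\ \vec{w}\in I_{a,b}(\vec{x},\vec{y})\},
\]
projects it by $p$ to $(\vec{x},\vec{y})$-space, confines the bad set to the topological boundary of the image $p(S)$, and uses that the boundary of a convex body (or of a polyhedral set) has Lebesgue measure zero. The difference lies in how you establish positivity of $Z_{a,b}$ off the boundary, and here your route is genuinely more laborious than the paper's. You propose to prove $Z_{a,b}>0$ on $\mathrm{int}(p(S))$ by induction on $b-a$, inserting an intermediate vector $\vec{w}^{\ell}$ with strict interlacing on both sides. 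This works -- the base case $b-a=2$ reduces to the non-emptiness of the box of $\vec{w}$ with $\max(x_i,y_{i-1})<w_i<\min(x_{i+1},y_i)$, and the strict iterated inequalities are exactly what makes these intervals nondegenerate -- but, as you acknowledge, the propagation of strictness to both halves requires keeping track of all the implicit constraints $y_i\leq x_{i+(b-a)}$, and the inductive bookkeeping is the bulk of the work. The paper instead uses a single general convexity fact that subsumes your induction: for a closed convex body $S$ with non-empty interior and a linear projection $p$, one has $p(\mathrm{int}(S))=\mathrm{int}(p(S))$; since $\mathrm{int}(S)$ is exactly the locus of strictly interlacing sequences (an open, non-empty condition), any $(\vec{x},\vec{y})\in\mathrm{int}(p(S))$ admits a $\vec{w}$ in an open set of valid configurations, hence $Z_{a,b}(\vec{x},\vec{y})>0$ immediately. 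This side-steps the induction entirely and gives the result in a few lines. Your argument is sound but would need the inductive step spelled out carefully; the paper's convexity shortcut is the one to internalize, as it also transfers verbatim to the triple version with $\vec{z}$ without any additional casework.
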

\begin{proof}
Consider the set
\begin{align*}
S\coloneqq \{ (\vec{x},\vec{y},\vec{z},\vec{w})\, |\, \vec{w}\in I_{a,b}(\vec{x},\vec{y},\vec{z})\}.
\end{align*}
By the definition of interlacing \eqref{def:interlacingweak}, $S$ is a closed convex body with non-empty interior $\textup{int}(S)$. Let $p(\vec{x},\vec{y},\vec{z},\vec{w})=(\vec{x},\vec{y},\vec{z})$ be the projection. It is easy to check that $(\vec{x},\vec{y},\vec{z},\vec{w})\in \textup{int}(S)$ is equivalent to strict interlacing \eqref{def:interlacing}. This implies that $Z_{a,b}(\vec{x},\vec{y},\vec{z})>0$ provided $(\vec{x},\vec{y},\vec{z})\in p(\textup{int}(S))$. Also $I_{a,b}(\vec{x},\vec{y},\vec{z})\neq \emptyset$ is equivalent to $(\vec{x},\vec{y},\vec{z})\in p(S)$. As a result,
\begin{align*}
\{ (\vec{x},\vec{y},\vec{z})\, |\, I_{a,b}(\vec{x},\vec{y},\vec{z})\neq \emptyset\ \textup{and}\ Z_{a,b}(\vec{x},\vec{y},\vec{z})= 0\}\subset p(S)\setminus p(\textup{int}(S)).
\end{align*}
Because $S$ and $p(S)$ are both closed convex bodies with non-empty interior, $ p(S)\setminus p(\textup{int}(S))= p(S)\setminus \textup{int}(p(S))$ has Lebesgue measure zero. The proof for $\{ (\vec{x},\vec{y} )\, |\, I_{a,b}(\vec{x},\vec{y} )\neq \emptyset\ \textup{and}\ Z_{a,b}(\vec{x},\vec{y} )= 0\}$ is similar. 
\end{proof}

Now we define an interlacing exponential bridge ensemble with prescribed boundary data.
\begin{definition}\label{def:PPP}
Fix $k\in\mathbb{N}$ and $a<b$ with in $a,b\in \mathbb{N}_0$. Let $(\vec{x},\vec{y},\vec{z})$ be as in Definition~\ref{def:interlacingset}. Assume that $Z_{a,b}(\vec{x},\vec{y} )>0$. Then we denote by $\mathbb{P}^{k,\llbracket a+1,b-1 \rrbracket,\vec{x},\vec{y}}$ the probability measure on $I_{a,b}(\vec{x},\vec{y} )$ which is proportional to the Lebesgue measure. That is, for any Borel set $E\subset I_{a,b}(\vec{x},\vec{y} )$,
\begin{equation*}
\mathbb{P}^{k,\llbracket a+1,b-1 \rrbracket,\vec{x},\vec{y} }(E)=\frac{\textup{Lebesgue measure of}\ E}{Z_{a,b}(\vec{x},\vec{y} )}.  
\end{equation*}
We call $\mathbb{P}^{k,\llbracket a+1,b-1 \rrbracket,\vec{x},\vec{y} }$ the law of the interlacing exponential ensemble with entrance data $\vec{x}$ and exit data $\vec{y}$.

Similarly, assume that $Z_{a,b}(\vec{x},\vec{y},\vec{z})>0$. Then we denote by $\mathbb{P}^{k,\llbracket a+1,b-1 \rrbracket,\vec{x},\vec{y},\vec{z}}$ the probability measure on $I_{a,b}(\vec{x},\vec{y},\vec{z})$ which is proportional to the Lebesgue measure. That is, for any Borel set $E\subset I_{a,b}(\vec{x},\vec{y},\vec{z})$,
\begin{equation*}
\mathbb{P}^{k,\llbracket a+1,b-1 \rrbracket,\vec{x},\vec{y},\vec{z}}(E)=\frac{\textup{Lebesgue measure of}\ E}{Z_{a,b}(\vec{x},\vec{y},\vec{z})}.  
\end{equation*}
We call $\mathbb{P}^{k,\llbracket a+1,b-1 \rrbracket,\vec{x},\vec{y},\vec{z}}$ the law of the interlacing exponential ensemble with entrance data $\vec{x}$, exit data $\vec{y}$ and top boundary data $\vec{z}$.
\end{definition}
\begin{remark}
The terminology \textbf{exponential} bridge ensemble comes from the following observation. If we sample $k$ independent exponential bridges with entrance data $\vec{x}$ and exit data $\vec{y}$ and condition on the event that bridges lie in $I_{a,b}(\vec{x},\vec{y},\vec{z})$, then the resulting law is the uniform distribution over all possible weakly interlacing sequences connecting $\vec{x}$ and $\vec{y}$. This is exactly the law of $\mathbb{P}^{k,\llbracket a+1,b-1 \rrbracket,\vec{x},\vec{y},\vec{z}}$.
\end{remark}

We define the exponential Gibbs property below. The exponential Gibbs property could be viewed as a spatial Markov property. More specifically, it provides a description of the conditional law inside a compact set.
\begin{definition}[Exponential Gibbs property]\label{def:ExpoGP}
Fix $N\in\mathbb{N}\cup\{\infty\}$. Let $\vec{X}=\big( \vec{X} (a)\in \mathbb{W}^N_+,\ a\in\mathbb{N}_0\big)$ be a sequence of random vectors. We view $\vec{X}$ as a random function defined on $ \llbracket 1,N \rrbracket\times \mathbb{N}_0$. $\vec{X}$ is said to satisfy the exponential Gibbs property if the following condition holds. For all $k<N$ and $a<b$ with $k\in\mathbb{N}$ and $a,b\in\mathbb{N}_0$, let $\vec{x}=( {X}_1 {(a)}, {X}_2 {(a)} ,\dots,{X}_k {(a)} )$, $\vec{y}=( {X}_1 {(b)} , {X}_2 {(b)} ,\dots,{X}_k {(b)} )$ and $\vec{z}=( {X}_{k+1} {(a)} ,{X}_{k+1} {(a+1)} ,\dots,{X}_{k+1} {(b)})$. Then with probability one,
\begin{equation}\label{equ:Z>0}
Z_{a,b}(\vec{x},\vec{y},\vec{z})>0.
\end{equation}
Moreover, the conditional law of $\vec{X}$ inside $\llbracket 1,k \rrbracket \times \llbracket a+1,b-1\rrbracket $ takes the following form,
\begin{equation}\label{Gibbs-condition}
\textrm{Law}\left(\vec{X}\left\vert_{\llbracket 1,k \rrbracket \times \llbracket a+1,b-1\rrbracket}\  \textrm{conditional on } \vec{X}\right\vert_{\llbracket 1,N \rrbracket\times \mathbb{N}_0  \setminus \llbracket 1,k \rrbracket \times \llbracket a+1,b-1\rrbracket } \right) =\bP^{k,\llbracket a+1,b-1\rrbracket,\vec{x},\vec{y},\vec{z}}
\end{equation}

If $N\in\mathbb{N}$, we further make the following requirement. Let $\vec{x}=( {X}_1 {(a)}, {X}_2 {(a)} ,\dots,{X}_N {(a)} )$, $\vec{y}=( {X}_1 {(b)} , {X}_2 {(b)} ,\dots,{X}_N {(b)} )$. Then with probability one, 
\begin{equation}\label{equ:ZN>0}
Z_{a,b}(\vec{x},\vec{y})>0.
\end{equation}
Moreover, the conditional law of $\vec{X}$ inside $\llbracket 1,N \rrbracket \times \llbracket a+1,b-1\rrbracket $ takes the following form,
\begin{equation}\label{Gibbs-condition-N}
\textrm{Law}\left(\vec{X}\left\vert_{\llbracket 1,N \rrbracket \times \llbracket a+1,b-1\rrbracket}\  \textrm{conditional on } \mathcal{L}\right\vert_{\llbracket 1,N \rrbracket\times (\mathbb{N}_0  \setminus  \llbracket a+1,b-1\rrbracket) } \right) =\bP^{k,\llbracket a+1,b-1\rrbracket,\vec{x},\vec{y}}
\end{equation}
\end{definition}
\begin{remark}\label{rmk:ExpoGP}
If $N\in\mathbb{N}$, then it is straightforward to check that \eqref{equ:ZN>0} implies \eqref{equ:Z>0} and that \eqref{Gibbs-condition-N} implies \eqref{Gibbs-condition}.  
\end{remark}

\section{Time-Like Sequences}\label{sec:3}

In this section, we study the the Laguerre ensemble along a time-like path. Recall that for $ N\in\mathbb{N}$, $\alpha\in\mathbb{N}_0$ and $t>0$, the Laguerre ensemble $\vec{X}^N(\alpha,t)=\left(X^N_1(\alpha,t),X^N_2(\alpha,t),\dots ,X^N_N(\alpha,t)\right)$, defined in \eqref{def:X}, are the ordered eigenvalues of the random Hermitian matrix. The density of $\vec{X}^N(\alpha,t)$ is given by \cite{For93} 
\begin{align}\label{onept_density}
C(N,\alpha) t^{-N^2} \prod_{j=1}^N (x_j/t)^{\alpha}e^{-x_j /t} \Delta(\vec{x})^2  \mathbbm{1}(\vec{x}\in \mathbb{W}_+^N)  \prod_{j=1}^N dx_j.
\end{align}
The constant $C(N,\alpha)$ is  
\begin{align}\label{equ:CNalpha}
C(N,\alpha)\coloneqq  \prod_{j=1}^N \frac{1}{\Gamma(\alpha+j)\Gamma(j)}.
\end{align}
We remark that the density in \cite{For93} is given for $t=1$. $\eqref{onept_density}$ can then be derived by the scaling
\begin{align*}
\left(X^N_1(\alpha,t),X^N_2(\alpha,t),\dots ,X^N_N(\alpha,t)\right)\overset{(d)}{=\joinrel =}t\cdot \left(X^N_1(\alpha,1),X^N_2(\alpha,1),\dots ,X^N_N(\alpha,1)\right).
\end{align*}

The structure of this section is as follows. We compute the transition probability and the joint density of $\vec{X}^N(\alpha,t)$ along a time-like path in Section~\ref{sec:2.1}. In Section~\ref{sec:2.2}, we show that the process is determinantal and calculate the correlation kernel using the Eynard--Mehta Theorem. In Section~\ref{sec:2.3}, we compute the limit of the correlation kernel under the hard edge scaling. In particular, we give the proof of  Theorem~\ref{thm:main} (i).

\subsection{Transition Density}\label{sec:2.1}

The goal of this section is to calculate the joint density of Wishart ensemble along a time-like path. We start with the Markov property of $\vec{X}^N(\alpha,t)$ along a time-like path. The proof is postponed to Section~\ref{sec:A}.

\begin{lemma}\label{lem:Markov}
Fix $N\in\mathbb{N}$, $m\in\mathbb{N}$ and let $\left\{\left(\alpha^{(k)},t^{(k)}\right)\right\}_{k=1}^m\subset \mathbb{N}_0\times(0,\infty)$ be a time-like path as in Definition~\ref{def:timelike}. Let 
\begin{align}\label{def:Xk-timelike}
\vec{X}^{(k)}\coloneqq \left\{X^{N}_j\left(\alpha^{(k)},t^{(k)}\right)\right\}_{j=1}^N.
\end{align}  
Then $\vec{X}^{(k)}$ is a Markov process in $k$.
\end{lemma}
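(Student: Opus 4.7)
The plan is to first establish the Markov property at the level of the full random matrices, and then descend to eigenvalues using unitary invariance.

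For $k \in \llbracket 1, m \rrbracket$, set $A^{(k)} := A^N(\alpha^{(k)}, t^{(k)})$ and $W^{(k)} := (A^{(k)})^* A^{(k)}$, so that $\vec{X}^{(k)}$ consists of the eigenvalues of $W^{(k)}$. Since $\alpha^{(k)} \le \alpha^{(k+1)}$ and $t^{(k)} \le t^{(k+1)}$, one can write
\begin{equation*}
A^{(k+1)} = \begin{pmatrix} A^{(k)} + \delta^{(k)} \\ \gamma^{(k)} \end{pmatrix},
\end{equation*}
where $\delta^{(k)}$ collects the Brownian increments $B_{ij}(t^{(k+1)}) - B_{ij}(t^{(k)})$ on the existing $N+\alpha^{(k)}$ rows and $\gamma^{(k)}$ collects the fresh values $B_{ij}(t^{(k+1)})$ on the $\alpha^{(k+1)}-\alpha^{(k)}$ newly added rows. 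The first step is to observe that $\delta^{(k)}$ and $\gamma^{(k)}$ have i.i.d.\ centered complex Gaussian entries and are independent of the $\sigma$-algebra generated by $(A^{(1)},\dots,A^{(k)})$; this immediately yields the Markov property of the matrix sequence $(A^{(k)})_k$.

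The second step is to show that the conditional law of $W^{(k+1)}$ given $A^{(k)}$ depends on $A^{(k)}$ only through $W^{(k)}$. Expanding,
\begin{equation*}
W^{(k+1)} = W^{(k)} + (A^{(k)})^* \delta^{(k)} + (\delta^{(k)})^* A^{(k)} + (\delta^{(k)})^* \delta^{(k)} + (\gamma^{(k)})^* \gamma^{(k)},
\end{equation*}
the only term whose conditional distribution could depend on the matrix $A^{(k)}$ rather than just on $W^{(k)}$ is $(A^{(k)})^* \delta^{(k)}$. A direct covariance computation shows that, given $A^{(k)}$, its columns are independent complex Gaussian vectors with covariance $(t^{(k+1)} - t^{(k)})\, W^{(k)}$, so its conditional law depends on $A^{(k)}$ only through $W^{(k)}$. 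The remaining terms are either independent of $A^{(k)}$ or already a function of $W^{(k)}$, and so $(W^{(k)})_k$ is Markov.

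The final step is to descend from $W^{(k)}$ to $\vec{X}^{(k)}$ by checking that this $W$-level transition kernel is unitarily equivariant: for any $N\times N$ unitary $U$, the conditional law of $W^{(k+1)}$ given $W^{(k)} = U\Lambda U^*$ is the pushforward under $W\mapsto U W U^*$ of the conditional law given $W^{(k)} = \Lambda$. This follows from the expansion above together with the right-unitary invariance of complex Gaussian noise. Consequently the eigenvalues of $W^{(k+1)}$ given $W^{(k)}$ depend only on $\vec{X}^{(k)}$, and combining this with the matrix-level Markov property through the tower law yields the claimed eigenvalue-level Markov property. The main subtlety will be the bookkeeping of the simultaneous time-increment and row-addition contributions, and in particular verifying the covariance identification for $(A^{(k)})^* \delta^{(k)}$ carefully in the complex Hermitian setting; once that is handled, the descent to eigenvalues via unitary equivariance is essentially automatic.
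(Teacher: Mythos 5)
The proposal is correct, but it takes a genuinely different route from the paper. The paper passes through the singular value decomposition: it identifies $A^{(k)}$ with a pair $(\vec{Y}^{(k)}, V_{(k)})$ consisting of singular values and a point in the quotient space $\mathcal{V}_{N,\alpha^{(k)}}$ of unitary pairs, shows this pair is Markov, and then proves (via Lemma~\ref{clm:key}, exploiting left-unitary invariance of the Gaussian increments) that the $V$-coordinate integrates out, so that the singular values alone form a Markov chain. You instead pass through the Wishart matrix $W^{(k)} = (A^{(k)})^* A^{(k)}$: establish that $(W^{(k)})_k$ is Markov, then use $\mathcal{U}(N)$-equivariance of the $W$-level transition kernel (from right-unitary invariance of the noise) to descend to eigenvalues. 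Both arguments hinge on the same unitary invariance of complex Gaussian matrices, but yours avoids the technical quotient-space machinery of Section~\ref{sec:A.1} (Invariance of Domain, Haar measure pushforward, characterization of the uniform measure on $\mathcal{V}_{N,\alpha}$). A comparative disadvantage is that the paper's SVD framework handles the time-like and space-like cases with a single uniform setup, whereas your $W$-level decomposition $W^{(k+1)} = (A^{(k)}+\delta^{(k)})^*(A^{(k)}+\delta^{(k)}) + (\gamma^{(k)})^*\gamma^{(k)}$ is tailored to the time-like direction where rows are only added.

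One soft spot in the second step. You argue that the marginal conditional law of $(A^{(k)})^* \delta^{(k)}$ given $A^{(k)}$ depends only on $W^{(k)}$ (covariance $(t^{(k+1)}-t^{(k)})W^{(k)}$, columns independent, circularly symmetric — all correct), and that the remaining terms are either already functions of $W^{(k)}$ or independent of $A^{(k)}$. As stated this does not close the argument: the terms $(A^{(k)})^*\delta^{(k)}$, $(\delta^{(k)})^* A^{(k)}$, and $(\delta^{(k)})^*\delta^{(k)}$ all share the same $\delta^{(k)}$ and are not conditionally independent, so one needs the joint conditional law — not just the marginal of $(A^{(k)})^*\delta^{(k)}$ — to depend only on $W^{(k)}$. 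The fix is immediate and is the same invariance mechanism you deploy in step three: if $A_1^* A_1 = A_2^* A_2 = W$ (almost surely both have full rank), then $A_2 = U A_1$ for some $U \in \mathcal{U}(N+\alpha^{(k)})$, and writing
\begin{equation*}
W^{(k+1)} = \big(A^{(k)} + \delta^{(k)}\big)^* \big(A^{(k)} + \delta^{(k)}\big) + \big(\gamma^{(k)}\big)^*\gamma^{(k)}
\end{equation*}
and substituting $\delta^{(k)} \mapsto U^*\delta^{(k)}$ (which preserves the law of $\delta^{(k)}$ and is independent of $\gamma^{(k)}$) shows that the conditional law given $A^{(k)} = A_2$ coincides with that given $A^{(k)} = A_1$. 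With that cleanup the proposal is complete.
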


 There are two types of extreme time-like paths. In one type $\alpha$ is fixed and only $t$ increases. In another type $t$ is fixed and only $\alpha$ increases. We compute the transition probability for these two special types below.
 \vspace{\baselineskip}

For $\alpha<\beta$ with $\alpha,\beta \in\mathbb{N}_0$, $ t<s$ with $t,s\in (0,\infty) $ and $x,y\in (0,\infty)$, define the functions
\begin{align}
T_{\alpha}((t,x);(s,y))&\coloneqq (s-t)^{-1}(y/x)^{\alpha/2}e^{-(x+y)/(s-t)} I_\alpha \left( \frac{2\sqrt{xy} }{s-t} \right),\label{def:T}\\
W_t((\alpha,x);(\beta,y) )&\coloneqq \frac{1}{\Gamma(\beta- \alpha)}\frac{(y-x)^{\beta- \alpha-1}}{t^{ \beta- \alpha }}e^{-(y-x)/t} \mathbbm{1}(x\leq y).\label{def:W}
\end{align}
Here $I_\alpha$ is the modified Bessel function of the first kind.
 
\begin{lemma}\label{lem:timechange}
Fix $N\in\mathbb{N}$,  $t<s$ with $t,s\in ( 0,\infty)$ and $\alpha\in  \mathbb{N}_0$. For any $\vec{x}\in \mathbb{W}^N_+$, the transition probability from $\vec{X}^{N}(\alpha, t)= \vec{x} $ to $\vec{X}^{N }(\alpha,s)= \vec{y}$ is given by  
\begin{align}\label{timechange}
\det \left[T_{\alpha}( (t,x_i);(s,y_j))\right]_{1\leq i,j\leq N}\frac{\Delta(\vec{y}) } {\Delta(\vec{x})} \mathbbm{1}(\vec{y}\in\mathbb{W}^N_+) \prod_{j=1}^N dy_j .
\end{align} 
Here $\Delta(\vec{y})$ and $\Delta(\vec{x})$ are Vandermonde determinants defined in \eqref{def:Vandermonde}.
\end{lemma}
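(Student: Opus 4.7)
The plan is to recognise the right-hand side of \eqref{timechange} as the Karlin--McGregor formula for the transition density of the Laguerre eigenvalue process, viewed as $N$ non-colliding squared Bessel motions of dimension $d=2(\alpha+1)$, the Markov property along this particular path already being granted by Lemma~\ref{lem:Markov}. First I would record that, up to time rescaling, $T_\alpha((t,x);(s,y))$ is the transition density from $x$ at time $t$ to $y$ at time $s$ of a single squared Bessel process $\mathrm{BESQ}_{2(\alpha+1)}$ on $(0,\infty)$, a classical identity (see e.g.\ Revuz--Yor); in particular $T_\alpha$ satisfies the Chapman--Kolmogorov relation and sends positive functions to positive functions on $(0,\infty)$. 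I would also note that $\Delta(\vec{x})$ is harmonic for the associated $N$-particle generator acting on $\mathbb{W}^N_+$, so Doob's $h$-transform with $h=\Delta$ makes sense.

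Next I would identify $\{\vec{X}^N(\alpha,t)\}_{t\ge 0}$, for fixed $\alpha$, with $N$ independent $\mathrm{BESQ}_{2(\alpha+1)}$ processes Doob-conditioned by $\Delta$ to stay in $\mathbb{W}^N_+$ for all positive times. The argument is to apply It\^o's formula to the entries of $(A^N(\alpha,t))^* A^N(\alpha,t)$, derive the Dyson-type SDE
\[
\mathrm{d}X_i = 2\sqrt{X_i}\,\mathrm{d}\beta_i + 2(\alpha+1)\,\mathrm{d}t + \sum_{j\ne i}\frac{2(X_i+X_j)}{X_i-X_j}\,\mathrm{d}t,
\]
and recognise the interaction term as precisely the drift coming from the Doob $h$-transform with $h=\Delta$ applied to independent $\mathrm{BESQ}_{2(\alpha+1)}$ processes. (Alternatively, this identification is classical and may be cited from K\"onig--O'Connell.) Once this is in hand, the Karlin--McGregor formula for non-colliding one-dimensional diffusions with transition density $p=T_\alpha((t,\cdot);(s,\cdot))$ and Doob harmonic function $h=\Delta$ yields the transition density
\[
\frac{\Delta(\vec{y})}{\Delta(\vec{x})}\,\det\bigl[T_\alpha((t,x_i);(s,y_j))\bigr]_{1\le i,j\le N}\,\mathbbm{1}(\vec{y}\in\mathbb{W}^N_+),
\]
which is \eqref{timechange}. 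A brief consistency check against the one-point density \eqref{onept_density} (e.g.\ by comparing the $t\downarrow 0$ behaviour after desingularising the ratio $\Delta(\vec y)/\Delta(\vec x)$) finishes the job; positivity of all coordinates under \eqref{onept_density} for $t>0$ makes the conditioning well-posed with probability one.

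The main obstacle is the step of turning the equality of infinitesimal generators into the transition-density identity cleanly, i.e.\ ensuring the Doob $h$-transform is well defined despite $\Delta$ vanishing on $\partial\mathbb{W}^N_+$, and that the non-colliding version of Karlin--McGregor applies without boundary corrections. If one prefers an entirely self-contained route, the alternative is to write $A^N(\alpha,s)=A^N(\alpha,t)+G$ with $G$ an independent $(N+\alpha)\times N$ complex Gaussian matrix of entry variance $s-t$, observe that conditional on $A^N(\alpha,t)$ the matrix $A^N(\alpha,s)^* A^N(\alpha,s)$ is non-central complex Wishart with non-centrality eigenvalues $\{x_i\}$, and evaluate its joint eigenvalue density via a Harish-Chandra--Itzykson--Zuber integral over $U(N+\alpha)\times U(N)$; the resulting closed form, a classical computation going back to James, is exactly the determinant in \eqref{timechange} with $T_\alpha$ appearing through the integral representation of $I_\alpha$.
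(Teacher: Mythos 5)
Your proposal is correct and takes essentially the same route as the paper: identify $\{\vec{X}^N(\alpha,t)\}_{t\geq 0}$ (for fixed $\alpha$) with $N$ non-intersecting squared Bessel processes as in K\"onig--O'Connell, and read off the transition density from the Karlin--McGregor formula with $T_\alpha$ the single-particle $\mathrm{BESQ}_{2(\alpha+1)}$ kernel. The paper dispatches this in two sentences by citing K\"onig--O'Connell and Katori--Tanemura; you fill in the SDE/Doob-$h$-transform identification and sketch a self-contained HCIZ alternative, but the underlying argument is the same one.
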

\begin{proof}
For fixed $N\in\mathbb{N}$ and $\alpha\in\mathbb{N}_0$, $\{\vec{X}^{N }(\alpha,t),t\in\mathbb{R} \}$ can be identified as non-intersecting squared Bessel processes \cite{KO}. The transition density can then be computed through the Karlin-McGregor formula \cite{KM}. See \cite[(1.6)]{KT} for more details.
\end{proof}

\begin{lemma}\label{lem:alphachange}
Fix $N\in\mathbb{N}$,  $\alpha<\beta$ with $\alpha,\beta \in \mathbb{N}_0 $ and $t\in  (0,\infty)$. For any $\vec{x}\in \mathbb{W}^N_+$, the transition probability from $\vec{X}^{N }(\alpha,t)= \vec{x} $ to $\vec{X}^{ N  }(\beta,t)= \vec{y}$ is given by  
\begin{align}\label{alphachange}
\det \left[W_{t}((\alpha,x_i);(\beta,y_j) )\right]_{1\leq i,j\leq N}\frac{\Delta(\vec{y})}{\Delta(\vec{x})} \mathbbm{1}(\vec{y}\in\mathbb{W}^N_+) \prod_{j=1}^N dy_j .
\end{align}
Here $\Delta(\vec{y})$ and $\Delta(\vec{x})$ are Vandermonde determinants defined in \eqref{def:Vandermonde}.
\end{lemma}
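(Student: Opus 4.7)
The plan is to prove the one-step transition $\beta=\alpha+1$ directly via the rank-one update structure of the Laguerre matrix model, and then bootstrap to general $\beta=\alpha+k$ using the Markov property in $\alpha$ together with the Andr\'eief identity, with the intermediate Vandermonde factors telescoping.

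For the one-step case I would first observe that $A^N(\alpha+1,t)$ is obtained from $A^N(\alpha,t)$ by stacking one additional row $v=(B_{N+\alpha+1,j}(t))_{j=1}^N$ of i.i.d.\ complex Gaussians independent of $A^N(\alpha,t)$, so
\begin{equation*}
(A^N(\alpha+1,t))^*A^N(\alpha+1,t)=(A^N(\alpha,t))^*A^N(\alpha,t)+v^*v
\end{equation*}
is a rank-one positive semidefinite perturbation. Unitary invariance of $v$ allows us to condition on $\vec{x}$ and diagonalize the base matrix, reducing the problem to computing the eigenvalues of $\textup{diag}(\vec{x})+ww^*$ where $w$ has i.i.d.\ complex Gaussian entries of variance $t$. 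A standard computation based on the secular equation $\sum_i |w_i|^2/(x_i-y)=-1$, combined with the Cauchy determinant identity, converts the exponential joint density of $(|w_i|^2)_i$ into the conditional density
\begin{equation*}
\frac{1}{t^N}\,e^{-\sum_{j=1}^N(y_j-x_j)/t}\,\frac{\Delta(\vec{y})}{\Delta(\vec{x})}\,\mathbbm{1}(\vec{x}\preceq\vec{y})\prod_{j=1}^N dy_j,
\end{equation*}
supported on the interlacing cone. To reconcile this with \eqref{alphachange} at $\beta=\alpha+1$, I would note that whenever $\vec{x}\preceq\vec{y}$ one has $x_i>y_j$ for all $i>j$, so the matrix $[W_t((\alpha,x_i);(\alpha+1,y_j))]_{i,j}=[t^{-1}e^{-(y_j-x_i)/t}\mathbbm{1}(x_i\le y_j)]_{i,j}$ is upper triangular and its determinant collapses to $t^{-N}\prod_i e^{-(y_i-x_i)/t}$, while for $\vec{y}$ violating interlacing a short case analysis (using the rank-one factorization $e^{x_i/t}\cdot e^{-y_j/t}$ of nonvanishing entries) shows the determinant vanishes, so the indicator $\mathbbm{1}(\vec{x}\preceq\vec{y})$ is implicit.

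For general $\beta=\alpha+k$ I would apply Lemma~\ref{lem:Markov} to the time-like path $(\alpha,t)\prec_{\textup{t}}(\alpha+1,t)\prec_{\textup{t}}\cdots\prec_{\textup{t}}(\beta,t)$, writing the transition density as the iterated integral over intermediate configurations $\vec{w}^{(1)},\dots,\vec{w}^{(k-1)}\in\mathbb{W}^N_+$ of products of one-step densities. The ratios $\Delta(\vec{w}^{(\ell)})/\Delta(\vec{w}^{(\ell-1)})$ telescope to $\Delta(\vec{y})/\Delta(\vec{x})$, and the iterated integral of products of determinants is evaluated by the Andr\'eief identity (the ordering constraints exactly absorb the symmetrization factors). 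The resulting kernel is the $k$-fold convolution of $t^{-1}e^{-(y-x)/t}\mathbbm{1}(x\le y)$, which is the Gamma density $\frac{(y-x)^{k-1}}{\Gamma(k)\,t^k}e^{-(y-x)/t}\mathbbm{1}(x\le y)=W_t((\alpha,x);(\alpha+k,y))$, matching \eqref{alphachange}.

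The main obstacle is the rank-one perturbation step: the change of variables from $(|w_1|^2,\dots,|w_N|^2)$ to $\vec{y}$ via the secular equation, extraction of the Jacobian as a Cauchy-type determinant producing $\Delta(\vec{y})/\Delta(\vec{x})$, and the careful tracking of constants and of the support $\vec{x}\preceq\vec{y}$. Once this one-step density is in hand, passing to general $\beta$ is a bookkeeping exercise with telescoping Vandermondes and Andr\'eief's identity.
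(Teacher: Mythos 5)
Your proposal mirrors the paper's proof: both compute the one-step $\beta=\alpha+1$ density from the rank-one update $(A^N(\alpha+1,t))^*A^N(\alpha+1,t)=\mathrm{diag}(\vec{x})+ww^*$ after reducing to a diagonal base matrix by unitary invariance, both carry out a Jacobian computation for the eigenvalue change of variables (you via the secular equation and a Cauchy-type determinant, the paper via a two-step change of variables through elementary symmetric polynomials and the matrix determinant lemma — the same bookkeeping organized differently), both convert the interlacing indicator to $\det[W_t]$ by Sasamoto's trick (you re-derive the triangularity/vanishing argument inline, the paper cites its Lemma~\ref{lem:sasamoto}), and both pass to general $\beta$ by iterating with the Cauchy--Binet/Andr\'eief identity and telescoping Vandermonde ratios, yielding the Gamma convolution $W_t((\alpha,x);(\beta,y))$. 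The route is essentially identical and the argument is sound.
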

\begin{proof}
We first develop the case $\beta = \alpha + 1$. By the Cauchy interlacing theorem and the ordering of eigenvalues, the transition probability is supported on the set $\{ \vec{x} \preceq \vec{y},\  \vec{y} \in\mathbb{W}^N_+\}$. By unitary invariance of the model, we can assume that $A^N(\alpha,t)$ and $A^N(\alpha+1,t)$ are of the form
\[
A^N(\alpha,t)=\begin{pmatrix}
	\mathrm{diag}\left((\sqrt{x_i})_{1\leqslant i\leqslant N}\right)\\
	0_{\alpha \times N}
\end{pmatrix},
\quad A^N(\alpha+1,t)=\begin{pmatrix}
	\mathrm{diag}\left((\sqrt{x_i})_{1\leqslant i\leqslant N}\right)\\
	0_{\alpha \times N}\\
	\vec{a}(t)^*
\end{pmatrix}
\]
where $\vec{a}(t)$ is the column vector whose entries $a_i(t)$ are distributed as i.i.d. complex Brownian motions at time $t$. See Section~\ref{sec:A} for more details about the unitary invariance. In particular, we obtain that the entries of the matrix $\left(A^N(\alpha+1,t)\right)^*A^N(\alpha+1,t)$ are given by, for $i,j\in\{1,\dots,N\}$,
\begin{equation}\label{eq:entries}
	\left[\left(A^N(\alpha+1,t)\right)^*A^N(\alpha+1,t)\right]_{ij}
	=
	\left\{
	\begin{array}{ll}
		x_i + \vert a_i(t)\vert^2 &\text{if }i=j,\\
		\overline{a_i}(t)a_j(t) & \text{otherwise}.
	\end{array}
	\right.
\end{equation}
Since we suppose that the eigenvalues of $\left(A^N(\alpha+1,t)\right)^*A^N(\alpha+1,t)$ are given by $\vec{y}$, note that we have
\begin{equation}\label{eq:coeffpoly}
	P^N_{\alpha,t}(\lambda )\coloneqq\det\left(\lambda\mathrm{Id}-\left(A^N(\alpha+1,t)\right)^*A^N(\alpha+1,t)\right)
	=
	\sum_{k=0}^N (-1)^k e_k(\vec{y}) \lambda^{n-k},
\end{equation}
where $e_k$ is the $k$-th elementary symmetric polynomial. We first perform a change of variables from $\vec{y}$ to $(e_1(\vec{y}),\dots,e_N(\vec{y}))$ which gives a Jacobian factor of 
\begin{equation}\label{eq:jacobiansym}
	\left\vert \frac{\partial(e_1(\vec{y}),\dots, e_N(\vec{y}))}{\partial(y_1,\dots,y_k)}\right\vert = \Delta(\vec{y}). 
\end{equation}
Next, we perform a change of variables from $(e_1(\vec{y}),\dots e_N(\vec{y}))$ to $(\vert a_1(t)\vert^2,\dots, \vert a_N(t)\vert^2)$. To compute the Jacobian factor, first see that 
\begin{align*}
	P^N_{\alpha, t}(\lambda)=\det\left(\mathrm{diag}((\lambda-x_i)_{1\leq i\leq N})-\vec{a}(t)\vec{a}(t)^*\right)=
	\left(1- \sum_{i=1}^N\frac{|a_i(t)|^2}{\lambda-x_i}\right)\prod_{i=1}^N(\lambda-x_i),
\end{align*}
where we used the matrix determinant lemma in the second equality. Thus we obtain that 
\[
P_{\alpha, t}^N(\lambda)=\prod_{i=1}^N(\lambda-x_i)-\sum_{j=1}^N\vert a_j(t)\vert^2\prod_{i\neq j}(\lambda-x_i).
\]
In particular, by identifying the coefficients from \eqref{eq:coeffpoly} we can see that
\[
\frac{\partial e_i(\vec{y})}{\partial \vert a_j\vert^2}=(-1)^ie_{i-1}(\hat{x}_j)=(-1)^i\frac{\partial e_i(\vec{x})}{\partial x_j}
\]
where $\hat{x}_j$ is the $N-1$ dimensional vector $(x_1,\dots,x_{j-1},x_{j+1},\dots,x_N)$ so that, with the same identity \eqref{eq:jacobiansym}, we obtain
\[
\left \vert\frac{\partial(\vert a_1\vert^2,\dots,\vert a_N\vert^2)}{\partial (e_1(\vec{y}),\dots, e_N(\vec{y}))}\right\vert
=
\left\vert \frac{\partial (e_1(\vec{y}),\dots, e_N(\vec{y}))}{\partial (\vert a_1\vert^2,\dots,\vert a_N\vert^2)}\right\vert^{-1}=\frac{1}{\Delta(\vec{x})}.
\]
Now, since $a_i(t)$'s are complex Gaussian random variables, their modulus is exponentially distributed and we have that the joint density of $(\vert a_1(t)\vert^2,\dots,\vert a_N(t)\vert^2)$ is given by
\[
\prod_{i=1}^N \frac{1}{t}e^{-z_i/t}=\frac{1}{t^N}e^{-\sum_{i=1}^N\vert a_i(t)\vert^2/t}=\frac{1}{t^N}e^{-\sum_{i=1}^N (y_i-x_i)/t}
\]
where we used the fact that $\mathrm{Tr}((A^N(\alpha+1,t))^*A^N(\alpha+1,t))=\sum_{i=1}^N(x_i+\vert a_i(t)\vert^2)=\sum_{i=1}^N y_i$ from \eqref{eq:entries}. 

Finally we obtain that the transition probability from $\vec{X}^{N}(\alpha,t)=\vec{x}$ to $\vec{X}^{N}( \alpha+1 ,t)=\vec{y}$ is given by
\begin{multline*}
	\frac{1}{t^N}e^{-\sum_{i=1}^N(y_i-x_i)/t}\mathbbm{1}(\vec{x} \preceq \vec{y})\frac{\Delta(\vec{y})}{\Delta(\vec{x})}\mathbbm{1}(y\in \mathbb{W}^N_+)\prod_{j=1}^N dy_j\\
	=
	\det[W_t((\alpha,x_i);(\alpha+1,y_j))]_{1\leq i,j\leq N}\frac{\Delta(\vec{y})}{\Delta(\vec{x})}\mathbbm{1}(y\in \mathbb{W}^N_+)\prod_{j=1}^N dy_j
\end{multline*}
where we used Sasamoto's trick given by Lemma \ref{lem:sasamoto} to write the interlacing property as a determinant. Thus the result is proved for $\beta=\alpha+1$. 

The general case can be obtained by using the generalized Cauchy--Binet formula. Consider $\beta > \alpha$, if we denote this transition probability as $p(X^N(\beta,t)=\vec{y}\,\vert X^N(\alpha,t)=\vec{x})$ and if we set $\vec{z}^{\,(0)}=\vec{x}$ and $\vec{z}^{\,(\beta-\alpha)}=\vec{y}$, we can write
\begin{align*}
	&p(X^N(\beta,t)=\vec{y}\,\vert X^N(\alpha,t)=\vec{x})\\
	&=
	\int_{\mathbb{W}^N_+}\dots\int_{\mathbb{W}^N_+}
	\prod_{k=0}^{\beta-\alpha-1}
	p(X^N(\alpha+k+1,t)=\vec{z}^{\,(k+1)}\vert X^N(\alpha+k,t)=\vec{z}^{\,(k)})
	\prod_{k=1}^{\beta-\alpha-1}
	 {d^N\vec{z}^{\,(k)}} \\
	&=\frac{\Delta(\vec{y})}{\Delta(\vec{x})}\mathbbm{1}(\vec{y}\in W_+^N)
	\int_{\mathbb{R}^N}\dots\int_{\mathbb{R}^N}
	\prod_{k=0}^{\beta-\alpha-1}
	\det[W_t((\alpha+k,{z}^{(k)}_i);(\alpha+k+1,{z}^{(k+1)}_j))]\prod_{k=1}^{\beta-\alpha-1}
	\frac{d^N\vec{z}^{\,(k)}}{N!}
\end{align*}
Now, if we first consider the integration with respect to $\vec{z}^{\,(1)}$, we have that 
\begin{align*}
	\frac{1}{N!}\int_{\mathbb{R}^N}
	&\det[W_t((\alpha,x_i);(\alpha+1,{z}^{(1)}_j))]_{1\leq i,j\leq N}
	\det[W_t((\alpha+1,{z}^{(1)}_j);(\alpha+2,{z}^{(2)}_i))]_{1\leq i,j\leq N}
	d^N \vec{z}^{\,(1)}\\
	&=
	\det\left[
	\int_\mathbb{R} 
	W_t((\alpha,x_i);(\alpha+1,{z}))
	W_t((\alpha+1,z);(\alpha+2,{z}^{(2)}_j))
	dz
	\right]_{1\leq i,j\leq N}\\
	&=
	\det\left[
	\frac{1}{t^2}e^{-(z_j^{(2)}-x_i)}
	\int_{\mathbb{R}}
	\mathbbm{1}(x_i\leq z)
	\mathbbm{1}(z\leq z_j^{(2)})
	\right]_{1\leq i,j\leq N}\\
	&=
	\det\left[
	\frac{(z_j^{(2)}-x_i)}{t^2}e^{-(z_j^{(2)}-x_i)}\mathbbm{1}(x_i\leq z_j^{(2)})
	\right]_{1\leq i,j\leq N}.
\end{align*}
By iterating the application of the generalized Cauchy--Binet formula we obtain the final result.
\end{proof}

Next, we consider the transition probability for a general time-like path in which both $\alpha$ and $t$ can increase. From the Cauchy--Binet formula, it suffices to compute the transition density for $N=1$. By setting $N=1$, $T_\alpha(x,y;s-t)dy$ is the transition density from $X^{1 }(\alpha ,t)=x$ to $X^{1}( \alpha ,s)=y$. Similarly, $W_t((\alpha,x);(\beta,y) )dy$ is the transition density from $X^{1}(\alpha ,t)=x$ to $X^{1}(\beta,t)=y$. Given $(\alpha,t)\prec_t (\beta,s)$ as in Definition~\ref{def:timelike}, let 
$$Q((\alpha,t,x);(\beta,s,y))dy$$ 
be the transition density from $X^{1}( \alpha,t)=x$ to $X^{1}(\beta,s)=y$. Because of the Markov property along time-like paths, for all $(\alpha,t)\prec_{\textup{t}} (\beta,s)\prec_{\textup{t}} (\gamma,\tau)$ , we have
\begin{align}\label{equ:QQ}
\int_{0}^\infty dy\, Q  ((\alpha,t,x);(\beta,s,y))Q((\beta,s,y);(\gamma,\tau,z))=Q((\alpha,t,x);(\gamma,\tau,z)).
\end{align}
Moreover, $Q((\alpha,t,x);(\beta,s,y))$ takes the following form:
\begin{equation}\label{equ:Qform}
Q((\alpha,t,x);(\beta,s,y))=\left\{ \begin{array}{cc}
T_\alpha((t,x);(s,y)), &\textup{if}\ \alpha=\beta\ \textup{and}\ t<s,\\
W_t((\alpha,x);(\beta,y) ), &\textup{if}\ \alpha<\beta\ \textup{and}\ t=s,\\
\int_0^\infty dz\, T_\alpha((t,x);(s,z))W_s((\alpha,z);(\beta,y) ), &\textup{if}\ \alpha<\beta\ \textup{and}\ t<s.
\end{array} \right.
\end{equation}
We remark that \eqref{equ:QQ} and \eqref{equ:Qform} implies a commutation relation between $T_\alpha((t,x);(s,y))$ and $W_t((\alpha,x);(\beta,y) )$. For any $\alpha<\beta\in\mathbb{N}_0$ and $t<s\in (0,\infty)$, it holds that
\begin{align}\label{equ:commm}
\int_0^\infty dz\, T_\alpha((t,x);(s,z) )W_s((\alpha,z);(\beta,y) )= \int_0^\infty dz\, W_t((\alpha,x);(\beta,z))T_\beta((t,z);(s,y)). 
\end{align}
An integral representation of $Q((\alpha,t,x);(\beta,s,y))$ is given at the end of this section. We now use $Q((\alpha,t,x);(\beta,s,y))$ to express the transition probability from $\vec{X}^{(k)}$ to $\vec{X}^{(k+1)}$. 
\begin{corollary}\label{cor:transition}
Fix $N\in\mathbb{N}$ and $m\in\mathbb{N}$. Let $\left\{\left(\alpha^{(k)},t^{(k)}\right)\right\}_{k=1}^m\subset\mathbb{N}_0\times (0,\infty)$  be a time-like path as in Definition~\ref{def:timelike}. Let $\vec{X}^{(k)}$ be defined as in \eqref{def:Xk-timelike}. Then for any $k\in \llbracket1,m-1\rrbracket $ and $\vec{x}^{(k)}\in \mathbb{W}^N_+$, the transition density from $\vec{X}^{(k)}=\vec{x}^{(k)}$ to $\vec{X}^{(k+1)}=\vec{x}^{(k+1)}$ is given by
\begin{align}\label{equ:transition}
\det\bigg[ Q((\alpha^{(k)},t^{(k)},x^{(k)}_i );(\alpha^{(k+1)},t^{(k+1)},x^{(k+1)}_j )) \bigg]_{1\leq i,j\leq N} \frac{\Delta(\vec{x}^{(k+1)})}{\Delta(\vec{x}^{(k)})}\mathbbm{1}(\vec{x}^{(k+1)}\in\mathbb{W}^N_+) \prod_{j=1}^N dx^{(k+1)}_j.
\end{align}
\end{corollary}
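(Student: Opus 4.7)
The plan is to apply the Markov property of Lemma~\ref{lem:Markov} so that the joint density of $(\vec{X}^{(1)},\ldots,\vec{X}^{(m)})$ factorizes into one-step transition densities, and then to compute each one-step transition density from $\vec{X}^{(k)}$ to $\vec{X}^{(k+1)}$ separately. Since every step of a time-like path falls into one of three categories (pure time change, pure index change, or both), it suffices to verify \eqref{equ:transition} in each case and check that the function $Q$ from \eqref{equ:Qform} is exactly what appears in the determinant.

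First I would dispense with the two easy cases. If $\alpha^{(k)}=\alpha^{(k+1)}$ and $t^{(k)}<t^{(k+1)}$, then by \eqref{equ:Qform} we have $Q=T_{\alpha^{(k)}}$, and \eqref{equ:transition} is precisely the content of Lemma~\ref{lem:timechange}. Similarly, if $\alpha^{(k)}<\alpha^{(k+1)}$ and $t^{(k)}=t^{(k+1)}$, then $Q=W_{t^{(k)}}$ and \eqref{equ:transition} reduces to Lemma~\ref{lem:alphachange}. Both cases require no real work beyond unwinding the definition of $Q$.

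The main case is $\alpha^{(k)}<\alpha^{(k+1)}$ and $t^{(k)}<t^{(k+1)}$. Writing $\alpha=\alpha^{(k)}$, $\beta=\alpha^{(k+1)}$, $t=t^{(k)}$, $s=t^{(k+1)}$, $\vec{x}=\vec{x}^{(k)}$ and $\vec{y}=\vec{x}^{(k+1)}$, I would insert the intermediate state $\vec{X}^N(\alpha,s)=\vec{z}$ and use the Markov property (applied to the time-like path $(\alpha,t)\prec_{\textup{t}}(\alpha,s)\prec_{\textup{t}}(\beta,s)$) to write the transition density from $\vec{x}$ to $\vec{y}$ as
\begin{equation*}
\int_{\mathbb{W}^N_+} \det\bigl[T_\alpha((t,x_i);(s,z_j))\bigr]_{i,j} \, \frac{\Delta(\vec{z})}{\Delta(\vec{x})} \, \det\bigl[W_s((\alpha,z_i);(\beta,y_j))\bigr]_{i,j} \, \frac{\Delta(\vec{y})}{\Delta(\vec{z})} \, d\vec{z} \cdot \mathbbm{1}(\vec{y}\in\mathbb{W}^N_+) \prod_{j=1}^N dy_j,
\end{equation*}
using Lemmas~\ref{lem:timechange} and \ref{lem:alphachange}. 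The Vandermonde factors $\Delta(\vec{z})$ cancel immediately, pulling out the expected prefactor $\Delta(\vec{y})/\Delta(\vec{x})$. The integrand is symmetric under permutations of $(z_1,\ldots,z_N)$ (swapping two $z$'s negates both determinants), so the integral over $\mathbb{W}^N_+$ equals $(N!)^{-1}$ times the integral over $(0,\infty)^N$. Applying the Andr\'eief identity then collapses this to
\begin{equation*}
\det\!\left[\int_0^\infty T_\alpha((t,x_i);(s,z))\,W_s((\alpha,z);(\beta,y_j))\,dz\right]_{1\leq i,j\leq N},
\end{equation*}
and by the third line of \eqref{equ:Qform} the integral inside the determinant equals $Q((\alpha,t,x_i);(\beta,s,y_j))$. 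This yields \eqref{equ:transition}.

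The steps are essentially bookkeeping; there is no single hard step. The point requiring the most care is checking the row/column matching in the Andr\'eief step so that the $x_i$'s end up indexing rows and the $y_j$'s end up indexing columns of the final determinant, which in turn ensures that the prefactor $\Delta(\vec{y})/\Delta(\vec{x})$ (rather than its reciprocal) is produced. The commutation relation \eqref{equ:commm} shows that the alternative splitting $(\alpha,t)\prec_{\textup{t}}(\beta,t)\prec_{\textup{t}}(\beta,s)$ gives the same answer, so the construction is unambiguous.
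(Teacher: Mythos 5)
Your proposal is correct and takes essentially the same route as the paper: reduce to the two pure cases via Lemmas~\ref{lem:timechange} and~\ref{lem:alphachange}, then splice in an intermediate state at $(\alpha,s)$ and collapse the integral with the Cauchy--Binet (Andr\'eief) identity. The paper states this more tersely, but the underlying argument, including the cancellation of the intermediate Vandermonde and the appeal to \eqref{equ:Qform}, is the same.
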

\begin{proof}
If $\alpha^{(k)}=\alpha^{(k+1)}$ and $t^{(k)}<t^{(k+1)}$, from \eqref{equ:Qform},  \eqref{equ:transition} is equivalent to \eqref{timechange}. If $\alpha^{(k)}<\alpha^{(k+1)}$ and $t^{(k)}=t^{(k+1)}$, from \eqref{equ:Qform}, \eqref{equ:transition} is equivalent to \eqref{alphachange}. Using \eqref{equ:QQ} and the Cauchy–Binet formula, \eqref{equ:transition} for general $(\alpha^{(k)},t^{(k)})\prec_{\textup{t}} (\alpha^{(k+1)},t^{(k+1)})$ follows.
\end{proof}

Combining \eqref{onept_density} and Corollary~\ref{cor:transition}, we obtain the joint density of $\vec{X}^{(k)}$.  
\begin{corollary}\label{cor:density}
Fix $N\in\mathbb{N}$ and $m\in\mathbb{N}$. Let $\left\{\left(\alpha^{(k)},t^{(k)}\right)\right\}_{k=1}^m\subset\mathbb{N}_0\times (0,\infty)$  be a time-like path as in Definition~\ref{def:timelike}. Then the joint density of $\vec{X}^{(k)}$ defined in \eqref{def:Xk-timelike} is given by
\begin{equation}\label{density}
\begin{split}
C(N,\alpha^{(1)}) &\left(t^{(1)}\right)^{-N^2}\prod_{j=1}^N  (x^{(1)}_j/t^{(1)})^{\alpha^{(1)}}e^{-x^{(1)}_j /t^{(1)}}\times  \Delta(\vec{x}^{(1)})\\
&\times \prod_{k=1}^{m-1} \det\bigg[ Q((\alpha^{(k)},t^{(k)},x^{(k)}_i );(\alpha^{(k+1)},t^{(k+1)},x^{(k+1)}_j )) \bigg]_{1\leq i,j\leq N}\\
&\times \Delta(\vec{x}^{(m)}) \prod_{k=1}^m \mathbbm{1}(\vec{x}^{(k)}\in\mathbb{W}^N_+)  \prod_{k=1}^m\prod_{j=1}^N dx^{(k)}_j.
\end{split}
\end{equation}

\end{corollary}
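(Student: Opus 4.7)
The plan is to synthesize the two main inputs already established in this section: the one-point marginal density \eqref{onept_density} for $\vec{X}^{N}(\alpha,t)$, and the transition density along a time-like path from Corollary~\ref{cor:transition}. The Markov property along time-like paths, guaranteed by Lemma~\ref{lem:Markov}, ensures that the joint density of $\{\vec{X}^{(k)}\}_{k=1}^{m}$ factorizes as the marginal density of $\vec{X}^{(1)}$ times the product of consecutive transition densities from $\vec{X}^{(k)}$ to $\vec{X}^{(k+1)}$.

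Concretely, I would first write
\begin{equation*}
p\bigl(\vec{X}^{(1)}=\vec{x}^{(1)},\dots,\vec{X}^{(m)}=\vec{x}^{(m)}\bigr)\prod_{k=1}^{m}\prod_{j=1}^{N}dx^{(k)}_j
= p\bigl(\vec{X}^{(1)}=\vec{x}^{(1)}\bigr)\prod_{k=1}^{m-1}p\bigl(\vec{X}^{(k+1)}=\vec{x}^{(k+1)}\,\big|\,\vec{X}^{(k)}=\vec{x}^{(k)}\bigr),
\end{equation*}
where the first factor is read off from \eqref{onept_density} at parameters $(\alpha^{(1)},t^{(1)})$, producing
\begin{equation*}
C(N,\alpha^{(1)})\bigl(t^{(1)}\bigr)^{-N^2}\prod_{j=1}^{N}(x_j^{(1)}/t^{(1)})^{\alpha^{(1)}}e^{-x_j^{(1)}/t^{(1)}}\,\Delta(\vec{x}^{(1)})^{2}\,\mathbbm{1}(\vec{x}^{(1)}\in\mathbb{W}_+^{N}),
\end{equation*}
and each conditional factor is supplied by \eqref{equ:transition} in Corollary~\ref{cor:transition}.

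The only thing to check is that the Vandermonde prefactors collapse correctly. The successive transition densities each contribute a ratio $\Delta(\vec{x}^{(k+1)})/\Delta(\vec{x}^{(k)})$, so the product telescopes:
\begin{equation*}
\prod_{k=1}^{m-1}\frac{\Delta(\vec{x}^{(k+1)})}{\Delta(\vec{x}^{(k)})}=\frac{\Delta(\vec{x}^{(m)})}{\Delta(\vec{x}^{(1)})}.
\end{equation*}
Combining this with the factor $\Delta(\vec{x}^{(1)})^{2}$ from the marginal yields $\Delta(\vec{x}^{(1)})\Delta(\vec{x}^{(m)})$, which is precisely the boundary Vandermonde structure appearing in \eqref{density}. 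The indicator functions $\mathbbm{1}(\vec{x}^{(k)}\in\mathbb{W}_+^N)$ for each $k$ likewise accumulate from the marginal and transition densities.

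No step is a genuine obstacle: the Markov property of Lemma~\ref{lem:Markov} is asserted (proven in Section~\ref{sec:A}), the transition kernel was identified in Corollary~\ref{cor:transition}, and the remaining manipulation is an algebraic telescoping. The only minor bookkeeping point is to verify that the time-like ordering $t^{(1)}>0$ is compatible with the hypotheses of \eqref{onept_density} and of Corollary~\ref{cor:transition}, which is immediate since the path is assumed to lie in $\mathbb{N}_0\times(0,\infty)$.
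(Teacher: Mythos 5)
Your proposal is correct and matches the paper's (implicit) proof exactly: the paper introduces this corollary with the one-line remark ``Combining \eqref{onept_density} and Corollary~\ref{cor:transition}, we obtain the joint density,'' which is precisely the Markov factorization, substitution of the marginal and transition densities, and telescoping of the Vandermonde ratios that you spell out.
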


We end this section by giving an integral representation of $Q((\alpha,t,x);(\beta,s,y))$. The proof can be found in Appendix \ref{sec:B}.  
\begin{lemma}\label{lem:Qintegral}
Let $(\alpha,t)\prec_{\textup{t}} (\beta,s)$ be time-like pairs in $ \mathbb{N}_0\times (0,\infty)$ as in Definition \ref{def:timelike} and $x,y\in (0,\infty)$. It holds that
\begin{equation}\label{equ:Qintegral}
\begin{split}
Q((\alpha,t,x);(\beta,s,y))= &(st)^{-(\beta-\alpha)/2} e^{-y/s+x/t} x^{-\alpha/2}y^{ \beta/2}\\
&\times\int_0^\infty du\,  e^{-(s-t)u}u^{-(\beta-\alpha)/2}J_{\alpha}(2\sqrt{t^{-1}s xu})J_{\beta}(2\sqrt{s^{-1}t yu}).
\end{split}
\end{equation} 
Here $J_\alpha $ is the Bessel function of the first kind.
\end{lemma}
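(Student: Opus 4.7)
The plan is to verify the integral representation \eqref{equ:Qintegral} in each of the three sub-cases corresponding to the piecewise definition \eqref{equ:Qform}: (a) $\alpha=\beta$ with $t<s$, where $Q=T_\alpha((t,x);(s,y))$; (b) $\alpha<\beta$ with $t=s$, where $Q=W_t((\alpha,x);(\beta,y))$; and (c) $\alpha<\beta$ with $t<s$, where $Q=\int_0^\infty T_\alpha((t,x);(s,z))\,W_s((\alpha,z);(\beta,y))\,dz$. Sub-cases (a) and (b) reduce to classical Bessel integral identities, and (c) then follows from the convolution definition combined with a Hankel orthogonality relation.

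For (a) the factor $u^{-(\beta-\alpha)/2}$ is trivial, and setting $a=2\sqrt{sx/t}$, $b=2\sqrt{ty/s}$ and $p=s-t$ turns the right-hand side of \eqref{equ:Qintegral} into a multiple of Weber's exponential integral
\begin{equation*}
\int_0^\infty e^{-pu}\,J_\alpha(a\sqrt{u})\,J_\alpha(b\sqrt{u})\,du=\frac{1}{p}\exp\!\left(-\frac{a^2+b^2}{4p}\right)I_\alpha\!\left(\frac{ab}{2p}\right).
\end{equation*}
A direct algebraic check gives $ab/(2p)=2\sqrt{xy}/(s-t)$ and $x/t-y/s-(a^2+b^2)/(4p)=-(x+y)/(s-t)$, so the prefactor $x^{-\alpha/2}y^{\alpha/2}=(y/x)^{\alpha/2}$ reconstructs the definition \eqref{def:T} of $T_\alpha$. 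For (b) the exponential $e^{-(s-t)u}$ equals $1$, and substituting $u=v^2$ turns the remaining integral into a discontinuous Weber--Schafheitlin integral in $v$. With the specific exponents at hand its associated hypergeometric factor reduces to ${}_2F_1(\alpha+1,1-(\beta-\alpha);\alpha+1;x/y)=(1-x/y)^{\beta-\alpha-1}$ when $x<y$, while when $x>y$ a factor $1/\Gamma(0)=0$ appears in the coefficient and the integral vanishes. Collecting the resulting powers of $x$, $y$, and $y-x$ reproduces the definition \eqref{def:W} of $W_t$.

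For (c) I substitute the integral representations for $T_\alpha((t,x);(s,z))$ and $W_s((\alpha,z);(\beta,y))$ just verified in (a) and (b) into the convolution and interchange the $z$-integration with the resulting double spectral integration in $(u_1,u_2)$. The two prefactors combine to $(z/x)^{\alpha/2}\cdot z^{-\alpha/2}y^{\beta/2}$, which carries no net power of $z$, so the $z$-dependence is concentrated in the product $J_\alpha(2\sqrt{tzu_1/s})\,J_\alpha(2\sqrt{zu_2})$. The Hankel orthogonality relation $\int_0^\infty J_\alpha(2\sqrt{zu})\,J_\alpha(2\sqrt{zv})\,dz=\delta(u-v)$ then collapses this inner integral to the constraint $u_2=tu_1/s$, and an elementary change of spectral variable produces the prefactor $(st)^{-(\beta-\alpha)/2}$ and brings the expression to the form \eqref{equ:Qintegral}. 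The main technical point is justifying the use of this distributional orthogonality, since the underlying Bessel integrals are only conditionally convergent; I expect to handle this by regularizing with a damping factor $e^{-\epsilon u}$, applying Fubini together with the Hankel Plancherel theorem in $L^2((0,\infty),dz)$, and then letting $\epsilon\downarrow 0$ via dominated convergence. The Bessel identities and the Hankel Plancherel statement required for these steps are collected in Appendix~\ref{sec:B}.
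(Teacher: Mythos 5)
Your decomposition into the three sub-cases of \eqref{equ:Qform} matches the paper exactly, and your treatment of cases (a) and (b) via Weber's exponential integral and the discontinuous Weber--Schafheitlin integral is precisely what the paper does in its Lemmas for $T_\alpha$ and $W_t$ (citing Gradshteyn--Ryzhik (6.633-2) and (6.575-1) respectively). The divergence is in case (c). You propose to expand both $T_\alpha((t,x);(s,z))$ and $W_s((\alpha,z);(\beta,y))$ as spectral integrals, interchange the $z$-integration with the double $(u_1,u_2)$-integration, and invoke the distributional closure relation $\int_0^\infty J_\alpha(2\sqrt{zu})J_\alpha(2\sqrt{zv})\,dz = \delta(u-v)$, collapsing to a single $u$-integral. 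You correctly flag this as the delicate step (the inner $z$-integral is not absolutely convergent, so Fubini is unavailable and a regularization is needed). The paper avoids this entirely: it rewrites $T_\alpha$ and $W_s$ not as spectral integrals but as bona fide Hankel transforms $H_\alpha[G_{\alpha,t,s,x}](z^{1/2})$ and $H_\alpha[F_{\alpha,\beta,y}](z^{1/2})$ of explicit functions, checks that $G$ and $F$ lie in $L^2(\mathbb{R}_+)$ (the first via its Gaussian factor $e^{-s(s-t)u^2/(4t)}$, the second via $u^{-(\beta-\alpha)+1/2}J_\beta(y^{1/2}u)$ and $\beta-\alpha\geq 1$), and then applies the Hankel Parseval identity $\int H_\alpha[G]H_\alpha[F]=\int GF$ directly to the $z$-integral. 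This reduces the convolution to a single $u$-integral in one rigorous stroke, with no double integral, no $\delta$-function, and no $\epsilon$-regularization to control. Your route can certainly be pushed through with the damping/dominated-convergence scheme you sketch, but the paper's is cleaner: it trades your formal kernel identity for its integrated ($L^2$) version and thereby sidesteps the technical overhead you anticipate.
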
 

\subsection{Correlation Kernel}\label{sec:2.2}
In this section, we show that $\vec{X}^{(k)}$ defined in \eqref{def:Xk-timelike} is determinantal and calculate its correlation kernel.
\vspace{\baselineskip}
 
We begin with introducing the generalized Laguerre polynomials which are used to rewrite \eqref{density}. For $j\in\mathbb{N}_0$ and $\alpha\in\mathbb{N}_0$, the generalized Laguerre polynomial with degree $j$ and parameter $\alpha$ is defined by
\begin{align}\label{def:Laguerre}
L^\alpha_{j}(x)\coloneqq \frac{x^{-\alpha}e^x}{\Gamma(j+1)}\frac{d^{j }}{dx^{j }}(x^{\alpha+j}e^{-x}).
\end{align}
Note that the following orthogonal relation holds.
\begin{align}\label{Lagorthogonal}
\int_0^\infty dx\, x^\alpha e^{-x} L^\alpha_i(x)L^\alpha_j(x)=\frac{\Gamma(\alpha+j+1)}{\Gamma(j+1)}\delta_{ij}.
\end{align}

For $j\in\mathbb{N}$, $\alpha\in\mathbb{N}_0$ and $t>0$, define the functions
\begin{align}
\phi_j(\alpha,t,x)&\coloneqq \frac{\Gamma(j)}{\Gamma(\alpha+j)}t^{-j}(x/t)^{\alpha }e^{-x/t}L^\alpha_{j-1}(t^{-1}x),\label{def:phi} \\
\psi_j(\alpha,t,x)&\coloneqq t^{j-1} L^\alpha_{j-1}(t^{-1}x).\label{def:psi}
\end{align}
Because the leading coefficient in $L^{\alpha}_{j-1}(x)$ is $\frac{(-1)^{j-1}}{\Gamma(j)}$, we have
\begin{align*}
&\det\left[\phi_i(\alpha ,t , x_j)\right]_{1\leq i,j\leq N}=(-1)^{N(N-1)/2}t^{-N^2}\prod_{j=1}^N \frac{1}{\Gamma(\alpha+j)}\times\prod_{j=1}^N (x_j/t)^{\alpha}e^{-x_j /t} \times\Delta(\vec{x}),\\
&\det\left[\psi_i(\alpha ,t , x_j)\right]_{1\leq i,j\leq N}=(-1)^{N(N-1)/2} \prod_{j=1}^N \frac{1}{\Gamma(j)}   \times\Delta(\vec{x}).
\end{align*} 
Therefore, the joint density \eqref{density} can be expressed as
\begin{equation}\label{density2}
\begin{split}
&\det \left[\phi_i(\alpha^{(1)},t^{(1)}, x^{(1)}_j)\right]_{1\leq i,j\leq N}\\
\times &\prod_{k=1}^{m-1} \det\bigg[ Q((\alpha^{(k)},t^{(k)},x^{(k)}_i );(\alpha^{(k+1)},t^{(k+1)},x^{(k+1)}_j )) \bigg]_{1\leq i,j\leq N}\\
\times &\det \left[\psi_j(\alpha^{(m)},t^{(m)}, x^{(m)}_i)\right]_{1\leq i,j\leq N}\times \prod_{k=1}^m \mathbbm{1}(\vec{x}^{(k)}\in\mathbb{W}^N_+)  \prod_{k=1}^m\prod_{j=1}^N dx^{(k)}_j.
\end{split}
\end{equation}

We now show that $\phi_j(\alpha,t,x)$ and $ \psi_j(\alpha,t,x)$ are compatible with ${Q}((\alpha,t,x);(\beta,s,y))$.

\begin{lemma}\label{lem:phiQpsi}
Let $(\alpha,t)\prec_{\textup{t}} (\beta,s)$ be time-like pairs in $ \mathbb{N}_0\times (0,\infty)$ as in Definition \ref{def:timelike}. Then the following statements hold. 
\begin{align}\label{delta}
\int_{0}^\infty dx\, \phi_i(\alpha,t,x) \psi_j(\alpha,t,x)  =\delta_{ij}.
\end{align}
\begin{align}\label{phiQ}
\int_{0}^\infty dx\, \phi_j(\alpha,t,x) {Q}((\alpha,t,x);(\beta,s,y)) =\phi_j(\beta,s,y).
\end{align}
\begin{align}\label{Qpsi}
\int_{0}^\infty dy\, {Q}((\alpha,t,x);(\beta,s,y))\psi_j(\beta,s,y) =\psi_j(\alpha,t,x).
\end{align}
\end{lemma}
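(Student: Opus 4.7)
My plan is to reduce all three identities to classical facts about Laguerre polynomials. Identity \eqref{delta} is the easiest: substituting the definitions of $\phi_i$ and $\psi_j$ and applying the change of variables $u=x/t$, the integral collapses, up to the factor $\Gamma(i)t^{j-i}/\Gamma(\alpha+i)$, to the standard Laguerre orthogonality \eqref{Lagorthogonal}, producing $t^{j-i}\delta_{ij}=\delta_{ij}$.

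For \eqref{phiQ} and \eqref{Qpsi}, the strategy is to decompose $Q$ according to \eqref{equ:Qform} into two elementary pieces, $Q=T_\alpha$ (same index, $t<s$) and $Q=W_t$ (same time, $\alpha<\beta$), and handle the general $(\alpha,t)\prec_{\textup{t}}(\beta,s)$ case via the third line of \eqref{equ:Qform}. Once the identities are established in both elementary cases, the general case follows by Fubini: the inner $T_\alpha$-integral converts $\phi_j(\alpha,t,\cdot)$ to $\phi_j(\alpha,s,\cdot)$, after which the outer $W_s$-integral sends it to $\phi_j(\beta,s,y)$; the argument for \eqref{Qpsi} is symmetric.

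For $Q=T_\alpha$, I would first establish the bilinear (spectral) expansion
\begin{equation*}
T_\alpha((t,x);(s,y))=\sum_{j=1}^\infty \psi_j(\alpha,t,x)\,\phi_j(\alpha,s,y),
\end{equation*}
which should follow from the Hardy--Hille (Mehler-type) formula
\begin{equation*}
\sum_{n\geq 0}\frac{n!}{\Gamma(n+\alpha+1)}L_n^\alpha(u)L_n^\alpha(v)\,w^n=\frac{(uvw)^{-\alpha/2}}{1-w}e^{-(u+v)w/(1-w)}I_\alpha\!\left(\frac{2\sqrt{uvw}}{1-w}\right)
\end{equation*}
with $u=x/t$, $v=y/s$, $w=t/s$, after an elementary rearrangement of the exponential and prefactor terms. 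Given this expansion, \eqref{phiQ} and \eqref{Qpsi} for $Q=T_\alpha$ collapse at once to the orthogonality in \eqref{delta}. For $Q=W_t$, the indicator $\mathbbm{1}(x\leq y)$ rules out a pointwise bilinear expansion, so I would verify \eqref{phiQ} directly: after substituting $u=x/y$, it reduces to the classical Laguerre fractional-integration identity
\begin{equation*}
\int_0^1 u^\alpha(1-u)^{\beta-\alpha-1}L_{j-1}^\alpha(wu)\,du=\frac{\Gamma(\alpha+j)\,\Gamma(\beta-\alpha)}{\Gamma(\beta+j)}L_{j-1}^\beta(w),
\end{equation*}
which I would prove by expanding $L_{j-1}^\alpha$ in its explicit monomial form and evaluating the resulting Beta integrals term by term. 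Identity \eqref{Qpsi} for $Q=W_t$ I would prove analogously: apply the standard Laguerre generating function to $\sum_{j\geq 1}\psi_j(\beta,t,y)z^{j-1}=(1-tz)^{-\beta-1}e^{-yz/(1-tz)}$, integrate against $W_t$ in $y$ using a single Gamma integral, and simplify to recover the corresponding generating function for $\sum_{j\geq 1}\psi_j(\alpha,t,x)z^{j-1}$.

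The main obstacle is identifying and correctly applying the right classical Laguerre-polynomial identity in each elementary case---Hardy--Hille for $T_\alpha$ and the fractional-integration identity for $W_t$. The accompanying matching of the various prefactors $t^{-j}$, $(x/t)^\alpha$, $e^{-x/t}$, etc.\ that appear in $\phi_j$, $\psi_j$, $T_\alpha$, and $W_t$ is tedious but entirely mechanical once the right identity is in hand.
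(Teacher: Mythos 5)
Your proof is correct and somewhat more self-contained than the paper's, but it does take a genuinely different route in two places. For the $T_\alpha$ part of \eqref{phiQ} (and its mirror in \eqref{Qpsi}), the paper simply cites \cite[Lemma~3.4(ii)]{FF}; you instead derive the bilinear expansion
\begin{equation*}
T_\alpha((t,x);(s,y))=\sum_{j\geq 1}\psi_j(\alpha,t,x)\,\phi_j(\alpha,s,y)
\end{equation*}
from Hardy--Hille and then invoke \eqref{delta}. I checked the substitution $u=x/t$, $v=y/s$, $w=t/s$, and the prefactors do collapse to \eqref{def:T} exactly, so this works; the one thing to add would be a brief remark that the Hardy--Hille series converges absolutely for $|w|=t/s<1$, which licenses the term-by-term integration against $\phi_j$ or $\psi_j$. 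For \eqref{Qpsi}, the paper avoids any generating-function computation entirely: it defines $\tilde\psi_j(x)=\int Q\,\psi_j\,dy$, notes via Cauchy--Binet and \eqref{density2} that $\det[\tilde\psi_j(x_i)]$ is proportional to the Vandermonde (hence $\tilde\psi_j$ is a polynomial of degree $j-1$), and then uses the already-proved \eqref{phiQ} and \eqref{delta} to pin it down by orthogonality. This is cleaner because it recycles work rather than introducing new special-function identities. Your generating-function verification of the $W_t$ case for \eqref{Qpsi}---integrate $\sum_j \psi_j(\beta,t,y)z^{j-1}=(1-tz)^{-\beta-1}e^{-yz/(1-tz)}$ against $W_t$ and simplify the Gamma integral---does go through (I checked: the $1/t + z/(1-tz)=1/(t(1-tz))$ telescoping gives $(1-tz)^{-\alpha-1}e^{-xz/(1-tz)}$), but it is more computation than the paper needs. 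In short: same decomposition of $Q$ via \eqref{equ:Qform}, same fractional-integration identity for the $W_t$ part of \eqref{phiQ}, but a more explicit, identity-driven treatment of $T_\alpha$ and of \eqref{Qpsi}.
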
	

As a direct corollary, we show that $\vec{X}^{(k)}$, $k\in\llbracket 1,m \rrbracket$ is determinantal.

\begin{corollary}\label{cor:kernel}
Fix $N\in\mathbb{N}$ and $m\in\mathbb{N}$. Let $\left\{\left(\alpha^{(k)},t^{(k)}\right)\right\}_{k=1}^m\subset \mathbb{N}_0\times (0,\infty)$  be a time-like path as in Definition~\ref{def:timelike}. Let $\vec{X}^{(k)}$ be defined as in \eqref{def:Xk-timelike}. Then  $ \{\vec{X}^{(k)} \}_{k=1}^m$ is a determinantal point process. Furthermore, the correlation kernel is given by $K^N((\alpha^{(k)},t^{(k)},x);(\alpha^{(\ell)},t^{(\ell)},y))$, where
\begin{equation}\label{equ:kernel}
K^N((\alpha,t,x);(\beta,s,y))=- {Q}((\alpha,t,x);(\beta,s,y))\mathbbm{1}((\alpha,t)\prec_{\textup{t}} (\beta,s) )+\sum_{j=1}^N \psi_j(\alpha,t,x)\phi_j(\beta,s,y).
\end{equation} 

\end{corollary}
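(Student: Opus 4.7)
My plan is to invoke the Eynard--Mehta theorem in the form suited to a multi-time determinantal structure, using the joint density representation \eqref{density2} as the input. That representation already has exactly the ``product of three determinants with transition kernels $Q$ in between'' shape that Eynard--Mehta requires: an initial determinant built from $\{\phi_i(\alpha^{(1)},t^{(1)},\cdot)\}_{i=1}^N$, a telescoping product of determinants with entries $Q((\alpha^{(k)},t^{(k)},\cdot);(\alpha^{(k+1)},t^{(k+1)},\cdot))$, and a final determinant built from $\{\psi_j(\alpha^{(m)},t^{(m)},\cdot)\}_{j=1}^N$.

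The first step is to form the Eynard--Mehta gram matrix
\begin{equation*}
M_{ij}=\int_{(0,\infty)^m} dx^{(1)}\cdots dx^{(m)}\, \phi_i(\alpha^{(1)},t^{(1)},x^{(1)}) \Bigl(\prod_{k=1}^{m-1} Q((\alpha^{(k)},t^{(k)},x^{(k)});(\alpha^{(k+1)},t^{(k+1)},x^{(k+1)}))\Bigr) \psi_j(\alpha^{(m)},t^{(m)},x^{(m)}).
\end{equation*}
I iteratively collapse this integral using the intertwining identity \eqref{phiQ}: each application pushes $\phi_i$ one step to the right, transforming it into $\phi_i(\alpha^{(k+1)},t^{(k+1)},\cdot)$. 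After $m-1$ applications we are left with $\int_0^\infty \phi_i(\alpha^{(m)},t^{(m)},x)\psi_j(\alpha^{(m)},t^{(m)},x)\,dx$, which is $\delta_{ij}$ by \eqref{delta}. Thus $M$ is the identity, so $M^{-1}$ is the identity and $\det M=1$, which also confirms that \eqref{density2} integrates to $1$ and is thus a bona fide density.

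With $M^{-1}=\mathrm{Id}$, the Eynard--Mehta formula for the correlation kernel of the extended process reduces to
\begin{equation*}
K^N((\alpha^{(k)},t^{(k)},x);(\alpha^{(\ell)},t^{(\ell)},y)) = -\mathbbm{1}(k<\ell)\, Q^{(k,\ell)}(x,y) + \sum_{j=1}^N \psi_j(\alpha^{(k)},t^{(k)},x)\,\phi_j(\alpha^{(\ell)},t^{(\ell)},y),
\end{equation*}
where $Q^{(k,\ell)}(x,y)$ denotes the composition $Q((\alpha^{(k)},t^{(k)},\cdot);(\alpha^{(k+1)},t^{(k+1)},\cdot))\ast\cdots\ast Q((\alpha^{(\ell-1)},t^{(\ell-1)},\cdot);(\alpha^{(\ell)},t^{(\ell)},\cdot))$. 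By the Chapman--Kolmogorov type identity \eqref{equ:QQ} (repeated $\ell-k-1$ times), this composition telescopes to $Q((\alpha^{(k)},t^{(k)},x);(\alpha^{(\ell)},t^{(\ell)},y))$. Since $k<\ell$ along the time-like path is equivalent to $(\alpha^{(k)},t^{(k)})\prec_{\textup{t}}(\alpha^{(\ell)},t^{(\ell)})$, this matches the claimed formula \eqref{equ:kernel}.

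The only real subtlety is the precise matching between \eqref{density2} and the hypotheses of Eynard--Mehta: one must check that the transition kernels $Q$ enter additively across the $m-1$ intermediate determinants (they do, by Corollary \ref{cor:transition}), and that the $\phi$ and $\psi$ families supply non-degenerate ``boundary'' data so that the biorthogonality computation is meaningful. Once the biorthogonality $M=\mathrm{Id}$ is established via Lemma \ref{lem:phiQpsi}, the remaining identification of the composed kernel with a single $Q$ is a direct telescoping from \eqref{equ:QQ}, so no further obstacle arises.
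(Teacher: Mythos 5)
Your argument is precisely the one the paper invokes: start from the joint density representation \eqref{density2}, check via Lemma~\ref{lem:phiQpsi} that the Eynard--Mehta Gram matrix is the identity (so $M^{-1}=\mathrm{Id}$), and then telescope the composed transition kernels to a single $Q$ via \eqref{equ:QQ}. The paper's proof is the one-line ``combine \eqref{density2}, Lemma~\ref{lem:phiQpsi} and Eynard--Mehta,'' and your write-up is exactly that computation carried out in detail.
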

\begin{proof}
The assertion follows by combining \eqref{density2}, Lemma \ref{lem:phiQpsi} and the Eynard--Mehta theorem \cite{EM,BR}. 
\end{proof}
We end this section by proving Lemma~\ref{lem:phiQpsi}.
\begin{proof}[Proof of Lemma~\ref{lem:phiQpsi}]
\eqref{delta} is a reformulation of the orthogonality of generalized Laguerre polynomials \eqref{Lagorthogonal}.
\vspace{\baselineskip}

 We turn to \eqref{phiQ}. In view of \eqref{equ:QQ} and \eqref{equ:Qform}, it suffices to show 
\begin{align}
\int_{0}^\infty \phi_j(\alpha,t,x) T_{\alpha}((t,x);(s,y))\, dx=\phi_j(\alpha,s,y),\label{phiT}
\end{align}
and
\begin{align}
\int_{0}^\infty \phi_j(\alpha,t,x) W_{t}((\alpha,x);(\beta,y))\, dx=\phi_j(\beta,t,y).\label{phiW}
\end{align}
Actually, the equality \eqref{phiT} is proved in \cite[Lemma 3.4(ii)]{FF} and we focus on showing \eqref{phiW}.

By a direct calculation,
\begin{align*}
\int_{0}^\infty dx\, \phi_j(\alpha,t,x) W_{t}((\alpha,x)&;(\beta,y))
\\
&=\frac{\Gamma(j)}{\Gamma(\alpha+j)\Gamma(\beta-\alpha)}t^{-j-\beta}e^{-y/t}\int_0^y dx\,  x^{\alpha}(y-x)^{\beta-\alpha-1}L^{\alpha}_{j-1}(x/t)  \\
&=\frac{\Gamma(j)}{\Gamma(\alpha+j)\Gamma(\beta-\alpha)}t^{-j-\beta} e^{-y/t} y^{\beta} \int_0^1 dz\, z^{\alpha}(1-z)^{\beta-\alpha-1}L^{\alpha}_{j-1}(yz/t) .
\end{align*}
From \cite[(7.412-1),page 809]{GR}
\begin{align*}
\int_0^1  dz\, z^{\alpha}(1-z)^{\beta-\alpha-1}L^{\alpha}_{j-1}(yz/t) =\frac{\Gamma(\alpha+j)\Gamma(\beta-\alpha)}{\Gamma(\beta+j)} L^{\beta}_{j-1}(y/t).
\end{align*}
Therefore,
\begin{align*}
\int_{0}^\infty dx\, \phi_j(\alpha,t,x) W_{t}((\alpha,x);(\beta,y)) =&\frac{\Gamma(j)}{\Gamma(\beta+j)} t^{-j}(y/t)^\beta e^{-y/t}  L^{\beta}_{j-1}(y/t)=\phi_j(\beta,t,y).
\end{align*}
This finishes the proof of \eqref{phiQ}.
\vspace{\baselineskip}

Next, we prove \eqref{Qpsi}. For $j\in\mathbb{N}$,  define 
\begin{align*}
\tilde{\psi}_j(x)\coloneqq \int_{0}^\infty  dy\, {Q}((\alpha,t,x);(\beta,s,y))\psi_j(\beta,s,y) .
\end{align*}
We aim to show that $\tilde{\psi}_j(x)=\psi_j(\alpha,t,x).$ By the Cauchy--Binet formula and \eqref{density2}, we have
\begin{align*}
\det [\tilde{\psi}_j(x_i)]_{1\leq i,j\leq N}= (-1)^{N(N-1)/2} \prod_{j=1}^N \frac{1}{\Gamma(j)}   \times\Delta(\vec{x}).
\end{align*}
Therefore, $\tilde{\psi}_j(x)$ is a polynomial of degree $j-1$. Also, from \eqref{phiQ} and \eqref{delta} we have
\begin{align*}
\int_{0}^\infty  dx\, \phi_i(\alpha,t,x)\tilde{\psi}_j(x) =\int_{0}^\infty  dx\, \phi_i(\beta,s,x){\psi}_j(\beta,s,x)=\delta_{ij}. 
\end{align*}
As a result, $\tilde{\psi}_j(x)=\psi_j(\alpha,t,x)$.
\end{proof}

\subsection{Scaling Limit}\label{sec:2.3}
In this section, we perform the hard edge scaling and show that when $N$ goes to infinity, the scaled correlation kernel converges to $K^{\textup{Bes}}$ defined in \eqref{def:KBes}. To be precise, we fix a time-like path $\{ (\alpha^{(k)},t^{(k)}) \}$ in $\mathbb{N}_0\times \mathbb{R}$. For $N>-4^{-1}t^{(1)}$, Define
\begin{equation}
\mathcal{B}_j^{N,(k)}\coloneqq 4N\cdot X_j^{N}\left(\alpha^{(k)}, 1+ {t^{(k)}}/{4N} \right),\ j\in\llbracket 1,N \rrbracket. 
\end{equation}  
Because of Corollary \ref{cor:kernel}, $\mathcal{B}_j^{N,(k)}$ is determinantal with the correlation kernel $$K^N_{\textup{scaled}}((\alpha^{(k)},t^{(k)},x);(\alpha^{(\ell)},t^{(\ell)},y)).$$ 
Here $K^N_{\textup{scaled}}((\alpha ,t ,x);(\beta ,s,y))$ is given by
\begin{align*}
K^N_{\textup{scaled}}((\alpha ,t ,x);(\beta ,s,y))\coloneqq (4N)^{-1}K^N\left(  ( \alpha ,1+ {t }/{4N}, {x}{/4N} );( \beta,1+ {s}/{4N}, {y}{/4N} )\right).
\end{align*}    
Define the gauged kernel as
\begin{equation}\label{def:Kgauge}
\begin{split}
K^N_{\textup{gauge}}((\alpha,t,x)&;(\beta,s,y))
\\&\coloneqq (4N)^{(\beta-\alpha)/2-1}\left( \frac{x}{4N+t} \right)^{\alpha/2}\left( \frac{y}{4N+s} \right)^{-\beta/2}\exp\left( -\frac{x}{8N+2t}+\frac{y}{8N+2s} \right) \\
&\times  K^N\left(  ( \alpha ,1+ t/{4N}, {x}{/4N} );( \beta,1+ t/{4N}, {s}{/4N} )\right).
\end{split}
\end{equation}
We remark that $K^N_{\textup{gauge}}$ and $K^N_{\textup{scaled}}$ are related through a gauge transform $$K^N_{\textup{gauge}}((\alpha,t,x);(\beta,s,y))=  \frac{f^N(\alpha,t,x)}{f^N(\beta,s,y)}\cdot  K^N_{\textup{scaled}}((\alpha,t,x);(\beta,s,y)). $$
This implies $K^N_{\textup{gauge}}((\alpha^{(k)},t^{(k)},x);(\alpha^{(\ell)},t^{(\ell)},y))$ is also a correlation kernel for $\mathcal{B}_j^{N,(k)}$.
\vspace{\baselineskip}

We now show the convergence of the kernel $K^N_{\textup{gauge}}((\alpha^{(k)},t^{(k)},x);(\alpha^{(\ell)},t^{(\ell)},y))$ to the kernel $K^{\textup{Bes}}((\alpha^{(k)},t^{(k)},x);(\alpha^{(\ell)},t^{(\ell)},y))$. To simplify the notation, we define the following quantities.
\begin{align*}
K^{N,1}((\alpha,t,x)&;(\beta,s,y))
\\&\coloneqq -(4N)^{  (\beta-\alpha) /{2}-1}\left( \frac{x}{4N+t} \right)^{\alpha/2}\left( \frac{y}{4N+s} \right)^{-\beta/2} \exp\left( -\frac{x}{8N+2t}+\frac{y}{8N+2s} \right) \\
 &\times \mathbbm{1}((\alpha,t)\prec_{\textup{t}} (\beta,s) )  \times {Q}\left(\left(\alpha,1+  {t}/{4N},  {x}/{4N}\right);\left(\beta,1+  {s}/{4N}, {y}/{4N}\right)\right),
\end{align*}
\begin{align*}
K^{N,2}((\alpha,t,x)&;(\beta,s,y))
\\&\coloneqq (4N)^{ {(\beta-\alpha)}/{2}-1}\left( \frac{x}{4N+t} \right)^{\alpha/2} \left( \frac{y}{4N+s} \right)^{-\beta/2} \exp\left( -\frac{x}{8N+2t}+\frac{y}{8N+2s} \right)\\
&\times \sum_{j=1}^N \psi_j\left(\alpha,1+ {t}/{4N}, {x}/{4N}\right)\cdot\phi_j\left(\beta,1+ {s}/{4N}, {y}/{4N}\right),
\end{align*}
and
\begin{align*}
K^{N,1}_{k\ell}(x,y)&\coloneqq  K^{N,1}((\alpha^{(k)},t^{(k)},x);(\alpha^{(\ell)},t^{(\ell)},y)),\\ K^{N,2}_{k\ell}(x,y)&\coloneqq  K^{N,2}((\alpha^{(k)},t^{(k)},x);(\alpha^{(\ell)},t^{(\ell)},y)). 
\end{align*}
Similarly, let
\begin{align*}
K^{1}((\alpha,t,x);(\beta,s,y))&\coloneqq -\int_0^\infty du\,  e^{-(s-t)u}u^{-(\beta-\alpha)/2}J_{\alpha}\left(2\sqrt{xu}\right)J_{\beta}\left(2\sqrt{yu}\right)  \mathbbm{1}((\alpha,t)\prec_{\textup{t}} (\beta,s) ) ,\\
K^{2}((\alpha,t,x);(\beta,s,y))&\coloneqq  \int_0^{1/4} du\,  e^{-(s-t)u}u^{-(\beta-\alpha)/2}J_{\alpha}\left(2\sqrt{xu}\right)J_{\beta}\left(2\sqrt{yu}\right)   
\end{align*}
and
\begin{align*}
K^{1}_{k\ell}(x,y)&\coloneqq  K^{1}((\alpha^{(k)},t^{(k)},x);(\alpha^{(\ell)},t^{(\ell)},y)),\\ K^{2}_{k\ell}(x,y)&\coloneqq  K^{2}((\alpha^{(k)},t^{(k)},x);(\alpha^{(\ell)},t^{(\ell)},y)). 
\end{align*}
Clearly
\begin{align*}
K^N_{\textup{gauge}}((\alpha^{(k)},t^{(k)},x);(\alpha^{(\ell)},t^{(\ell)},y))&=K^{N,1}_{k\ell}(x,y)+K^{N,2}_{k\ell}(x,y),\\
K^{\textup{Bes}}((\alpha^{(k)},t^{(k)},x);(\alpha^{(\ell)},t^{(\ell)},y))&=K^{1}_{k\ell}(x,y)+E^{ 2}_{k\ell}(x,y).
\end{align*}
We view $\{ K^{N,1}_{k\ell} \}_{1\leq k,\ell\leq m}$ as a function defined on $\mathfrak{X}\times \mathfrak{X}$ with $\mathfrak{X}=\llbracket 1,m \rrbracket\times\mathbb{R}_+$.  
\begin{lemma}\label{lem:E1}
$\{ K^{N,1}_{k\ell} \}_{1\leq k,\ell\leq m}$ converges to $\{ K^{1}_{k\ell} \}_{1\leq k,\ell\leq m}$ locally in $\mathcal{L}_{1|2}$ defined in Definition \ref{def:L12}.  
\end{lemma}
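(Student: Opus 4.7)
The plan is to insert the integral representation for $Q$ from Lemma~\ref{lem:Qintegral} into the definition of $K^{N,1}$ and perform the change of variables $u=4Nv$ inside the inner integral. After a careful but elementary accounting of powers of $4N$, of the gauge factor in \eqref{def:Kgauge}, and of the $(s_N t_N)^{-(\beta-\alpha)/2}$ and $x_N^{-\alpha/2}y_N^{\beta/2}$ factors in $Q$, one arrives at the identity
\begin{equation*}
K^{N,1}_{k\ell}(x,y) = -\mathbbm{1}\bigl((\alpha,t)\prec_{\textup{t}}(\beta,s)\bigr)\, a_N(x,y) \int_0^\infty dv\, e^{-(s-t)v} v^{-(\beta-\alpha)/2} J_\alpha\bigl(2\sqrt{\lambda_N xv}\bigr) J_\beta\bigl(2\sqrt{\mu_N yv}\bigr),
\end{equation*}
where, abbreviating $(\alpha,t)=(\alpha^{(k)},t^{(k)})$ and $(\beta,s)=(\alpha^{(\ell)},t^{(\ell)})$, we have $\lambda_N=(4N+s)/(4N+t)\to 1$, $\mu_N=(4N+t)/(4N+s)\to 1$, and $a_N(x,y)=(1+t/(4N))^{-\beta/2}(1+s/(4N))^{\alpha/2}\exp\bigl(x/(2(4N+t))-y/(2(4N+s))\bigr) \to 1$ locally uniformly in $(x,y)$.

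Since the diagonal blocks $K^{N,1}_{kk}$ and $K^{1}_{kk}$ vanish identically (the time-like order $\prec_{\textup{t}}$ is strict, so the indicator is $0$ when $k=\ell$), the trace-class portion of the $\mathcal{L}_{1|2}$-convergence is automatic, and the lemma reduces to establishing Hilbert--Schmidt convergence $K^{N,1}_{k\ell}\to K^{1}_{k\ell}$ in $L^2(F_k\times F_\ell)$ for each pair $k\neq\ell$ (say with $(\alpha,t)\prec_{\textup{t}}(\beta,s)$, since otherwise both kernels vanish) and each pair of bounded Borel sets $F_k,F_\ell\subset\mathbb{R}_+$. I would proceed by two applications of dominated convergence: first in the $v$-variable, to deduce pointwise convergence of $K^{N,1}_{k\ell}(x,y)$ to $K^{1}_{k\ell}(x,y)$, using the classical Bessel bounds $|J_\alpha(z)|\leq (z/2)^\alpha/\Gamma(\alpha+1)$ and $|J_\alpha(z)|\leq C_\alpha z^{-1/2}$ (valid for $\alpha\geq 0$) together with the exponential factor $e^{-(s-t)v}$ when $t<s$ to construct an $N$-uniform integrable majorant; and second in the $(x,y)$-variables, where the same bounds together with the boundedness of $F_k\times F_\ell$ supply an $L^2$-integrable majorant for $|K^{N,1}_{k\ell}-K^{1}_{k\ell}|$ uniform in $N$.

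The main obstacle is the borderline case $t=s$ and $\beta-\alpha=1$, where the $v$-integrand has no exponential factor and only decays like $v^{-1}$, so the integral converges only conditionally through oscillation of the product $J_\alpha\cdot J_\beta$, and a naive dominant in $v$ is not available. To circumvent this I would either appeal to the closed form $Q((\alpha,t,\cdot);(\beta,t,\cdot))=W_t((\alpha,\cdot);(\beta,\cdot))$ from \eqref{def:W}, which makes $K^{N,1}_{k\ell}$ explicit and amenable to a direct estimate, or exploit the Hankel-transform Plancherel identities collected in Appendix~\ref{sec:B} to translate $L^2(F_k\times F_\ell)$ convergence in $(x,y)$ into a manifestly dominated $L^2$ statement in the conjugate variable $v$. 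Once this degenerate case is handled, the general off-diagonal convergence, and hence the full locally-$\mathcal{L}_{1|2}$ convergence asserted in the lemma, follows.
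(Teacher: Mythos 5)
Your proposal is correct and follows essentially the same route as the paper: both derive the exact identity $K^{N,1}_{k\ell}(x,y)=a_N(x,y)\,K^{1}_{k\ell}(\lambda_N x,\mu_N y)$ from the integral representation of $Q$, note that the diagonal blocks vanish because $\prec_{\textup{t}}$ is strict, and split the off-diagonal convergence into the cases $t<s$ and $t=s$. The one difference is technical: for $t<s$ the paper observes via the Bessel asymptotics \eqref{equ:asymptoticzero}--\eqref{equ:asymptoticinfinity} that $K^{1}_{k\ell}$ is continuous on $[0,\infty)^2$, which immediately yields locally uniform (hence local $L^2$) convergence without any explicit dominated convergence argument; and for $t=s$ the paper exploits the fact that $\lambda_N=\mu_N=1$, so the Bessel arguments do not move with $N$ and only the prefactor $a_N\to 1$ changes, reducing the claim to the single observation that the explicit $W_t$-kernel from Lemma~\ref{lem:W} is locally $L^2$. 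Your plan to use the closed form for the borderline case $\beta-\alpha=1$, $t=s$ lands in exactly the same place, and your recognition of the conditional-convergence pitfall there is the right diagnosis; the paper just avoids it more cheaply by treating all of $t=s$ via the closed form rather than restricting to $\beta-\alpha=1$.
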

\begin{lemma}\label{lem:E2}
$\{ K^{N,2}_{k\ell} \}_{1\leq k,\ell\leq m}$ converges to $\{ K^{2}_{k\ell} \}_{1\leq k,\ell\leq m}$ locally in $\mathcal{L}_{1|2}$ defined in Definition \ref{def:L12}. Moreover, $\{ K^{N,2}_{k\ell} \}_{1\leq k,\ell\leq m}$ and $\{ K^{2}_{k\ell} \}_{1\leq k,\ell\leq m}$ are continuous on the diagonal blocks. See Definition~\ref{def:Kconti}.
\end{lemma}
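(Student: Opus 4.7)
The plan is to prove convergence of $K^{N,2}_{k\ell}$ to $K^{2}_{k\ell}$ by recognizing the sum $\sum_{j=1}^{N}$ in the definition of $K^{N,2}$ as a Riemann sum for the integral $\int_{0}^{1/4} du$ appearing in $K^{2}$, and then upgrading pointwise convergence to the appropriate operator-norm convergence via uniform bounds. The key analytic input is the Mehler--Heine asymptotic formula for the generalized Laguerre polynomials,
\begin{equation*}
\lim_{n\to\infty} n^{-\alpha} L_{n}^{\alpha}\!\left(\tfrac{z}{n}\right) = z^{-\alpha/2} J_{\alpha}\!\left(2\sqrt{z}\right),
\end{equation*}
valid locally uniformly in $z>0$. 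Setting $j-1 = 4Nu$ so that $j/(4N) \to u \in (0, 1/4)$ and $z = ux$, this gives
\begin{equation*}
(4N)^{-\alpha}L^{\alpha}_{j-1}\!\left(\tfrac{x/(4N)}{1 + t/(4N)}\right) \longrightarrow (ux)^{-\alpha/2} J_{\alpha}\!\left(2\sqrt{ux}\right),
\end{equation*}
and combining this with its $\beta$-analog and the explicit form of $\phi_j, \psi_j$ from \eqref{def:phi}--\eqref{def:psi}, one checks that the gauge prefactors in \eqref{def:Kgauge} are precisely what is required to produce, in the $N\to\infty$ limit, the integrand $e^{-(s-t)u} u^{-(\beta-\alpha)/2} J_{\alpha}(2\sqrt{xu}) J_{\beta}(2\sqrt{yu})$. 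The factor $(4N)^{-1}$ supplies the mesh width $\Delta u = 1/(4N)$, so $\sum_{j=1}^{N}(\cdots)$ converges pointwise on $(0,\infty)^2$ to $\int_{0}^{1/4}du\,(\cdots)$.

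To promote pointwise convergence to $\mathcal{L}_{1|2}$-convergence on a compact box $E^2$, I would use the factorization already built into the kernel. For the off-diagonal blocks $k\neq\ell$, write
\begin{equation*}
K^{N,2}_{k\ell}(x,y) = \sum_{j=1}^{N} \Psi^{N}_{k,j}(x)\, \Phi^{N}_{\ell,j}(y),
\end{equation*}
where $\Psi^{N}_{k,j}, \Phi^{N}_{\ell,j}$ are the scaled and gauged analogues of $\psi_j, \phi_j$, and view this as the product of two integral/summation operators. Hilbert--Schmidt convergence of $K^{N,2}_{k\ell}$ on $E^{(k)}\times E^{(\ell)}$ then reduces to $L^{2}$-convergence of $\Psi^{N}_{k,\cdot}$ and $\Phi^{N}_{\ell,\cdot}$ viewed as vectors in $L^{2}(E^{(k)})\otimes \ell^{2}(\llbracket 1,N\rrbracket)$ compared to the corresponding limits in $L^{2}(E^{(k)})\otimes L^{2}(0,1/4)$; this is a standard Riemann-sum/dominated-convergence argument once one secures a uniform-in-$N$ pointwise bound on the scaled Laguerre polynomials over compact sets (available, e.g., from the classical asymptotics in Szeg\H{o}). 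For the diagonal blocks $k=\ell$, the same factorization $K^{N,2}_{kk} = A^{N}_{k}B^{N}_{k}$ expresses $K^{N,2}_{kk}$ as a product of two Hilbert--Schmidt operators, so
\begin{equation*}
\|K^{N,2}_{kk} - K^{2}_{kk}\|_{1} \leq \|A^{N}_{k}-A_{k}\|_{2}\,\|B^{N}_{k}\|_{2} + \|A_{k}\|_{2}\,\|B^{N}_{k} - B_{k}\|_{2},
\end{equation*}
and trace-norm convergence follows from the two Hilbert--Schmidt convergences already established together with uniform Hilbert--Schmidt bounds.

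Continuity on the diagonal blocks is the easier half. For $K^{N,2}_{kk}$, it is a finite sum of products of Laguerre polynomials times power and exponential factors, hence jointly continuous in $(x,y)\in (0,\infty)^2$; one checks directly that the boundary $x=0$ or $y=0$ is handled by the factor $x^{\alpha/2}$ or $y^{-\beta/2}$ combined with the factor $y^{\beta}$ hidden in $\phi_j$ (the Laguerre polynomial is a polynomial in $y$, so after the gauge $y^{-\beta/2}$ the resulting expression remains continuous up to and including $y=0$). For $K^{2}_{kk}(x,y) = \int_{0}^{1/4} du\,J_{\alpha}(2\sqrt{xu})J_{\alpha}(2\sqrt{yu})$, joint continuity follows from the continuity of $J_{\alpha}$ on $[0,\infty)$ and dominated convergence over the compact interval $[0,1/4]$, using $|J_{\alpha}(z)|\leq C_{\alpha}(1+z)^{-1/2}$.

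The main obstacle I anticipate is getting a uniform-in-$N$ dominating bound for the scaled and gauged Laguerre products $\Psi^{N}_{k,j}(x)\Phi^{N}_{\ell,j}(y)$ that is integrable in $j/(4N)\in (0,1/4)$ and locally bounded in $(x,y)$, so that the Riemann-sum limit passes to $L^{2}$ and trace-norm convergence rather than merely pointwise. Once this bound is in place, uniform integrability turns the Mehler--Heine pointwise limit into the required $\mathcal{L}_{1|2}$ convergence and the lemma follows.
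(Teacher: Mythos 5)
Your proposal identifies the same central analytic input as the paper — the Mehler--Heine/Szeg\H{o} asymptotic for the Laguerre polynomials together with the $\Delta u = 1/(4N)$ Riemann-sum reading of the $j$-sum — and arrives at the same limiting integrand and integration range $[0,1/4]$, so the heart of the argument matches. Where you diverge is in how you upgrade pointwise convergence to trace-norm convergence on the diagonal blocks. You factor $K^{N,2}_{kk}=A^N_k B^N_k$ as a product of Hilbert--Schmidt operators, carry both factors over to $L^2(0,1/4)$ and use $\|A^N B^N - AB\|_1 \le \|A^N-A\|_2\|B^N\|_2 + \|A\|_2\|B^N-B\|_2$. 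The paper instead observes that $K^{N,2}_{kk}(x,y)$ is symmetric and positive semi-definite, so its trace norm on $L^2([0,M])$ equals its trace $\int_0^M K^{N,2}_{kk}(x,x)\,dx$; once locally uniform convergence of the kernel (and hence of the trace norms and, separately, operator-norm convergence via the Hilbert--Schmidt bound) is established, Simon's Theorem~2.20 gives trace-norm convergence without any factorization. Both routes are valid, but the paper's route avoids the bookkeeping required to embed $\ell^2(\llbracket 1,N\rrbracket)$ into $L^2(0,1/4)$ via step functions, which is precisely the point you flag as your ``main obstacle.'' That obstacle is resolved in the paper by the quantitative form of the Szeg\H{o} estimate \cite[(8.22.4)]{S}, which gives $L^\alpha_{j-1}(x/(4N+t)) = j^\alpha g_\alpha(jx/4N) + O(j^{\alpha-1})$ with an explicit error term uniformly over $x\in[0,M]$; this single estimate feeds both the locally uniform convergence and (were you to need it) the $L^2$-on-the-rectangle convergence of your step-function kernels. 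In short: same key lemma, same continuity argument, but a genuinely different operator-theoretic step for the diagonal trace class; the paper's is the shorter path, and your anticipated gap is exactly what the quantitative Szeg\H{o} bound fills.
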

With Lemma~\ref{lem:E1} and Lemma~\ref{lem:E2}, we are ready to prove Theorem~\ref{thm:main} (ii).
\begin{proof}[Proof of Theorem~\ref{thm:main} (i)]
From the definition of $\mathcal{B}(\alpha,t)$, a subsequence of $\mathcal{B}^N(\alpha^{(k)},t^{(k)})$ converges in distribution to $\mathcal{B}(\alpha^{(k)},t^{(k)})$. We abuse the notation and still denote such subsequence by $N$. From Lemma~\ref{lem:E1} and Lemma~\ref{lem:E2}, correlation kernels of $\mathcal{B}^N(\alpha^{(k)},t^{(k)})$ converges locally in $\mathcal{L}_{1|2}$ to a kernel given by $K^{\textup{Bes}}$. Moreover, all the kernels are continuous on the diagonal blocks. Then the assertion follows Corollary~\ref{cor:det}.
\end{proof}
%\begin{align*}
%\widetilde{K}^N((\alpha,t,x);(\beta,s,y))\coloneqq (t^{-1}x)^{\alpha/2}(s^{-1}y)^{-\beta/2}e^{-x/%(2t)+y/(2s)} {K}((\alpha,t,x);(\beta,s,y)).
%\end{align*}
The rest of the section is devoted to prove Lemma~\ref{lem:E1} and Lemma~\ref{lem:E2}.
\begin{proof}[Proof of Lemma~\ref{lem:E1}]
If $k\geq \ell$, then both $K^{N,1}_{k\ell}(x,y)$ and $K^{1}_{k\ell}(x,y)$ vanish. Therefore, it suffices to prove that $K^{N,1}_{k\ell}(x,y)$ converges to $K^{1}_{k\ell}(x,y)$ locally in $L^2$ for all $k<\ell$. See Definition~\ref{def:L12}. To simplify the notation, we set $(\alpha^{(k)},t^{(k)})=(\alpha,t)$ and $(\alpha^{(\ell)},t^{(\ell)})=(\beta,s)$.   Using the integral representation \eqref{equ:Qintegral} and a change of variable, $${Q}\left(\left(\alpha,1+  {t}/{4N},  {x}/{4N}\right);\left(\beta,1+  {s}/{4N}, {y}/{4N}\right)\right)$$ equals
\begin{multline*}
(4N)^{-(\beta-\alpha)/2+1}[(1+t/4N)(1+s/4N)]^{-(\beta-\alpha)/2}\exp\left( -\frac{y}{4N+s}+\frac{x}{4N+t} \right)\\
\times \left( \frac{x}{4N} \right)^{-\alpha/2}\left( \frac{y}{4N} \right)^{ \beta/2}\int_0^\infty du\, e^{-(s-t)u}u^{-(\beta-\alpha)/2}J_{\alpha}\left(2\sqrt{\frac{4N+s}{4N+t}xu }\right)J_{\beta}\left(2\sqrt{\frac{4N+t}{4N+s}yu }\right).
\end{multline*}
Therefore, $E_{k\ell}^{N,1}(x,y)$ equals
\begin{multline*}
-(1+t/4N)^{-\beta/2}(1+s/4N)^{\alpha/2} \exp\left( -\frac{y}{8N+2s}+\frac{x}{8N+2t} \right)\\
 \times \int_0^\infty du\, e^{-(s-t)u}u^{-(\beta-\alpha)/2}J_{\alpha}\left(2\sqrt{\frac{4N+s}{4N+t}xu }\right)J_{\beta}\left(2\sqrt{\frac{4N+t}{4N+s}yu }\right).
\end{multline*}
Denote $x_N=\frac{4N+s}{4N+t}\cdot x $ and $y_N=\frac{4N+s}{4N+t}\cdot y $ . Then
\begin{align*}
K^{N,1}_{k\ell}(x,y)= -&(1+t/4N)^{-\beta/2}(1+s/4N)^{\alpha/2} \exp\left( -\frac{y}{8N+2s}+\frac{x}{8N+2t} \right) K_{k\ell}^{1}(x_N,y_N). 
\end{align*}
We now consider two cases. In the first case, we assume $t=s$. In particular, we have $x_N=x$, $y_N=y$ and $\beta\geq \alpha+1$. Moreover, from Lemma~\ref{lem:W},
\begin{align*}
K^{1}_{k\ell}(x,y)=x^{\alpha/2} y^{-\beta/2} \frac{(y-x)^{\beta-\alpha-1}}{\Gamma(\beta-\alpha)}\times \mathbbm{1}(x\leq y).
\end{align*}
It is straightforward to check that $K^{1}_{k\ell}(x,y)$ is locally a $L^2$ function and that $K_{k\ell}^{N,1}(x,y)$ converges to $K_{k\ell}^{1}(x,y)$ locally in $L^2$. In the second case, we assume $t<s$. From the asymptotics of the Bessel function $J_\alpha(z)$ \eqref{equ:asymptoticzero} and \eqref{equ:asymptoticinfinity}, it is straightforward to check that $K_{k\ell}^{1}(x,y)$ is a continuous function on $[0,\infty)^2$. Therefore, $K_{k\ell}^{N,1}(x,y)$ converges to $K_{k\ell}^{1}(x,y)$ locally uniformly. This suffices to imply $K_{k\ell}^{N,1}(x,y)$ converges to $K_{k\ell}^{1}(x,y)$ locally in $L^2$. 
\end{proof}

\begin{remark}\label{rmk:nontrace}
It can be checked that the kernel $K_0(x,y)=x^{\alpha/2}y^{-(\alpha+1)/2}\mathbbm{1}(x\leq y)$ is not of local trace class. Using the isometry on $L^2([0,1])$ given by $f(x)\mapsto 2^{1/2}x^{1/2}f(x^2)$, $K_0(x,y)$ is similar to the integral kernel $K_0'(x,y)=2 x^{\alpha+1/2}y^{-\alpha-1/2}\mathbbm{1}(x\leq y)$. From \cite[Theorem 1.3]{BD}, $\mu>0$ is a singular values of $\mathbbm{1}(0\leq  x\leq 1)\cdot K_0'(x,y)\cdot \mathbbm{1}(0\leq  y\leq 1)$ if and only if $J_{\alpha}(2/\mu)=0$. From \eqref{equ:asymptoticinfinity}, the $n$-th singular value decays like $1/n$, which is not summable.  
\end{remark}

\begin{proof}[Proof of Lemma~\ref{lem:E2}]
First, we show it is sufficient to prove that $K_{k\ell}^{N,2}(x,y)$ converges to $K_{k\ell}^{2}(x,y)$ locally uniformly for all $k,\ell\in\llbracket 1,m \rrbracket$. From \eqref{equ:asymptoticzero}, $K_{k\ell}^{2}(x,y)$ is a continuous function on $[0,\infty)^2$. Therefore, locally uniform convergence implies the locally convergence in $L^2$. In view of Definition~\ref{def:L12}, the off-diagonals are settled. For the diagonal terms, we need to show $K_{\ell\ell}^{2,N}(x,y)$ converges to $K_{\ell\ell}^{2}(x,y)$ locally in trace norm. From \cite[Theorem 3.9]{Simonbook}, the trace of $K_{\ell\ell}^{N,2}(x,y)$ on $L^2([0,M])$ is given by
$\int_0^M dx\,  K_{\ell\ell}^{N,2}(x,x)$.

Because $K_{\ell\ell}^{N,2}(x,y)$ is symmetric and non-negative (see \eqref{equ:E2N}), the above equals the trace norm of $K_{\ell\ell}^{N,2}(x,y)$ on $L^2([0,M])$. The same holds for $K_{\ell\ell}^{2}(x,y)$. Therefore, the trace norm of $K_{\ell\ell}^{N,2}(x,y)$ on $L^2([0,M])$ convergence to the one of $K_{\ell\ell}^{2}(x,y)$. Convergence in Hilbert-Schmidt norm implies the convergence in operator norm. From \cite[Theorem 2.20]{Simonbook}, these imply the convergence in trace norm.
\vspace{\baselineskip}

It remain to show locally uniform convergence. Fix $k,\ell\in\llbracket 1,m \rrbracket$ and let $(\alpha^{(k)},t^{(k)})=(\alpha,t)$ and $(\alpha^{(\ell)},t^{(\ell)})=(\beta,s)$. Using the definitions of $\psi_j$ and $\phi_j$ in \eqref{def:phi} and \eqref{def:psi}, 
\begin{equation}\label{equ:E2N}
\begin{split}
K^{N,2}_{k\ell}(x,y) =&(4N)^{  \frac{\beta-\alpha}{2}-1}\left( \frac{x}{4N+t} \right)^{\alpha/2}\left( \frac{y}{4N+s} \right)^{\beta/2}\left(1+\frac{t}{4N}\right)^{-1} \exp\left( -\frac{y}{8N+2s}-\frac{x}{8N+2t} \right) \\
&\times \sum_{j=1}^N \frac{\Gamma(j)}{\Gamma(\beta+j)} \left(\frac{1+t/4N}{1+s/4N}\right)^{j}  \cdot L^\alpha_{j-1}\bigg(\frac{x}{4N+t}\bigg)\cdot L^\beta_{j-1}\left(\frac{y}{4N+s}\right).
\end{split}
\end{equation}
From now on we always assume $j\in \llbracket 1,N \rrbracket$ in the proof. Moreover, we assume $x,y\in [0,M]$ for some $M>0$ and denote by $O(1)$ a quantity which can be bounded by a constant depending on $\alpha,\beta,s,t$ and $M$. We proceed by applying the asymptotic limit of Laguerre polynomial for $j$ large. For $z>0$, define
\begin{align}\label{def:galpha}
g_{\alpha}(z)=z^{-\alpha/2}J_\alpha(2\sqrt{z}).
\end{align}
$g_\alpha(z)$ can be extended to an entire function. See \eqref{equ:gexpansion}. Using $g_\alpha(z)$ and $g_\beta(z)$, we have the following asymptotics for Laguerre polynomials.
\begin{equation}\label{equ:Lasymptotic}
\begin{split}
L^\alpha_{j-1}\bigg(\frac{x}{4N+t}\bigg)=j^\alpha g_\alpha\left( \frac{jx}{4N} \right)+O(j^{\alpha-1}),\\
L^\beta_{j-1}\bigg(\frac{y}{4N+s}\bigg)=j^\beta g_\beta\left( \frac{jy}{4N} \right)+O(j^{\beta-1}). 
\end{split}
\end{equation}
We postpone the proof of \eqref{equ:Lasymptotic} and continue the argument. Combining \eqref{equ:Lasymptotic} with 
\begin{equation*}
	\begin{gathered}
4N+t=4N(1+O(N^{-1})),\quad 4N+s=4N(1+O(N^{-1})),\\
 \exp\left( -\frac{y}{8N+2s}-\frac{x}{8N+2t} \right)=1+O(N^{-1}),\\ \frac{\Gamma(j)}{\Gamma(\beta+j)}=j^{-\beta}(1+O(j^{-1})),
 \quad\text{and}\quad \left(\frac{1+t/4N}{1+s/4N}\right)^{j} =\exp\left( -\frac{(s-t)j}{4N} \right)(1+O(N^{-1})),  
	\end{gathered}
\end{equation*}
we derive
\begin{multline*}
K^{N,2}_{k\ell}(x,y)=\frac{1}{4N} x^{\alpha/2}y^{\beta/2}\sum_{j=1}^N\exp \left( -\frac{(s-t)j}{4N} \right)\left( \frac{j}{4N} \right)^\alpha \times \left[ g_\alpha\left( \frac{jx}{4N} \right)+O(j^{-1}) \right]\\
\times \left[ g_\beta\left( \frac{jy}{4N} \right)+O(j^{-1}) \right]\times (1+O(j^{-1}))   
\end{multline*}
Because $g_\alpha (jx/4N)$ and $g_\beta(jy/4N)$ are bounded, it holds that
 \begin{multline*}
K^{N,2}_{k\ell}(x,y)-\frac{1}{4N} x^{\alpha/2}y^{\beta/2}\sum_{j=1}^N\exp \left( -\frac{(s-t)j}{4N} \right)\left( \frac{j}{4N} \right)^\alpha  g_\alpha\left( \frac{jx}{4N} \right) g_\beta\left( \frac{jy}{4N} \right)
\\= x^{\alpha/2}y^{\beta/2}O(N^{-1}\log N). 
\end{multline*}
Since the function $u\mapsto e^{-(s-t)u}u^\alpha g_\alpha(xu)g_\beta(yu) $ has bounded derivatives on $u\in [0,4^{-1}]$, we have
\begin{multline*}
\frac{1}{4N} \sum_{j=1}^N\exp \left( -\frac{(s-t)j}{4N} \right)\left( \frac{j}{4N} \right)^\alpha  g_\alpha\left( \frac{jx}{4N} \right) g_\beta\left( \frac{jy}{4N} \right)\\
=O(N^{-1})+\int_0^{1/4} du\,e^{-(s-t)u}u^\alpha g_\alpha(xu)g_\beta(yu).
\end{multline*}
As a result,
\begin{align*}
K^{N,2}_{k\ell}(x,y)=&x^{\alpha/2}y^{\beta/2} O(N^{-1}\log N)+x^{\alpha/2}y^{\beta/2} \int_0^{1/4} du\,e^{-(s-t)u}u^\alpha g_\alpha(xu)g_\beta(yu)\\
=&x^{\alpha/2}y^{\beta/2} O(N^{-1}\log N)+\int_0^{1/4} du\,e^{-(s-t)u}u^{-(\beta-\alpha)/2} J_\alpha(2\sqrt{xu}) J_\beta(2\sqrt{yu}).
\end{align*}
We used \eqref{def:galpha} in the second equality. This proves locally uniform convergence in $x$ and $y$.
\vspace{\baselineskip}

Lastly, we prove \eqref{equ:Lasymptotic}.  Let $ {j}' = j+2^{-1}(\alpha-1) $. \cite[(8.22.4),page 199]{S} reads
\begin{align*} 
 L^\alpha_{j-1}(z)\sim \frac{\Gamma(\alpha+j)}{\Gamma(j)} \cdot  e^{z/2} \cdot (z  {j}')^{-\alpha/2} \cdot J_\alpha\left(2\sqrt{ j'   z }\right).
\end{align*}
More precisely, from \cite[(8.22.4)]{S},
\begin{align*}
\left| L^\alpha_{j-1}(z)- \frac{\Gamma(\alpha+j)}{\Gamma(j)} \cdot  e^{z/2} \cdot (z  {j}')^{-\alpha/2} \cdot J_\alpha\left(2\sqrt{ j'   z }\right)\right|\leq Ce^{z/2}z^{2} j^{\alpha} 
\end{align*}
provided $z\leq cj^{-1}$. Here $C$ is a constant depending on $c>0$. Using the function $g_\alpha(z)$ defined in \eqref{def:galpha}, we have
\begin{align*}
\left| L^\alpha_{j-1}(z)- \frac{\Gamma(\alpha+j)}{\Gamma(j)}  \cdot e^{z/2} \cdot   g_\alpha\left(    j' z) \right)\right|\leq Ce^{z/2}z^{2} j^{\alpha}. 
\end{align*}
Then \eqref{equ:Lasymptotic} follows by plugging $z=x/(4N+t)$ and using the smoothness of $g_\alpha(z)$. The proof is finished.  
\end{proof}

\section{Space-like Curves}\label{sec:4}

We consider now the eigenvalues of the Laguerre ensemble along a space-like path. The structure is similar to the one in Section~\ref{sec:3}. We start by computing in Section~\ref{sec:3.1} the transition probability and the joint density of $\vec{X}^N(\alpha,t)$. We prove that the process is determinantal and give its correlation kernel in In Section~\ref{sec:3.2}. Finally, in section~\ref{sec:3.3}, we compute the limit, under the hard edge scaling, of the correlation kernel which gives a proof of  Theorem~\ref{thm:main} (ii).

% We compute the transition probability and the joint density in section~\ref{sec:3.1} . In section~\ref{sec:3.2}, we show that the process is determinantal and calculate the correlation kernel using the Eynard-Mehta Theorem. In section~\ref{sec:3.3}, we compute the limit of the correlation kernel under the hard edge scaling.   
\subsection{Transition Density}\label{sec:3.1}

The goal of this section is to calculate the joint density of Laguerre ensemble along a space-like path. The following lemma gives the Markov property of $\vec{X}^N(\alpha,t)$ now considered along a time-like path. The proof is postponed to Section~\ref{sec:A}.

\begin{lemma}\label{lem:Markov-spacelike}
Fix $N\in\mathbb{N}$, $m\in\mathbb{N}$ and let $\left\{\left(\alpha^{(k)},t^{(k)}\right)\right\}_{k=1}^m$  be a space-like path as in Definition~\ref{def:spacelike}. Let \begin{align}\label{def:Xk-spacelike}
\vec{X}^{(k)}\coloneqq \left\{X^{N }_j\left(\alpha^{(k)},t^{(k)}\right)\right\}_{j=1}^N.
\end{align}   
Then $\vec{X}^{(k)}$ is a Markov process in $k$.
\end{lemma}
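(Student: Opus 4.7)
The plan is to establish a stronger matrix-level statement by induction on $k$ and deduce the eigenvalue-level Markov property as a direct consequence. Let $M_k := A^N(\alpha^{(k)},t^{(k)})$ denote the underlying complex Gaussian matrix of dimension $(N+\alpha^{(k)})\times N$, and let $\Sigma_k$ be the $(N+\alpha^{(k)})\times N$ diagonal matrix whose nonzero entries are $\sqrt{X_j^N(\alpha^{(k)},t^{(k)})}$. The inductive claim I would prove is: conditional on $(\vec{X}^{(1)},\dots,\vec{X}^{(k)})$, one has the representation $M_k = U_k\Sigma_kV_k^*$ where $U_k,V_k$ are independent Haar-distributed unitaries on $\mathrm{U}(N+\alpha^{(k)})$ and $\mathrm{U}(N)$, independent also of $\Sigma_k$. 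The base case $k=1$ is exactly the bi-unitary invariance of the complex Gaussian matrix $M_1$.

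For the inductive step, the definition of a space-like path gives $\alpha^{(k+1)}\leq \alpha^{(k)}$ and $t^{(k+1)}\geq t^{(k)}$, which affords the coupling $M_{k+1}=P_kM_k+\Delta_k$, where $P_k$ extracts the top $N+\alpha^{(k+1)}$ rows and $\Delta_k$ is the $(N+\alpha^{(k+1)})\times N$ matrix of Brownian increments over $[t^{(k)},t^{(k+1)}]$. Since $\Delta_k$ is independent of $\sigma(B_{ij}(t):t\leq t^{(k)})$, it is independent of $(M_1,\dots,M_k)$. Substituting the inductive representation gives $M_{k+1}=\tilde U_k\Sigma_kV_k^*+\Delta_k$, where $\tilde U_k:=P_kU_k$ is a truncated Haar unitary. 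The key manipulation is to use right-unitary invariance of Gaussians to set $B:=\Delta_kV_k$; then $B$ is independent of $V_k$ (and of $\tilde U_k$) with the same law as $\Delta_k$, so $M_{k+1}=(\tilde U_k\Sigma_k+B)V_k^*$. One verifies directly that $\tilde U_k\Sigma_k+B$ has a left-unitary invariant law: for any $L\in\mathrm{U}(N+\alpha^{(k+1)})$, the block-diagonal extension $\tilde L:=\mathrm{diag}(L,I_{\alpha^{(k)}-\alpha^{(k+1)}})\in \mathrm{U}(N+\alpha^{(k)})$ satisfies $L\tilde U_k = P_k(\tilde LU_k) \stackrel{d}{=} \tilde U_k$, while $LB\stackrel{d}{=}B$ by rotational invariance. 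The SVD $\tilde U_k\Sigma_k+B=\hat U\hat\Sigma\hat V^*$ therefore has $\hat U$ Haar-distributed and independent of $\hat\Sigma$. Setting $U_{k+1}:=\hat U$, $\Sigma_{k+1}:=\hat\Sigma$, and $V_{k+1}:=V_k\hat V$, the Haar-ness of $V_k$ and its independence from $(\hat U,\hat V,\hat\Sigma)$ imply that $V_{k+1}$ is Haar on $\mathrm{U}(N)$ and independent of $U_{k+1}$ conditionally on $\Sigma_{k+1}$, closing the induction.

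The Markov property is then immediate: conditional on $(\vec{X}^{(1)},\dots,\vec{X}^{(k)})$, the law of $M_{k+1}$ is determined by $\Sigma_k$ (a function of $\vec{X}^{(k)}$ alone), the independent Haar factors $U_k,V_k$, and the independent Brownian increment $\Delta_k$, so the conditional distribution of $\vec{X}^{(k+1)}=\mathrm{eig}(M_{k+1}^*M_{k+1})$ depends on the past only through $\vec{X}^{(k)}$. The main obstacle is the careful verification of left-unitary invariance of $\tilde U_k\Sigma_k+B$: the absorption of $L\in\mathrm{U}(N+\alpha^{(k+1)})$ into the larger Haar unitary $U_k\in\mathrm{U}(N+\alpha^{(k)})$ through the block-diagonal extension is the feature distinguishing space-like paths (matrix dimension decreasing) from the time-like case (matrix dimension increasing), where the analogous step only involves appending a fresh independent Gaussian block rather than reconciling truncation with Haar measure on a strictly larger group.
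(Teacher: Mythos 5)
Your argument is correct and rests on exactly the same mechanism as the paper's proof: the unitary invariance of the complex Gaussian increments, together with the block-diagonal extension $\tilde L=\mathrm{diag}(L,I_{\alpha^{(k)}-\alpha^{(k+1)}})$ that absorbs a unitary acting on the truncated matrix into the Haar measure on the larger group. The packaging is somewhat different, though. The paper works with the quotient space $\mathcal{V}_{N,\alpha}$ of SVD unitary pairs modulo gauge, promotes $(\vec{Y}^{(k)},V_{(k)})$ to a Markov process, and then proves Lemma~\ref{clm:keyspace}: that the kernel $\overline{F}_{(k)}$, when integrated against the ``uniform'' measure $\nu_k$ in the source, factorizes into a singular-value kernel times $\nu_{k+1}$; this is proved by the Haar characterization of Lemma~\ref{lem:Haar}. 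Your induction establishes the pointwise version of the same fact --- that conditional on the full singular-value history, the unitary factors remain jointly Haar --- and deduces the Markov property directly. Your version has two modest advantages: you handle a general space-like step ($\alpha$ decreases and $t$ increases simultaneously) in a single stroke by writing $M_{k+1}=P_kM_k+\Delta_k$, where the paper splits into the two extreme cases and then remarks that the general case follows by composition; and your use of right-unitary invariance to set $B:=\Delta_kV_k$ so that $M_{k+1}=(\tilde U_k\Sigma_k+B)V_k^*$ makes the separation of the right unitary factor quite transparent. The one thing you gloss over is the non-uniqueness of the SVD: strictly speaking ``$U_k$ is Haar'' must be read as a statement about the equivalence class in $\mathcal{V}_{N,\alpha^{(k)}}$ under the gauge action \eqref{actionappendix}, which is precisely what the paper's quotient-space machinery (Lemmas~\ref{lem:id}--\ref{lem:Haar}) is there to handle; this is a technicality and does not affect the substance of your argument. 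Finally, your concluding remark about why the space-like case requires this stronger conditional-Haar bookkeeping, whereas the time-like case only needs the weaker observation that the marginalized kernel is independent of the source gauge (compare the conclusions of Lemmas~\ref{clm:key} and~\ref{clm:keyspace}), is exactly the right diagnosis of the asymmetry between the two settings.
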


There are again two types of extreme space-like paths. Either $t$ is fixed and only $\alpha$ decreases or $\alpha$ is fixed and only $t$ increases. We first compute the transition probability in these two cases.
\vspace{\baselineskip}

For $\alpha>\beta$ with $\alpha,\beta \in\mathbb{N}_0$ and $t>0$, define
\begin{align}\label{def:W-s}
\overline{W}_t((\alpha,x);(\beta,y) ):=\frac{x^{-\alpha}y^{\beta}(x-y)^{\alpha-\beta-1}}{\Gamma(\alpha-\beta)}\mathbbm{1}(y\leq x).
\end{align}
Note that $\overline{W}_t((\alpha,x);(\beta,y) )$ does not depend on $t$ but we still use this notation to be consistent with $ {W}_t((\alpha,x);(\beta,y) )$ defined in \eqref{def:W}.
 
\begin{lemma}\label{lem:alphachange-s}
Fix $N\in\mathbb{N}$,  $\alpha>\beta$ with $\alpha,\beta \in \mathbb{N}_0 $ and $t\in  (0,\infty)$. For any $\vec{x}\in \mathbb{W}^N_+ $, the transition probability from $\vec{X}^{N}(\alpha,t)= \vec{x} $ to $\vec{X}^{ N }(\beta,t)= \vec{y}$ is given by  
\begin{align}\label{alphachange-s}
\frac{C(N,\beta)}{C(N,\alpha)} \det \left[\overline{W}_t((\alpha,x_i);(\beta,y_j) )\right]_{1\leq i,j\leq N}\frac{\Delta(\vec{y})}{\Delta(\vec{x})}\mathbbm{1}(\vec{y}\in\mathbb{W}^N_+) \prod_{j=1}^N dy_j  .
\end{align}
 $C(N,\alpha)$ and $C(N,\beta)$ are constants defined in \eqref{equ:CNalpha}.
\end{lemma}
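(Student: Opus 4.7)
The plan is to derive this transition density by Bayes's rule, exploiting that every other relevant distribution is already in hand. Specifically, the two Laguerre marginals $p(\vec{X}^N(\alpha,t)=\vec{x})$ and $p(\vec{X}^N(\beta,t)=\vec{y})$ are supplied by \eqref{onept_density}, while the \emph{reverse} transition from $\vec{X}^N(\beta,t)=\vec{y}$ to $\vec{X}^N(\alpha,t)=\vec{x}$ (legitimate since $\beta<\alpha$) is given by Lemma~\ref{lem:alphachange}. Writing
\begin{align*}
p\bigl(\vec{X}^N(\beta,t)=\vec{y}\,\big|\,\vec{X}^N(\alpha,t)=\vec{x}\bigr)=\frac{p\bigl(\vec{X}^N(\alpha,t)=\vec{x}\,\big|\,\vec{X}^N(\beta,t)=\vec{y}\bigr)\,p\bigl(\vec{X}^N(\beta,t)=\vec{y}\bigr)}{p\bigl(\vec{X}^N(\alpha,t)=\vec{x}\bigr)}
\end{align*}
and substituting, the normalization constants assemble into the claimed prefactor $C(N,\beta)/C(N,\alpha)$, the three Vandermondes $\Delta(\vec{y})^2\,\Delta(\vec{x})/(\Delta(\vec{x})^2\,\Delta(\vec{y}))$ collapse to $\Delta(\vec{y})/\Delta(\vec{x})$, and the ratio of the remaining Gaussian--power factors produces $t^{N(\alpha-\beta)}\prod_i x_i^{-\alpha}e^{x_i/t}\prod_j y_j^{\beta}e^{-y_j/t}$ multiplying $\det[W_t((\beta,y_i);(\alpha,x_j))]_{ij}$.

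It then remains to verify the pointwise determinantal identity
\begin{align*}
\det\!\bigl[\overline{W}_t((\alpha,x_i);(\beta,y_j))\bigr]_{ij}=t^{N(\alpha-\beta)}\prod_{i=1}^N x_i^{-\alpha}e^{x_i/t}\prod_{j=1}^N y_j^{\beta}e^{-y_j/t}\,\det\!\bigl[W_t((\beta,y_i);(\alpha,x_j))\bigr]_{ij}.
\end{align*}
Transpose invariance rewrites the right-hand determinant as $\det[W_t((\beta,y_j);(\alpha,x_i))]_{ij}$, so each entry now depends on $(x_i,y_j)$ exactly as $\overline{W}_t$ does. Extracting the constant $\Gamma(\alpha-\beta)^{-1}t^{-(\alpha-\beta)}$ from each of the $N$ columns cancels the prefactor $t^{N(\alpha-\beta)}$, and pulling $x_i^{-\alpha}e^{x_i/t}$ out of row $i$ together with $y_j^{\beta}e^{-y_j/t}$ out of column $j$ produces the telescoping $e^{x_i/t}e^{-y_j/t}\cdot e^{-(x_i-y_j)/t}=1$, leaving each $(i,j)$ entry equal to $x_i^{-\alpha}y_j^{\beta}(x_i-y_j)^{\alpha-\beta-1}\mathbbm{1}(y_j\le x_i)/\Gamma(\alpha-\beta)=\overline{W}_t((\alpha,x_i);(\beta,y_j))$.

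The only real obstacle is careful index bookkeeping: $W_t$ appears with arguments ordered $((\beta,y_i),(\alpha,x_j))$ whereas $\overline{W}_t$ carries $((\alpha,x_i),(\beta,y_j))$, so one has to track how the indicator supports $\mathbbm{1}(y_i\le x_j)$ and $\mathbbm{1}(y_j\le x_i)$ swap under the transpose and align with the row--column factorizations; the Cauchy interlacing $\vec{y}\preceq\vec{x}$ on the support of the answer is then automatic from these indicators together with $\vec{y}\in\mathbb{W}^N_+$. No analytic obstacle arises, so the argument should be essentially mechanical once the Bayes setup is written out.
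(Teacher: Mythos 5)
Your Bayes-rule argument is exactly the paper's approach: the paper forms the joint density of $(\vec{X}^N(\alpha,t),\vec{X}^N(\beta,t))$ by combining the $\beta$-marginal \eqref{onept_density} with the forward transition of Lemma~\ref{lem:alphachange}, then divides by the $\alpha$-marginal, which is the same Bayes inversion you write out. Your row/column extractions and the telescoping of the exponentials correctly supply the algebra that the paper compresses into ``by comparing the two,'' and the final determinantal identity you check is precisely what turns $W_t((\beta,\cdot);(\alpha,\cdot))$ into $\overline{W}_t((\alpha,\cdot);(\beta,\cdot))$.
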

\begin{proof}
From \eqref{onept_density} and Lemma~\ref{lem:alphachange}, the joint density of $\vec{X}^{N,\alpha}(t)= \vec{x} $ and $\vec{X}^{(N,\beta)}(t)= \vec{y}$ is
\begin{align*}
C(N,\beta) t^{-N^2-\alpha N}\prod_{j=1}^N y_j^\beta e^{-x_j/t} \det\left[ \frac{(x_i-y_j)^{\alpha-\beta-1}}{\Gamma(\alpha-\beta)} \right]_{1\leq i,j\leq N} \hspace{-.6em}\Delta (\vec{x})\Delta(\vec{y})  \mathbbm{1}(\vec{x}\in \mathbb{W}_+^N,\vec{y}\in \mathbb{W}_+^N) \prod_{j=1}^N dx_j dy_j.
\end{align*}
Also, the density of $\vec{X}^{N,\alpha}(t)= \vec{x} $ is given by
\begin{align*}
C(N,\alpha) t^{-N^2-\alpha N}\prod_{j=1}^N x_j^{\alpha} e^{-x_j/t} \Delta(\vec{x})^2 \mathbbm{1}(\vec{x}\in \mathbb{W}_+^N) \prod_{j=1}^N dx_j.
\end{align*}
We then derive \eqref{alphachange-s} by comparing the two.
\end{proof}

Note that for $\alpha\in\mathbb{N}_0$ and $t<s$, $(\alpha,t)\prec_{\textup{s}} (\alpha,s)$ and $(\alpha,t)\prec_{\textup{t}} (\alpha,s)$ both hold. In particular, the transition probability from $\vec{X}^{N}(\alpha,t)= \vec{x} $ to $\vec{X}^{N }(\alpha,s)= \vec{y}$ is computed in Lemma~\ref{lem:timechange}. For consistency, we still write
\begin{align}\label{def:T-s}
\overline{T}_{\alpha}((t,x);(s,y) )\coloneqq {T}_{\alpha}((t,x);(s,y) ).
\end{align} 

We now compute the transition probability along a general space-like path in which both $\alpha$ and $t$ can change. We can again consider first consider the case $N=1$ and obtain the full result using the Cauchy--Binet formula. In this case, $\overline{T}_\alpha((t,x);(s,y))dy$ is the transition density from $X^{1 }(\alpha,t)={x}$ to $X^{1 }(\alpha,s)=y$ and $\overline{W}_t((\alpha,x);(\beta,y) )dy$ is the transition density from $X^{1 }(\alpha,t)=x$ to $X^{1 }(\beta,t)=y$. 

Considering $(\alpha,t)\prec_{\textup{t}}(\beta,s)$, define $\overline{Q}((\alpha,t,x);(\beta,s,y))dy$ as the transition density from $X^{1 }(\alpha,t)=x$ to $X^{1 }(\beta,s)=y$. From the Markov property along space-like paths, for all $(\alpha,t)\prec_{\textup{s}} (\beta,s)\prec_{\textup{s}} (\gamma,\tau)$, we have
\begin{align}\label{equ:QQ-s}
\int_{0}^\infty dy\, \overline{Q}  ((\alpha,t,x);(\beta,s,y))\overline{Q}((\beta,s,y);(\gamma,\tau,z))=\overline{Q}((\alpha,t,x);(\gamma,\tau,z)).
\end{align}
Besides, we can write $\overline{Q}((\alpha,t,x);(\beta,s,y))$ as
\begin{equation}\label{equ:Qform-s}
\overline{Q}((\alpha,t,x);(\beta,s,y))=\left\{ \begin{array}{cc}
\overline{T}_\alpha((t,x);(s,y)), &\textup{if}\ \alpha=\beta\ \textup{and}\ t<s,\\
\overline{W}_t((\alpha,x);(\beta,y) ), &\textup{if}\ \alpha>\beta\ \textup{and}\ t=s,\\
\int_0^\infty dz\, \overline{W}_t((\alpha,x);(\beta,z) )\overline{T}_\beta((t,z);(s,y)), &\textup{if}\ \alpha>\beta\ \textup{and}\ t<s.
\end{array} \right.
\end{equation}
Note that we also recover a commutation relation between $\overline{T}_\alpha$ and $\overline{W}_t$ with \eqref{equ:QQ-s} and \eqref{equ:Qform-s}. For any $\alpha>\beta\in\mathbb{N}_0$ and $t<s\in (0,\infty)$, it holds that
\begin{align}\label{equ:commm-s}
\int_0^\infty dz\, \overline{T}_\alpha((t,x);(s,z))\overline{W}_t((\alpha,z);(\beta,y) )= \int_0^\infty dz\, \overline{W}_t((\alpha,x);(\beta,z) )\overline{T}_\beta((t,z);(s,y)). 
\end{align}
An integral representation of $\overline{Q}((\alpha,t,x);(\beta,s,y))$ is given at the end of this section. We are now able to give the transition probability from $\vec{X}^{(k)}$ to $\vec{X}^{(k+1)}$ using $\overline{Q}((\alpha,t,x);(\beta,s,y))$.
\begin{corollary}\label{cor:transition-s}
Fix $N\in\mathbb{N}$ and $m\in\mathbb{N}$. Let $\left\{\left(\alpha^{(k)},t^{(k)}\right)\right\}_{k=1}^m\subset\mathbb{N}_0\times (0,\infty)$  be a space-like sequence as in Definition~\ref{def:spacelike}. Let $\vec{X}^{(k)}$ be defined as in \eqref{def:Xk-spacelike}. Then for all $k\in \llbracket  1,m-1 \rrbracket $ and $\vec{x}^{(k)}\in\mathbb{W}^N_+$,
The transition density from $\vec{X}^{(k)}=\vec{x}^{(k)}$ to $\vec{X}^{(k+1)}=\vec{x}^{(k+1)}$ is given by
\begin{multline}\label{equ:transition-s}
\frac{C(N,\alpha^{(k+1)})}{C(N,\alpha^{(k)})}  \det\left[ \overline{Q}((\alpha^{(k)},t^{(k)},x^{(k)}_i );(\alpha^{(k+1)},t^{(k+1)},x^{(k+1)}_j )) \right]_{1\leq i,j\leq N}
\times \\ \times \frac{\Delta(\vec{x}^{(k+1)})}{\Delta(\vec{x}^{(k)})} \mathbbm{1}(\vec{x}^{(k+1)}\in \mathbb{W}_+^N)\prod_{j=1}^N dx_j^{(k+1)} .
\end{multline}
\end{corollary}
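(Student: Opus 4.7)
The plan is to mirror the proof of Corollary~\ref{cor:transition} by first disposing of the two extreme space-like steps (which are precisely the building blocks of $\overline{Q}$ in \eqref{equ:Qform-s}) and then assembling the general step via the Markov property and the Cauchy--Binet formula.

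First, I would check the two extreme cases. If $\alpha^{(k)}=\alpha^{(k+1)}$ and $t^{(k)}<t^{(k+1)}$, then the step is simultaneously time-like and space-like; the constant prefactor $C(N,\alpha^{(k+1)})/C(N,\alpha^{(k)})$ equals $1$ and $\overline{T}_\alpha=T_\alpha$ by \eqref{def:T-s}, so \eqref{equ:transition-s} reduces to the content of Lemma~\ref{lem:timechange}. If instead $\alpha^{(k)}>\alpha^{(k+1)}$ and $t^{(k)}=t^{(k+1)}$, then $\overline{Q}=\overline{W}_t$ by \eqref{equ:Qform-s} and the identity is exactly Lemma~\ref{lem:alphachange-s}.

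For the general step with $\alpha^{(k)}>\alpha^{(k+1)}$ and $t^{(k)}<t^{(k+1)}$, I would insert the auxiliary intermediate state $(\alpha^{(k+1)},t^{(k)})$, noting that
\[
(\alpha^{(k)},t^{(k)})\prec_{\textup{s}}(\alpha^{(k+1)},t^{(k)})\prec_{\textup{s}}(\alpha^{(k+1)},t^{(k+1)}).
\]
By Lemma~\ref{lem:Markov-spacelike}, the transition density from $\vec{X}^{(k)}=\vec{x}^{(k)}$ to $\vec{X}^{(k+1)}=\vec{x}^{(k+1)}$ is the convolution over $\vec{z}\in\mathbb{W}_+^N$ of the two extreme-case densities established above. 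Substituting those formulas produces a product of two determinants with Vandermonde ratios; the intermediate $\Delta(\vec{z})$ telescopes so that only $\Delta(\vec{x}^{(k+1)})/\Delta(\vec{x}^{(k)})$ survives, and the two constant prefactors combine to $C(N,\alpha^{(k+1)})/C(N,\alpha^{(k)})$. Applying the Cauchy--Binet formula (with the $1/N!$ absorbed by extending the integration from $\mathbb{W}_+^N$ to $\mathbb{R}_+^N$, as in the proof of Lemma~\ref{lem:alphachange}) converts the product of determinants into a single determinant whose $(i,j)$ entry is precisely $\int_0^\infty dz\, \overline{W}_{t^{(k)}}((\alpha^{(k)},x_i^{(k)});(\alpha^{(k+1)},z))\overline{T}_{\alpha^{(k+1)}}((t^{(k)},z);(t^{(k+1)},x_j^{(k+1)}))$, which equals $\overline{Q}((\alpha^{(k)},t^{(k)},x_i^{(k)});(\alpha^{(k+1)},t^{(k+1)},x_j^{(k+1)}))$ by \eqref{equ:Qform-s}.

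The main obstacle is the bookkeeping: one must verify that the two $C(N,\cdot)$ prefactors and the Vandermonde ratios telescope correctly, and that the symmetrization argument justifying Cauchy--Binet on $\mathbb{R}_+^N$ goes through after the Vandermonde ratios have already been pulled out. A minor consistency check (not strictly needed for the proof but reassuring) is that inserting instead the alternative intermediate state $(\alpha^{(k)},t^{(k+1)})$ yields the same answer; this is guaranteed by the commutation identity \eqref{equ:commm-s}.
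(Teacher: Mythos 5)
Your proposal is correct and follows exactly the paper's route: verify the two extreme space-like steps by matching them to Lemma~\ref{lem:timechange} and Lemma~\ref{lem:alphachange-s}, then handle the mixed step by inserting the intermediate vertex $(\alpha^{(k+1)},t^{(k)})$ and combining via the Markov property (equivalently, the semigroup identity \eqref{equ:QQ-s}) and Cauchy--Binet. The bookkeeping of the $C(N,\cdot)$ prefactors and telescoping Vandermondes that you flag is indeed the only content beyond the extreme cases, and you handle it correctly.
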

\begin{proof}
If $\alpha^{(k)}=\alpha^{(k+1)}$ and $t^{(k)}<t^{(k+1)}$, from \eqref{equ:Qform-s},  \eqref{equ:transition-s} is equivalent to \eqref{timechange}. If $\alpha^{(k)}>\alpha^{(k+1)}$ and $t^{(k)}=t^{(k+1)}$, from \eqref{equ:Qform-s},  \eqref{equ:transition-s} is equivalent to \eqref{alphachange-s}. Using \eqref{equ:QQ-s} and the Cauchy–Binet formula, \eqref{equ:transition-s} for general $(\alpha^{(k)},t^{(k)})\prec_{\textup{s}} (\alpha^{(k)},t^{(k)})$ follows.
\end{proof}
Combining \eqref{onept_density} and Corollary~\ref{cor:transition-s}, we obtain the joint density of $\vec{X}^{(k)}$.
\begin{corollary}\label{cor:density-s}
Fix $N\in\mathbb{N}$ and $m\in\mathbb{N}$. Let $\left\{\left(\alpha^{(k)},t^{(k)}\right)\right\}_{k=1}^m\subset\mathbb{N}_0\times (0,\infty)$  be a space-like path as in Definition~\ref{def:spacelike}. Then the joint density of $\vec{X}^{(k)}$ defined in \eqref{def:Xk-spacelike} is given by
\begin{equation}\label{density-s}
\begin{split}
&\left(t^{(1)}\right)^{-N^2}\prod_{j=1}^N  (x^{(1)}_j/t^{(1)})^{\alpha^{(1)}}e^{-x^{(1)}_j /t^{(1)}}\times  \Delta(\vec{x}^{(1)})\\
&\times \prod_{k=1}^{m-1} \det\bigg[ \overline{Q}((\alpha^{(k)},t^{(k)},x^{(k)}_i );(\alpha^{(k+1)},t^{(k+1)},x^{(k+1)}_j )) \bigg]_{1\leq i,j\leq N}\\
&\times C(N,\alpha^{(m)})  \Delta(\vec{x}^{(m)}) \prod_{k=1}^m \mathbbm{1}(\vec{x}^{(k)}\in\mathbb{W}^N_+)  \prod_{k=1}^m\prod_{j=1}^N dx^{(k)}_j.
\end{split}
\end{equation}
\end{corollary}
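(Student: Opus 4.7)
The plan is to apply the Markov property along the space-like path (Lemma~\ref{lem:Markov-spacelike}) to decompose the joint density as a product, then verify that the prefactors telescope correctly to yield \eqref{density-s}. By Lemma~\ref{lem:Markov-spacelike} the joint density of $\left(\vec{X}^{(1)},\vec{X}^{(2)},\dots,\vec{X}^{(m)}\right)$ equals the marginal density of $\vec{X}^{(1)}=\vec{X}^{N}(\alpha^{(1)},t^{(1)})$ times the product of the one-step transition densities from $\vec{X}^{(k)}$ to $\vec{X}^{(k+1)}$. The marginal is given by \eqref{onept_density} with $(\alpha,t)=(\alpha^{(1)},t^{(1)})$, and each one-step transition is given by \eqref{equ:transition-s} from Corollary~\ref{cor:transition-s}.

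The main computation is then to track how the constants and Vandermonde ratios assemble. The normalization prefactors form a telescoping product
\[
C(N,\alpha^{(1)})\cdot \prod_{k=1}^{m-1}\frac{C(N,\alpha^{(k+1)})}{C(N,\alpha^{(k)})}=C(N,\alpha^{(m)}),
\]
which accounts for the single $C(N,\alpha^{(m)})$ appearing in \eqref{density-s}. Similarly, the Vandermonde factors combine through
\[
\Delta(\vec{x}^{(1)})^2\cdot \prod_{k=1}^{m-1}\frac{\Delta(\vec{x}^{(k+1)})}{\Delta(\vec{x}^{(k)})}=\Delta(\vec{x}^{(1)})\,\Delta(\vec{x}^{(m)}),
\]
matching the two Vandermonde factors (at the endpoints $k=1$ and $k=m$) that appear in \eqref{density-s}. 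The remaining pieces — the exponential-Laguerre weight $(t^{(1)})^{-N^2}\prod_j(x^{(1)}_j/t^{(1)})^{\alpha^{(1)}}e^{-x^{(1)}_j/t^{(1)}}$ from \eqref{onept_density}, the determinantal transition densities $\det[\overline{Q}(\cdot;\cdot)]$ from \eqref{equ:transition-s}, and the indicators $\mathbbm{1}(\vec{x}^{(k)}\in\mathbb{W}_+^N)$ — carry over term by term.

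There is no real obstacle here: the proof is purely bookkeeping once Lemma~\ref{lem:Markov-spacelike} and Corollary~\ref{cor:transition-s} are in hand. If any step warrants caution, it is verifying the placement of the $\mathbbm{1}(\vec{x}^{(k)}\in\mathbb{W}_+^N)$ indicators along the chain: Lemma~\ref{lem:Markov-spacelike} ensures that each intermediate $\vec{x}^{(k)}$ must lie in $\mathbb{W}_+^N$ for the transition at step $k$ to be nonzero, so the $m$ indicators all arise naturally from the iterated application of \eqref{equ:transition-s}. Assembling the telescoped prefactors with these terms produces exactly \eqref{density-s}.
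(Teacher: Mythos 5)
Your proof is correct and matches the paper's approach exactly: the paper likewise obtains the joint density by combining the one-point density \eqref{onept_density} at $(\alpha^{(1)},t^{(1)})$ with the one-step transition densities from Corollary~\ref{cor:transition-s}, letting the $C(N,\alpha^{(k)})$ factors and the Vandermonde ratios telescope. Your bookkeeping of the prefactors and indicators is accurate; the only tiny imprecision is attributing the indicator constraints to Lemma~\ref{lem:Markov-spacelike} rather than to the explicit $\mathbbm{1}(\vec{x}^{(k+1)}\in\mathbb{W}_+^N)$ factor carried by \eqref{equ:transition-s}, but this does not affect the argument.
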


Finally, we give an integral representation of $\overline{Q}$ which we prove in Appendix \ref{sec:B}.  
\begin{lemma}\label{lem:Qintegral-s}
Let $(\alpha,t)\prec_{\textup{s}} (\beta,s)$ be space-like pairs in $ \mathbb{N}_0\times (0,\infty)$ as in Definition \ref{def:spacelike} and $x,y\in (0,\infty)$. It holds that
\begin{equation}\label{equ:Qintegral-s}
\begin{split}
\overline{Q}((\alpha,t,x);(\beta,s,y))= &x^{-\alpha/2}y^{ \beta/2} \times\int_0^\infty du\,  e^{-(s-t)u}u^{-(\alpha-\beta)/2}J_{\alpha}(2\sqrt{ xu})J_{\beta}(2\sqrt{  yu}).
\end{split}
\end{equation} 
Here $J_\alpha $ is the Bessel function of the first kind.
\end{lemma}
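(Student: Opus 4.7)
The plan is to verify the integral representation by splitting the space-like situation into the three sub-cases set out in \eqref{equ:Qform-s}: (a) $\alpha=\beta$ with $t<s$, (b) $\alpha>\beta$ with $t=s$, and (c) $\alpha>\beta$ with $t<s$. In each sub-case the right-hand side of \eqref{equ:Qintegral-s} reduces either to a classical Bessel-function integral, or else to a composition of the preceding sub-cases through the convolution identity \eqref{equ:Qform-s}.

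For sub-case (a), I would substitute $u=v^2$ and invoke the Hankel--Nicholson (Weber) identity
\begin{align*}
\int_0^\infty v\, e^{-(s-t)v^2} J_\alpha(2\sqrt{x}\,v)J_\alpha(2\sqrt{y}\,v)\,dv = \frac{1}{2(s-t)}\,e^{-(x+y)/(s-t)}\,I_\alpha\!\left(\frac{2\sqrt{xy}}{s-t}\right).
\end{align*}
Multiplying by $x^{-\alpha/2}y^{\alpha/2}$ reproduces $\overline{T}_\alpha((t,x);(s,y))=T_\alpha((t,x);(s,y))$ exactly. For sub-case (b) the exponential factor disappears, and after the same substitution the integral has the Weber--Schafheitlin form $\int_0^\infty v^{-(\alpha-\beta-1)}J_\alpha(2\sqrt{x}\,v)J_\beta(2\sqrt{y}\,v)\,dv$; I would evaluate this by the classical hypergeometric formula, noting that the ${}_2F_1$ produced has parameters allowing the identity ${}_2F_1(a,b;a;z)=(1-z)^{-b}$, which after the arithmetic simplification of the prefactor delivers $y^{\beta/2}(x-y)^{\alpha-\beta-1}/(\Gamma(\alpha-\beta)\,x^{\alpha/2})$ when $y<x$ and $0$ otherwise. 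Multiplying by $x^{-\alpha/2}y^{\beta/2}$ then matches $\overline{W}_t((\alpha,x);(\beta,y))$ as defined in \eqref{def:W-s}.

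For sub-case (c) the plan is to start from $\overline{Q}=\int_0^\infty dz\,\overline{W}_t((\alpha,x);(\beta,z))\,\overline{T}_\beta((t,z);(s,y))$ (which is one side of \eqref{equ:Qform-s}), substitute the integral representations already established in (a) and (b) for the two factors, swap orders via Fubini, and then collapse the inner $z$-integral using the Hankel orthogonality relation $\int_0^\infty J_\beta(2\sqrt{zv})J_\beta(2\sqrt{zu})\,dz=\delta(v-u)$. This reduces the resulting double integral over $(u,v)$ to the single $u$-integral appearing in \eqref{equ:Qintegral-s}, completing the derivation.

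The hardest step will be sub-case (b): the Weber--Schafheitlin integral is only conditionally convergent, and one has to keep track of two regimes $(y<x$ and $y>x)$ since the hypergeometric expression changes character across $b^2/a^2=1$. A cleaner alternative, which I would keep in reserve, is to deduce (b) by passing to the limit $s\downarrow t$ in (c); the regularizing exponential $e^{-(s-t)u}$ then legitimizes the use of the Hankel delta identity in a genuine (rather than purely distributional) sense, and one recovers $\overline{W}_t$ by dominated convergence. Either route will need the indicator $\mathbbm{1}(y\le x)$ to be read off carefully from the vanishing of the Gegenbauer-type Bessel integral in the wrong regime.
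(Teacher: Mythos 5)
Your decomposition into the three sub-cases matches the paper exactly (Lemmas~\ref{lem:T-s}, \ref{lem:W-s}, \ref{lem:Q-s}), and your treatment of cases (a) and (b) is essentially identical to the paper's: after the substitution $u=v^2$, case (a) is the Weber integral \cite[(6.633-2)]{GR} and case (b) is the Weber--Schafheitlin integral \cite[(6.575-1)]{GR}, both of which the paper simply cites as tabled formulas rather than deriving through a hypergeometric series. You overstate the difficulty of (b): in the parameter regime needed here ($\alpha,\beta\in\mathbb{N}_0$, $\alpha>\beta$) the tabled formula delivers the indicator $\mathbbm{1}(y\le x)$ and the polynomial factor directly, with no need to track the hypergeometric function across $y=x$.

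The genuine gap is in sub-case (c). Your plan is to substitute the representations from (a) and (b), use Fubini to move the $z$-integration to the inside, and then invoke
\begin{align*}
\int_0^\infty J_\beta(2\sqrt{zu})\,J_\beta(2\sqrt{zv})\,dz = \delta(u-v).
\end{align*}
This inner $z$-integral is not absolutely convergent (the integrand decays only like $z^{-1/2}$ for $u\neq v$), so Fubini is not applicable and the delta identity is purely distributional; the regularizing factor $e^{-(s-t)v}$ you mention controls the $v$-integral, not the divergent $z$-integral, so it does not rescue the manipulation. The paper avoids this entirely by recognizing both $\overline{W}_t$ and $\overline{T}_\beta$ as Hankel transforms $H_\beta$ of explicit functions $\overline{F}_{\alpha,\beta,x}$ and $\overline{G}_{\beta,t,s,y}$, checking that those functions lie in $L^2(\mathbb{R}_+)$ via the asymptotics \eqref{equ:asymptoticzero}--\eqref{equ:asymptoticinfinity}, and then applying the Parseval identity \eqref{equ:Parseval} for the Hankel transform (Lemmas~\ref{lem:Hankel-s}, \ref{lem:Q-s}). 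This is the rigorous version of the ``Hankel orthogonality'' idea you are reaching for --- Parseval encodes the same completeness statement but never exchanges an order of integration that Fubini cannot justify. To repair your argument you would need either to carry out the Parseval step as the paper does, or to introduce a cutoff/regularization in $z$ and pass to a limit with dominated convergence, which in practice amounts to reproving the $L^2$ Hankel--Parseval theorem.
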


\subsection{Correlation Kernel}\label{sec:3.2}
We prove in this subsection that $\vec{X}^{(k)}$ defined in \eqref{def:Xk-spacelike} is a determinantal point process and give its correlation kernel.
\vspace{\baselineskip}

For $j\in\mathbb{N}$, $\alpha\in\mathbb{N}_0$ and $t>0$, define the functions
\begin{align}
\overline{\phi}_j(\alpha,t,x)& \coloneqq \Gamma(\alpha+j)\phi_j(\alpha,t,x)=  {\Gamma(j)}t^{-j}(x/t)^{\alpha }e^{-x/t}L^\alpha_{j-1}(t^{-1}x),\label{def:phi-s}\\
\overline{\psi}_j(\alpha,t,x)&\coloneqq \frac{1}{\Gamma(\alpha+j)}\psi_j(\alpha,t,x)=\frac{1}{ \Gamma(\alpha+j) } t^{j-1} L^\alpha_{j-1}(t^{-1}x)\label{def:psi-s}
\end{align}
with $L^\alpha_{j}(x)$ the generalized Laguerre polynomials defined in Section~\ref{sec:2.2}. 

%Note that since the leading coefficient in $L^{\alpha}_{j-1}(x)$ is $\frac{(-1)^{j-1}}{\Gamma(j)}$,
%\begin{align*}
%&\det\left[\overline{\phi}_i(\alpha ,t , x_j)\right]_{1\leq i,j\leq N}=(-1)^{N(N-1)/2}t^{-N^2}  \prod_{j=1}^N (x_j/t)^{\alpha}e^{-x_j /t} \times\Delta(\vec{x}),\\
%&\det\left[\overline{\psi}_i(\alpha ,t , x_j)\right]_{1\leq i,j\leq N}=(-1)^{N(N-1)/2} \prod_{j=1}^N \frac{1}{\Gamma(\alpha+j)\Gamma(j)}   \times\Delta(\vec{x}).
%\end{align*} 
Similarly as in Section \ref{sec:2}, the joint density \eqref{density-s} can be written
\begin{equation}\label{density2-s}
\begin{split}
&\det \left[\overline{\phi}_i(\alpha^{(1)},t^{(1)}, x^{(1)}_j)\right]_{1\leq i,j\leq N}\\
\times &\prod_{k=1}^{m-1} \det\bigg[ \overline{Q}((\alpha^{(k)},t^{(k)},x^{(k)}_i );(\alpha^{(k+1)},t^{(k+1)},x^{(k+1)}_j )) \bigg]_{1\leqq i,j\leq N}\\
\times &\det \left[\overline{\psi}_j(\alpha^{(m)},t^{(m)}, x^{(m)}_i)\right]_{1\leq i,j\leq N}\times \prod_{k=1}^m \mathbbm{1}(\vec{x}^{(k)}\in\mathbb{W}^N_+)  \prod_{k=1}^m\prod_{j=1}^N dx^{(k)}_j.
\end{split}
\end{equation}

We also have the compatibility property of $\overline{\phi}_j(\alpha,t,x)$ and $ \overline{\psi}_j(\alpha,t,x)$ with $\overline{Q}((\alpha,t,x);(\beta,s,y))$.

\begin{lemma}\label{lem:phiQpsi-s}
Let $(\alpha,t)\prec_{\textup{s}} (\beta,s)$ be space-like pairs as in Definition \ref{def:spacelike}. Then the following hold. 
\begin{align}\label{delta-s}
\int_{0}^\infty dx \,   \overline{\phi}_i(\alpha,t,x) \overline{\psi}_j(\alpha,t,x)=\delta_{ij}.
\end{align}
\begin{align}\label{phiQ-s}
\int_{0}^\infty dx \,  \overline{\phi}_j(\alpha,t,x) \overline{Q}((\alpha,t,x);(\beta,s,y))=\overline{\phi}_j(\beta,s,y).
\end{align}
\begin{align}\label{Qpsi-s}
\int_{0}^\infty dy\, \overline{Q}((\alpha,t,x);(\beta,s,y))\overline{\psi}_j(\beta,s,y) =\overline{\psi}_j(\alpha,t,x).
\end{align}
\end{lemma}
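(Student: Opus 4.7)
The plan is to mirror the proof of Lemma~\ref{lem:phiQpsi} and exploit the elementary identities $\overline{\phi}_j(\alpha,t,x)=\Gamma(\alpha+j)\phi_j(\alpha,t,x)$ and $\overline{\psi}_j(\alpha,t,x)=\Gamma(\alpha+j)^{-1}\psi_j(\alpha,t,x)$, so that $\overline{\phi}_i\overline{\psi}_j=\phi_i\psi_j$. Consequently \eqref{delta-s} follows immediately from \eqref{delta}. For \eqref{phiQ-s}, I would invoke \eqref{equ:Qform-s} and the semigroup relation \eqref{equ:QQ-s} to decompose any space-like step into two extreme sub-steps: a pure-time one (fixed $\alpha$, increasing $t$) and a pure-$\alpha$ one (fixed $t$, decreasing $\alpha$).

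For the pure-time sub-step, $\overline{T}_\alpha=T_\alpha$ by \eqref{def:T-s} and $\alpha$ is preserved, so the constant prefactor $\Gamma(\alpha+j)$ passes through the integral and \eqref{phiQ-s} in this case reduces immediately to \eqref{phiT}. For the pure-$\alpha$ sub-step with $\alpha>\beta$, after inserting the explicit formulas for $\overline{\phi}_j$ and $\overline{W}_t$ and changing variables $u=(x-y)/t$, the desired identity reduces to the Laguerre formula
\begin{equation*}
\int_0^\infty e^{-u}u^{\alpha-\beta-1}L^\alpha_{j-1}(z+u)\,du=\Gamma(\alpha-\beta)\,L^\beta_{j-1}(z),
\end{equation*}
which is classical, and dual to the beta-integral \cite[(7.412-1)]{GR} used in the proof of \eqref{phiW}. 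It can be verified directly from the polynomial expansion of $L^\alpha_{j-1}$ together with the identity $\Gamma(\alpha-\beta+m)=(\alpha-\beta)(\alpha-\beta+1)\cdots(\alpha-\beta+m-1)\Gamma(\alpha-\beta)$ and a coefficient match against the series for $L^\beta_{j-1}$.

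For \eqref{Qpsi-s}, rather than recomputing, I would repeat verbatim the Vandermonde/Cauchy--Binet uniqueness argument from the final paragraph of the proof of Lemma~\ref{lem:phiQpsi}. Define $\tilde{\overline{\psi}}_j(x)\coloneqq \int_0^\infty \overline{Q}((\alpha,t,x);(\beta,s,y))\,\overline{\psi}_j(\beta,s,y)\,dy$. Using Corollary~\ref{cor:transition-s}, the factorization \eqref{density2-s}, and the generalized Cauchy--Binet formula, $\det[\tilde{\overline{\psi}}_j(x_i)]_{1\le i,j\le N}$ is a constant multiple of $\Delta(\vec{x})$, so $\tilde{\overline{\psi}}_j(x)$ is a polynomial of degree $j-1$ in $x$. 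Then \eqref{phiQ-s} combined with \eqref{delta-s} yields $\int_0^\infty \overline{\phi}_i(\alpha,t,x)\tilde{\overline{\psi}}_j(x)\,dx=\delta_{ij}$, and this biorthogonality uniquely determines $\tilde{\overline{\psi}}_j$ to coincide with $\overline{\psi}_j(\alpha,t,\cdot)$.

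The main obstacle is locating and verifying the Laguerre integral identity needed for the pure-$\alpha$ case of \eqref{phiQ-s}. Once that is in hand, everything else is bookkeeping that closely parallels the argument in Section~\ref{sec:2.2}, and no genuinely new analytic input is required.
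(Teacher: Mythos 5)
Your proof is correct, but it takes the \emph{dual} route to the one in the paper. The paper proves \eqref{delta-s} exactly as you do (from $\overline{\phi}_i\overline{\psi}_j=\phi_i\psi_j$ and \eqref{delta}), then \emph{directly computes} \eqref{Qpsi-s}: the pure-time sub-step is the special case $\overline{T}_\alpha=T_\alpha$ of \eqref{Qpsi}, and the pure-$\alpha$ sub-step reduces, after a change of variables, to the finite-interval beta integral $\int_0^1 z^\beta(1-z)^{\alpha-\beta-1}L^\alpha_{j-1}(xz/t)\,dz=\frac{\Gamma(\beta+j)\Gamma(\alpha-\beta)}{\Gamma(\alpha+j)}L^\alpha_{j-1}(x/t)$ from \cite[(7.412-1)]{GR}. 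Finally, \eqref{phiQ-s} is obtained by the biorthogonality/Vandermonde uniqueness argument, using \eqref{Qpsi-s} and \eqref{delta-s}. You swap the roles: you directly compute \eqref{phiQ-s} (pure-time reduces to \eqref{phiT}; pure-$\alpha$ reduces to the infinite-interval gamma integral $\frac{1}{\Gamma(\alpha-\beta)}\int_0^\infty e^{-u}u^{\alpha-\beta-1}L^\alpha_{j-1}(z+u)\,du=L^\beta_{j-1}(z)$, the Weyl fractional integral form of the index-lowering), and then deduce \eqref{Qpsi-s} by the uniqueness argument using \eqref{phiQ-s} and \eqref{delta-s}. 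The Laguerre identity you invoke is indeed classical and easy to check: apply $\frac{1}{\Gamma(\mu)}\int_0^\infty e^{-u}u^{\mu-1}(\cdot)\,du$ to the generating function $\sum_n L_n^\alpha(z+u)t^n=(1-t)^{-\alpha-1}e^{-(z+u)t/(1-t)}$; the $u$-integral produces $(1-t)^{\mu}$, yielding $\sum_n L_n^{\alpha-\mu}(z)t^n$. So both approaches rest on one classical Laguerre integral plus a Cauchy--Binet/biorthogonality uniqueness step, and yours is a legitimate alternative; the only presentational cost is that your route requires a formula not already cited in the paper, whereas the paper reuses \cite[(7.412-1)]{GR}, which also appears in the time-like Lemma~\ref{lem:phiQpsi}.
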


\begin{corollary}\label{cor:kernel-s}
Fix $N\in\mathbb{N}$ and $m\in\mathbb{N}$. Let $\left\{\left(\alpha^{(k)},t^{(k)}\right)\right\}_{k=1}^m$  be a space-like sequence as in Definition~\ref{def:spacelike}. Let $\vec{X}^{(k)}$ be defined as in \eqref{def:Xk-spacelike}. Then  $ \{\vec{X}^{(k)} \}_{k=1}^m$ is a determinantal point process. Furthermore, the correlation kernel is given by $\overline{K}^N((\alpha^{(k)},t^{(k)},x);(\alpha^{(\ell)},t^{(\ell)},y))$, where
\begin{multline}\label{equ:kernel-s}
\overline{K}^N((\alpha,t,x);(\beta,s,y))=- \overline{Q}((\alpha,t,x);(\beta,s,y))\mathbbm{1}((\alpha,t)\prec_{\textup{s}} (\beta,s) )\\
+\sum_{j=1}^N \overline{\psi}_j(\alpha,t,x)\overline{\phi}_j(\beta,s,y).
\end{multline}

\end{corollary}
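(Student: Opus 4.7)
The plan is to mirror the proof of Corollary~\ref{cor:kernel} almost verbatim, invoking the Eynard--Mehta theorem with the rewritten joint density \eqref{density2-s} together with the compatibility identities of Lemma~\ref{lem:phiQpsi-s}. The key observation is that \eqref{density2-s} is already in the canonical biorthogonal ensemble form: a product of two boundary determinants
\begin{equation*}
\det[\overline{\phi}_i(\alpha^{(1)},t^{(1)},x^{(1)}_j)]_{1\leq i,j\leq N}\quad\textrm{and}\quad \det[\overline{\psi}_j(\alpha^{(m)},t^{(m)},x^{(m)}_i)]_{1\leq i,j\leq N},
\end{equation*}
interleaved with the transition determinants $\det[\overline{Q}((\alpha^{(k)},t^{(k)},x^{(k)}_i);(\alpha^{(k+1)},t^{(k+1)},x^{(k+1)}_j))]_{1\leq i,j\leq N}$. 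This is exactly the setup for \cite{EM,BR}.

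Applying the Eynard--Mehta theorem then yields a determinantal correlation kernel of the shape
\begin{equation*}
\overline{K}^N((\alpha,t,x);(\beta,s,y)) = -\overline{Q}((\alpha,t,x);(\beta,s,y))\mathbbm{1}((\alpha,t)\prec_{\textup{s}}(\beta,s)) + \sum_{i,j=1}^N \overline{\psi}_i(\alpha,t,x)\,[M^{-1}]_{ij}\,\overline{\phi}_j(\beta,s,y),
\end{equation*}
where $M_{ij}$ is the Gram matrix obtained by propagating $\overline{\psi}_j(\alpha^{(m)},t^{(m)},\cdot)$ all the way back to the first slice via the chain of $\overline{Q}$ kernels and pairing against $\overline{\phi}_i(\alpha^{(1)},t^{(1)},\cdot)$. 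The role of Lemma~\ref{lem:phiQpsi-s} is to collapse this formula: identity \eqref{Qpsi-s} tells us that backward propagation of $\overline{\psi}_j$ along any single segment $(\alpha,t)\prec_{\textup{s}}(\beta,s)$ reproduces $\overline{\psi}_j$ on the previous slice, so by iteration the propagated function at the first slice is $\overline{\psi}_j(\alpha^{(1)},t^{(1)},\cdot)$; identity \eqref{delta-s} then gives $M_{ij}=\delta_{ij}$, so the double sum reduces to $\sum_{j=1}^N \overline{\psi}_j(\alpha,t,x)\overline{\phi}_j(\beta,s,y)$, which is exactly \eqref{equ:kernel-s}.

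The only point that requires genuine care, rather than copy-paste from the time-like case, is orientation: along a space-like path $\alpha$ decreases while $t$ increases, so the ``forward'' direction for $\overline{Q}$ is dictated by $\prec_{\textup{s}}$ and not $\prec_{\textup{t}}$. This is already baked into the definitions \eqref{def:phi-s}, \eqref{def:psi-s}, \eqref{def:W-s}, \eqref{def:T-s} and into the integral relations of Lemma~\ref{lem:phiQpsi-s}, so once we consistently use the overlined objects throughout and the indicator $\mathbbm{1}((\alpha,t)\prec_{\textup{s}}(\beta,s))$ in the first summand, the algebra of Corollary~\ref{cor:kernel} goes through unchanged. I therefore expect no new obstacle beyond this bookkeeping; the substantive inputs (the density factorization in Corollary~\ref{cor:density-s} and the three identities of Lemma~\ref{lem:phiQpsi-s}) do all the work.
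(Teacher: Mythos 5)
Your proposal is correct and follows the same approach as the paper, which proves this corollary in one line by combining the density factorization \eqref{density2-s}, the compatibility identities of Lemma~\ref{lem:phiQpsi-s}, and the Eynard--Mehta theorem. You have simply (and correctly) expanded the standard bookkeeping---the Gram matrix reducing to the identity via \eqref{Qpsi-s} and \eqref{delta-s}---that the paper leaves implicit.
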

\begin{proof}
The assertion follows by combining \eqref{density2-s}, Lemma \ref{lem:phiQpsi-s} and the Eynard--Mehta Theorem \cite{EM,BR}. 
\end{proof}
We now prove Lemma~\ref{lem:phiQpsi-s}.
\begin{proof}[Proof of Lemma~\ref{lem:phiQpsi-s}]
Because $\overline{\phi}_j(\alpha,t,x)=\Gamma(\alpha+j)\phi_j(\alpha,t,x)$ and $\overline{\psi}_j(\alpha,t,x)=\frac{1}{\Gamma(\alpha+j)}\psi_j(\alpha,t,x)$, \eqref{delta-s} is a direct consequence of \eqref{delta}.
\vspace{\baselineskip}

 We turn to \eqref{Qpsi-s}. Note that it is enough to show by \eqref{equ:QQ-s} and \eqref{equ:Qform-s}
\begin{align}
\int_{0}^\infty  dy \, \overline{T}_{\alpha}((t,x);(s,y) )\overline{\psi}_j(\alpha,s,y) =\overline{\psi}_j(\alpha,t,x),\label{Tpsi-s}\\
\int_{0}^\infty   dy\, \overline{W}_t((\alpha,x);(\beta,y) )\overline{\psi}_j(\beta,t,y)=\overline{\psi}_j(\alpha,t,x).\label{Wpsi-s}
\end{align}
Because $\overline{T}_{\alpha}((t,x);(s,y))={T}_{\alpha}((t,x);(s,y) )$, \eqref{Tpsi-s} is a special case of \eqref{Qpsi}. We then focus on \eqref{Wpsi-s}.
\begin{align*}
\int_{0}^\infty  dy\,  \overline{W}_t((\alpha,x);(\beta,y) )\overline{\psi}_j(\beta,t,y) =&\frac{t^{j-1}x^{-\alpha}}{\Gamma(\alpha-\beta)\Gamma(\beta+j)}\int_0^x dy\,  y^\beta(x-y)^{\alpha-\beta-1}L^\alpha_{j-1}(y/t)\\
=&\frac{t^{j-1} }{\Gamma(\alpha-\beta)\Gamma(\beta+j)}\int_0^1 dz\, z^\beta(1-z)^{\alpha-\beta-1}L^\alpha_{j-1}(xz/t). 
\end{align*}
From \cite[(7.412-1),page 809]{GR}
\begin{align*}
\int_0^1 dz\, z^\beta(1-z)^{\alpha-\beta-1}L^\alpha_{j-1}(xz/t) =\frac{\Gamma(\beta+j)\Gamma(\alpha-\beta)}{\Gamma(\alpha+j)} L^{\alpha}_{j-1}(x/t).
\end{align*}
Therefore,
\begin{align*}
\int_{0}^\infty dy\,  \overline{W}_t((\alpha,x);(\beta,y) )\overline{\psi}_j(\beta,t,y) =&\frac{t^{j-1} }{\Gamma(\alpha+j)}    L^{\alpha}_{j-1}(x/t)=\overline{\psi}_j(\alpha,t,x).
\end{align*}
This finishes the proof of \eqref{Qpsi-s}.
\vspace{\baselineskip}

Next, we prove \eqref{phiQ-s}. For $j\in\mathbb{N}$,  denote 
\begin{align*}
\tilde{\phi}_j(y)\coloneqq \int_{0}^\infty \overline{\phi}_j(\alpha,t,x) \overline{Q}((\alpha,t,x);(\beta,s,y))\, dx
\end{align*}
The goals is to show that $\tilde{\phi}_j(y)=\overline{\phi}_j(\beta,s,y).$ Using \eqref{density2-s} and the Cauchy--Binet formula, we have 
\begin{align*}
\det [\tilde{\phi}_j(y_i)]_{1\leq i,j\leq N} =(-1)^{N(N-1)/2}s^{-N^2}  \prod_{j=1}^N (y_j/s)^{\alpha}e^{-y_j /s} \times\Delta(\vec{y})
\end{align*}
and $(y/s)^{-\alpha} e^{y/s} \tilde{\phi}_j(y)$ is a polynomial of degree $j-1$. Since  we have
\begin{align*}
\int_{0}^\infty \tilde{\phi}_i(y) \overline{\psi}_j(\beta,s,y)\, dy=\int_{0}^\infty \overline{\phi}_i(\alpha,t,x)\overline{\psi}_j(\alpha,t,x)\, dx=\delta_{ij} 
\end{align*}
from \eqref{Qpsi-s} and \eqref{delta-s}, we obtain that $\tilde{\phi}_j(y)=\overline{\phi}_j(\beta,s,y).$
\end{proof}

\subsection{Scaling Limit}\label{sec:3.3}
We now perform the hard edge scaling and show that the scaled kernel converges locally uniformly. Consider a fixedd space-like path $\{ (\alpha^{(k)},t^{(k)}) \}$ in $\mathbb{N}_0\times \mathbb{R}$. Note again that $t^{(k)}$ is not required to be positive. Define
\begin{equation}
\overline{ \mathcal{B}}_j^{N,(k)}\coloneqq 4N\cdot X_j^{N,\alpha^{(k)}}\left( 1+ {t^{(k)}}/{4N} \right),\ j\in\llbracket 1,N \rrbracket. 
\end{equation} 
$\overline{\mathcal{B}}_j^{N,(k)}$ is a determinantal point process by Corollary \ref{cor:kernel-s} with correlation kernel \[\overline{K}^N_{\textup{scaled}}((\alpha^{(k)},t^{(k)},x);(\alpha^{(\ell)},t^{(\ell)},y)).
\] 
where
\begin{align*}
\overline{K}^N_{\textup{scaled}}((\alpha ,t ,x);(\beta ,s,y))\coloneqq (4N)^{-1}\overline{K}^N\left(  ( \alpha ,1+ {t }/{4N}, {x}{/4N} );( \beta,1+ {s}/{4N}, {y}{/4N} )\right).
\end{align*}    
We define the gauged kernel as
\begin{multline}\label{def:Kgauge-s}
\overline{K}^N_{\textup{gauge}}((\alpha,t,x);(\beta,s,y))\coloneqq (4N)^{-1}x^{\alpha/2}y^{-\beta/2}\exp\left( -\frac{x}{8N+2t}+\frac{y}{8N+2s}\right) \\
\times  \overline{K}^N\left(  ( \alpha ,1+ t/{4N}, {x}{/4N} );( \beta,1+ t/{4N}, {s}{/4N} )\right)
\end{multline}
We remark that we have the following relation between $\overline{K}^N_{\textup{gauge}}$ and $\overline{K}^N_{\textup{scaled}}$
 $$\overline{K}^N_{\textup{gauge}}((\alpha,t,x);(\beta,s,y))=  \frac{\overline{f}^N(\alpha,t,x)}{\overline{f}^N(\beta,s,y)}\cdot  \overline{K}^N_{\textup{scaled}}((\alpha,t,x);(\beta,s,y)). $$
In particular, $\overline{K}^N_{\textup{gauge}}((\alpha^{(k)},t^{(k)},x);(\alpha^{(\ell)},t^{(\ell)},y))$ is also a correlation kernel for $\overline{\mathcal{B}}_j^{N,(k)}$.
\vspace{\baselineskip}

We want to show the convergence of the kernel $\overline{K}^N_{\textup{gauge}}((\alpha^{(k)},t^{(k)},x);(\alpha^{(\ell)},t^{(\ell)},y))$ to the kernel $\overline{K}^{\textup{Bes}}((\alpha^{(k)},t^{(k)},x);(\alpha^{(\ell)},t^{(\ell)},y))$. For ease of notation, we denote
\begin{multline*}
\overline{K}^{N,1}((\alpha,t,x);(\beta,s,y))\coloneqq -(4N)^{   -1}x^{\alpha/2}y^{-\beta/2} \exp\left( -\frac{x}{8N+2t}+\frac{y}{8N+2s}\right) \\
\times \mathbbm{1}((\alpha,t)\prec_{\textup{s}} (\beta,s) )\times \overline{Q}\left(\left(\alpha,1+  {t}/{4N},  {x}/{4N}\right);\left(\beta,1+  {s}/{4N}, {y}/{4N}\right)\right),
\end{multline*}
\begin{multline*}
\overline{K}^{N,2}((\alpha,t,x);(\beta,s,y))\coloneqq (4N)^{   -1}x^{\alpha/2}y^{-\beta/2} \exp\left( -\frac{x}{8N+2t}+\frac{y}{8N+2s}\right)\\
\times \sum_{j=1}^N \overline{\psi}_j\left(\alpha,1+ {t}/{4N}, {x}/{4N}\right)\cdot \overline{\phi}_j\left(\beta,1+ {s}/{4N}, {y}/{4N}\right),
\end{multline*}
and
\begin{align*}
\overline{K}^{N,1}_{k\ell}(x,y)& \coloneqq  \overline{K}^{N,1}((\alpha^{(k)},t^{(k)},x);(\alpha^{(\ell)},t^{(\ell)},y)),\\ 
\overline{K}^{N,2}_{k\ell}(x,y)& \coloneqq  \overline{K}^{N,2}((\alpha^{(k)},t^{(k)},x);(\alpha^{(\ell)},t^{(\ell)},y)). 
\end{align*}
Similarly, let
\begin{align*}
\overline{K}^{1}((\alpha,t,x);(\beta,s,y))&\coloneqq -\displaystyle\int_{0}^{\infty} du\, u^{-(\alpha-\beta)/2}e^{-(s-t)u}J_\alpha(2\sqrt{xu})J_\beta(2\sqrt{yu})\times   \mathbbm{1}((\alpha,t)\prec_{\textup{s}} (\beta,s) ) ,\\
\overline{K}^{2}((\alpha,t,x);(\beta,s,y))&\coloneqq  \displaystyle\int_{0}^{\infty} du\, u^{-(\alpha-\beta)/2}e^{-(s-t)u}J_\alpha(2\sqrt{xu})J_\beta(2\sqrt{yu}),
\end{align*}
and
\begin{align*}
\overline{K}^{1}_{k\ell}(x,y)&\coloneqq  \overline{K}^{1}((\alpha^{(k)},t^{(k)},x);(\alpha^{(\ell)},t^{(\ell)},y)),\\ 
\overline{K}^{2}_{k\ell}(x,y)&\coloneqq  \overline{K}^{2}((\alpha^{(k)},t^{(k)},x);(\alpha^{(\ell)},t^{(\ell)},y)). 
\end{align*}
We have
\begin{align*}
\overline{K}^N_{\textup{gauge}}((\alpha^{(k)},t^{(k)},x);(\alpha^{(\ell)},t^{(\ell)},y))&=\overline{K}^{N,1}_{k\ell}(x,y)+K^{N,2}_{k\ell}(x,y),\\
\overline{K}^{\textup{Bes}}((\alpha^{(k)},t^{(k)},x);(\alpha^{(\ell)},t^{(\ell)},y))&=\overline{K}^{1}_{k\ell}(x,y)+\overline{K}^{ 2}_{k\ell}(x,y).
\end{align*}

%\begin{align*}
%\widetilde{K}^N((\alpha,t,x);(\beta,s,y))\coloneqq (t^{-1}x)^{\alpha/2}(s^{-1}y)^{-\beta/2}e^{-x/%(2t)+y/(2s)} {K}((\alpha,t,x);(\beta,s,y)).
%\end{align*}

$\{ \overline{K}^{N,1}_{k\ell} \}_{1\leq k,\ell\leq m}$ can be seen as a function from $\mathfrak{X}\times \mathfrak{X}$ with $\mathfrak{X}=\llbracket 1,m \rrbracket\times\mathbb{R}_+$.  
\begin{lemma}\label{lem:E1-s}
$\{ \overline{K}^{N,1}_{k\ell} \}_{1\leq k,\ell\leq m}$ converges to $\{ \overline{K}^{1}_{k\ell} \}_{1\leq k,\ell\leq m}$ locally in $\mathcal{L}_{1|2}$ as defined in Definition~\ref{def:L12}.  
\end{lemma}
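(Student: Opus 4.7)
The plan is to mirror the proof of Lemma~\ref{lem:E1}, exploiting the fact that the space-like integral representation \eqref{equ:Qintegral-s} is cleaner than its time-like counterpart: the Bessel arguments $2\sqrt{xu}, 2\sqrt{yu}$ carry no $(s/t)^{\pm 1}$ correction factors. First, if $k\geq\ell$ then $(\alpha^{(k)},t^{(k)})$ does not space-like-precede $(\alpha^{(\ell)},t^{(\ell)})$, so both $\overline{K}^{N,1}_{k\ell}$ and $\overline{K}^{1}_{k\ell}$ vanish identically, reducing matters to $k<\ell$. Writing $(\alpha,t)=(\alpha^{(k)},t^{(k)})$ and $(\beta,s)=(\alpha^{(\ell)},t^{(\ell)})$, we have $\alpha\geq\beta$ and $t\leq s$.

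The key computation is to plug \eqref{equ:Qintegral-s} into the definition of $\overline{K}^{N,1}$ with $(t,s,x,y)$ replaced by $(1+t/4N,\, 1+s/4N,\, x/4N,\, y/4N)$, then change variable $u\mapsto 4Nv$ in the Bessel integral. All powers of $4N$ coming from the prefactor $(4N)^{-1}$, the factors $x^{\alpha/2}y^{-\beta/2}$ vs.\ $(x/4N)^{-\alpha/2}(y/4N)^{\beta/2}$, the Jacobian, and $(4Nv)^{-(\alpha-\beta)/2}$ cancel exactly, producing the remarkably clean identity
\begin{equation*}
\overline{K}^{N,1}_{k\ell}(x,y)=\exp\!\left(-\frac{x}{8N+2t}+\frac{y}{8N+2s}\right)\overline{K}^{1}_{k\ell}(x,y).
\end{equation*}
Since the exponential prefactor tends to $1$ locally uniformly on $[0,\infty)^2$, the required local $L^2$ convergence reduces to showing that $\overline{K}^{1}_{k\ell}$ is itself locally in $L^2$.

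I would then split into two sub-cases. If $t<s$, the factor $e^{-(s-t)u}$ gives absolute convergence at infinity; writing $J_\alpha(2\sqrt{z})=z^{\alpha/2}g_\alpha(z)$ with $g_\alpha$ entire and bounded on $[0,\infty)$ (cf.~\eqref{def:galpha}), dominated convergence shows $\overline{K}^{1}_{k\ell}$ is continuous on $[0,\infty)^2$, so the convergence is in fact locally uniform and hence locally in $L^2$. The real work is the case $t=s$ (forcing $\alpha>\beta$), where Lemma~\ref{lem:Qintegral-s} together with \eqref{def:W-s} evaluates the integral explicitly to
\begin{equation*}
\overline{K}^{1}_{k\ell}(x,y)=-\frac{x^{-\alpha/2}y^{\beta/2}(x-y)^{\alpha-\beta-1}}{\Gamma(\alpha-\beta)}\mathbbm{1}(y\leq x).
\end{equation*}
The main obstacle is verifying local $L^2$ integrability of this singular kernel: the dangerous region is $(x,y)$ near the origin, where $x^{-\alpha/2}$ could blow up and $(x-y)^{\alpha-\beta-1}$ is singular along $y=x$ when $\alpha-\beta=1$. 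Substituting $y=rx$ on $\{0\leq y\leq x\leq M\}$ converts the squared integrand (with Jacobian) into a constant multiple of $x^{\alpha-\beta-1}\,r^{\beta}(1-r)^{2(\alpha-\beta-1)}\,dr\,dx$, which is integrable precisely because $\alpha-\beta\geq 1$. With $\overline{K}^{1}_{k\ell}\in L^2_{\mathrm{loc}}$ established, a dominated-convergence argument using the uniformly bounded exponential prefactor closes the proof.
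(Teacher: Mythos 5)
Your proof follows the paper's approach exactly: you derive the same gauge identity $\overline{K}^{N,1}_{k\ell}(x,y)=\exp\left(-\frac{x}{8N+2t}+\frac{y}{8N+2s}\right)\overline{K}^{1}_{k\ell}(x,y)$ and reduce to the local $L^2$ integrability of $\overline{K}^{1}_{k\ell}$, which the paper delegates to the proof of Lemma~\ref{lem:E1} and there calls ``straightforward to check''; your explicit substitution $y=rx$ is precisely the omitted verification. One small correction to your motivating remark: $(x-y)^{\alpha-\beta-1}$ is not singular along $y=x$ when $\alpha-\beta=1$ (it is identically $1$), and indeed since $\alpha-\beta\in\mathbb{N}$ this factor never blows up — the only genuine singularity is $x^{-\alpha/2}$ near the origin, which your change of variables handles correctly.
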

\begin{lemma}\label{lem:E2-s}
$\{ \overline{K}^{N,2}_{k\ell} \}_{1\leq k,\ell\leq m}$ converges to $\{ \overline{K}^{2}_{k\ell} \}_{1\leq k,\ell\leq m}$ locally in $\mathcal{L}_{1|2}$ as defined in Definition~\ref{def:L12}. Moreover, $\{ \overline{K}^{N,2}_{k\ell} \}_{1\leq k,\ell\leq m}$ and $\{ \overline{K}^{2}_{k\ell} \}_{1\leq k,\ell\leq m}$ are continuous on the diagonal blocks. See Definition~\ref{def:Kconti}.
\end{lemma}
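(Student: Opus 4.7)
\bigskip

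\noindent\textbf{Proof proposal for Lemma~\ref{lem:E2-s}.} The plan is to follow the template of the proof of Lemma~\ref{lem:E2}, adapting the bookkeeping to the space-like gauge. First I would reduce the two claimed convergences (locally in $\mathcal{L}_{1|2}$, plus continuity on diagonal blocks) to locally uniform convergence of $\overline{K}^{N,2}_{k\ell}(x,y)$ to $\overline{K}^{2}_{k\ell}(x,y)$, by exactly the same argument as in Lemma~\ref{lem:E2}: the limiting kernel is continuous in view of \eqref{equ:asymptoticzero}, so uniform convergence on compacts implies $L^2$ convergence on compacts; on diagonal blocks the kernel is symmetric and non-negative, so its local trace norm equals $\int_0^M \overline{K}^{N,2}_{\ell\ell}(x,x)\,dx$, and uniform convergence then upgrades to trace-norm convergence via \cite[Theorems 2.20 and 3.9]{Simonbook}.

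The bulk of the work is the locally uniform asymptotics. Fix $k,\ell$, write $(\alpha,t)=(\alpha^{(k)},t^{(k)})$, $(\beta,s)=(\alpha^{(\ell)},t^{(\ell)})$ with $\alpha\geq\beta$ and $t\leq s$, and plug the definitions \eqref{def:phi-s}--\eqref{def:psi-s} of $\overline{\phi}_j,\overline{\psi}_j$ into the definition of $\overline{K}^{N,2}_{k\ell}$. A short calculation gives
\begin{equation*}
\overline{K}^{N,2}_{k\ell}(x,y)=\frac{x^{\alpha/2}y^{-\beta/2}}{4N}\,e^{-x/(8N+2t)+y/(8N+2s)}\,e^{-y/(4N+s)}\!\left(\tfrac{y}{4N+s}\right)^{\!\beta}\sum_{j=1}^{N}\frac{\Gamma(j)}{\Gamma(\alpha+j)}\,\frac{(1+t/4N)^{j-1}}{(1+s/4N)^{j}}\,L^{\alpha}_{j-1}\!\left(\tfrac{x}{4N+t}\right)L^{\beta}_{j-1}\!\left(\tfrac{y}{4N+s}\right).
\end{equation*}
Note that the ratio $\overline{\psi}_j\overline{\phi}_j/(\psi_j\phi_j)=1$, so modulo the different prefactors this is essentially the same sum as in Lemma~\ref{lem:E2}, with the Gamma factor $\Gamma(j)/\Gamma(\beta+j)$ replaced by $\Gamma(j)/\Gamma(\alpha+j)$.

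Now I would invoke the Laguerre polynomial asymptotic \eqref{equ:Lasymptotic} already established in the proof of Lemma~\ref{lem:E2}, i.e.\ $L^{\alpha}_{j-1}(x/(4N+t))=j^{\alpha}g_{\alpha}(jx/(4N))+O(j^{\alpha-1})$ and similarly for $L^{\beta}_{j-1}$, together with the elementary expansions $\Gamma(j)/\Gamma(\alpha+j)=j^{-\alpha}(1+O(j^{-1}))$, $(1+t/4N)^{j-1}/(1+s/4N)^{j}=e^{-(s-t)j/(4N)}(1+O(N^{-1}))$, and $(y/(4N+s))^{\beta}e^{-y/(4N+s)}=(y/(4N))^{\beta}(1+O(N^{-1}))$, uniformly for $(x,y)$ in a compact set and $1\leq j\leq N$. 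Substituting, the exponents of $j$ collapse to $j^{\beta-\alpha}\cdot j^{\alpha}=j^{\beta}$ times the boundedness factor $g_{\alpha}(jx/4N)g_{\beta}(jy/4N)$, giving
\begin{equation*}
\overline{K}^{N,2}_{k\ell}(x,y)=x^{\alpha/2}y^{\beta/2}\,\frac{1}{4N}\sum_{j=1}^{N}e^{-(s-t)j/(4N)}\left(\tfrac{j}{4N}\right)^{\!\beta}g_{\alpha}\!\left(\tfrac{jx}{4N}\right)g_{\beta}\!\left(\tfrac{jy}{4N}\right)+x^{\alpha/2}y^{\beta/2}\,O(N^{-1}\log N).
\end{equation*}

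The remaining step is to recognize the Riemann sum. The integrand $u\mapsto e^{-(s-t)u}u^{\beta}g_{\alpha}(xu)g_{\beta}(yu)$ is smooth on $[0,1/4]$ with bounded derivatives uniformly in $(x,y)$ on compacts, so standard Riemann approximation gives
\begin{equation*}
\frac{1}{4N}\sum_{j=1}^{N}e^{-(s-t)j/(4N)}\left(\tfrac{j}{4N}\right)^{\!\beta}g_{\alpha}\!\left(\tfrac{jx}{4N}\right)g_{\beta}\!\left(\tfrac{jy}{4N}\right)=\int_{0}^{1/4}\!du\,e^{-(s-t)u}u^{\beta}g_{\alpha}(xu)g_{\beta}(yu)+O(N^{-1}).
\end{equation*}
Unwinding $g_{\alpha}(z)=z^{-\alpha/2}J_{\alpha}(2\sqrt{z})$ and $g_{\beta}(z)=z^{-\beta/2}J_{\beta}(2\sqrt{z})$, the prefactor $x^{\alpha/2}y^{\beta/2}$ cancels the $x^{-\alpha/2}y^{-\beta/2}$ from the Bessel transforms, and the power of $u$ becomes $u^{\beta-\alpha/2-\beta/2}=u^{-(\alpha-\beta)/2}$, which recovers $\overline{K}^{2}_{k\ell}(x,y)$. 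The main obstacle, just as in Lemma~\ref{lem:E2}, is controlling the error $O(j^{-1})$ in \eqref{equ:Lasymptotic} after summation to obtain the $O(N^{-1}\log N)$ bound locally uniformly in $(x,y)$; this is purely technical and handled exactly as before.
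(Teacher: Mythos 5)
Your proposal is correct and follows essentially the same route as the paper's proof: reduce to locally uniform convergence via the trace/Hilbert–Schmidt argument of Lemma~\ref{lem:E2}, expand $\overline{K}^{N,2}_{k\ell}$ using \eqref{def:phi-s}–\eqref{def:psi-s}, invoke the Laguerre asymptotic \eqref{equ:Lasymptotic}, and recognize the Riemann sum with integrand $u^\beta g_\alpha(xu)g_\beta(yu)$. One small slip in an aside: the claim ``$\overline{\psi}_j\overline{\phi}_j/(\psi_j\phi_j)=1$'' is only true when both factors carry the same index; here $\overline{\psi}_j(\alpha,\cdot)\overline{\phi}_j(\beta,\cdot)/(\psi_j(\alpha,\cdot)\phi_j(\beta,\cdot))=\Gamma(\beta+j)/\Gamma(\alpha+j)$, which is exactly why $\Gamma(j)/\Gamma(\beta+j)$ in the time-like case is replaced by $\Gamma(j)/\Gamma(\alpha+j)$ here — so the subsequent computation is still right, just the stated justification for it is off.
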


We are now ready to prove Theorem~\ref{thm:main} (ii).
\begin{proof}[Proof of Theorem~\ref{thm:main} (ii)]
By definition of $\mathcal{B}(\alpha,t)$, there exists a subsequence of $\overline{\mathcal{B}}^N(\alpha^{(k)},t^{(k)})$ which converges in distribution to $\mathcal{B}(\alpha^{(k)},t^{(k)})$. Note that we abuse the notation and still denote such subsequence by $N$ for simplicity. From Lemma~\ref{lem:E1} and Lemma~\ref{lem:E2}, the correlation kernel of $\overline{\mathcal{B}}^N(\alpha^{(k)},t^{(k)})$ converges locally in $\mathcal{L}_{1|2}$ to $\overline{K}^{\textup{Bes}}$. Finally, all kernels considered are continuous on the diagonal and the assertion follows by Corollary~\ref{cor:det}.
\end{proof}
%\begin{align*}
%\widetilde{K}^N((\alpha,t,x);(\beta,s,y))\coloneqq (t^{-1}x)^{\alpha/2}(s^{-1}y)^{-\beta/2}e^{-x/%(2t)+y/(2s)} {K}((\alpha,t,x);(\beta,s,y)).
%\end{align*}
The rest of the section is devoted to the proof of Lemmas~\ref{lem:E1-s} and \ref{lem:E2-s}.

\begin{proof}[Proof of Lemma~\ref{lem:E1-s}]
Note that it suffices to prove that $\overline{K}^{N,1}_{k\ell}(x,y)$ converges to $\overline{K}^{1}_{k\ell}(x,y)$ locally in $L^2$ for all $k<\ell$ since both $\overline{K}^{N,1}_{k\ell}(x,y)$ and $\overline{K}^{1}_{k\ell}(x,y)$ vanish for $k\geq \ell$. We denote $(\alpha^{(k)},t^{(k)})=(\alpha,t)$ and $(\alpha^{(\ell)},t^{(\ell)})=(\beta,s)$.   By a change of variables combined with the integral representation \eqref{equ:Qintegral-s}, we have
  \begin{multline*}\overline{Q}\left(\left(\alpha,1+  {t}/{4N},  {x}/{4N}\right);\left(\beta,1+  {s}/{4N}, {y}/{4N}\right)\right)
  \\=
 (4N) x^{-\alpha/2}y^{\beta/2}  \int_0^\infty du\,u^{-(\alpha-\beta)/2}  e^{-(s-t)u} J_{\alpha}\left(2\sqrt{ xu }\right)J_{\beta}\left(2\sqrt{ yu }\right)
 =-(4N) x^{-\alpha/2}y^{\beta/2}\overline{K}_{k\ell}^{1}(x,y).
\end{multline*}
Thus, we have
\begin{align*}
\overline{K}_{k\ell}^{N,1}(x,y)=  \exp\left(- \frac{x}{8N+2t}+\frac{y}{8N+2s}\right)\overline{K}_{k\ell}^{1}(x,y).
\end{align*}
Similar to the proof of Lemma~\ref{lem:E1}, $\overline{K}_{k\ell}^{1}(x,y)$ is a locally $L^2$ function. This suffices to imply $\overline{K}_{k\ell}^{N,1}(x,y)$ converges to $\overline{K}_{k\ell}^{1}(x,y)$ locally in $L^2$. 
\end{proof}

\begin{proof}[Proof of Lemma~\ref{lem:E2-s}]
By the same argument as in the proof of Lemma~\ref{lem:E2}, it  suffices to prove that $\overline{K}_{k\ell}^{N,2}(x,y)$ converges to $\overline{K}_{k\ell}^{2}(x,y)$ locally uniformly. Fix $k,\ell\in\llbracket 1,m \rrbracket$ and let $(\alpha^{(k)},t^{(k)})=(\alpha,t)$ and $(\alpha^{(\ell)},t^{(\ell)})=(\beta,s)$. Using the definitions of $\overline{\psi}_j$ and $\overline{\phi}_j$ in \eqref{def:phi-s} and \eqref{def:psi-s}, we have 
\begin{multline}\label{equ:E2N-s}
\overline{K}^{N,2}_{k\ell}(x,y)=(4N)^{-1}x^{\alpha/2}y^{\beta/2} (4N+s)^{-\beta}(1+t/4N)^{-1} \exp\left(-\frac{x}{8N+2t}-\frac{y}{8N+2s}\right)
\\\times 
\sum_{j=1}^N \frac{\Gamma(j)}{\Gamma(\alpha+j)} \left(\frac{1+t/4N}{1+s/4N}\right)^{j}  \cdot L^\alpha_{j-1}\bigg(\frac{x}{4N+t}\bigg)\cdot L^\beta_{j-1}\left(\frac{y}{4N+s}\right)
\end{multline}
We assume that $x,y\in [0,M]$ for some $M>0$, $j\in \llbracket 1,N \rrbracket$ and denote by $O(1)$ a quantity which can be bounded by a constant depending on $\alpha,\beta,s,t$ and $M$. If we combine \eqref{equ:Lasymptotic} with 
\begin{equation*}
	\begin{gathered}
4N+t=4N(1+O(N^{-1})),\quad 4N+s=4N(1+O(N^{-1})),
\\ \exp\left( -\frac{y}{8N+2s}-\frac{x}{8N+2t} \right)=1+O(N^{-1}),\\ \frac{\Gamma(j)}{\Gamma(\alpha+j)}=j^{-\alpha}(1+O(j^{-1})),\quad\text{and}\quad \left(\frac{1+t/4N}{1+s/4N}\right)^{j} =\exp\left( -\frac{(s-t)j}{4N} \right)(1+O(N^{-1})),
	\end{gathered}  
\end{equation*}
we obtain
\begin{multline*}
\overline{K}^{N,2}_{k\ell}(x,y)=\frac{1}{4N} x^{\alpha/2}y^{\beta/2}\sum_{j=1}^N\exp \left( -\frac{(s-t)j}{4N} \right)\left( \frac{j}{4N} \right)^\beta \left[ g_\alpha\left( \frac{jx}{4N} \right)+O(j^{-1}) \right]
\\ \times 
\left[ g_\beta\left( \frac{jy}{4N} \right)+O(j^{-1}) \right] (1+O(j^{-1}))   
\end{multline*}
Since $g_\alpha (jx/4N)$ and $g_\beta(jy/4N)$ are bounded, we have
 \begin{multline*}
\overline{K}^{N,2}_{k\ell}(x,y)-\frac{1}{4N} x^{\alpha/2}y^{\beta/2}\sum_{j=1}^N\exp \left( -\frac{(s-t)j}{4N} \right)\left( \frac{j}{4N} \right)^\beta  g_\alpha\left( \frac{jx}{4N} \right) g_\beta\left( \frac{jy}{4N} \right)
\\= x^{\alpha/2}y^{\beta/2}O(N^{-1}\log N). 
\end{multline*}
Since the function $u\mapsto e^{-(s-t)u}u^\beta g_\alpha(xu)g_\beta(yu) $ has bounded derivatives on $u\in [0,4^{-1}]$, we have
\begin{multline*}
\frac{1}{4N} \sum_{j=1}^N\exp \left( -\frac{(s-t)j}{4N} \right)\left( \frac{j}{4N} \right)^\beta  g_\alpha\left( \frac{jx}{4N} \right) g_\beta\left( \frac{jy}{4N} \right)
\\=O(N^{-1})+\int_0^{1/4} du\,e^{-(s-t)u}u^\beta g_\alpha(xu)g_\beta(yu).
\end{multline*}
Finally,
\begin{align*}
\overline{K}^{N,2}_{k\ell}(x,y)=&x^{\alpha/2}y^{\beta/2} O(N^{-1}\log N)+x^{\alpha/2}y^{\beta/2} \int_0^{1/4} du\,e^{-(s-t)u}u^\beta g_\alpha(xu)g_\beta(yu)\\
=&x^{\alpha/2}y^{\beta/2} O(N^{-1}\log N)+\int_0^{1/4} du\,e^{-(s-t)u}u^{-(\alpha-\beta)/2} J_\alpha(2\sqrt{xu}) J_\beta(2\sqrt{yu})
\end{align*}
where we used \eqref{def:galpha}. This proves locally uniform convergence in $x$ and $y$.
\end{proof}

\section{Exponential Gibbs property}\label{sec:5}
Let $\mathcal{B}=\{\mathcal{B}(\alpha,t),\alpha\in\mathbb{N}_0,t\in\mathbb{R}\}$ be a Bessel field defined in Section~\ref{sec:1.3}. The goal of this section is to show that for any $t\in\mathbb{R}$, $\{\mathcal{B}(\alpha,t), \alpha\in\mathbb{N}_0\}$ satisfies the exponential Gibbs property. We first illustrate the exponential Gibbs property for the Laguerre field in Section~\ref{sec:6.1}. Then in Section~\ref{sec:6.2} we show that the Gibbs property is preserved under the hard edge limit which leads to the exponential Gibbs property for $\{\mathcal{B}(\alpha,t), \alpha\in\mathbb{N}_0\}$.  
\subsection{Exponential Gibbs property for the Laguerre field}\label{sec:6.1}
We need the following lemma, which is known as Sasamoto's trick. We provide a proof for reader's convenience.
\begin{lemma}\label{lem:sasamoto}
Fix $N\in\mathbb{N}$. Let $\vec{a}=(a_1,a_2, \cdots,a_N), \vec{b}=(b_1,b_2, \cdots,b_N)\in\mathbb{W}_+^N$ and assume $a_i\neq b_j$ for $i,j\in\llbracket 1,N \rrbracket $. We have the following equivalence,
\begin{align}\label{eqn:sasamoto}
\det\left[\mathbbm{1}(a_i<b_j)\right]_{1\leq i,j\leq N } =\mathbbm{1} \left(\vec{a}\prec\vec{b}\right).
\end{align}
\end{lemma}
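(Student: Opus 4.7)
My plan is to reduce the $N \times N$ determinant to an explicit computation by performing row operations that exploit the ordering of $\vec{a}$ and $\vec{b}$, and then count cases based on how the $b_j$'s interleave with the $a_i$'s.

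First I would apply the elementary row operations $R_i \leftarrow R_i - R_{i+1}$ for $i = 1, \dots, N-1$, which do not change the determinant. Using that $a_i < a_{i+1}$ and that no $b_j$ equals any $a_i$, the new row $i$ (for $i < N$) has entries $\mathbbm{1}(a_i < b_j) - \mathbbm{1}(a_{i+1} < b_j) = \mathbbm{1}(a_i < b_j < a_{i+1})$, while row $N$ keeps the form $\mathbbm{1}(a_N < b_j)$. Because $\vec{b}$ is strictly increasing, row $i$ of the new matrix has ones exactly at the columns $j$ such that $b_j$ falls in the interval $(a_i, a_{i+1})$ (with $a_{N+1} = +\infty$), and zeros elsewhere.

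Next I would count: let $p_0 = \#\{j : b_j < a_1\}$ and $p_i = \#\{j : a_i < b_j < a_{i+1}\}$ for $i = 1, \dots, N$ (again $a_{N+1} = +\infty$). Then $p_0 + p_1 + \cdots + p_N = N$. The determinant is forced to vanish unless every row $i \in \{1,\dots,N\}$ of the transformed matrix is nonzero, i.e.\ unless $p_i \geq 1$ for all $i \geq 1$. Combined with $\sum p_i = N$, this forces $p_0 = 0$ and $p_1 = p_2 = \cdots = p_N = 1$, which is exactly the interlacing condition $a_1 < b_1 < a_2 < b_2 < \cdots < a_N < b_N$, i.e.\ $\vec{a} \prec \vec{b}$. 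In this case, the $i$-th row of the new matrix has its unique $1$ in column $i$ (since $b_i \in (a_i, a_{i+1})$), so the transformed matrix is the identity and the determinant is $1$. In every other case, at least one row is identically zero and the determinant is $0$.

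This establishes both directions of the equivalence in \eqref{eqn:sasamoto}. I do not anticipate a genuine obstacle here: the main thing to be careful about is handling the boundary column (row $N$, corresponding to the interval $(a_N, +\infty)$) symmetrically with the others, and invoking the no-equality assumption $a_i \neq b_j$ at the right moment so that strict inequalities $a_i < b_j < a_{i+1}$ come out cleanly after the subtraction.
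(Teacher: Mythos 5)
Your proof is correct, and it takes a genuinely different route from the paper's. The paper argues by induction on $N$, splitting on the position of $a_{N}$ relative to $b_{N-1}$ and $b_N$: if $a_N > b_N$ the last row vanishes, if $a_N < b_{N-1}$ the last two columns coincide, and in the remaining case $b_{N-1} < a_N < b_N$ the last row is a unit vector and Laplace expansion reduces to the $(N-1)\times(N-1)$ case. Your argument instead performs the determinant-preserving row operations $R_i \leftarrow R_i - R_{i+1}$ once, observes that the new row $i$ is the indicator of $b_j \in (a_i,a_{i+1})$ (with $a_{N+1}=+\infty$), and then reads off the whole determinant in one shot by a pigeonhole count on the interval occupancies $p_0,\dots,p_N$. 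Both are elementary; yours avoids the induction and makes it immediately visible why the answer is $0$ or $1$ (either some interval $(a_i,a_{i+1})$ is empty, killing a row, or the occupancy pattern forces the transformed matrix to be a permutation matrix — in fact the identity). The one point worth making explicit, which you implicitly use, is that since $\vec b$ is increasing the ones in each transformed row form a contiguous block, so when $p_i=1$ for all $i\geq 1$ and $p_0=0$ the unique $1$ in row $i$ sits in column $i$; you state this correctly. Your use of the hypothesis $a_i\neq b_j$ (so that the indicators are unambiguous after the subtraction) is also in the right place.
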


\begin{proof}
We prove by induction on $N$. The case when $N=1$ is trivial. Assume \eqref{eqn:sasamoto} holds for $N=k$, we proceed to show it as well holds for $N=k+1$. Let $M=\{\mathbbm{1}(a_i<b_j)\}_{1\leq i,j\leq k+1}$. If $a_{k+1}>b_{k+1}$, the last row of $M$ has zero entries. This implies both sides of \eqref{eqn:sasamoto} are zero. If $a_{k+1}<b_k$, the last two columns of $M$ are identical. This again implies both sides of \eqref{eqn:sasamoto} are zero. It remains to consider the case $b_k<a_{k+1}<b_{k+1}$. The last row of $M$ is all zeros except for the $(k+1,k+1)$-th entry, the assertion follows by the induction hypothesis. The proof is finished.
\end{proof}

Recall that for $\alpha\in\mathbb{N}_0$ and $t>0$, $\vec{X}^N(\alpha,t)=( {X}_1^N(\alpha,t), {X}_2^N(\alpha,t),\dots, {X}_N^N(\alpha,t))$ are the ordered eigenvalues of $(A^{N}(\alpha,t))^*  A^{N}(\alpha,t)  $ defined in \eqref{def:A}. In the next lemma,  we show that for any $t_0>0 $, $\vec{X}^N(\alpha,t_0)$ satisfies the exponential Gibbs property.

\begin{lemma}\label{lem:bridgeinterlacing}
Fix $N\in\mathbb{N}$ and $t_0>0$. Then $X^N_j(\alpha,t_0)$, $(j,\alpha)\in\llbracket 1,N \rrbracket\times \mathbb{N}_0$ satisfies the exponential Gibbs property defined in Definition~\ref{def:ExpoGP}. 
\end{lemma}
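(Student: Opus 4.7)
The plan is to combine the Markov property of $\alpha \mapsto \vec{X}^N(\alpha,t_0)$ (provided by Lemma~\ref{lem:Markov} applied to the trivially time-like path $\{(\alpha,t_0)\}_{\alpha=a}^{b}$) with the explicit one-step transition density of Lemma~\ref{lem:alphachange} and Sasamoto's trick (Lemma~\ref{lem:sasamoto}). By Remark~\ref{rmk:ExpoGP}, for finite $N$ it suffices to establish the full-level version \eqref{equ:ZN>0}--\eqref{Gibbs-condition-N} of Definition~\ref{def:ExpoGP}, so I only need to analyze the conditional law of $\{\vec{X}^N(\alpha,t_0)\}_{\alpha=a+1}^{b-1}$ given $\vec{X}^N(a,t_0)=\vec{x}$ and $\vec{X}^N(b,t_0)=\vec{y}$.

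For $\beta=\alpha+1$ the weight in Lemma~\ref{lem:alphachange} collapses to $W_{t_0}((\alpha,x);(\alpha+1,y))=t_0^{-1}e^{-(y-x)/t_0}\mathbbm{1}(x\leq y)$. Pulling $e^{x_i/t_0}$ out of row $i$ and $e^{-y_j/t_0}$ out of column $j$ and applying Sasamoto's trick yields, almost surely,
\[
\det\bigl[W_{t_0}((\alpha,x_i);(\alpha+1,y_j))\bigr]_{1\leq i,j\leq N} \;=\; t_0^{-N}\, e^{\sum_i(x_i-y_i)/t_0}\,\mathbbm{1}(\vec{x}\prec\vec{y}),
\]
so the one-step transition density from $\vec{x}$ to $\vec{y}$ simplifies to $t_0^{-N} e^{-\sum_i(y_i-x_i)/t_0}\,\tfrac{\Delta(\vec{y})}{\Delta(\vec{x})}\,\mathbbm{1}(\vec{x}\prec\vec{y})$. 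Multiplying these kernels along the Markov chain from $\alpha=a$ to $\alpha=b$, both the Vandermonde ratios and the exponential prefactors telescope into factors depending only on the endpoints, and the joint density of $(\vec{X}^N(a,t_0),\dots,\vec{X}^N(b,t_0))$ at $(\vec{x}^{(a)},\dots,\vec{x}^{(b)})$ with $\vec{x}^{(a)}=\vec{x}$, $\vec{x}^{(b)}=\vec{y}$ becomes
\[
f_a(\vec{x})\cdot t_0^{-N(b-a)}\, e^{-\sum_i(y_i-x_i)/t_0}\,\frac{\Delta(\vec{y})}{\Delta(\vec{x})}\,\prod_{\alpha=a}^{b-1}\mathbbm{1}(\vec{x}^{(\alpha)}\prec\vec{x}^{(\alpha+1)}),
\]
where $f_a$ denotes the marginal density of $\vec{X}^N(a,t_0)$. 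Conditioning on the endpoints absorbs every factor in front of the product into the normalization, leaving uniform measure on the strictly interlacing subset of $I_{a,b}(\vec{x},\vec{y})$; since the weakly-but-not-strictly interlacing boundary has Lebesgue measure zero, the resulting conditional law is exactly $\bP^{N,\llbracket a+1,b-1\rrbracket,\vec{x},\vec{y}}$, establishing \eqref{Gibbs-condition-N}.

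For \eqref{equ:ZN>0}, integrating the displayed density over the interior variables shows that the joint density of $(\vec{X}^N(a,t_0),\vec{X}^N(b,t_0))$ is proportional to $Z_{a,b}(\vec{x},\vec{y})$ times a strictly positive function of $(\vec{x},\vec{y})$; by Lemma~\ref{lem:boundaryZ} the set $\{I_{a,b}\neq\emptyset,\, Z_{a,b}=0\}$ is Lebesgue-null, so it is reached with probability zero. I do not anticipate a serious obstacle: the argument reduces structurally to two telescopings, and the one place to be careful is justifying Sasamoto's trick at every $\alpha$-step, which is legitimate because the Markov chain visits configurations with distinct entries across adjacent levels almost surely.
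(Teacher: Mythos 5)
Your proposal is correct and follows essentially the same route as the paper's proof: reduction via Remark~\ref{rmk:ExpoGP}, the one-step transition density from Lemma~\ref{lem:alphachange}, Sasamoto's trick, telescoping of the Vandermonde and exponential factors, and Lemma~\ref{lem:boundaryZ} for the measure-zero boundary. The only differences are presentational (you apply Sasamoto's trick before telescoping, the paper after), and your argument for \eqref{equ:ZN>0} observes directly that the endpoint density is proportional to $Z_{a,b}$ times a positive function, whereas the paper exhibits an element of $I_{a,b}$ via Cauchy interlacing and then invokes absolute continuity together with Lemma~\ref{lem:boundaryZ}.
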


\begin{proof}
From Remark \ref{rmk:ExpoGP}, it suffices to prove \eqref{equ:ZN>0} and \eqref{Gibbs-condition-N}. We start with \eqref{equ:ZN>0}. Fix $\alpha<\beta$ with $\alpha,\beta\in\mathbb{N}$. Let 
$$\vec{x}=(X^N_1(\alpha,t_0),X^N_2(\alpha,t_0),\dots,X^N_N(\alpha,t_0)),$$ $$\vec{y}=(X^N_1(\beta,t_0),X^N_2(\beta,t_0),\dots,X^N_N(\beta,t_0)).$$ 
We aim to show that $Z_{\alpha ,\beta}(\vec{x},\vec{y})>0$ almost surely. Let $\vec{w}=(\vec{w}^{ \alpha+1 },\dots,\vec{w}^{ \beta-1 })$ with $$\vec{w}^{ \gamma }=(X^N_1(\gamma,t_0),X^N_2(\gamma,t_0),\dots,X^N_N(\gamma,t_0)). $$
By the Cauchy interlace theorem, we have $\vec{w}\in I_{\alpha ,\beta}(\vec{x},\vec{y})$. In particular, $I_{\alpha ,\beta}(\vec{x},\vec{y})\neq \emptyset$. Note that from \eqref{density2}, the joint density of $(\vec{x},\vec{y} )$ is absolutely continuous with respect to the Lebesgue measure. In the view of Lemma~\ref{lem:boundaryZ}, we have $Z_{\alpha ,\beta}(\vec{x},\vec{y} )>0$ with probability one.
\vspace{\baselineskip}

Next, we turn to proving  \eqref{Gibbs-condition-N}. Fix $(\vec{x},\vec{y})$ with $Z_{\alpha,\beta}(\vec{x},\vec{y})>0$. 
In view of Lemma~\ref{lem:alphachange}, the conditional density of $\vec{w}$ is proportional to
\begin{align*}
\prod_{\gamma=\alpha+1}^{\beta }\det\left[e^{ {w}^{\gamma}_j- {w}^{\gamma-1}_i} \mathbbm{1}\left( {w}^{\gamma-1}_i\leq  w^{\gamma}_j\right)\right]_{1\leq i, j\leq N}  \frac{\Delta(\vec{w}^{\gamma})}{\Delta(\vec{w}^{\gamma-1})} \prod_{\gamma=\alpha+1}^{\beta-1} \mathbbm{1}\left( \vec{w}^{\gamma}\in\mathbb{W}_+^N\right).
\end{align*}
Here we adopt the convention $\vec{w}^{ \alpha }=\vec{x}$ and $\vec{w}^{ \beta }=\vec{y}$. After canceling the telescoping terms, the above term is proportional to
\begin{align*}
\prod_{\gamma=\alpha+1}^{\beta }\det\left[  \mathbbm{1}\left( {w}^{\gamma-1}_i\leq  w^{\gamma}_j\right)\right]_{1\leq i, j\leq N}    \prod_{\gamma=\alpha+1}^{\beta-1} \mathbbm{1}\left( \vec{w}^{\gamma}\in\mathbb{W}_+^N\right).
\end{align*}
Applying Lemma~\ref{lem:sasamoto}, the above term further reduces to (up to a measure zero set)
\begin{align*}
\prod_{\gamma=\alpha}^{\beta+1}\mathbbm{1}\left( \vec{w}^{\gamma-1}\prec \vec{w}^{\gamma}\right)=  \mathbbm{1}\left(\vec{x} \prec \vec{w}^{(\alpha+1)}\prec \vec{w}^{(\alpha+2)}\prec \cdots \prec\vec{w}^{(\beta-1)}\prec \vec{y} \right).
\end{align*}
This is the indicator function of $\textup{int}(I_{\alpha ,\beta}(\vec{x},\vec{y}))$. From Lemma~\ref{lem:boundaryZ}, $I_{\alpha ,\beta}(\vec{x},\vec{y})\setminus \textup{int}(I_{\alpha ,\beta}(\vec{x},\vec{y}))$ is of measure zero. Therefore \eqref{Gibbs-condition-N} follows.
\end{proof}

\subsection{Proof of Theorem~\ref{thm:main} (iii)}\label{sec:6.2} In this section we seek to prove Theorem \ref{thm:main} (iv), i.e. the Bessel field $\{\mathcal{B}(\alpha , t),\ \alpha\in\mathbb{N}_0\}$ enjoys the exponential Gibbs property. Recall that $\mathcal{B}^N_i(\alpha,t)= 4N\cdot X^N_i\left(\alpha,1+ {t}/{4N}\right)$ is defined in \eqref{eqn:scaledBfield}. Because of Lemma~\ref{lem:bridgeinterlacing}, $\{\mathcal{B}^N (\alpha,t),\ \alpha\in\mathbb{N}_0\}$ satisfies the exponential Gibbs property. Since $t$ is fixed throughout this section, for simplicity, we denote $\mathcal{B}^N(\alpha,t)$ by $\mathcal{B}^N(\alpha)$ and $\mathcal{B}(\alpha,t)$ by $\mathcal{B}^{\infty}(\alpha)$. Recall that a subsequence of $\mathcal{B}^N(\alpha)$ converges to $\mathcal{B}^\infty(\alpha)$ in distribution. We will abuse the notation and assume $\mathcal{B}^N(\alpha)$ converges to $\mathcal{B}^\infty(\alpha)$ in distribution. Besides notational changes, this inconsequential.
\vspace{\baselineskip}

Fix $k\in\mathbb{N}$ and $\alpha <\beta$ with $\alpha,\beta \in\mathbb{N}_0$. Let 
\begin{align*}
\vec{x}=( \mathcal{B}^\infty_1(\alpha),\mathcal{B}^\infty_2(\alpha),\dots ,\mathcal{B}^\infty_k(\alpha) ),\quad\vec{y}=( \mathcal{B}^\infty_1(\beta),\mathcal{B}^\infty_2(\beta),\dots ,\mathcal{B}^\infty_k(\beta) ),
\end{align*}
and
\begin{align*}
\vec{z}=( \mathcal{B}^\infty_{k+1}(\alpha+1),\mathcal{B}^\infty_{k+1}(\alpha+2),\dots ,\mathcal{B}^\infty_{k+1}(\beta-1) ).
\end{align*}
By the Cauchy interlacing theorem and the convergence of $\mathcal{B}^N,$ we have $I_{\alpha,\beta}(\vec{x},\vec{y},\vec{z})\neq \emptyset$ with probability one. Here  $I_{\alpha,\beta}(\vec{x},\vec{y},\vec{z})$ is the interlacing set defined in Definition~\ref{def:interlacingset}. Because of Lemma~\ref{lem:boundaryZ}, we have $Z_{\alpha ,\beta}(\vec{x},\vec{y},\vec{z})>0$ with probability one, where $Z_{\alpha ,\beta}(\vec{x},\vec{y},\vec{z})$ is the Lebesgue measure of $I_{\alpha ,\beta}(\vec{x},\vec{y},\vec{z})$. The goal is to show that the law of $\mathcal{B}^\infty $ is unchanged when one resamples $\mathcal{B}_j^{\infty}(\gamma)$ between $(j,\gamma)\in \llbracket 1,k \rrbracket\times \llbracket \alpha+1 ,\beta-1 \rrbracket $ according to $\mathbb{P}^{k,\llbracket \alpha+1,\beta-1 \rrbracket,\vec{x},\vec{y},\vec{z}}$ defined in Definition~\ref{def:PPP}.% See Figure \ref{figure:BGP} for an illustration.

\vspace{\baselineskip}

From the Skorohod representation theorem \cite[Theorem 6.7]{Bil}, there exists a probability space $(\Omega,\mathcal{F},\mathbb{P})$ on which $\mathcal{B}^N$ for $N\in\mathbb{N}\cup\{\infty\}$ are defined and almost surely $\mathcal{B}^N(\omega)\to\mathcal{B}^\infty(\omega)$ in each entry.\\[0.1cm]
\indent 
For $(j,\ell)\in \llbracket 1,k  \rrbracket\times \mathbb{N}$, let $E_j^\ell $ be a collection of independent random variables with distribution $\mathbb{P}^{1,\llbracket \alpha+1,\beta-1 \rrbracket,0,1}$. In other words, $E^\ell_j$ are distributed uniformly on $I_{\alpha,\beta}(0,1)$ and are independent among $(j,\ell)\in \llbracket 1,k  \rrbracket\times \mathbb{N}$. See Section~\ref{sec:Gibbs} for the definition of $\mathbb{P}^{1,\llbracket \alpha+1,\beta-1 \rrbracket,0,1}$ and $I_{\alpha,\beta}(0,1)$.

We define the $\ell$-th candidate of the resampling trajectory. For $N\in\mathbb{N}\cup \{\infty\}$, define
\begin{align*}
\mathcal{B}^{N,\ell}_j(\gamma)\coloneqq  \left\{ \begin{array}{cc}
\mathcal{B}^N_j(\alpha)+(\mathcal{B}^N_j(\beta)-\mathcal{B}^N_j(\alpha)) E^\ell_j(\gamma) , & \gamma\in \llbracket \alpha+1,\beta-1\rrbracket ,\\[0.1cm]
\mathcal{B}^{N}_j(\gamma),\ & \gamma\notin \llbracket \alpha+1,\beta-1\rrbracket.
\end{array}  \right.	
\end{align*}
For $N\in\mathbb{N}\cup\{\infty\}$, we {accept} the candidate resampling $\mathcal{B}^{N,\ell}$ if it satisfies the interlacing relation
\begin{align*}
\mathcal{B}^{N,\ell} (\alpha)\prec   \mathcal{B}^{N,\ell} (\alpha+1)\prec  \dots\prec   \mathcal{B}^{N,\ell} (\beta-1)\prec   \mathcal{B}^{N,\ell} (\beta).
\end{align*}
   For $N\in \mathbb{N}\cup\{\infty\}$, define $\ell(N)$ to be the minimal value of $\ell$ of which we accept $\mathcal{B}^{N,\ell}$. That is,
\begin{align*}
\ell(N)\coloneqq  \inf\{\ell\in\mathbb{N}\,|\, \mathcal{B}^{N,\ell} (\alpha)\prec   \mathcal{B}^{N,\ell} (\alpha+1)\prec  \dots\prec   \mathcal{B}^{N,\ell} (\beta-1)\prec   \mathcal{B}^{N,\ell} (\beta)\}.
\end{align*} 
Write $\mathcal{B}^{N,\textup{re}}$ for the line ensemble $ \mathcal{B}^{N,\ell(N)}$. 
\begin{figure}
\begin{tikzcd}  
\mathcal{B}^{N,\textup{re}} \arrow[rr,equal ,"(d)","(1)"'] \arrow[dd,"\textup{a.s.}","(2)'"']& & \mathcal{B}^{N } \arrow[dd,"\textup{a.s.}","(2)"']  \\
 & &\\
\mathcal{B}^{\infty,\textup{re}} \arrow[rr,equal ,"(d)","(1)'"'] & & \mathcal{B}^{\infty }
\end{tikzcd}
\caption{(1) is equivalent to the exponential Gibbs property of $\mathcal{B}^N$. The {\bf goal} is to prove (1)', which implies the exponential Gibbs property for $\mathcal{B}^\infty$. (1)' follows from the convergence in (2) and (2)'. (2) follows from the Skorohod representation theorem and (2)' is proved in Lemma \ref{lem:ellconvergence}.}\label{figure:BGP}
\end{figure}

Since $Z_{\alpha,\beta}(\vec{x},\vec{y},\vec{z})>0$ holds almost surely, we have almost surely $\ell(\infty)$ is finite. Suppose that $\ell(N)$ converges to $\ell(\infty)$  almost surely. Then each entry of $\mathcal{B}^{N,\textup{re}}$ converges to the corresponding entry of $\mathcal{B}^{\infty,\textup{re}}$ almost surely. This implies $\mathcal{B}^{N,\textup{re}}$ converges weakly to $\mathcal{B}^{\infty,\textup{re}}$. As a consequence, $\mathcal{B}^{\infty,\textup{re}}$ has the same distribution as $\mathcal{B}^{\infty}$, which is equivalent to the exponential Gibbs property. See Figure \ref{figure:BGP} for an illustration.

\begin{lemma}\label{lem:ellconvergence}
Almost surely $\ell(N)$ converges to $\ell(\infty)$.
\end{lemma}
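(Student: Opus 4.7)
The plan is to leverage the almost sure entrywise convergence from the Skorohod coupling and show that the acceptance decision for each candidate stabilizes as $N \to \infty$. First, since $\mathcal{B}^N \to \mathcal{B}^\infty$ almost surely entrywise and $\mathcal{B}^{N,\ell}_j(\gamma)$ depends continuously (in fact affinely) on $\mathcal{B}^N_j(\alpha)$, $\mathcal{B}^N_j(\beta)$, and the fixed $E^\ell_j(\gamma)$, I deduce $\mathcal{B}^{N,\ell}_j(\gamma) \to \mathcal{B}^{\infty,\ell}_j(\gamma)$ almost surely, for every $\ell \in \mathbb{N}$ and every $(j,\gamma)$ in the finite index set.

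Second, I would establish a \emph{no-boundary} claim: on a full-measure event, for every $\ell \in \mathbb{N}$, the limiting candidate $\mathcal{B}^{\infty,\ell}$ either strictly interlaces everywhere (acceptance), or some strict inequality required by weak interlacing is reversed (unambiguous rejection); the borderline event of weak-but-not-strict interlacing is excluded. To prove this, work on the full-measure event where $Z_{\alpha,\beta}(\vec{x},\vec{y},\vec{z}) > 0$, which in particular forces $\mathcal{B}^\infty_j(\alpha) < \mathcal{B}^\infty_j(\beta)$ for each $j \leq k$. Conditional on $\mathcal{B}^\infty$, the maps $E^\ell_j \mapsto (\mathcal{B}^{\infty,\ell}_j(\gamma))_{\gamma}$ are independent bijective affine maps from $I_{\alpha,\beta}(0,1)$ onto translated, scaled copies, so the joint distribution of the interior entries is absolutely continuous on the corresponding product of interlacing sets. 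By Lemma~\ref{lem:boundaryZ} (applied after the affine change of variables), the subset where weak but not strict interlacing occurs has Lebesgue measure zero, hence probability zero for each fixed $\ell$; countable subadditivity over $\ell \in \mathbb{N}$ yields the full no-boundary statement.

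Given these two ingredients, the conclusion follows directly. For every $\omega$ in the full-measure event and every $\ell$, the acceptance criterion for $\mathcal{B}^{\infty,\ell}$ is determined by finitely many strict inequalities, each an open condition in the entries. Entrywise convergence therefore provides some $N_0(\omega,\ell)$ beyond which $\mathcal{B}^{N,\ell}$ shares the same acceptance status as $\mathcal{B}^{\infty,\ell}$. Since $\ell(\infty)(\omega) < \infty$ almost surely (noted immediately before the lemma, as $Z_{\alpha,\beta}(\vec{x},\vec{y},\vec{z}) > 0$ a.s.), the finite maximum $N_1(\omega) := \max_{1 \leq \ell \leq \ell(\infty)(\omega)} N_0(\omega,\ell)$ guarantees $\ell(N)(\omega) = \ell(\infty)(\omega)$ for all $N \geq N_1(\omega)$, which is the desired almost sure convergence. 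The main obstacle in the argument is the no-boundary claim: one must rule out configurations at which interlacing inequalities fail only weakly, where the acceptance status is unstable under small perturbations of the entries; this is precisely where the absolute continuity of the $E^\ell$'s (independent of $\mathcal{B}^\infty$) together with Lemma~\ref{lem:boundaryZ} is crucial.
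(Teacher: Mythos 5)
Your proof is correct and follows essentially the same strategy as the paper's: fix a full-measure event on which (i) each entry of $\mathcal{B}^{N,\ell}$ converges to that of $\mathcal{B}^{\infty,\ell}$ and (ii) the acceptance/rejection of each $\mathcal{B}^{\infty,\ell}$ is decided by strict inequalities (no boundary ties), then exploit stability of these open conditions to get $\ell(N)\to\ell(\infty)$. The paper states the ``no-boundary'' condition as a short explicit list of non-equalities which hold almost surely because the $E^\ell_j$ are independent of $\mathcal{B}^\infty$; your invocation of Lemma~\ref{lem:boundaryZ} is a slight overreach (that lemma concerns degenerate boundary data, while what you actually need is only that the topological boundary of the fixed convex interlacing set $I_{\alpha,\beta}(\vec{x},\vec{y},\vec{z})$ is Lebesgue-null), but the underlying convexity observation is the same and the argument goes through.
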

\begin{proof}
Let $\mathsf{E}$ be the event such that the following conditions hold

\begin{enumerate}
\item $\ell(\infty)<\infty$
\item  For all $(\ell,j,\gamma)\in\mathbb{N}\times\llbracket 1,k-1 \rrbracket\times
\llbracket \alpha+1,\beta-1 \rrbracket$ 
$$\mathcal{B}^{\infty}_j(\alpha)+ (\mathcal{B}^{\infty}_j(\beta)-\mathcal{B}^{\infty}_j(\alpha)) E^\ell_j(\gamma)\neq  \mathcal{B}^{\infty}_{j+1}(\alpha)+ (\mathcal{B}^{\infty}_{j+1}(\beta)-\mathcal{B}^{\infty}_{j+1}(\alpha)) E^\ell_{j+1}(\gamma).$$
\item  For all $(\ell,\gamma)\in\mathbb{N}\times
\llbracket \alpha+1,\beta-1 \rrbracket$ 
$$\mathcal{B}^{\infty}_k(\alpha)+ (\mathcal{B}^{\infty}_k(\beta)-\mathcal{B}^{\infty}_k(\alpha)) E^\ell_k(\gamma)\neq  \mathcal{B}^{\infty}_k(\gamma).$$
\item Each entry of $\mathcal{B}^N$ converges the corresponding entry of $\mathcal{B}^\infty$.
\end{enumerate}
From the above discussion, conditions (1) and (4) hold with probability one. Conditions (2) and (3) also hold with probability one because each $E^\ell_j$ is independent of $\mathcal{B}^\infty$. In short, $\mathsf{E}$ has probability one.

We will show that when $\mathsf{E}$ occurs, $\ell(N) \to \ell(\infty)$. %This implies that $\mathfrak{H}^{\infty,\textup{re}}  \rightarrow \mathfrak{H}^{\infty }$ almost surely, hence completing the proof.\\ 
From now on we fix a realization $\omega\in \mathsf{E}$ and the constants below may depend on $\omega$. By the definition of $\ell(\infty)$,  
\begin{align*}
\mathcal{B}^{\infty,\ell(\infty)} (\alpha)\prec   \mathcal{B}^{\infty,\ell(\infty)} (\alpha+1)\prec  \dots\prec   \mathcal{B}^{\infty,\ell(\infty)} (\beta-1)\prec   \mathcal{B}^{\infty,\ell(\infty)} (\beta).
\end{align*} 
Because of condition (4), for $N$ large enough we have
\begin{align*}
\mathcal{B}^{N,\ell(\infty)} (\alpha)\prec   \mathcal{B}^{N,\ell(\infty)} (\alpha+1)\prec  \dots\prec   \mathcal{B}^{N,\ell(\infty)} (\beta-1)\prec   \mathcal{B}^{N,\ell(\infty)} (\beta).
\end{align*}
Therefore,
\begin{align*}
\limsup_{N\to\infty}\ell(N)\leq \ell(\infty). 
\end{align*}

On the other hand, because of conditions (2) and (3), for all $\ell\in\llbracket 1,\ell(\infty)-1\rrbracket$,   
\begin{align*}
\mathcal{B}^{\infty,\ell } (\alpha)\preceq   \mathcal{B}^{\infty,\ell } (\alpha+1)\preceq  \dots\preceq   \mathcal{B}^{\infty,\ell } (\beta-1)\preceq   \mathcal{B}^{\infty,\ell } (\beta) 
\end{align*} 
fails. Because this is an open condition, for $N$ large enough, we have
\begin{align*}
\mathcal{B}^{N,\ell } (\alpha)\preceq   \mathcal{B}^{N,\ell } (\alpha+1)\preceq  \dots\preceq   \mathcal{B}^{N,\ell } (\beta-1)\preceq   \mathcal{B}^{N,\ell } (\beta) 
\end{align*} 
fails as well. As a consequence,
\begin{align*}
\liminf_{N\to\infty}\ell(N)\geq \ell(\infty).
\end{align*}
Hence $\ell(N)$ converges to $\ell(\infty)$  and the proof is finished.
\end{proof} 

\section{Markov Property}\label{sec:A}
The goal of this section is to show the Markov property of the Laguerre ensemble along a time-like or a space-like path, i.e. Lemma~\ref{lem:Markov} and Lemma~\ref{lem:Markov-spacelike}. The proof is based on singular value decomposition and the unitary invariance of complex Brownian motions.  In section~\ref{sec:A.1}, we collect basic properties of singular value decomposition which we need later. Lemma~\ref{lem:Markov} is proved in Section~\ref{sec:A.2} and Lemma~\ref{lem:Markov-spacelike} is proved in Section~\ref{sec:A.3}.

\subsection{Singular Value Decomposition}\label{sec:A.1}
We consider the singular value decomposition (SVD) in this section. Using SVD, we can assign a matrix with its singular values and a pair of unitary matrices. Because unitary matrices in SVD are not unique, we consider the corresponding quotient space to obtain a one-to-one identification.
\vspace{\baselineskip}

For $N\in\mathbb{N}$ and $\alpha\in\mathbb{N}_0$, let $\mathbf{Mat}_+^{N,\alpha}$ be the space of $(N+\alpha)\times N$ complex matrices with positive and distinct singular values. Let $A\in \mathbf{Mat}_+^{N,\alpha}$ and  $ A=U_1\Sigma U_2^{*} $ be its singular value decomposition. In other words, $U_1\in \mathcal{U}(N+\alpha )$,\ $U_2\in \mathcal{U}(N )$ are unitary matrices and $\Sigma$ is a $(N+\alpha)\times N$ matrix with zero entries except on the diagonal. By requiring the entries in $\Sigma$ to be increasing from top-left to bottom-right, we can identify $\Sigma$ with an element in the Weyl chamber $\mathbb{W}^N_+$ defined in \eqref{def:Weylchamber}. Moreover, the pair $(U_1,U_2)$ is unique up to the action
\begin{align}\label{actionappendix}
(U_1',U_2')= ( U_1 R',U_2 R),  
\end{align}
with
\begin{align*}
R=\textup{diag}(e^{\mathrm{i}\theta_1},e^{\mathrm{i}\theta_2},\dots,e^{\mathrm{i}\theta_N}),\ \textup{and}\  {R}'=\left[ \begin{array}{cc}
R & 0\\
0 & Q
\end{array} \right], Q\in\mathcal{U}\left(\alpha \right).
\end{align*}
We denote $u=(U_1,U_2)$ and write $v=[u]$ for equivalent the class of $u$ under the action \eqref{actionappendix}. Let $\mathcal{U}_{N,\alpha}\coloneqq \mathcal{U}(N+\alpha)\times \mathcal{U}(N)$ and $\mathcal{V}_{N,\alpha}$ be the quotient space of $\mathcal{U}_{N,\alpha}$ by the action \eqref{actionappendix}. Let
\begin{align}\label{equ:proj}
\pi:\mathcal{U}_{N,\alpha}\to\mathcal{V}_{N,\alpha}
\end{align}
be the projection and equip $\mathcal{V}_{N,\alpha}$ with the quotient topology. From the above discussion, we have the following lemma.
\begin{lemma}\label{lem:id}
The exists a homeomorphism between $\mathbb{W}_+^N\times \mathcal{V}_{N,\alpha}$ and $\mathbf{Mat}_+^{N,\alpha}$. 
\end{lemma}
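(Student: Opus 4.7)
The plan is to exhibit the homeomorphism explicitly. I would define
\[
\Phi\colon \mathbb{W}_+^N\times\mathcal{U}_{N,\alpha}\longrightarrow \mathbf{Mat}_+^{N,\alpha},\qquad \Phi(\vec\sigma,U_1,U_2)=U_1\Sigma(\vec\sigma)U_2^*,
\]
where $\Sigma(\vec\sigma)$ denotes the $(N+\alpha)\times N$ matrix whose only nonzero entries are $\sigma_1,\ldots,\sigma_N$ placed on the diagonal of the top $N\times N$ block. The map $\Phi$ is manifestly continuous. The first step is to check invariance under the action \eqref{actionappendix}, so that $\Phi$ descends to a continuous map $\bar\Phi\colon \mathbb{W}_+^N\times\mathcal{V}_{N,\alpha}\to\mathbf{Mat}_+^{N,\alpha}$ through the quotient projection $\pi$ from \eqref{equ:proj}. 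Invariance reduces to the identity $R'\Sigma(\vec\sigma)R^*=\Sigma(\vec\sigma)$, which holds because $\Sigma(\vec\sigma)$ is supported on the top $N\times N$ block, where $R'$ acts by multiplication by the diagonal $R$, which commutes with $\mathrm{diag}(\vec\sigma)$.

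Next I would establish that $\bar\Phi$ is a bijection. Surjectivity is the standard existence of SVD. For injectivity, suppose $A=U_1\Sigma(\vec\sigma)U_2^*=U_1'\Sigma(\vec\sigma')(U_2')^*$ lies in $\mathbf{Mat}_+^{N,\alpha}$. Then $A^*A=U_2\,\mathrm{diag}(\sigma_i^2)\,U_2^*$ is Hermitian with strictly positive, pairwise distinct eigenvalues; this pins down $\vec\sigma$ uniquely and, because each eigenspace is one-dimensional, forces $U_2'=U_2R$ for some diagonal unitary $R$. Writing out $AU_2=U_1\Sigma(\vec\sigma)$ column-by-column and using $\sigma_i>0$ then identifies the first $N$ columns of $U_1$ as $(U_1)_j=\sigma_j^{-1}A(U_2)_j$, which inherit exactly the phase twist $R$; the remaining $\alpha$ columns of $U_1$ form an orthonormal basis of the orthogonal complement of their span, unique up to an element of $\mathcal{U}(\alpha)$. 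This is precisely the block action \eqref{actionappendix}, so $[(U_1,U_2)]=[(U_1',U_2')]$ in $\mathcal{V}_{N,\alpha}$.

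To upgrade this continuous bijection to a homeomorphism, the cleanest route is properness. Since $\mathcal{U}_{N,\alpha}=\mathcal{U}(N+\alpha)\times\mathcal{U}(N)$ is compact, so is the quotient $\mathcal{V}_{N,\alpha}$. Given a compact $K\subset\mathbf{Mat}_+^{N,\alpha}$ and any sequence $(\vec\sigma_n,[u_n])\in\bar\Phi^{-1}(K)$, the ordered singular values depend continuously on $A_n=\bar\Phi(\vec\sigma_n,[u_n])\in K$ (via the continuous functional calculus on $A_n^*A_n$), so $\vec\sigma_n$ stays in a compact subset of $\mathbb{W}_+^N$, and compactness of $\mathcal{V}_{N,\alpha}$ then extracts a convergent subsequence of $[u_n]$. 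Thus $\bar\Phi$ is proper, and a continuous proper bijection between locally compact Hausdorff spaces is closed, hence a homeomorphism.

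The main obstacle is the injectivity step: carefully tracking the two sources of ambiguity --- the diagonal phase in $U_2$, inherited by the first $N$ columns of $U_1$, versus the $\mathcal{U}(\alpha)$ freedom in the remaining columns of $U_1$ --- and verifying that they assemble into exactly the block action \eqref{actionappendix} and nothing more. Once that is done, continuity of the inverse is essentially automatic via the properness argument above, which sidesteps the more delicate perturbation-theoretic question of continuous local eigenvector sections.
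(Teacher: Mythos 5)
Your construction of the map, the descent to the quotient, and the appeal to SVD for surjectivity are the same as the paper's. Where you differ is in the final step: the paper observes that both $\mathbb{W}_+^N\times\mathcal{V}_{N,\alpha}$ and $\mathbf{Mat}_+^{N,\alpha}$ are real manifolds of the same dimension $2N(N+\alpha)$ and invokes Brouwer's Invariance of Domain to conclude that a continuous bijection between them is a homeomorphism. You instead prove the map is proper --- using continuity of the ordered singular values to control the $\mathbb{W}_+^N$-factor and compactness of $\mathcal{V}_{N,\alpha}$ (inherited from the compact group $\mathcal{U}_{N,\alpha}$) for the other --- and then use the fact that a proper continuous bijection between locally compact Hausdorff spaces is closed, hence a homeomorphism. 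The properness route is more elementary in that it avoids a dimension count on the quotient manifold and does not rely on a nontrivial algebraic-topology input; the paper's route is shorter once one is willing to cite Invariance of Domain. You also supply an explicit injectivity argument (tracking the $\mathcal{U}(1)^N$ phase ambiguity in $U_2$ and how it propagates to the first $N$ columns of $U_1$, with the residual $\mathcal{U}(\alpha)$ freedom in the remaining columns), whereas the paper treats injectivity as immediate from the construction of $\mathcal{V}_{N,\alpha}$; your added detail is correct and matches the block form of the action \eqref{actionappendix}. Both arguments are valid.
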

\begin{proof}
The natural map  $\mathcal{F}:\mathbb{W}^N_+\times \mathcal{U}_{N,\alpha}\to \mathbf{Mat}_+^{N,\alpha}$ defined by
\begin{align*}
\mathcal{F}(\vec{x},U_1,U_2)\coloneqq U_1\left[ \begin{array}{cccc}
x_1 & 0 & \cdots & 0\\
0 & x_2 & \cdots & 0\\
\vdots &\vdots &\ddots &\vdots \\
0 & 0 & \cdots & x_N
\end{array} \right]U_2^*.
\end{align*} 
is continuous. By the construction of $\mathcal{V}_{N,\alpha}$, $\mathcal{F}$ descends to a continuous, injective map from $\mathbb{W}^N_+\times \mathcal{V}_{N,\alpha}$ to $\mathbf{Mat}_+^{N,\alpha}$. We abuse the notation and still denote it by $\mathcal{F}$. The singular value decomposition implies $\mathcal{F}$ is surjective. Because both $\mathcal{V}_{N,\alpha}$ and $\mathbf{Mat}_+^{N,\alpha}$ are of real $2N(N+\alpha)$-dimensional, by Invariance of Domain \cite[Theorem 2B.3]{H}, $\mathcal{F}$ is a homeomorphism between $\mathbb{W}_+^N\times \mathcal{V}_{N,\alpha}$ and $\mathbf{Mat}_+^{N,\alpha}$.
\end{proof}

Next, we want to define the ``uniform" measure on $\mathcal{V}_{N,\alpha}$ which is induced from the Haar measure on $\mathcal{U}_{N,\alpha}$ and give it a characterization.

 Let $\mathfrak{M}(\mathcal{U}_{N,\alpha})$ and $\mathfrak{M}(\mathcal{V}_{N,\alpha})$ be the space of signed, finite Borel measures on $\mathcal{U}_{N,\alpha}$ and $\mathcal{V}_{N,\alpha}$ respectively. Let $\mathfrak{M}_{\textup{inv}}(\mathcal{U}_{N,\alpha})$ be the subsapce of $\mathfrak{M}(\mathcal{U}_{N,\alpha})$ consisting of measures which are invariant under the action \eqref{actionappendix}.
\begin{lemma}\label{lem:iso}
The projection \eqref{equ:proj} induces an isomorphism
\begin{equation}\label{equ:iso}
\pi_*:\mathfrak{M}_{\textup{inv}}(\mathcal{U}_{N,\alpha})\to \mathfrak{M}(\mathcal{V}_{N,\alpha}).
\end{equation}
\end{lemma}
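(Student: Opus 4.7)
The plan is to treat $G \coloneqq \mathcal{U}(N)\times\mathcal{U}(\alpha)$, realized via the action \eqref{actionappendix} on $\mathcal{U}_{N,\alpha}$, as a compact Lie group acting continuously and freely on the compact Hausdorff space $\mathcal{U}_{N,\alpha}$. Since $G$ is compact, the quotient $\mathcal{V}_{N,\alpha} = \mathcal{U}_{N,\alpha}/G$ is again compact Hausdorff and the projection $\pi$ is continuous, open, and proper. From this the pushforward $\pi_{*}$ is a well-defined continuous linear map from $\mathfrak{M}(\mathcal{U}_{N,\alpha})$ to $\mathfrak{M}(\mathcal{V}_{N,\alpha})$, and I only need to show that its restriction to $\mathfrak{M}_{\textup{inv}}(\mathcal{U}_{N,\alpha})$ is both injective and surjective.

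The main tool is an averaging operator. Let $h_G$ denote the normalized Haar measure on $G$. For $f\in C(\mathcal{U}_{N,\alpha})$, define
\begin{equation*}
\tilde f(u)\coloneqq \int_G f(g\cdot u)\, dh_G(g).
\end{equation*}
Because the action is continuous and $G$ is compact, $\tilde f$ is continuous and $G$-invariant, hence descends to a continuous function $\bar f$ on $\mathcal{V}_{N,\alpha}$ with $\tilde f=\bar f\circ \pi$. Moreover, the averaging map $f\mapsto \bar f$ is a bounded linear surjection $C(\mathcal{U}_{N,\alpha})\to C(\mathcal{V}_{N,\alpha})$: any $h\in C(\mathcal{V}_{N,\alpha})$ lifts to $h\circ\pi\in C(\mathcal{U}_{N,\alpha})$, which is already $G$-invariant and therefore satisfies $\overline{h\circ\pi}=h$.

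For injectivity, suppose $\mu\in\mathfrak{M}_{\textup{inv}}(\mathcal{U}_{N,\alpha})$ and $\pi_*\mu=0$. For any $f\in C(\mathcal{U}_{N,\alpha})$, Fubini together with the $G$-invariance of $\mu$ yields
\begin{equation*}
\int f\, d\mu = \int \tilde f\, d\mu = \int \bar f\, d(\pi_*\mu)=0,
\end{equation*}
so $\mu=0$ by the Riesz--Markov theorem. For surjectivity, given $\nu\in\mathfrak{M}(\mathcal{V}_{N,\alpha})$ I define the linear functional $L_\nu(f)\coloneqq \int_{\mathcal{V}_{N,\alpha}} \bar f\, d\nu$ on $C(\mathcal{U}_{N,\alpha})$. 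It is bounded by $\|\nu\|\cdot\|f\|_\infty$, so Riesz--Markov yields a unique signed Borel measure $\mu\in\mathfrak{M}(\mathcal{U}_{N,\alpha})$ with $\int f\,d\mu=L_\nu(f)$. Since $\overline{f(g\cdot)}=\bar f$ for every $g\in G$, the measure $\mu$ is $G$-invariant; taking $f=h\circ\pi$ for $h\in C(\mathcal{V}_{N,\alpha})$ gives $\bar f=h$ and hence $\int h\, d(\pi_*\mu)=\int h\,d\nu$, which forces $\pi_*\mu=\nu$.

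The step I expect to require the most care is the first one, namely verifying the topological hypotheses that make the averaging construction go through: that $\mathcal{V}_{N,\alpha}$ with the quotient topology is a compact Hausdorff space (which uses compactness of $G$ together with closedness of the orbit equivalence relation), and that $\bar f$ is continuous on this quotient so that $L_\nu$ is well-defined on all of $C(\mathcal{U}_{N,\alpha})$. Once these structural facts are in place, the injectivity/surjectivity argument via Riesz--Markov and Haar averaging is routine, and linearity of $\pi_*$ and of its inverse is immediate from the definitions.
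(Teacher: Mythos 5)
Your approach is correct and genuinely different from the paper's. The paper proves injectivity via Hahn decomposition of a signed invariant measure into invariant positive parts, and proves surjectivity by exploiting the principal bundle structure of $\pi$ explicitly — choosing local trivializations, fiber-wise Haar measure, and gluing with a partition of unity. You instead run both directions through the single device of Haar averaging $f \mapsto \tilde f$ combined with Riesz--Markov duality, which is more uniform and arguably cleaner because it sidesteps the local trivialization and partition-of-unity bookkeeping entirely. What you pay for this is having to verify the point-set topology up front (that $\mathcal{V}_{N,\alpha}$ is compact Hausdorff, that $\bar f$ descends continuously), whereas the paper's Hahn-decomposition argument for injectivity is essentially measure-theoretic and does not need those facts, and its surjectivity argument leans on the bundle structure that is used elsewhere in the section anyway.

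One small but real slip: the group acting via \eqref{actionappendix} is $\mathcal{U}(1)^N\times\mathcal{U}(\alpha)$ (diagonal phase matrices $R$ together with $Q\in\mathcal{U}(\alpha)$), not $\mathcal{U}(N)\times\mathcal{U}(\alpha)$ as you wrote. This is exactly the structure group the paper identifies in its surjectivity step. Since $\mathcal{U}(1)^N\times\mathcal{U}(\alpha)$ is still a compact Lie group acting freely and continuously, every step of your argument goes through unchanged once the group is corrected; but as written the definition of $G$ is wrong and should be fixed.
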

\begin{proof}
We start by showing $\pi_*$ is injective on $\mathfrak{M}_{\textup{inv}}(\mathcal{U}_{N,\alpha})$. Suppose $\mu\in \ker(\pi_*)\cap \mathfrak{M}_{\textup{inv}}(\mathcal{U}_{N,\alpha})$. Let $\mu_+$ and $\mu_-$ be the Hahn decomposition of $\mu$. In other words, $\mu_\pm$ are non-negative measures and $\mu_{\pm}(E)=\pm\mu(\Omega_\pm\cap E)$ for some Borel sets $\Omega_\pm$ with $\Omega_+\cup\Omega_-=\mathcal{U}_{N,\alpha}$. Because $\mu\in \mathfrak{M}_{\textup{inv}}(\mathcal{U}_{N,\alpha})$, we can pick both $\Omega_{\pm}$ to be invariant under the action \eqref{actionappendix}. In particular, $\pi^{-1}(\pi(\Omega_{\pm}))=\Omega_{\pm}$. Because $\mu\in \ker(\pi_*)$, we have $\mu(\Omega_{\pm})=(\pi_*\mu) (\pi(\Omega_{\pm}))=0$. Therefore, $\mu=0$.
\vspace{\baselineskip}

Next, we show $\pi_*$ is surjective. Given $\nu\in \mathfrak{M}(\mathcal{V}_{N,\alpha})$, we can construct $\mu\in \mathfrak{M}_{\textup{inv}}(\mathcal{U}_{N,\alpha})$ such that $\pi_*\mu=\nu$ as follows. Note that $\pi:\mathcal{U}_{N,\alpha}\to\mathcal{V}_{N,\alpha}$ is a principal $\mathcal{U}(1)^N\times \mathcal{U}(\alpha)$-bundle. Let $\mu_0$ be the Haar measure on $\mathcal{U}(1)^N\times \mathcal{U}(\alpha)$ with total mass one. Let $\mathcal{U}_{N,\alpha}\supset E\cong \Omega\times \mathcal{U}(1)^N\times \mathcal{U}(\alpha)$ be a local trivialization with $\Omega\subset \mathcal{V}_{N,\alpha}$ and let $h_E:\Omega\times \mathcal{U}(1)^N\times \mathcal{U}(\alpha)\to E$ be the identification map. Define on $E$ a measure $\mu_E\coloneqq  (h_E)_*(\nu|_{\Omega}\times \mu_0)$. By the uniqueness of the Haar measure, $\mu_E$ does not depend on the trivialization. Putting $\mu_E$ together using a partition of unity gives the desired $\mu\in \mathfrak{M}_{\textup{inv}}(\mathcal{U}_{N,\alpha})$. Therefore, $\pi_*$ is surjective. This shows $\pi_*$ is an isomorphism. 
\end{proof}
We write 
\begin{equation}\label{equ:iso-inv}
\pi^{-1}_*: \mathfrak{M}(\mathcal{V}_{N,\alpha}) \to\mathfrak{M}_{\textup{inv}}(\mathcal{U}_{N,\alpha}) 
\end{equation}
for the inverse map of \eqref{equ:iso}.
\vspace{\baselineskip}

Let $\mu_{N,\alpha}\in \mathfrak{M}_{\textup{inv}}(\mathcal{U}_{N,\alpha})$ be the Haar measure with total mass one. We define the ``uniform" measure on $\mathcal{V}_{N,\alpha}$ to be
\begin{equation}\label{def:Haar}
 \nu_{N,\alpha}\coloneqq \pi_*\mu_{N,\alpha}. 
\end{equation}  

We now give a characterization of $\nu_{N,\alpha}$ which is similar to the one for a Haar measure. To do so, we consider the left multiplication as follows. For any ${u}=({U}_1,{U}_2)\in \mathcal{U}_{N,\alpha}$, let $L_u:\mathcal{U}_{N,\alpha}\to \mathcal{U}_{N,\alpha}$ be the left multiplication. Because \eqref{actionappendix} is a right action, $L_u$ descends to a map $L_u:\mathcal{V}_{N,\alpha}\to \mathcal{V}_{N,\alpha}$. We write $ {u}_*$ for the induced map on $\mathfrak{M}_{\textup{inv}}(\mathcal{U}_{N,\alpha})$ or on $\mathfrak{M}(\mathcal{V}_{N,\alpha})$. Because $\pi\circ L_u=L_u\circ\pi$, we have $\pi_* \circ u_*=u_*\circ \pi_*$ on $\mathfrak{M}(\mathcal{U}_{N,\alpha})$. Moreover, because of Lemma~\ref{lem:iso}, on $\mathfrak{M}(\mathcal{V}_{N,\alpha})$ we have 
\begin{align}\label{equ:commute}
 u_*\circ \pi^{-1}_*=\pi^{-1}_* \circ u_*. 
\end{align}

\begin{lemma}\label{lem:Haar}
Let $\nu\in \mathfrak{M}(\mathcal{V}_{N,\alpha})$ be a finite Borel measure on $\mathcal{V}_{N,\alpha}$. Suppose for all $u\in \mathcal{U}_{N,\alpha}$ it holds that $u_*\nu=\nu.$ Then $\nu=c\nu_{N,\alpha}$ with $c=\nu(\mathcal{V}_{N,\alpha})$.
\end{lemma}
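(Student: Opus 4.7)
The plan is to reduce the claim on $\mathcal{V}_{N,\alpha}$ to the standard uniqueness statement for a left-invariant measure on the compact group $\mathcal{U}_{N,\alpha}$, via the isomorphism $\pi_*$ of Lemma~\ref{lem:iso}.

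First, I would lift $\nu$ to $\mathcal{U}_{N,\alpha}$ by setting $\mu := \pi_*^{-1} \nu \in \mathfrak{M}_{\textup{inv}}(\mathcal{U}_{N,\alpha})$, which is well defined by Lemma~\ref{lem:iso}. By construction $\mu$ is invariant under the right action \eqref{actionappendix} of $\mathcal{U}(1)^N \times \mathcal{U}(\alpha)$. I then want to show that $\mu$ is also invariant under left multiplication by every $u \in \mathcal{U}_{N,\alpha}$. For this I invoke the commutativity relation \eqref{equ:commute}: for each $u \in \mathcal{U}_{N,\alpha}$,
\begin{equation*}
u_* \mu = u_* \pi_*^{-1} \nu = \pi_*^{-1}(u_* \nu) = \pi_*^{-1} \nu = \mu,
\end{equation*}
where the last equality is the hypothesis on $\nu$.

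Next, since $\mu$ is a finite signed Borel measure on the compact group $\mathcal{U}_{N,\alpha}$ that is invariant under left multiplication by all elements of the group, the uniqueness of the Haar measure on a compact group (applied to the Hahn decomposition of $\mu$ if one wishes to treat the signed case cleanly) gives $\mu = c\, \mu_{N,\alpha}$ with $c = \mu(\mathcal{U}_{N,\alpha})$. Applying $\pi_*$ and using $\pi_*\mu_{N,\alpha} = \nu_{N,\alpha}$ from \eqref{def:Haar}, we obtain $\nu = \pi_* \mu = c\, \nu_{N,\alpha}$, and since $\pi$ is surjective, $c = \mu(\mathcal{U}_{N,\alpha}) = \nu(\mathcal{V}_{N,\alpha})$.

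The only genuinely non-routine point is the left-invariance of the lift $\mu$, which is exactly where \eqref{equ:commute} is needed; the rest is a routine application of the isomorphism in Lemma~\ref{lem:iso} together with uniqueness of the Haar measure. If one prefers to avoid a signed-measure version of Haar uniqueness, the cleanest route is to decompose $\nu$ into $\nu_+ - \nu_-$ via Hahn decomposition, observe that invariance of $\nu$ under each $u_*$ forces the corresponding invariance of $\nu_\pm$ (since $u_*$ is a homeomorphism-push-forward and hence preserves mutual singularity), and then apply the positive Haar uniqueness statement to each of $\mu_\pm := \pi_*^{-1}\nu_\pm$ separately before recombining.
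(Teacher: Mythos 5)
Your proof is correct and follows essentially the same route as the paper: lift $\nu$ to $\mu=\pi_*^{-1}\nu$, use \eqref{equ:commute} to deduce left-invariance of $\mu$, invoke Haar uniqueness on $\mathcal{U}_{N,\alpha}$, and push forward. Your added care about the signed-measure case via Hahn decomposition is a sound and slightly more explicit version of the paper's tacit appeal to Haar uniqueness with $c\in\mathbb{R}$.
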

\begin{proof}
Let $\mu=\pi_*^{-1}\nu$. From \eqref{equ:commute} we have $u_*\mu=u_*\pi_*^{-1}\nu=\pi_*^{-1}u_*\nu=\pi_*^{-1}\nu=\mu.$ By the uniqueness of the Haar measure, $\mu=c \mu_{N,\alpha}$ for some $c\in\mathbb{R}$. This implies $\nu=c \nu_{N,\alpha}$. Then $c=\nu(\mathcal{V}_{N,\alpha})$ follows easily.

\end{proof} 
\subsection{Proof of Lemma~\ref{lem:Markov}}\label{sec:A.2}
In this section, we aim to prove Lemma~\ref{lem:Markov}. Fix $m\in\mathbb{N}$ and a time-like path $\left\{\left(\alpha^{(k)},t^{(k)}\right)\right\}_{k=1}^m$ in $\mathbb{N}_0\times (0,\infty)$ as in Definition \ref{def:timelike}. Let
\begin{equation}\label{def:Ak-time}
 A_{(k)}=A^{ N}\left(\alpha^{(k)} ,t^{(k)}\right).
\end{equation}
With probability one, $A_{(k)}$ has positive and distinct singular values for all $k\in \llbracket 1,m\rrbracket $. From Lemma~\ref{lem:id}, $\{A_{(k)}\}_{k=1}^m$ can be identified as a sequence random variables $\{(\vec{Y}^{(k)},  V_{(k)})\}_{k=1}^m$ taking values in $\mathbb{W}^N_+\times\mathcal{V}_{N,\alpha^{(k)}}$. Note that $\vec{Y}^{(k)}$ is related to $\vec{X}^{(k)}$ defined in \eqref{def:Xk-timelike} by $X^{(k)}_j=\left( Y^{(k)}_j\right)^2$ for all $j\in\llbracket 1,N \rrbracket$.
\vspace{\baselineskip}

It is straightforward to see that $A_{(k)}$ is a Markov process in $k$. Hence $(\vec{Y}^{(k)},  V_{(k)})$ is also Markov. We write 
$$F_{k}\left(\left( \vec{y}^{(k)},  {v}_{(k)}\right);d\left(\vec{y}^{(k+1)},  {v}_{(k+1)}\right)\right) $$
for the Markov kernel. The next lemma is a consequence of the unitary invariance of complex Brownian motions. 
\begin{lemma}\label{clm:key}
\begin{align}\label{key}
\int_{\mathcal{V}_{N,\alpha^{(k+1)}} } F_{k}\left(\left( \vec{y}^{(k)},  {v}_{(k)}\right);d\left(\vec{y}^{(k+1)},  {v}_{(k+1)}\right)\right)=G_{k}\left(\vec{x}^{(k)};d\vec{x}^{(k+1)}\right).
\end{align}
In other words, \eqref{key} does not depend on $ {v}_{(k)}$.
\end{lemma}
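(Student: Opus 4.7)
The plan is to reduce an arbitrary time-like step to two elementary moves and then exploit the bi-unitary invariance of the complex Gaussian entries that define $A^N(\alpha,t)$. Concretely, I would decompose any step $(\alpha^{(k)},t^{(k)})\prec_{\mathrm{t}}(\alpha^{(k+1)},t^{(k+1)})$ into a composition of moves of type (a) pure time increment with $\alpha$ fixed, and (b) pure $\alpha$-increment by one with $t$ fixed. The property ``integrating out the unitary fiber yields a kernel depending only on $\vec{y}^{(k)}$'' is preserved under composition of Markov kernels by the tower property, so it suffices to establish \eqref{key} for each elementary move.

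For case (a) I would write $A_{(k+1)} = A_{(k)} + \Delta$, where $\Delta$ has i.i.d.\ centered complex Gaussian entries of variance $t^{(k+1)}-t^{(k)}$ and is independent of $A_{(k)}$. Given an SVD representative $A_{(k)} = U_1 \Sigma U_2^*$, one rewrites
\begin{equation*}
A_{(k+1)} = U_1\bigl(\Sigma + U_1^*\Delta U_2\bigr)U_2^*,
\end{equation*}
so the singular values of $A_{(k+1)}$ coincide with those of $\Sigma + U_1^*\Delta U_2$. Because $\Delta$ is bi-unitarily invariant and independent of $A_{(k)}$, the matrix $U_1^*\Delta U_2$ is distributed as $\Delta$ regardless of the choice of $(U_1,U_2)$. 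For case (b) I would append to $A_{(k)}$ a row $\vec{a}^*$ of i.i.d.\ complex Gaussians independent of $A_{(k)}$; using the SVD,
\begin{equation*}
A_{(k+1)} \;=\; \begin{pmatrix} U_1 & 0 \\ 0 & 1\end{pmatrix}\begin{pmatrix}\Sigma \\ \vec{a}^* U_2\end{pmatrix}U_2^*,
\end{equation*}
so the singular values of $A_{(k+1)}$ equal those of the stacked matrix on the right. Right-unitary invariance of $\vec{a}^*$ gives $\vec{a}^* U_2 \overset{(d)}{=}\vec{a}^*$ independently of $v_{(k)}$. In both elementary moves the conditional law of $\vec{y}^{(k+1)}$ given $(\vec{y}^{(k)},v_{(k)})$ depends only on $\Sigma$, i.e.\ only on $\vec{y}^{(k)}$, which is precisely \eqref{key}.

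The one bookkeeping point requiring care is that a representative $(U_1,U_2)$ of $v_{(k)}\in\mathcal{V}_{N,\alpha^{(k)}}$ is defined only up to the right action \eqref{actionappendix}. However, the laws of $U_1^*\Delta U_2$ and of $\vec{a}^* U_2$ are manifestly invariant under that action (right multiplication of $U_2$ by a diagonal phase, or of $U_1$ by a block-unitary, preserves the Gaussian law of $\Delta$ and $\vec{a}^*$), so the argument descends unambiguously to the quotient. I do not expect to need Lemma~\ref{lem:Haar} for this particular statement; the Haar framework of Section~\ref{sec:A.1} is used only to make sense of $F_k$ as a Markov kernel on $\mathcal{V}_{N,\alpha^{(k+1)}}$. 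The main conceptual step is thus the reduction to elementary moves, which is immediate from the definition \eqref{def:Ak-time} since $A_{(k+1)}$ and $A_{(k)}$ are nested principal submatrices of the same infinite matrix of Brownian motions.
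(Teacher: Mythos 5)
Your proposal is correct and follows essentially the same route as the paper: reduce to the two elementary moves (pure time increment, pure $\alpha$ increment), then exploit the bi-unitary invariance $U_1^*\Delta U_2\overset{(d)}{=}\Delta$ and the right-unitary invariance $\vec{a}^*U_2\overset{(d)}{=}\vec{a}^*$ to see that the conditional law of the new singular values depends only on $\Sigma$ and not on the unitary frame. The paper phrases this through the equivariance identity $\tilde{u}_*F_k((\vec{x},v);\cdot)=F_k((\vec{x},L_{\tilde u}v);\cdot)$ and then marginalizes over $v'$, whereas you conjugate the noise directly inside the SVD, but the underlying mechanism is identical; you are also right that Lemma~\ref{lem:Haar} plays no role here and only enters in the space-like analogue.
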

 Now we assume Lemma~\ref{clm:key} holds and prove Lemma~\ref{lem:Markov}.
\begin{proof}[Proof of Lemma~\ref{lem:Markov}]
Denote by $\tilde{G}_{1}\left(d\vec{y}^{(1)}\right)$ the density of $\vec{Y}^{(1)}$. From Lemma~\ref{clm:key}, the joint density of $(\vec{Y}^{(1)}, \vec{Y}^{(2)},\dots,\vec{Y}^{(m)})$ is given by
\begin{align*}
\tilde{G}_{1}\left(d\vec{y}^{(1)}\right)\prod_{k=1}^{m-1}G_{k}\left(\vec{y}^{(k)};d\vec{y}^{(k+1)}\right).
\end{align*}
This implies $\vec{Y}^{(k)}$ is a Markov chain. As a result, $\vec{X}^{(k)}$ is a Markov as well.
\end{proof}   

%For the special cases $\alpha^{(k+1)}=\alpha^{(k )},t^{(k+1)}> t^{(k )}$ or $t^{(k+1)}=t^{(k )},\alpha^{(k+1)}>\alpha^{(k)}$,  \eqref{key} follows the unitary invariance of $B_{ij}(t)$. 
In the rest of the section, we prove Lemma~\ref{clm:key} based on the unitary invariance of Brownian motions. In the following, we consider two special cases. In case 1, we assume $\alpha^{(k+1)}=\alpha^{(k )} $ and $t^{(k+1)}>t^{(k )}$. In case 2, we assume $\alpha^{(k+1)}>\alpha^{(k )} $ and $t^{(k+1)}=t^{(k )}$. The general case follows by combining the two. 
\begin{proof}[Proof of Lemma~\ref{clm:key}, case 1]
Suppose $\alpha^{(k)}=\alpha^{(k+1)}$ and $t^{(k)}<t^{(k+1)}$. To simplify the notation, we denote $\alpha^{(k)}=\alpha^{(k+1)}=\alpha$, $t^{(k )}=t$, and  $t^{(k+1)}=t'$. Moreover, we write $A$ for $A_{(k)}$, $A'$ for $A_{(k+1)}$ and $ F_{k}\left(\left( \vec{x},  v \right);d\left(\vec{x}',  {v}'\right)\right)$ for the Markov kernel. From \eqref{def:A} and \eqref{def:Ak-time},
\begin{align}\label{equ:A-2-case1-1}
A'=A+B_k,\ B_{k}=\left\{ B_{ij}\left( t'-t \right)\right\}_{1\leq i\leq N+\alpha, 1\leq j\leq N}.  
\end{align}
Here $B_{k}$ is independent of $A$. Let
\begin{equation}\label{equ:A-2-case1-2}
 A=U_1\Sigma U_2^{*}\ \textup{  and}\  A'=U'_1\Sigma' (U'_2)^{*} 
\end{equation}  
be the singular value decomposition of $A$ and $A'$ respectively. Fix an arbitrary $\tilde{u}=(\tilde{U}_1,\tilde{U}_2)\in \mathcal{U}_{N,\alpha}=\mathcal{U}(N+\alpha)\times\mathcal{U}(N)$. Multiplying \eqref{equ:A-2-case1-1} by $\tilde{U}_1$ from the left, by $\tilde{U}^{*}_2$ from the right, and using \eqref{equ:A-2-case1-2}, we get  
\begin{align}\label{equ:A-2-case1-3}
\left(\tilde{U}_1U'_1 \right)\Sigma' \left(\tilde{U}_2U'_
2 \right)^{*}= \left(\tilde{U}_1U_1 \right)\Sigma \left(\tilde{U}_2U_2 \right)^{*}+\tilde{U}_1B_k\tilde{U}^{*}_2.
\end{align}
Because $\tilde{U}_1B_k\tilde{U}^{*}_2\overset{(d)}{=}B_k,$ we deduce from \eqref{equ:A-2-case1-3} that 
\begin{align}\label{equ:Brownian_inv}
\tilde{u}_* F_k((\vec{x},v );d(\vec{x}',v'))=F_k((\vec{x},L_{\tilde{u}} v);d(\vec{x}',v')).
\end{align}

We are ready to prove \eqref{key}. Fix  $(\vec{x},v)\in\mathbb{W}^+_N\times\mathcal{V}_{N,\alpha}$. Let $u=(U_1,U_2)\in v$ be a representation of the class $v$ and let $e=[(I_{N+\alpha},I_N)]$ be the class of the pair of identity matrices. Clearly $L_u e=v.$ From \eqref{equ:Brownian_inv},
\begin{align*}
\int_{\mathcal{V}_{N,\alpha}} F_k((\vec{x}, v);d(\vec{x}',v'))=&\int_{\mathcal{V}_{N,\alpha}} F_k((\vec{x}, L_u e );d(\vec{x}',v'))= \int_{\mathcal{V}_{N,\alpha}} u_* F_k((\vec{x}, e);d(\vec{x}',v'))\\
=&\int_{\mathcal{V}_{N,\alpha}} F_k((\vec{x}, e);d(\vec{x}',v'))=:G_k(\vec{x};d\vec{x}').
\end{align*}
This proves \eqref{key} for case 1.
\end{proof}  

\begin{proof}[Proof of Lemma~\ref{clm:key}, case 2]
Suppose $\alpha^{(k)}<\alpha^{(k+1 )} $ and $t^{(k)}=t^{(k +1)}$. To simplify the notation, we denote $t^{(k)}=t^{(k+1 )}=t$, $\alpha^{(k)}=\alpha$ and $\alpha^{(k+1)}=\alpha'$. Moreover, we write $A$ for $A_{(k)}$, $A'$ for $A_{(k+1)}$ and $ F_{k}\left(\left( \vec{x},  v \right);d\left(\vec{x}',  {v}'\right)\right)$ for the Markov kernel. Recall that 
\begin{align}\label{equ:A-2-case2-1}
A'= \left[ \begin{array}{c}
A \\
\tilde{B}_{k}
\end{array}
\right] ,\ \tilde{B}_{k}=\left\{ B_{ij}\left(t\right)\right\}_{N+\alpha+1\leq i\leq N+\alpha' , 1\leq j\leq N}.  
\end{align}  
 Here $\tilde{B}_{k}$ is independent of $A$. Let
\begin{equation}\label{equ:A-2-case2-2}
 A=U_1\Sigma U_2^{*}\ \textup{  and}\  A'=U'_1\Sigma' (U'_2)^{*} 
\end{equation}  
be the singular value decomposition of $A$ and $A'$ respectively. Fix an arbitrary $\tilde{u}=(\tilde{U}_1,\tilde{U}_2)\in \mathcal{U}_{N,\alpha}=\mathcal{U}(N+\alpha)\times\mathcal{U}(N)$.  Let
\begin{align*}
\hat{U}_1=\left[ \begin{array}{cc}
\tilde{U}_1 & 0\\
0 & I_{\alpha'-\alpha}
\end{array} \right]\in\mathcal{U}(N+\alpha'),
\end{align*}
and $\hat{u}=(\hat{U}_1,\tilde{U}_2)\in \mathcal{U}_{N,\alpha'}.$ Multiplying \eqref{equ:A-2-case2-1} by $\hat{U}_1$ from the left, by $\tilde{U}^{*}_2$ from the right and using \eqref{equ:A-2-case2-2}, we get  
\begin{align*}
\left(\hat{U}_1U'_1 \right)\Sigma' \left(\tilde{U}_2U'_
2 \right)^{*}= \left[ \begin{array}{c}
\left(\tilde{U}_1U_1 \right)\Sigma \left(\tilde{U}_2U_2 \right)^{*} \\
 \tilde{B}_{k}\tilde{U}_2^{*}
\end{array}
\right].
\end{align*}
 Because $\tilde{B}_{k} \tilde{U}_2^{*}\overset{(d)}{=}\tilde{B}_{(k)},$ we deduce
 \begin{align}\label{equ:Brownian_inv_2}
\hat{u}_* F_k((\vec{x},v );d(\vec{x}',v'))=F_k((\vec{x},L_{\tilde{u}} v);d(\vec{x}',v')).
\end{align}

We are ready to prove \eqref{key}. Fix  $(\vec{x},v)\in\mathbb{W}^+_N\times\mathcal{V}_{N,\alpha}$. Let $u=(U_1,U_2)\in v$ be a representation of the class $v$. Recall that $e=[(I_{N+\alpha},I_N)]$ is the class of the pair of identity matrices. Clearly $L_u e=v.$ From \eqref{equ:Brownian_inv_2},
\begin{align*}
\int_{\mathcal{V}_{N,\alpha'}} F_k((\vec{x}, v);d(\vec{x}',v'))=&\int_{\mathcal{V}_{N,\alpha'}} F_k((\vec{x}, L_u e );d(\vec{x}',v'))= \int_{\mathcal{V}_{N,\alpha'}} \hat{u}_* F_k((\vec{x}, e);d(\vec{x}',v'))\\
=&\int_{\mathcal{V}_{N,\alpha'}} F_k((\vec{x}, e);d(\vec{x}',v'))=:G_k(\vec{x};d\vec{x}').
\end{align*}
This proves \eqref{key} for case 2.
\end{proof}  
\subsection{Proof of Lemma~\ref{lem:Markov-spacelike}}\label{sec:A.3}

The goal of this section is to prove Lemma~\ref{lem:Markov-spacelike}. Fix $m\in\mathbb{N}$ and a space-like sequence $\left\{\left(\alpha^{(k)},t^{(k)}\right)\right\}_{k=1}^m$ in $\mathbb{N}_0\times (0,\infty)$ as in Definition \ref{def:spacelike}. Let 
\begin{equation}\label{def:Ak-space}
 A_{(k)}=A^{N }\left( \alpha^{(k)} ,t^{(k)}\right). 
\end{equation}

 We can consider $\{A_{(k)}\}_{k=1}^m$ as a sequence random variables $\{(\vec{Y}^{(k)},  V_{(k)})\}_{k=1}^m$ in $\mathbb{W}^N_+\times\mathcal{V}_{N,\alpha^{(k)}}$ by Lemma~\ref{lem:id}.
\vspace{\baselineskip}

$A_{(k)}$ is clearly a Markov process in $k$ and we denote 
$$\overline{F}_{(k)}\left(\left( \vec{y}^{(k)},  {v}_{(k)}\right);d\left(\vec{y}^{(k+1)},  {v}_{(k+1)}\right)\right) $$
its Markov kernel. We set $\nu_k=\nu_{N,\alpha^{(k)}}$ as in \eqref{def:Haar}. From the unitary invariance of complex Brownian motions, we have the following lemma.
\begin{lemma}\label{clm:keyspace}
\begin{equation}\label{keyspace}
\begin{split}
&\int_{\mathcal{V}_{N,\alpha^{(k)}}} d\nu_{k}\left( v_{(k)} \right)\,  \overline{F}_{(k)}\left(\left( \vec{y}^{(k)},  {v}_{(k)}\right);d\left(\vec{y}^{(k+1)},  {v}_{(k+1)}\right)\right)=\overline{G}_{k}\left(\vec{y}^{(k)};d\vec{y}^{(k+1)}\right)d\nu_{k+1}\left( v_{(k+1)} \right).
\end{split}
\end{equation}

\end{lemma}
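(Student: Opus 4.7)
The plan is to parallel the proof of Lemma~\ref{clm:key}, splitting the step $k \to k+1$ into two extreme sub-steps: case 1 with $\alpha^{(k)} = \alpha^{(k+1)}$ and $t^{(k)} < t^{(k+1)}$ (identical to case 1 of Lemma~\ref{clm:key}), and case 2 with $\alpha^{(k)} > \alpha^{(k+1)}$ and $t^{(k)} = t^{(k+1)}$ (where $A_{(k+1)}$ is the top $(N+\alpha^{(k+1)}) \times N$ block of $A_{(k)}$). A general space-like step factors as a composition of these two, and the lemma for the composition follows by integrating out the intermediate variables, so I treat each case separately.

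The key input is a joint unitary invariance of the pair $(A_{(k)}, A_{(k+1)})$ indexed by $\tilde u = (\tilde U_1, \tilde U_2) \in \mathcal{U}_{N,\alpha^{(k+1)}}$. In case 1, using unitary invariance of the Brownian increment $B_k = A_{(k+1)} - A_{(k)}$ as in the proof of Lemma~\ref{clm:key}, the pair is invariant in distribution under the diagonal action $A_{(k)} \mapsto \tilde U_1 A_{(k)} \tilde U_2^*$, $A_{(k+1)} \mapsto \tilde U_1 A_{(k+1)} \tilde U_2^*$. In case 2, setting $\hat U_1 = \mathrm{diag}(\tilde U_1, I_{\alpha^{(k)} - \alpha^{(k+1)}}) \in \mathcal{U}(N+\alpha^{(k)})$ and $\hat u = (\hat U_1, \tilde U_2) \in \mathcal{U}_{N,\alpha^{(k)}}$, the pair is invariant under $A_{(k)} \mapsto \hat U_1 A_{(k)} \tilde U_2^*$, $A_{(k+1)} \mapsto \tilde U_1 A_{(k+1)} \tilde U_2^*$; this uses only the unitary invariance of the full Gaussian matrix $A_{(k)}$ together with the compatibility $(\hat U_1 A_{(k)} \tilde U_2^*)_{\text{top}} = \tilde U_1 A_{(k+1)} \tilde U_2^*$ that is forced by the block form of $\hat U_1$. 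Translated through the SVD correspondence of Lemma~\ref{lem:id}, this yields
\begin{equation*}
\overline F_{(k)}((\vec{y}, L_{\hat u} v); (\cdot, L_{\tilde u} \cdot)) = \overline F_{(k)}((\vec{y}, v); (\cdot, \cdot))
\end{equation*}
(with $\hat u = \tilde u$ in case 1).

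Now denote $M_{\vec{y}}(d(\vec{y}', v')) := \int d\nu_k(v)\, \overline F_{(k)}((\vec{y}, v); d(\vec{y}', v'))$, the left-hand side of \eqref{keyspace}. Using the joint invariance just established together with the $L_{\hat u}$-invariance of $\nu_k$, a one-line change of variables $v \mapsto L_{\hat u} v$ gives $(L_{\tilde u})_* M_{\vec{y}} = M_{\vec{y}}$ for every $\tilde u \in \mathcal{U}_{N,\alpha^{(k+1)}}$. To upgrade this invariance into the product-form conclusion, I would average an arbitrary test function $f(\vec{y}', v')$ against the Haar measure $\mu_{N,\alpha^{(k+1)}}$ on $\mathcal{U}_{N,\alpha^{(k+1)}}$:
\begin{equation*}
\int f \, dM_{\vec{y}} = \int \! \int f(\vec{y}', L_{\tilde u} v') \, d\mu_{N,\alpha^{(k+1)}}(\tilde u) \, dM_{\vec{y}}(\vec{y}', v').
\end{equation*}
The inner integral equals the push-forward of $\mu_{N,\alpha^{(k+1)}}$ along $\tilde u \mapsto L_{\tilde u} v'$, which is a probability measure on $\mathcal{V}_{N,\alpha^{(k+1)}}$ invariant under left multiplication and hence equal to $\nu_{k+1}$ by Lemma~\ref{lem:Haar}, independent of $v'$. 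Defining $\overline G_k(\vec{y}; d\vec{y}')$ as the first marginal of $M_{\vec{y}}$ then gives \eqref{keyspace}. The main obstacle is verifying the joint unitary invariance in case 2: one must check that the block structure of $\hat U_1$ is exactly what preserves the row-truncation relation between $A_{(k)}$ and $A_{(k+1)}$; everything else in the argument is formal.
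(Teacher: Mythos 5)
Your proposal is correct and follows essentially the same approach as the paper: the same split into the two extremal cases, the same joint unitary equivariance of the pair $(A_{(k)},A_{(k+1)})$ (diagonal action in case 1, block matrix $\hat U_1=\mathrm{diag}(\tilde U_1, I_{\alpha^{(k)}-\alpha^{(k+1)}})$ in case 2), and the same reduction to Lemma~\ref{lem:Haar} via invariance of the averaged kernel. Your explicit Haar-averaging of a test function simply spells out the disintegration step that the paper compresses into ``By Lemma~\ref{lem:Haar}, it suffices to show\dots'', but the underlying argument is the same.
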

 Assuming Lemma~\ref{clm:keyspace} we can prove Lemma~\ref{lem:Markov-spacelike}.
 \begin{proof}[Proof of Lemma~\ref{lem:Markov-spacelike}] We can write the density of $(\vec{Y}^{(1)},V_{(1)}) $ as $$ {G}'_{(1)}\left(d\vec{y}^{(1)}\right)d\nu_{1}\left( v_{(1)} \right)$$ 
 by unitary invariance of complex Brownian motions.
Besides, from Lemma~\ref{clm:keyspace}, the joint density of $\vec{Y}^{(1)}, \vec{Y}^{(2)},\dots,\vec{Y}^{(m)} $ is given by
\begin{align*}
 {G}'_{(1)}\left(d\vec{y}^{(1)}\right)\prod_{k=1}^{m-1}\overline{G}_{(k)}\left(\vec{y}^{(k)};d\vec{y}^{(k+1)}\right)
\end{align*}
which implies that $\vec{Y}^{(k)}$ is a Markov chain and thus $\vec{X}^{(k)}$ is also Markovian.
 \end{proof}
 %For the special cases $\alpha^{(k+1)}=\alpha^{(k )},t^{(k+1)}> t^{(k )}$ or $t^{(k+1)}=t^{(k )},\alpha^{(k+1)}>\alpha^{(k)}$,  \eqref{key} follows the unitary invariance of $B_{ij}(t)$. 
Similarly as in the previous subsection, we now prove Lemma~\ref{clm:keyspace} and we split the proof in two cases. We first assume that $\alpha^{(k)}=\alpha^{(k+1)} $ and $t^{(k )}<t^{(k+1 )}$ and then assume that $\alpha^{(k )}>\alpha^{(k+1 )} $ and $t^{(k )}=t^{(k+1)}$. The case of general space-like paths follows by combining these two cases.
\begin{proof}[Proof of Lemma~\ref{clm:keyspace}, case 1]
 We suppose that $\alpha^{(k)}=\alpha^{(k+1)}$ and $t^{(k)}<t^{(k+1)}$. As in the proof of Lemma~\ref{clm:key}, we set $\alpha^{(k)}=\alpha^{(k+1 )} =\alpha$, $t^{(k )}=t$ and $t^{(k+1)}=t'$, and we write $A$ for $A_{(k)}$, $A'$ for $A_{(k+1)}$, $\nu$ for $\nu_k$ and $ \overline{F}_{k}\left(\left( \vec{x},  v \right);d\left(\vec{x}',  {v}'\right)\right)$ for the Markov kernel. We want to prove that
 \begin{equation}\label{keyspace2}
\begin{split}
&\int_{\mathcal{V}_{N,\alpha}} d\nu \left( v  \right)\,  \overline{F}_{ k }\left(\left( \vec{x} ,  {v} \right);d\left(\vec{x}',  {v}'\right)\right)=\overline{G}_{k}\left(\vec{x} ;d\vec{x}'\right)d\nu \left( v' \right).
\end{split}
\end{equation}
By Lemma~\ref{lem:Haar}, it suffices to show that for all $ {u}\in \mathcal{U}_{N,\alpha}$,
 \begin{equation}\label{keyspace3}
\begin{split}
&\int_{\mathcal{V}_{N,\alpha} } d\nu \left( v  \right)\,   {u}_*\overline{F}_{ k }\left(\left( \vec{x} ,  {v} \right);d\left(\vec{x}',  {v}'\right)\right)=\int_{\mathcal{V}_{N,\alpha}} d\nu \left( v  \right)\,  \overline{F}_{ k }\left(\left( \vec{x} ,  {v} \right);d\left(\vec{x}',  {v}'\right)\right).
\end{split}
\end{equation}
Note that we have that, from \eqref{equ:Brownian_inv},  for all $ {u}\in \mathcal{U}_{N,\alpha}$,
 \begin{align}\label{equ:Brownian_inv_space}
 {u}_* \overline{F}_k((\vec{x},v );d(\vec{x}',v'))=\overline{F}_k((\vec{x},L_{u}  v);d(\vec{x}',v')).
\end{align}
Thus,
\begin{align*}
\int_{\mathcal{V}_{N,\alpha }} d\nu \left( v  \right)\,   {u}_*\overline{F}_{ k }\left(\left( \vec{x} ,  {v} \right);d\left(\vec{x}',  {v}'\right)\right)=&\int_{\mathcal{V}_{N,\alpha }} d\nu \left( v  \right)\,  \overline{F}_{ k }\left(\left( \vec{x} ,  L_u {v} \right);d\left(\vec{x}',  {v}'\right)\right)\\
=&\int_{\mathcal{V}_{N,\alpha}} d\nu \left( v  \right)\,  \overline{F}_{ k }\left(\left( \vec{x} , {v} \right);d\left(\vec{x}',  {v}'\right)\right)
\end{align*} 
where we used $u_*\nu=\nu$ in the second equality. and the proof of the first case is finished.
\end{proof}
\begin{proof}[Proof of Lemma~\ref{clm:keyspace}, case 2]

Suppose that $\alpha^{(k)}>\alpha^{(k+1 )} $ and $t^{(k)}=t^{(k+1 )}$. To simplify the notation, we denote $t^{(k)}=t^{(k+1 )}=t$, $\alpha^{(k )}=\alpha$ and $\alpha^{(k+1)}=\alpha'$. Moreover, we write $A$ for $A_{(k)}$, $A'$ for $A_{(k+1)}$, $\nu$ for $\nu_k$, $\nu'$ for $\nu_{k+1}$ and $ \overline{F}_{k}\left(\left( \vec{x},  v \right);d\left(\vec{x}',  {v}'\right)\right)$ for the Markov kernel. We aim to show that
 \begin{equation}\label{keyspace4}
\begin{split}
&\int_{\mathcal{V}_{N,\alpha}} d\nu \left( v  \right)\,  \overline{F}_{ k }\left(\left( \vec{x} ,  {v} \right);d\left(\vec{x}',  {v}'\right)\right)=\overline{G}_{k}\left(\vec{x} ;d\vec{x}'\right)d\nu' \left( v' \right).
\end{split}
\end{equation}  
It is straightforward to see that \eqref{keyspace4} is equivalent to \eqref{keyspace}. From Lemma~\ref{lem:Haar}, it suffices to show that for all $ \tilde{u}\in \mathcal{U}_{N,\alpha'}$,
 \begin{equation}\label{keyspace5}
\begin{split}
&\int_{\mathcal{V}_{N,\alpha }} d\nu \left( v  \right)\,  \tilde{u}_*\overline{F}_{ k }\left(\left( \vec{x} ,  {v} \right);d\left(\vec{x}',  {v}'\right)\right)=\int_{\mathcal{V}_{N,\alpha }} d\nu \left( v  \right)\,  \overline{F}_{ k }\left(\left( \vec{x} ,  {v} \right);d\left(\vec{x}',  {v}'\right)\right).
\end{split}
\end{equation}
From \eqref{def:A} and \eqref{def:Ak-space}, $A'$ is obtained by eliminating the last $\alpha-\alpha'$ rows in $A$. That is,
\begin{align}\label{equ:A-3-case2-1}
 \left[ \begin{array}{c}
A' \\
{C}_{k}
\end{array}
\right]= A   ,\ {C}_{k}\ \textup{is a}\ (\alpha-\alpha')\times N\ \textup{matrix}.  
\end{align} 
Here we write the last $\alpha-\alpha'$ rows of $A$ as $C_k$ to emphasize that they are inconsequential for the dynamics from $A$ to $A'$. In particular, the following argument \textbf{does not} rely on the distribution of $C_k$. Let
\begin{equation}\label{equ:A-3-case2-2}
 A=U_1\Sigma U_2^{*}\ \textup{  and}\  A'=U'_1\Sigma' (U'_2)^{*} 
\end{equation} 
be the singular value decomposition of $A$ and $A'$ respectively. Fix an arbitrary $ \tilde{u}=( \tilde{U}_1, \tilde{U}_2)\in \mathcal{U}_{N,\alpha'}=\mathcal{U}(N+\alpha')\times\mathcal{U}(N)$. Let
\begin{align*}
\hat{U}_1=\left[ \begin{array}{cc}
 \tilde{U}_1 & 0\\
0 & I_{\alpha-\alpha'}
\end{array} \right]\in\mathcal{U}(N+\alpha),
\end{align*}
and $\hat{u}=( \hat{U}_1, \tilde{U}_2)\in \mathcal{U}_{N,\alpha}.$ Multiplying \eqref{equ:A-3-case2-1} by $\hat{U}_1$ from the left, by $ \tilde{U}^{*}_2$ from the right and using \eqref{equ:A-3-case2-2}, we get  
 \begin{align}\label{equ:A-3-case2-3}
 \left[ \begin{array}{c}
\left( \tilde{U}_1U'_1 \right)\Sigma' \left( \tilde{U}_2U'_
2 \right)^{*} \\
{C}_{k}\tilde{U}_2^{*}
\end{array}
\right]= \left(\hat{U}_1U_1 \right)\Sigma \left(\tilde{U}_2U_
2 \right)^{*} .  
\end{align} 
From \eqref{equ:A-3-case2-3}, we derive
 \begin{align}\label{equ:Brownian_inv_space2}
 \tilde{u}_* \overline{F}_k((\vec{x},v );d(\vec{x}',v'))=\overline{F}_k((\vec{x},L_{\hat{u}}  v);d(\vec{x}',v')).
\end{align}
As a result,
 \begin{equation*} 
\begin{split}
 \int_{\mathcal{V}_{N,\alpha }} d\nu \left( v  \right)\,   {u}_*\overline{F}_{ k }\left(\left( \vec{x} ,  {v} \right);d\left(\vec{x}',  {v}'\right)\right)=&\int_{\mathcal{V}_{N,\alpha }} d\nu \left( v  \right)\,  \overline{F}_{ k }\left(\left( \vec{x} ,  L_{\hat{u}} {v} \right);d\left(\vec{x}',  {v}'\right)\right)\\
=&\int_{\mathcal{V}_{N,\alpha }} d\nu \left( v  \right)\,  \overline{F}_{ k }\left(\left( \vec{x} , {v} \right);d\left(\vec{x}',  {v}'\right)\right).
\end{split}
\end{equation*}
We have used $\hat{u}_*\nu=\nu$ in the second equality. This proves \eqref{keyspace5} and hence \eqref{keyspace}. The proof for case 2 is finished.
\end{proof}

\begin{appendix}

\section{Bessel Function}\label{sec:B}
In Section \ref{sec:B.0}, we collect basic properties of the Bessel function $J_\alpha(z)$ and introduce the Hankel transformation. Then we prove Lemma~\ref{lem:Qintegral} and Lemma~\ref{lem:Qintegral-s} in Sections~\ref{sec:B.1} and \ref{sec:B.2} respectively. 
\subsection{Bessel Function and Hankel transform}\label{sec:B.0}
Throughout this section, we assume $\alpha\in\mathbb{N}_0$ even though many of the results hold for a wider range of $\alpha$. Interested readers can find those generalizations in the references provided with the statements.
\vspace{\baselineskip}

From \cite[(9.1.10), page 360]{AS}, the Bessel function $J_\alpha(z)$ has the following expansion
\begin{equation}\label{equ:Jexpansion}
J_\alpha(z)=\left( \frac{z}{2} \right)^\alpha\sum_{k=0}^\infty \frac{(-4^{-1}z^2)^k}{k!\Gamma(\alpha+k+1)}.
\end{equation}
In particular, for $g_\alpha(z)\coloneqq z^{-\alpha/2}J_{\alpha}(2z^{1/2})$ defined in \eqref{def:galpha}, we have the expansion
\begin{align}\label{equ:gexpansion}
g_\alpha(z)=\sum_{k=0}^\infty\frac{(-z)^k}{k!\Gamma(\alpha+k+1)}.
\end{align}
It is easy to check that $g_\alpha(z)$ is an entire function.
\vspace{\baselineskip}
 
Next, we record the asymptotics of $J_\alpha(z)$ for $z$ close to zero or infinity. From \eqref{equ:Jexpansion}, as $z$ goes to zero, the asymptotics of $J_\alpha(z)$ is 
\begin{align}\label{equ:asymptoticzero}
J_\alpha(z)=\frac{(2^{-1}z)^\alpha}{\Gamma(\alpha+1)}(1+O(z^2)). 
\end{align}
As $z$ goes to infinity, the asymptotics of $J_\alpha(z)$ is given by \cite[(9.2.1), page 364]{AS}
\begin{align}\label{equ:asymptoticinfinity}
J_\alpha(z)=\sqrt{\frac{2}{\pi z}}\left( \cos(z-2^{-1}\alpha\pi-4^{-1}\pi)+O(z^{-1}) \right).
\end{align}

Now we introduce the \textit{Hankel transform}. The Hankel transform of order $\alpha$ of a function $f(u)$ is defined by \cite[page 214]{T} 
\begin{align}\label{def:Hankel}
H_\alpha[f](z)\coloneqq \int_0^\infty \sqrt{zu}J_\alpha(zu)f(u)\, du,\ z\in \mathbb{R}_+=(0,\infty).
\end{align}
$H_\alpha$ is an automorphism on $L^2(\mathbb{R}_+)$ \cite[page 221, Theorem 129]{T}. That is, for any $f(u),g(u)\in L^2(\mathbb{R}_+)$, the Parseval identity holds. 
\begin{equation}\label{equ:Parseval}
\int_0^\infty dz\, H_\alpha [f](z)H_\alpha [g](z)   =\int_0^\infty du\, f(u)g(u).
\end{equation}

\subsection{Proof of Lemma~\ref{lem:Qintegral}}\label{sec:B.1}
In this section, we prove Lemma~\ref{lem:Qintegral}. We start by recalling relevant definitions.
\begin{align}
T_{\alpha}((t,x);(s,y))\coloneqq  &(s-t)^{-1}(y/x)^{\alpha/2}e^{-(x+y)/(s-t)} I_\alpha \left( \frac{2\sqrt{xy} }{s-t} \right),\label{def:T-app}\\
W_t((\alpha,x);(\beta,y))\coloneqq &\frac{1}{\Gamma(\beta- \alpha)}\frac{(y-x)^{\beta- \alpha-1}}{t^{ \beta- \alpha }}e^{-(y-x)/t} \mathbbm{1}(x\leq y).\label{def:W-app}
\end{align}
\begin{equation}\label{equ:Qform-app}
Q((\alpha,t,x);(\beta,s,y))=\left\{ \begin{array}{cc}
T_\alpha((t,x);(s,y)), &\textup{if}\ \alpha=\beta\ \textup{and}\ t<s,\\
W_t((\alpha,x);(\beta,y) ), &\textup{if}\ \alpha<\beta\ \textup{and}\ t=s,\\
\int_0^\infty dz\, T_\alpha((t,x);(s,z))W_s((\alpha,z);(\beta,y) ), &\textup{if}\ \alpha<\beta\ \textup{and}\ t<s.
\end{array} \right.
\end{equation}

We record Lemma~\ref{lem:Qintegral} as Lemma~\ref{lem:Qintegral-app} below.
\begin{lemma}\label{lem:Qintegral-app}
Let $(\alpha,t)\prec_{\textup{t}} (\beta,s)$ be time-like pairs in $ \mathbb{N}_0\times (0,\infty)$ as in Definition \ref{def:timelike} and $x,y\in (0,\infty)$. It holds that
\begin{multline}\label{equ:Qintegral-app}
Q((\alpha,t,x);(\beta,s,y))
\\= (st)^{-(\beta-\alpha)/2} e^{-y/s+x/t} x^{-\alpha/2}y^{ \beta/2}\int_0^\infty du\,  e^{-(s-t)u}u^{-(\beta-\alpha)/2}J_{\alpha}(2\sqrt{t^{-1}s xu})J_{\beta}(2\sqrt{s^{-1}t yu}).
\end{multline} 
\end{lemma}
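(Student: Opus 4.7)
The plan is to verify the integral representation \eqref{equ:Qintegral-app} separately in each of the three cases of the piecewise definition \eqref{equ:Qform-app}: $\alpha=\beta$ with $t<s$, $\alpha<\beta$ with $t=s$, and $\alpha<\beta$ with $t<s$. The first two cases reduce to classical Bessel-integral identities applied to \eqref{def:T-app} and \eqref{def:W-app}, and the third follows from them via the convolution in \eqref{equ:Qform-app}.

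For Case 1 ($\alpha=\beta$, $t<s$), I would apply Weber's second exponential integral
\[
\int_0^\infty e^{-pu}J_\nu(a\sqrt u)J_\nu(b\sqrt u)\,du=p^{-1}\,e^{-(a^2+b^2)/(4p)}\,I_\nu\!\bigl(ab/(2p)\bigr)
\]
with $\nu=\alpha$, $a=2\sqrt{sx/t}$, $b=2\sqrt{ty/s}$, $p=s-t$. The $I_\alpha$-argument becomes $2\sqrt{xy}/(s-t)$, matching \eqref{def:T-app}, and the identity reduces to the algebraic check
\[
-\frac{x+y}{s-t}=\frac{x}{t}-\frac{y}{s}-\frac{s^2x+t^2y}{st(s-t)},
\]
which holds after clearing denominators. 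For Case 2 ($\alpha<\beta$, $t=s$), the exponential factor is $1$ and $t^{-1}s=1$, so after the substitution $u=w^2$ the integral becomes a Weber--Schafheitlin integral
\[
2\int_0^\infty w^{\alpha-\beta+1}J_\alpha(2\sqrt x\,w)J_\beta(2\sqrt y\,w)\,dw.
\]
By the discontinuous Weber--Schafheitlin formula \cite[6.574.2]{GR} this integral vanishes for $y<x$ (the relevant Gamma factor is infinite), while for $x<y$ the ${}_2F_1$ degenerates via ${}_2F_1(c,b;c;z)=(1-z)^{-b}$ and the expression reduces to $x^{\alpha/2}(y-x)^{\beta-\alpha-1}/\bigl(y^{\beta/2}\Gamma(\beta-\alpha)\bigr)$; multiplying by the prefactors in \eqref{equ:Qintegral-app} recovers \eqref{def:W-app}.

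For Case 3 ($\alpha<\beta$, $t<s$), I would substitute the representations from Cases 1 and 2 into $Q=\int_0^\infty T_\alpha((t,x);(s,z))W_s((\alpha,z);(\beta,y))\,dz$. The $z$-dependence of the integrand is carried entirely by $J_\alpha(2\sqrt{s^{-1}tzu})J_\alpha(2\sqrt{zv})$, and after $z=w^2$ the Hankel orthogonality relation
\[
\int_0^\infty J_\alpha(as)J_\alpha(bs)\,s\,ds=\delta(a-b)/a
\]
yields $\delta(v-s^{-1}tu)$ in the $v$-variable. Collapsing the $v$-integral against this delta, the prefactors combine as $s^{-(\beta-\alpha)}\bigl(s^{-1}t\bigr)^{-(\beta-\alpha)/2}=(st)^{-(\beta-\alpha)/2}$ and the exponentials telescope as $e^{-z/s+x/t}\cdot e^{-(y-z)/s}=e^{-y/s+x/t}$, producing exactly \eqref{equ:Qintegral-app}. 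The main technical hurdle is justifying the distributional orthogonality together with the accompanying Fubini interchange; this can be handled by inserting a regulator $e^{-\varepsilon u}$ and passing to $\varepsilon\downarrow 0$, or alternatively by a direct series-expansion argument based on the entire function $g_\alpha$ introduced in \eqref{def:galpha}.
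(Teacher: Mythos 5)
Your proposal is correct and follows the same three-way decomposition the paper uses: Case 1 via Weber's second exponential integral, Case 2 via a Weber--Schafheitlin discontinuous integral, and Case 3 by composing the first two through the convolution in \eqref{equ:Qform-app}. Cases 1 and 2 match the paper's Lemmas~\ref{lem:T} and~\ref{lem:W} almost verbatim (the paper cites~\cite[(6.633-2)]{GR} and~\cite[(6.575-1)]{GR}, closely related entries to the ones you name). Your algebraic check in Case 1 and the cancellation of the $z$-dependent exponential and power prefactors in Case 3 are both correct.

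The one place where the paper's route is cleaner than yours is Case 3. You invoke the distributional completeness relation $\int_0^\infty J_\alpha(aw)J_\alpha(bw)\,w\,dw=\delta(a-b)/a$ and then propose to justify the interchange of integrals and the delta-regularization by inserting a regulator $e^{-\varepsilon u}$ or by a series argument. The paper sidesteps both of those technical steps at once: it writes $T_\alpha((t,x);(s,z))$ and $W_s((\alpha,z);(\beta,y))$, as functions of $z^{1/2}$, in the form of Hankel transforms $H_\alpha[G]$ and $H_\alpha[F]$ of explicit functions $G,F$ (Lemma~\ref{lem:Hankel}), checks via the asymptotics \eqref{equ:asymptoticzero}--\eqref{equ:asymptoticinfinity} that $G,F\in L^2(\mathbb{R}_+)$, and then applies the $L^2$ Parseval identity \eqref{equ:Parseval} for the Hankel transform. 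Parseval is exactly the rigorous form of the delta-function identity you want, and because it is stated at the level of $L^2$ functions it packages the Fubini interchange and the limiting passage into a single step. If you carry out your regularization plan you will in effect be re-proving a special case of that Parseval theorem, so it is worth recognizing the identity up front and citing it directly, as the paper does.
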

From \eqref{equ:Qform-app}, we split \eqref{equ:Qintegral-app} into three cases.
\begin{lemma}\label{lem:T}
For any $t<s$ with $t,s\in (0,\infty) $, $\alpha\in\mathbb{N}_0$ and $x,y\in (0,\infty)$, it holds that
\begin{align}\label{equ:TQ}
{T}_{\alpha}((t,x);(s,y) )=& e^{-y/s+x/t}(y/x)^{\alpha/2} \int_0^\infty du\,  e^{-(s-t)u} J_{\alpha}(2\sqrt{t^{-1}s xu})J_{\alpha}(2\sqrt{s^{-1}t yu}).
\end{align}
\end{lemma}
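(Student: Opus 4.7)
The plan is to reduce \eqref{equ:TQ} to a single classical integral identity (Weber's second exponential integral):
\begin{equation*}
\int_0^\infty e^{-cu}\, J_\alpha(2\sqrt{au})\, J_\alpha(2\sqrt{bu})\, du \;=\; \frac{1}{c}\exp\!\left(-\frac{a+b}{c}\right) I_\alpha\!\left(\frac{2\sqrt{ab}}{c}\right),
\end{equation*}
valid for $\operatorname{Re} c>0$, $a,b>0$, $\alpha\in\mathbb{N}_0$. (This is the substitution $u=v^2$ applied to the standard Hankel formula $\int_0^\infty e^{-p^2 v^2} J_\alpha(Av) J_\alpha(Bv)\, v\, dv = (2p^2)^{-1} e^{-(A^2+B^2)/(4p^2)} I_\alpha(AB/(2p^2))$.)

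I would then apply this identity with the parameter choice $a = t^{-1}s x$, $\ b = s^{-1} t y$, $\ c = s-t$, for which $ab = xy$ and
\begin{equation*}
\frac{2\sqrt{ab}}{c} \;=\; \frac{2\sqrt{xy}}{s-t}.
\end{equation*}
The key bookkeeping step is the elementary identity
\begin{equation*}
\frac{a+b}{c} \;=\; \frac{s^2 x + t^2 y}{t\,s\,(s-t)} \;=\; \frac{x+y}{s-t} + \frac{x}{t} - \frac{y}{s},
\end{equation*}
which one verifies by clearing denominators: $(x+y)\,ts + x\,s(s-t) - y\,t(s-t) = s^2 x + t^2 y$.

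Putting these together, Weber's identity yields
\begin{equation*}
\int_0^\infty e^{-(s-t)u}\, J_\alpha(2\sqrt{t^{-1}sxu})\, J_\alpha(2\sqrt{s^{-1}tyu})\, du \;=\; \frac{1}{s-t}\exp\!\left(-\frac{x+y}{s-t}-\frac{x}{t}+\frac{y}{s}\right) I_\alpha\!\left(\frac{2\sqrt{xy}}{s-t}\right).
\end{equation*}
Multiplying both sides by $e^{-y/s+x/t}(y/x)^{\alpha/2}$ and comparing with the definition \eqref{def:T-app} of $T_\alpha$ gives \eqref{equ:TQ}.

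There is no substantive obstacle in this proof: the content is a single classical Bessel integral plus algebra. The only place errors could creep in is the exponent bookkeeping, so my write-up would display the computation of $(a+b)/c$ explicitly. If the reader prefers a self-contained derivation rather than invoking Weber's identity, one can instead verify the equality by taking the Hankel transform in $x$ (or in $y$) of both sides using \eqref{equ:Parseval}, which reduces each side to the Gaussian-type integral that makes $e^{-(s-t)u}$ appear; I would mention this as an alternative route.
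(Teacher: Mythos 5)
Your proof is correct and follows essentially the same route as the paper: the paper substitutes $u=\bar{x}^2$ and cites Gradshteyn--Ryzhik (6.633-2), which is precisely Weber's second exponential integral in the $v\,dv$ form you mention, with the identical parameter choice $a=t^{-1}sx$, $b=s^{-1}ty$, $c=s-t$, and the same exponent bookkeeping yielding $e^{y/s-x/t}e^{-(x+y)/(s-t)}$. The algebra $\tfrac{a+b}{c}=\tfrac{x+y}{s-t}+\tfrac{x}{t}-\tfrac{y}{s}$ checks out, so the proposal is a faithful match.
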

\begin{lemma}\label{lem:W}
For any $t\in (0,\infty)$, $\alpha<\beta$ with $\alpha,\beta\in\mathbb{N}_0$ and $x,y\in (0,\infty)$, it holds that
\begin{align}\label{equ:WQ}
{W}_t((\alpha,x);(\beta,y))=&t^{-(\beta-\alpha) } e^{-(y-x)/t } y^{\beta/2}x^{-\alpha/2}   \int_0^\infty  du\, u^{-(\beta-\alpha)/2}J_{\alpha}(2\sqrt{  xu})J_{\beta}(2\sqrt{  yu}) .
\end{align}
\end{lemma}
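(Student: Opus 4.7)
\textbf{Proof proposal for Lemma~\ref{lem:W}.}
The plan is to reduce the claimed identity to a classical Weber--Schafheitlin integral. First, since both sides already share the factor $t^{-(\beta-\alpha)}e^{-(y-x)/t}$, the equality \eqref{equ:WQ} is equivalent to the $t$-independent identity
$$\int_0^\infty u^{-(\beta-\alpha)/2} J_\alpha(2\sqrt{xu}) J_\beta(2\sqrt{yu})\, du = \frac{x^{\alpha/2}(y-x)^{\beta-\alpha-1}}{y^{\beta/2}\,\Gamma(\beta-\alpha)}\,\mathbbm{1}(x\leq y),$$
after which the exponential factor on the right-hand side of \eqref{equ:WQ} matches the $e^{-(y-x)/t}$ in the definition \eqref{def:W-app}. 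The indicator $\mathbbm{1}(x\leq y)$ will be recovered automatically: the integral vanishes when $y<x$ and produces $(y-x)^{\beta-\alpha-1}$ when $y>x$.

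Next I would substitute $u=s^2$ to rewrite the integral as $2\int_0^\infty s^{1-(\beta-\alpha)}J_\alpha(2\sqrt{x}\,s)J_\beta(2\sqrt{y}\,s)\,ds$, which is the classical Weber--Schafheitlin form $\int_0^\infty s^{-\lambda}J_\mu(as)J_\nu(bs)\,ds$ with parameters $\mu=\alpha$, $\nu=\beta$, $\lambda=\beta-\alpha-1$, $a=2\sqrt{x}$, $b=2\sqrt{y}$. Appealing to the Weber--Schafheitlin evaluation, the integral (for $b>a$) equals an explicit combination of Gamma functions times $\,_2F_1\!\left(\tfrac{\mu+\nu-\lambda+1}{2},\tfrac{\mu-\nu-\lambda+1}{2};\mu+1;(a/b)^2\right)$. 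With our parameters these arguments become $\alpha+1$, $1-(\beta-\alpha)$, $\alpha+1$, so the two $\alpha+1$ entries cancel and the hypergeometric collapses to the elementary $(1-(a/b)^2)^{\beta-\alpha-1} = ((y-x)/y)^{\beta-\alpha-1}$. Bookkeeping of the powers of $2$, $x$ and $y$ then produces precisely $\frac{x^{\alpha/2}(y-x)^{\beta-\alpha-1}}{y^{\beta/2}\Gamma(\beta-\alpha)}$, while for $b<a$ the roles of $a,b$ swap and the corresponding formula vanishes (the symmetric version has a different $_2F_1$ that becomes identically zero by the integer negative parameter), delivering the indicator.

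The main obstacle is convergence at infinity: by the asymptotic \eqref{equ:asymptoticinfinity}, the product $J_\alpha(2\sqrt{x}\,s)J_\beta(2\sqrt{y}\,s)$ decays only like $s^{-1}$ with oscillation, so in Step~2 the integrand decays like $s^{-(\beta-\alpha)}\cdot\text{osc}$, i.e.\ merely conditionally when $\beta=\alpha+1$. I would therefore regularize by inserting $e^{-\epsilon s^2}$, compute the Laplace-type integral in closed form using the classical formula $\int_0^\infty e^{-\epsilon s^2}s J_\alpha(as)J_\beta(bs)\,ds=\tfrac{1}{2\epsilon}e^{-(a^2+b^2)/(4\epsilon)}I_?(ab/(2\epsilon))$ (suitably weighted by $s^{1-n}$), and pass to the limit $\epsilon\to 0^+$ using Abel summation / dominated convergence off the boundary $y=x$. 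An alternative self-contained route is induction on $n=\beta-\alpha$: the base case $n=1$ is a direct Hankel-transform computation based on the pair $H_\alpha\!\left[\mathbbm{1}_{[0,\sqrt{y}]}\right]\propto J_{\alpha+1}(\sqrt{y}\,\cdot)/\sqrt{\,\cdot\,}$, while the inductive step invokes the semigroup identity for $W_t$ coming from the Chapman--Kolmogorov relation together with the Beta-function convolution $\int_0^{y} z^\alpha(y-z)^{n-1}\,dz$, transported through the Bessel integral by the $n=1$ identity.
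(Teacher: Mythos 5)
Your plan is correct and is essentially the same as the paper's: after pulling out the common factor $t^{-(\beta-\alpha)}e^{-(y-x)/t}$ and substituting $u=s^2$, the paper directly invokes Gradshteyn--Ryzhik~(6.575-1) (a Weber--Schafheitlin evaluation), which yields $\mathbbm{1}(x\leq y)\,x^{\alpha/2}y^{-\beta/2}(y-x)^{\beta-\alpha-1}/\Gamma(\beta-\alpha)$ at once. Your discussion of the hypergeometric collapse and the $\epsilon$-regularization is a sound way to re-derive or justify that table entry, but the paper simply cites it, so the core route is identical.
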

\begin{lemma}\label{lem:Q}
For any $t<s$ with $t,s\in (0,\infty)$, $ \alpha<\beta$ with $\alpha,\beta\in\mathbb{N}_0$ and $x,y\in (0,\infty)$, it holds that
\begin{multline*}
\int_0^\infty dz\, T_\alpha((t,x);(s,z) )W_s((\alpha,z);(\beta,y) )\\=(st)^{-(\beta-\alpha)/2} e^{-y/s+x/t} x^{-\alpha/2}y^{ \beta/2}
 \int_0^\infty du\,  e^{-(s-t)u}u^{-(\beta-\alpha)/2}J_{\alpha}(2\sqrt{t^{-1}s xu})J_{\beta}(2\sqrt{s^{-1}t yu}).
\end{multline*}
\end{lemma}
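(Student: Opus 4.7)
The plan is to substitute the integral representations of $T_\alpha$ and $W_s$ from Lemmas~\ref{lem:T} and~\ref{lem:W}, then perform the $z$-integration first and use Bessel orthogonality (a distributional form of the Hankel--Parseval identity \eqref{equ:Parseval}) to reduce the resulting double integral in auxiliary variables $(u,v)$ to a single integral in $u$. Concretely, after substitution the product $T_\alpha((t,x);(s,z))\,W_s((\alpha,z);(\beta,y))$ carries the prefactor $s^{-(\beta-\alpha)}e^{x/t-y/s}x^{-\alpha/2}y^{\beta/2}$ (note that the $e^{\pm z/s}$ exponentials and the $z^{\pm\alpha/2}$ algebraic factors from the two kernels cancel exactly), and the $z$-dependence of the integrand is confined to the product $J_\alpha(2\sqrt{s^{-1}tzu})\,J_\alpha(2\sqrt{zv})$.

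The heart of the computation is then the identity
\[
\int_0^\infty J_\alpha\bigl(2\sqrt{s^{-1}tzu}\bigr)\,J_\alpha\bigl(2\sqrt{zv}\bigr)\,dz \;=\; \frac{s}{t}\,\delta\!\Bigl(u - \tfrac{sv}{t}\Bigr).
\]
The substitution $z = w^2$ rewrites the left side as $\int_0^\infty 2w\,J_\alpha(aw)J_\alpha(bw)\,dw$ with $a=2\sqrt{s^{-1}tu}$ and $b=2\sqrt{v}$, and this is the classical Hankel orthogonality $\int_0^\infty w\,J_\alpha(aw)J_\alpha(bw)\,dw = a^{-1}\delta(a-b)$, which is a distributional form of the Parseval identity \eqref{equ:Parseval} for the unitary Hankel transform $H_\alpha$ (the kernels $\lambda\mapsto\sqrt{z\lambda}J_\alpha(z\lambda)$ realize $H_\alpha$, and $H_\alpha H_\alpha=\mathrm{Id}$ is precisely this orthogonality). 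Collapsing the $u$-integral against the delta forces $u = sv/t$, and the subsequent change of variables $u' = sv/t$ in the remaining $v$-integral, together with the prefactor identity $s^{-(\beta-\alpha)}(t/s)^{-(\beta-\alpha)/2} = (st)^{-(\beta-\alpha)/2}$, produces exactly the right-hand side of Lemma~\ref{lem:Q}.

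The main obstacle is rigour: by \eqref{equ:asymptoticinfinity} the Bessel factors decay only like the inverse square root of their arguments, so the iterated integral in $(u,v,z)$ is not absolutely convergent and the delta-identity above is only distributional; hence Fubini cannot be applied naively. I would resolve this by inserting a regulariser $e^{-\varepsilon z}$ inside the $z$-integral. For each fixed $\varepsilon>0$ the triple integral becomes absolutely convergent, the orthogonality step reduces to a bona fide $L^2$ Parseval computation for $H_\alpha$, and the resulting expression converges as $\varepsilon\downarrow 0$ to the claimed single integral in $u$ by dominated convergence, with a uniform-in-$\varepsilon$ envelope furnished by the pointwise asymptotics \eqref{equ:asymptoticzero}--\eqref{equ:asymptoticinfinity}.
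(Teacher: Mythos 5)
Your formal computation is correct and the prefactor bookkeeping (including the delta identity and the final simplification $s^{-(\beta-\alpha)}(t/s)^{-(\beta-\alpha)/2}=(st)^{-(\beta-\alpha)/2}$) checks out. You have also correctly recognised that the identity is, at heart, Hankel orthogonality; this is the same idea the paper uses. The two executions differ, though, and the paper's is the one that closes rigorously. The paper first rewrites $T_\alpha((t,x);(s,z))$ and $W_s((\alpha,z);(\beta,y))$ each as a \emph{single} Hankel transform evaluated at $z^{1/2}$, namely $H_\alpha[G_{\alpha,t,s,x}](z^{1/2})$ and $H_\alpha[F_{\alpha,\beta,y}](z^{1/2})$ (Lemma~\ref{lem:Hankel}); after a change of variables the $z$-integral is then literally $\int_0^\infty H_\alpha[G]H_\alpha[F]$, and the $L^2$ Parseval identity \eqref{equ:Parseval} converts this to $\int_0^\infty GF$, which is computed directly. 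Because one verifies once that $G,F\in L^2(\mathbb{R}_+)$, there is no interchange of iterated integrals to justify and hence no Fubini problem anywhere. Your route instead expands both kernels into their integral representations, producing a genuine triple integral in $(z,u,v)$, and then wants to pull the $z$-integral inside; this is where the difficulty lives.

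The specific gap is in your regularisation claim. Inserting $e^{-\varepsilon z}$ controls only the $z$-variable, but the obstruction to Fubini is the $v$-integral coming from the representation \eqref{equ:WQ} of $W_s$: its integrand behaves like $v^{-(\beta-\alpha)/2}\cdot v^{-1/4}\cdot v^{-1/4}=v^{-(\beta-\alpha)/2-1/2}$ at infinity by \eqref{equ:asymptoticinfinity}, which for $\beta-\alpha=1$ is $v^{-1}$ and is \emph{not} absolutely integrable, for any fixed $\varepsilon>0$. So the statement that ``for each fixed $\varepsilon>0$ the triple integral becomes absolutely convergent'' fails precisely in the borderline case $\beta-\alpha=1$ (where $W_s$ is a scaled indicator), and the Fubini step you need is not licensed. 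The delta identity you invoke is not itself the problem; the problem is that the regularised triple integral does not have the absolute integrability that would let you reorganise it into the form where the delta (or, in the $\varepsilon>0$ picture, the heat-type kernel obtained from GR (6.633-2)) can be applied to a legitimately iterated inner integral. To repair this along your lines you would need to regularise the $v$-integral as well, or establish uniform conditional convergence of the $v$-integral in $z$, neither of which you do. The cleanest fix, as in the paper, is to never introduce the $v$-variable at all: apply Parseval directly with $G,F\in L^2$.
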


\begin{proof}[Proof of Lemma~\ref{lem:T}]
Using a change of variables,
\begin{align*}
u=\bar{x}^2,\ s-t=\bar{\rho}^2,\ 2\sqrt{t^{-1}sx}=\bar{\alpha},\ 2\sqrt{s^{-1}ty}=\bar{\beta},\ \alpha=\bar{p},
\end{align*} 
we have
\begin{align*}
\int_0^\infty  du\, e^{-(s-t)u} J_{\alpha}(2\sqrt{t^{-1}s xu})J_{\alpha}(2\sqrt{s^{-1}t yu}) =&\int_0^\infty d\bar{x}\, 2\bar{x} e^{-\bar{\rho}^2\bar{x}^2}J_{\bar{p}}(\bar{\alpha}\bar{x})J_{\bar{p}}(\bar{\beta}\bar{x}).
\end{align*}
In view of \cite[(6.633-2), page 707]{GR},
\begin{multline*}
\int_0^\infty d\bar{x}\, 2\bar{x} e^{-\bar{\rho}^2\bar{x}^2}J_{\bar{p}}(\bar{\alpha}\bar{x})J_{\bar{p}}(\bar{\beta}\bar{x})=\frac{1}{\bar{\rho}^2}\exp\left( -\frac{\bar{\alpha}^2+\bar{\beta}^2}{4\bar{\rho}^2} \right)I_{\bar{p}}\left( \frac{\bar{\alpha}\bar{\beta}}{2\bar{\rho}^2} \right)\\
=(s-t)^{-1}e^{y/s-x/t}e^{-(x+y)/(s-t)}I_\alpha\left(\frac{2\sqrt{xy}}{s-t}\right) .
\end{multline*}
As a result,
\begin{multline*}
 e^{-y/s+x/t}(y/x)^{\alpha/2} \int_0^\infty du\,  e^{-(s-t)u} J_{\alpha}(2\sqrt{t^{-1}s xu})J_{\alpha}(2\sqrt{s^{-1}t yu})\\
 =(s-t)^{-1}(y/x)^{\alpha/2}e^{-(x+y)/(s-t)} I_\alpha \left( \frac{2\sqrt{xy} }{s-t} \right)=T_{\alpha}((t,x);(s,y)).
\end{multline*}
This finishes the proof of Lemma~\ref{lem:T}.
\end{proof}

\begin{proof}[Proof of Lemma~\ref{lem:W}]
The proof is similar, first using a change of variables,
\begin{align*}
u=\bar{t}^2,\ \alpha=\bar{\mu},\ \beta=\bar{\nu}+1,\ 2\sqrt{x}=\bar{\beta}, 2\sqrt{y}=\bar{\alpha}.
\end{align*} 
we have
\begin{align*}
 \int_0^\infty  du\, u^{-(\beta-\alpha)/2}J_{\alpha}(2\sqrt{  xu})J_{\beta}(2\sqrt{  yu}) =&\int_0^\infty  d\bar{t}\, 2 \bar{t}^{-(\bar{\nu}-\bar{\mu})}J_{\bar{\nu}+1}(\bar{\alpha}\bar{t})J_{\bar{\mu}}(\bar{\beta}\bar{t}) .
\end{align*}
By \cite[(6.575-1), page 683]{GR},
\begin{align*}
\int_0^\infty  d\bar{t}\, 2 \bar{t}^{-(\bar{\nu}-\bar{\mu})}J_{\bar{\nu}+1}(\bar{\alpha}\bar{t})J_{\bar{\mu}}(\bar{\beta}\bar{t}) =&\mathbbm{1}(\bar{\beta}\leq \bar{\alpha}) \frac{(\bar{\alpha}^2-\bar{\beta}^2)^{\bar{\nu}-\bar{\mu}}\bar{\beta}^{\bar{\mu}}}{2^{\bar{\nu}-\bar{\mu}-1}\bar{\alpha}^{\bar{\nu}+1}\Gamma(\bar{\nu}-\bar{\mu}+1)}\\
 =&\mathbbm{1}(x\leq y) x^{\alpha/2} y^{-\beta/2} \frac{(y-x)^{\beta-\alpha-1}}{\Gamma(\beta-\alpha)}.
\end{align*}
Thus,
\begin{multline*}
 t^{-(\beta-\alpha) } e^{-(y-x)/t } y^{\beta/2}x^{-\alpha/2}   \int_0^\infty  du\, u^{-(\beta-\alpha)/2}J_{\alpha}(2\sqrt{  xu})J_{\beta}(2\sqrt{  yu})\\
 =\frac{1}{\Gamma(\beta- \alpha)}\frac{(y-x)^{\beta- \alpha-1}}{t^{ \beta- \alpha }}e^{-(y-x)/t} \mathbbm{1}(x\leq y)=W_t((\alpha,x);(\beta,y)  ).
\end{multline*}
This finishes the proof of Lemma~\ref{lem:W}.
\end{proof}
To prove Lemma~\ref{lem:Q}, we express $T_\alpha((t,x);(s,y))$ and $W_t((\alpha,x);(\beta,y))$ as Hankel transforms and then apply the Parseval identity \eqref{equ:Parseval}. For $t<s$ with $t,s\in (0,\infty)$, $\alpha<\beta$ with $\alpha,\beta \in\mathbb{N}_0$ and $x, y\in (0,\infty)$, define the functions
\begin{align*}
G_{\alpha,t,s,x}(u)&\coloneqq \exp\left( -\frac{s(s-t)}{4t}u^2 \right) u^{1/2} J_{\alpha}( t^{-1}s x^{1/2}u),\\
F_{\alpha,\beta,y}(u)&\coloneqq u^{-(\beta-\alpha)+1/2} J_{\beta}( y^{1/2}u),
\end{align*}
 
\begin{lemma}\label{lem:Hankel}
Fix $\alpha<\beta$ with $\alpha,\beta\in\mathbb{N}_0 $ and $t<s$ with $t,s\in (0,\infty) $. For any $x,y\in (0,\infty)$, we have
\begin{align*}
T_\alpha((t,x),(s,y) )&=2^{-1}(s/t)e^{-y/s+x/t}x^{-\alpha/2}y^{\alpha/2-1/4} H_{\alpha}\left[ G_{\alpha,t,s,x} \right](y^{1/2}),\\
W_t((\alpha,x);(\beta,y))&=2^{\beta-\alpha-1}t^{-(\beta-\alpha)}e^{-(y-x)/t}x^{-\alpha/2-1/4}y^{\beta/2} H_{\alpha}\left[F_{\alpha,\beta,y} \right](x^{1/2}).
\end{align*}
Here $H_\alpha$ is the Hankel transform defined in \eqref{def:Hankel}.
\end{lemma}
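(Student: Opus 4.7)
The plan is to verify each identity by unfolding the Hankel transform via its definition \eqref{def:Hankel} and reducing to the integral representations of $T_\alpha$ and $W_t$ already obtained in Lemmas~\ref{lem:T} and \ref{lem:W}. No new integral identity is needed beyond (6.633-2) and (6.575-1) of \cite{GR}; the work is a bookkeeping exercise with powers and substitutions.

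For the first identity, I would expand
\[
H_\alpha[G_{\alpha,t,s,x}](y^{1/2}) = y^{1/4}\int_0^\infty u\, \exp\!\left(-\frac{s(s-t)}{4t}u^2\right) J_\alpha\!\left(t^{-1}s\, x^{1/2} u\right) J_\alpha\!\left(y^{1/2} u\right)\, du,
\]
and apply the change of variable $u = 2\sqrt{t/s}\,v^{1/2}$, which turns the quadratic weight into $e^{-(s-t)v}$ and converts the two Bessel arguments into $2\sqrt{t^{-1}s\, x v}$ and $2\sqrt{s^{-1}t\, y v}$, matching exactly the integrand in \eqref{equ:TQ}. After tracking the Jacobian together with the prefactor $2^{-1}(s/t)e^{-y/s+x/t}x^{-\alpha/2}y^{\alpha/2-1/4}$, the right-hand side of the claimed formula collapses to the expression on the right of \eqref{equ:TQ}, which equals $T_\alpha((t,x);(s,y))$ by Lemma~\ref{lem:T}.

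For the second identity, I would similarly expand
\[
H_\alpha[F_{\alpha,\beta,y}](x^{1/2}) = x^{1/4}\int_0^\infty u^{1-(\beta-\alpha)} J_\alpha(x^{1/2}u) J_\beta(y^{1/2}u)\, du,
\]
and apply the substitution $u = 2 v^{1/2}$, which brings the integrand into the form $v^{-(\beta-\alpha)/2}J_\alpha(2\sqrt{xv})J_\beta(2\sqrt{yv})$ appearing in \eqref{equ:WQ}, with an explicit scaling factor of $2^{\beta-\alpha-1}$ coming from the Jacobian and the power of $u$. The prefactor $2^{\beta-\alpha-1}t^{-(\beta-\alpha)}e^{-(y-x)/t}x^{-\alpha/2-1/4}y^{\beta/2}$ absorbs the leftover constants, and Lemma~\ref{lem:W} completes the reduction to $W_t((\alpha,x);(\beta,y))$.

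The only mild obstacle is keeping track of the fractional powers of $x$ and $y$ and the constants $2$, $s/t$ that arise from the two substitutions; these are chosen precisely so that the Hankel factor $\sqrt{zu}$ in \eqref{def:Hankel} combines with $u^{1/2}$ inside $G_{\alpha,t,s,x}$ (respectively with the $u^{1/2}$ inside $F_{\alpha,\beta,y}$) to reproduce the integrands of Lemmas~\ref{lem:T} and \ref{lem:W} exactly. Once the two substitutions are performed, both identities follow by direct inspection.
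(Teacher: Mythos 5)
Your proposal is correct and takes exactly the same approach as the paper: unfold the Hankel transform from its definition \eqref{def:Hankel}, perform the change of variable that converts the quadratic exponent and Bessel arguments into the form appearing in \eqref{equ:TQ} and \eqref{equ:WQ}, and invoke Lemmas~\ref{lem:T} and~\ref{lem:W}. The paper states the two post-substitution integrals directly as the result of ``a direct computation''; you have made the substitutions $u = 2\sqrt{t/s}\,v^{1/2}$ and $u=2v^{1/2}$ explicit, which is the content hidden behind that phrase. One small slip in your write-up: for the second identity, the factor produced by the substitution and the power of $u$ is $2^{1-(\beta-\alpha)}=2^{-(\beta-\alpha)+1}$ (the reciprocal of what you wrote as ``an explicit scaling factor of $2^{\beta-\alpha-1}$''); the stated prefactor $2^{\beta-\alpha-1}$ is precisely what cancels this, so the argument is unaffected.
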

\begin{proof}
Through a direct computation,
\begin{align*}
H_{\alpha}\left[ G_{\alpha,t,s,x} \right](y^{1/2})=2(t/s)y^{1/4}\int_0^\infty du\, e^{-(s-t)u}J_{\alpha}(2\sqrt{t^{-1}sxu})J_{\alpha}(2\sqrt{s^{-1}tyu}).
\end{align*}
Therefore,
\begin{multline*}
2^{-1}(s/t)e^{-y/s+x/t}x^{-\alpha/2}y^{\alpha/2-1/4} H_{\alpha}\left[ G_{\alpha,t,s,x} \right](y^{1/2})\\
=e^{-y/s+x/t}(y/x)^{\alpha/2}\int_0^\infty du\, e^{-(s-t)u}J_{\alpha}(2\sqrt{t^{-1}sxu})J_{\alpha}(2\sqrt{s^{-1}tyu})=T_\alpha ((t,x),(s,y)).
\end{multline*}
We used \eqref{equ:TQ} in the second equality.
\vspace{\baselineskip}

By another direct computation,
\begin{align*}
H_{\alpha}\left[ F_{\alpha,\beta,y} \right](x^{1/2})=2^{-(\beta-\alpha)+1} x^{1/4} \int_0^\infty du\, u^{-(\beta-\alpha)/2} J_{\alpha}(2\sqrt{ xu})J_{\beta}(2\sqrt{yu}).
\end{align*}
which gives
\begin{multline*}
2^{\beta-\alpha-1}t^{-(\beta-\alpha)}e^{-(y-x)/t}x^{-\alpha/2-1/4}y^{\beta/2} H_{\alpha}\left[F_{\alpha,\beta,t,y} \right](x^{1/2})\\
= t^{-(\beta-\alpha)}e^{-(y-x)/t} x^{-\alpha/2 }y^{\beta/2}\int_0^\infty du\, u^{-(\beta-\alpha)/2} J_{\alpha}(2\sqrt{ xu})J_{\beta}(2\sqrt{yu})=W_t((\alpha,x);(\beta,y) ).
\end{multline*}
We used \eqref{equ:WQ} in the second equality.
\end{proof}

\begin{proof}[Proof of Lemma~\ref{lem:Q}]
Fix $t<s$ with $t,s\in (0,\infty) $, $ \alpha<\beta$ with $\alpha,\beta \in\mathbb{N}_0$ and $x,y\in (0,\infty)$. For simplicity, we write $G(u) $ for $G_{\alpha,t,s,x}(u)$ and $F(u)$ for $F_{\alpha,\beta,y}(u)$. From Lemma~\ref{lem:Hankel},
\begin{align*}
\int_0^\infty dz\, T_\alpha((t,x),(s,z))W_s((\alpha,z),(\beta,y)) 
\end{align*}
equals
\begin{align*}
2^{\beta-\alpha-2}(s/t) s^{-(\beta-\alpha)}e^{-y/s+x/t}x^{-\alpha/2}y^{\beta/2} \int_0^\infty dz\, z^{-1/2} H_\alpha[G](z^{1/2})H_\alpha[F](z^{1/2}).
\end{align*}
Through a change of variable, the above equals
\begin{align*}
2^{\beta-\alpha-1}(s/t) s^{-(\beta-\alpha)}e^{-y/s+x/t}x^{-\alpha/2}y^{\beta/2} \int_0^\infty dz\,  H_\alpha[G](z )H_\alpha[F](z ).
\end{align*}

Suppose that $G(u)$ and $F(u)$ are in $L^2(\mathbb{R}_+)$. Applying the Parseval identity \eqref{equ:Parseval}, we get
\begin{align*}
\int_{0}^\infty dz\,  H_\alpha[G](z)H_\alpha[F](z)=\int_0^\infty du\, G(u)F(u)  .
\end{align*}
By a direct computation, 
\begin{multline*}
\int_0^\infty du\, G(u)F(u) \\=  2^{-(\beta-\alpha)+1} \left(  t/s \right)^{-(\beta-\alpha)/2+1} \int_0^\infty du\, e^{-(s-t)u}u^{-(\beta-\alpha)/2}J_{\alpha}(2\sqrt{t^{-1}s xu})J_{\beta}(2\sqrt{s^{-1}t yu}).
\end{multline*}
As a result,
\begin{multline*}
\int_0^\infty dz\, T_\alpha(x,z;s-t)W_s(z,y;\beta-\alpha)
\\=(st)^{-(\beta-\alpha)/2} e^{-y/s+x/t} x^{-\alpha/2}y^{ \beta/2}
\int_0^\infty du\,  e^{-(s-t)u}u^{-(\beta-\alpha)/2}J_{\alpha}(2\sqrt{t^{-1}s xu})J_{\beta}(2\sqrt{s^{-1}t yu}).
\end{multline*}

It remains to check that $G(u)$ and $F(u)$ are in $L^2(\mathbb{R}_+)$. 
Using $s-t>0,\beta-\alpha\geq 1$ and the asymptotics in \eqref{equ:asymptoticzero} and \eqref{equ:asymptoticinfinity}, it is straightforward to show that
\begin{align*}
G(u)=&\exp\left( -\frac{s(s-t)}{4t}u^2 \right) u^{1/2} J_{\alpha}( t^{-1}s x^{1/2}u),\ F(u)=u^{-(\beta-\alpha)+1/2} J_{\beta}( y^{1/2}u) 
\end{align*}
are both in $L^2(\mathbb{R}_+)$. This finishes the proof.
\end{proof}

\subsection{Proof of Lemma~\ref{lem:Qintegral-s}}\label{sec:B.2}
In this section, we prove Lemma~\ref{lem:Qintegral-s}. We start by recalling relevant definitions.
\begin{align}
\overline{T}_{\alpha}((t,x);(s,y) )\coloneqq &(s-t)^{-1}(y/x)^{\alpha/2}e^{-(x+y)/(s-t)} I_\alpha \left( \frac{2\sqrt{xy} }{s-t} \right),\label{def:T-s-app}\\
\overline{W}_t((\alpha,x);(\beta,y) )\coloneqq &\frac{y^{\beta}x^{-\alpha}(x-y)^{\alpha-\beta-1}}{\Gamma(\alpha-\beta)}\mathbbm{1}(y\leq x).\label{def:W-s-app}
\end{align}
\begin{equation}\label{equ:Qform-s-app}
\overline{Q} ((\alpha,t,x);(\beta,s,y))=\left\{ \begin{array}{cc}
\overline{T}_\alpha((t,x);(s,y)), &\textup{if}\ \alpha=\beta\ \textup{and}\ t<s,\\
\overline{W}_t((\alpha,x);(\beta,y) ), &\textup{if}\ \alpha>\beta\ \textup{and}\ t=s,\\
\int_0^\infty dz\, \overline{W}_t((\alpha,x);(\beta,z) )\overline{T}_\beta((t,z);(s,y)), &\textup{if}\ \alpha>\beta\ \textup{and}\ t<s.
\end{array} \right.
\end{equation}

We record Lemma~\ref{lem:Qintegral-s} as Lemma~\ref{lem:Qintegral-s-app} below.
\begin{lemma}\label{lem:Qintegral-s-app}
Let $(\alpha,t)\prec_{\textup{s}} (\beta,s)$ be space-like pairs in $ \mathbb{N}_0\times (0,\infty)$ as in Definition \ref{def:spacelike} and $x,y\in (0,\infty)$. It holds that
\begin{equation}\label{equ:Qintegral-s-app}
\begin{split}
\overline{Q}((\alpha,t,x);(\beta,s,y))= &x^{-\alpha/2}y^{ \beta/2}\int_0^\infty du\,  e^{-(s-t)u}u^{-(\alpha-\beta)/2}J_{\alpha}(2\sqrt{ xu})J_{\beta}(2\sqrt{  yu}).
\end{split}
\end{equation} 
\end{lemma}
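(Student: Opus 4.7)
The plan is to mirror the proof of Lemma~\ref{lem:Qintegral-app} and split into the three cases dictated by \eqref{equ:Qform-s-app}: (a) $\alpha=\beta$ with $t<s$, where $\overline{Q}=\overline{T}_\alpha=T_\alpha$; (b) $\alpha>\beta$ with $t=s$, where $\overline{Q}=\overline{W}_t$; and (c) $\alpha>\beta$ with $t<s$, where $\overline{Q}$ is the convolution $\int \overline{W}_t\,\overline{T}_\beta\, dz$. In each case the target identity \eqref{equ:Qintegral-s-app} is simpler than its time-like analogue because all the gauge factors of the form $e^{\pm x/t}$ and $(s/t)^{\ast}$ disappear: this is essentially because $\overline{W}_t$ does not depend on $t$, and because the Hankel transform will be taken in the same spatial scale on both sides.

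For case (a) the identity $x^{-\alpha/2}y^{\alpha/2}\!\int_0^\infty\! e^{-(s-t)u} J_\alpha(2\sqrt{xu})J_\alpha(2\sqrt{yu})\,du = T_\alpha((t,x);(s,y))$ follows immediately from Lemma~\ref{lem:T} by setting $\beta=\alpha$ and noting that $t^{-1}s x\cdot s^{-1}ty = xy$, so the weighting and scaling factors cancel; equivalently, one re-applies \cite[(6.633-2), page 707]{GR} directly. For case (b), I would substitute $u=\bar t^{\,2}$ to reduce
\[
\int_0^\infty\! u^{-(\alpha-\beta)/2}J_\alpha(2\sqrt{xu})J_\beta(2\sqrt{yu})\,du
\]
to the form treated by \cite[(6.575-1), page 683]{GR} with parameters $\bar\nu+1=\alpha$, $\bar\mu=\beta$, $\bar a=\sqrt x$, $\bar b=\sqrt y$; unwinding the resulting formula reproduces exactly $x^{\alpha/2}y^{-\beta/2}\overline{W}_t((\alpha,x);(\beta,y))$. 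This is the same calculation as in Lemma~\ref{lem:W}, only with the roles of $\alpha$ and $\beta$ interchanged to accommodate the space-like ordering $\alpha>\beta$.

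For case (c), I would introduce Hankel representations in the shared index $\beta$ (playing the role that $\alpha$ played in Lemma~\ref{lem:Hankel}) and apply Parseval \eqref{equ:Parseval}. Concretely, set $\overline{F}_{\alpha,\beta,x}(v)\coloneqq v^{1/2-(\alpha-\beta)}J_\alpha(\sqrt{x}\,v)$ and $\overline{G}_{\beta,t,s,y}(v)\coloneqq v^{1/2}e^{-(s-t)v^{2}/4}J_\beta(\sqrt{y}\,v)$. After the substitution $u=v^2/4$ one checks
\[
\overline{W}_t((\alpha,x);(\beta,z)) = 2^{\alpha-\beta-1}x^{-\alpha/2}z^{\beta/2-1/4} H_\beta[\overline{F}_{\alpha,\beta,x}](\sqrt{z}),\qquad
\overline{T}_\beta((t,z);(s,y)) = \tfrac12\, y^{\beta/2}z^{-\beta/2-1/4} H_\beta[\overline{G}_{\beta,t,s,y}](\sqrt{z}),
\]
using case (b) (respectively case (a)) of this lemma to expose the Bessel kernels. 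Then a change of variable $z=r^2$ in the integral over $z$ converts it into $\int_0^\infty H_\beta[\overline F]\cdot H_\beta[\overline G]\,dr$, which equals $\int_0^\infty \overline F\cdot \overline G\,dv$ by Parseval. A final substitution $v=2\sqrt u$ produces exactly the right-hand side of \eqref{equ:Qintegral-s-app}; miraculously all the powers of $2$ cancel, leaving coefficient $1$.

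The main technical obstacle is to justify Parseval's identity, i.e.\ to verify that $\overline F_{\alpha,\beta,x}$ and $\overline G_{\beta,t,s,y}$ lie in $L^2(\mathbb{R}_+)$. At infinity $\overline G$ decays like a Gaussian and $\overline F$ decays like $v^{-(\alpha-\beta)}$ times the $v^{-1/2}$ from \eqref{equ:asymptoticinfinity}; since $\alpha-\beta\ge 1$ this is square-integrable. Near $0$, \eqref{equ:asymptoticzero} gives $\overline F(v)\sim v^{1/2-(\alpha-\beta)+\alpha}= v^{\beta+1/2}$ and $\overline G(v)\sim v^{\beta+1/2}$, both of which are square-integrable at the origin provided $\beta\ge 0$, which holds since $\beta\in\mathbb{N}_0$. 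Hence Parseval applies and case (c) is complete. This is more a bookkeeping issue than a real difficulty, but it is the only place where we must be careful, and it is the analogue of the $L^2$ verification at the end of the proof of Lemma~\ref{lem:Q}.
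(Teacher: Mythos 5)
Your proposal is correct and mirrors the paper's argument essentially step for step: the paper also splits along \eqref{equ:Qform-s-app} into the same three cases (Lemmas~\ref{lem:T-s}, \ref{lem:W-s}, \ref{lem:Q-s}), uses the same two Gradshteyn--Ryzhik identities, and handles the convolution case via exactly the Hankel representations $\overline F_{\alpha,\beta,x}$, $\overline G_{\beta,t,s,y}$ in $H_\beta$ together with Parseval's identity (Lemma~\ref{lem:Hankel-s}), including the closing $L^2$ check using \eqref{equ:asymptoticzero}--\eqref{equ:asymptoticinfinity}. Two small nits that do not affect correctness: in case (a), the space-like representation does not literally follow from Lemma~\ref{lem:T} by ``cancellation of scaling factors'' (the exponential prefactor $e^{-y/s+x/t}$ and the rescaled Bessel arguments are genuinely different), but your fallback --- re-applying \cite[(6.633-2)]{GR} directly --- is exactly what the paper does; and the stated decay rate of $\overline F$ at infinity should be $v^{-(\alpha-\beta)}$ rather than $v^{-(\alpha-\beta)-1/2}$, though the conclusion $\overline F\in L^2(\mathbb{R}_+)$ holds either way since $\alpha-\beta\geq 1$.
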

From \eqref{equ:Qform-s-app}, we split \eqref{equ:Qintegral-s-app} into three cases.
\begin{lemma}\label{lem:T-s}
For any $t<s$ with $t,s\in (0,\infty) $, $\alpha\in\mathbb{N}_0$ and $x,y\in (0,\infty)$, it holds that
\begin{align}\label{equ:TQ-s}
\overline{T}_{\alpha}((t,x);(s,y) )=&  (y/x)^{\alpha/2} \int_0^\infty du\,  e^{-(s-t)u} J_{\alpha}(2\sqrt{ xu})J_{\alpha}(2\sqrt{ yu}).
\end{align}
\end{lemma}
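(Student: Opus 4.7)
The plan is to reduce the identity to the classical Weber–Sonine type integral already exploited in the proof of Lemma~\ref{lem:T}. Recall that by \eqref{def:T-s} we simply have $\overline{T}_\alpha=T_\alpha$, so the target reads
\[
(s-t)^{-1}(y/x)^{\alpha/2}e^{-(x+y)/(s-t)}I_\alpha\!\left(\tfrac{2\sqrt{xy}}{s-t}\right)=(y/x)^{\alpha/2}\int_0^\infty e^{-(s-t)u}J_\alpha(2\sqrt{xu})J_\alpha(2\sqrt{yu})\,du.
\]
After canceling the common factor $(y/x)^{\alpha/2}$, this is a pure statement about a single Bessel integral.

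The first step is a simple change of variable $u=v^2$, which recasts the right–hand side as
\[
\int_0^\infty 2v\,e^{-(s-t)v^2}J_\alpha(2\sqrt{x}\,v)J_\alpha(2\sqrt{y}\,v)\,dv.
\]
The second step is to invoke the Gradshteyn–Ryzhik formula \cite[(6.633-2), page 707]{GR}, applicable since $s-t>0$ and $\alpha\in\mathbb{N}_0$, with the identification $\bar\rho^2=s-t$, $\bar\alpha=2\sqrt{x}$, $\bar\beta=2\sqrt{y}$, $\bar p=\alpha$. This yields
\[
\frac{1}{s-t}\exp\!\left(-\frac{4x+4y}{4(s-t)}\right)I_\alpha\!\left(\frac{4\sqrt{xy}}{2(s-t)}\right)=\frac{1}{s-t}e^{-(x+y)/(s-t)}I_\alpha\!\left(\tfrac{2\sqrt{xy}}{s-t}\right),
\]
which is exactly the desired expression after reintroducing the prefactor $(y/x)^{\alpha/2}$.

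This is essentially a direct specialization of the calculation done for Lemma~\ref{lem:T}: in the time-like case the change of variables introduced the distortions $2\sqrt{t^{-1}sxu}$ and $2\sqrt{s^{-1}tyu}$ inside the Bessel functions along with an extra exponential gauge factor, whereas here those scalings are absent and the identity comes out cleanly. Consequently there is no genuine obstacle; the only verification needed is that the conditions on the parameters in \cite[(6.633-2)]{GR} (positivity of $\bar\rho^2$, nonnegativity of the Bessel index) are met, which is immediate. No nontrivial estimate or extra lemma from the appendix is required, in contrast to Lemma~\ref{lem:Q} where the composition over $z$ forced an appeal to Parseval for the Hankel transform (Lemma~\ref{lem:Hankel}).
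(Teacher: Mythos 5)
Your proof is correct and follows essentially the same route as the paper: the change of variable $u=v^2$ plus a direct application of \cite[(6.633-2)]{GR} with $\bar\rho^2=s-t$, $\bar\alpha=2\sqrt{x}$, $\bar\beta=2\sqrt{y}$, $\bar p=\alpha$. Your observation that this is the undistorted special case of Lemma~\ref{lem:T} is also accurate.
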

\begin{lemma}\label{lem:W-s}
For any $t>0$, $\alpha>\beta$ with $\alpha\,\beta \in\mathbb{N}_0$ and $x,y\in (0,\infty)$, it holds that
\begin{align}\label{equ:WQ-s}
\overline{W}_t((\alpha,x);(\beta,y) )=& y^{\beta/2}x^{-\alpha/2}   \int_0^\infty  du\, u^{-(\alpha-\beta)/2}J_{\alpha}(2\sqrt{  xu})J_{\beta}(2\sqrt{  yu}) .
\end{align}
\end{lemma}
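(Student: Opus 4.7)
The plan is to reduce Lemma~\ref{lem:W-s} to the already proved Lemma~\ref{lem:W}. First observe from the definition \eqref{def:W-s-app} that $\overline{W}_t((\alpha,x);(\beta,y))$ does not actually depend on $t$, and equals $\frac{x^{-\alpha}y^\beta(x-y)^{\alpha-\beta-1}}{\Gamma(\alpha-\beta)}\mathbbm{1}(y\le x)$. Consequently, \eqref{equ:WQ-s} is equivalent to the pure Bessel-integral identity
$$\int_0^\infty du\, u^{-(\alpha-\beta)/2}J_{\alpha}(2\sqrt{xu})J_{\beta}(2\sqrt{yu}) \;=\; x^{-\alpha/2}y^{\beta/2}\,\frac{(x-y)^{\alpha-\beta-1}}{\Gamma(\alpha-\beta)}\,\mathbbm{1}(y\le x).$$

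To obtain this identity I would invoke Lemma~\ref{lem:W} with the relabeling $(\alpha',\beta',x',y')=(\beta,\alpha,y,x)$. The hypothesis $\alpha'<\beta'$ of Lemma~\ref{lem:W} becomes exactly the given $\beta<\alpha$, and the integrand $u^{-(\beta'-\alpha')/2}J_{\alpha'}(2\sqrt{x'u})J_{\beta'}(2\sqrt{y'u})$ coincides, after the swap and using the trivial symmetry $J_\alpha J_\beta=J_\beta J_\alpha$, with our integrand. Rearranging \eqref{equ:WQ} so as to isolate the integral (the $t$-dependent factors $t^{-(\beta'-\alpha')}$ and $e^{-(y'-x')/t}$ cancel on both sides) produces precisely the displayed identity above.

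An alternative, self-contained route mirrors the computation in the proof of Lemma~\ref{lem:W}: substitute $u=\bar t^{\,2}$ together with $\bar\mu=\beta$, $\bar\nu=\alpha-1$, $\bar\alpha=2\sqrt x$, $\bar\beta=2\sqrt y$, reducing the integral to a Weber--Schafheitlin type integral that is evaluated by \cite[(6.575-1), page~683]{GR}. The only difference from the derivation of \eqref{equ:WQ} is that now we are in the opposite regime $\bar\beta<\bar\alpha$ (i.e. $y<x$), which produces the factor $(x-y)^{\alpha-\beta-1}$ and the indicator $\mathbbm{1}(y\le x)$ in place of their $\beta\leftrightarrow\alpha$, $x\leftrightarrow y$ counterparts. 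There is no genuine obstacle here; the only point worth checking carefully is that the regime condition in \cite[(6.575-1)]{GR} is indeed satisfied by $\alpha>\beta\geq 0$ so that the formula can be applied without issue, which is why the reduction to Lemma~\ref{lem:W} via the symmetric relabeling is the cleanest route.
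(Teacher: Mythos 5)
Your proposal is correct. Both routes you describe work, and the paper in fact takes the second (direct) one: its proof of Lemma~\ref{lem:W-s} simply says ``As in the proof of Lemma~\ref{lem:W}'' and then records the pure Bessel identity
\begin{equation*}
\int_0^\infty du\, u^{-(\alpha-\beta)/2}J_\alpha(2\sqrt{xu})J_\beta(2\sqrt{yu})=\mathbbm{1}(y\leq x)\,x^{-\alpha/2}y^{\beta/2}\,\frac{(x-y)^{\alpha-\beta-1}}{\Gamma(\alpha-\beta)},
\end{equation*}
i.e.\ it reruns the substitution feeding into the Weber--Schafheitlin formula \cite[(6.575-1)]{GR} with the roles of the indices exchanged. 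Your primary route --- relabel $(\alpha',\beta',x',y')=(\beta,\alpha,y,x)$ directly in the conclusion of Lemma~\ref{lem:W}, cancel the $t$-dependent prefactors, and use $J_\alpha J_\beta = J_\beta J_\alpha$ --- establishes exactly the same integral identity without re-examining the validity conditions of the tabulated formula, which is slightly cleaner; the two differ only in whether the symmetry $\alpha\leftrightarrow\beta$, $x\leftrightarrow y$ is exploited at the level of the proved lemma or of the reference identity. Either way the verification is complete: the $t$-dependent factors on both sides of \eqref{equ:WQ} really do cancel (they are $t^{-(\beta'-\alpha')}e^{-(y'-x')/t}$), and after relabeling and multiplying by $y^{\beta/2}x^{-\alpha/2}$ one recovers $\overline{W}_t((\alpha,x);(\beta,y))=\frac{x^{-\alpha}y^\beta(x-y)^{\alpha-\beta-1}}{\Gamma(\alpha-\beta)}\mathbbm{1}(y\le x)$ as required.
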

\begin{lemma}\label{lem:Q-s}
For any $t<s$ with $t,s\in (0,\infty) $, $\alpha>\beta$ with $\alpha,\beta \in\mathbb{N}_0$ and $x,y\in (0,\infty)$, it holds that
\begin{equation*}	
\int_0^\infty dz\, \overline{W}_t((\alpha,x);(\beta,z) )\overline{T}_\beta((t,z);(s,y))=x^{-\alpha/2}y^{ \beta/2}\int_0^\infty du\,  e^{-(s-t)u}u^{-(\alpha-\beta)/2}J_{\alpha}(2\sqrt{ xu})J_{\beta}(2\sqrt{  yu}).
\end{equation*}
\end{lemma}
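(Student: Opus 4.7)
The proof proceeds in exact parallel to that of Lemma~\ref{lem:Q} (the time-like analogue), via the Parseval identity \eqref{equ:Parseval} for the Hankel transform. The plan is as follows. I would introduce auxiliary functions
\begin{align*}
\bar{G}_{\beta,t,s,y}(u) &:= \exp\left(-\tfrac{s-t}{4}u^2\right)u^{1/2}J_\beta(y^{1/2}u),\\
\bar{F}_{\alpha,\beta,x}(u) &:= u^{-(\alpha-\beta)+1/2}J_\alpha(x^{1/2}u),
\end{align*}
which are designed so that $\overline{T}_\beta$ and $\overline{W}_t$ can be recognized as Hankel transforms of $\bar{G}$ and $\bar{F}$ evaluated at $z^{1/2}$. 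Concretely, starting from the integral representations \eqref{equ:TQ-s} and \eqref{equ:WQ-s} and substituting $w = u^2/4$ inside each integral, one obtains the analogue of Lemma~\ref{lem:Hankel}:
\begin{align*}
\overline{T}_\beta((t,z);(s,y)) &= \tfrac{1}{2}\,y^{\beta/2}\,z^{-\beta/2-1/4}\,H_\beta[\bar{G}_{\beta,t,s,y}](z^{1/2}),\\
\overline{W}_t((\alpha,x);(\beta,z)) &= 2^{\alpha-\beta-1}\,x^{-\alpha/2}\,z^{\beta/2-1/4}\,H_\beta[\bar{F}_{\alpha,\beta,x}](z^{1/2}).
\end{align*}

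Next, I would take the product and integrate in $z$. The prefactor powers of $z$ telescope to $z^{-1/2}$, and the substitution $w = z^{1/2}$ (so $z^{-1/2}dz = 2\,dw$) yields
\begin{equation*}
\int_0^\infty \overline{W}_t((\alpha,x);(\beta,z))\,\overline{T}_\beta((t,z);(s,y))\,dz = 2^{\alpha-\beta-1}\,x^{-\alpha/2}\,y^{\beta/2}\int_0^\infty H_\beta[\bar{G}](w)\,H_\beta[\bar{F}](w)\,dw.
\end{equation*}
Applying the Parseval identity \eqref{equ:Parseval} to the right-hand side replaces the integrand by $\bar{G}(u)\bar{F}(u)$; one final change of variable $u\mapsto u^2/4$ converts the resulting integral (after cancelling the factor $2^{\alpha-\beta-1}$) into
\begin{equation*}
x^{-\alpha/2}\,y^{\beta/2}\int_0^\infty du\, e^{-(s-t)u}\,u^{-(\alpha-\beta)/2}\,J_\alpha(2\sqrt{xu})\,J_\beta(2\sqrt{yu}),
\end{equation*}
which is exactly the claimed expression.

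The last step is to verify that $\bar{G},\bar{F}\in L^2(\mathbb{R}_+)$, so that Parseval applies. Near $u=0$, the asymptotics \eqref{equ:asymptoticzero} give $\bar{G}(u)\sim u^{\beta+1/2}$ and $\bar{F}(u)\sim u^{-(\alpha-\beta)+1/2+\alpha}=u^{\beta+1/2}$, both square-integrable at the origin. Near $u=\infty$, the Gaussian factor in $\bar{G}$ makes $L^2$-integrability trivial; and \eqref{equ:asymptoticinfinity} gives $\bar{F}(u) = O(u^{-(\alpha-\beta)})$, which is square-integrable at infinity because $\alpha-\beta\geq 1$ (here one uses crucially the space-like hypothesis $\alpha > \beta$).

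The argument is almost entirely bookkeeping, so no conceptually hard step arises; the main pitfall is simply tracking the numerous constants (powers of $2$, $x^{\pm\alpha/2}$, $y^{\pm\beta/2}$, $z^{\pm 1/4}$) through the Hankel rescalings and the two changes of variable, and being careful that the power $-(\alpha-\beta)/2$ appearing in the space-like Bessel kernel $\overline{K}^{\mathrm{Bes}}$ (rather than the $-(\beta-\alpha)/2$ of the time-like case) matches the power arising from the $\bar{F}$ definition above.
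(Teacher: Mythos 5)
Your proposal is correct and follows the paper's own route essentially verbatim: you introduce the same auxiliary functions $\overline{G}_{\beta,t,s,y}$ and $\overline{F}_{\alpha,\beta,x}$, derive the analogue of Lemma~\ref{lem:Hankel-s} writing $\overline{T}_\beta$ and $\overline{W}_t$ as order-$\beta$ Hankel transforms, apply Parseval, and track constants. The $L^2$ verification at the end (using $\alpha-\beta\geq 1$) is exactly the one the paper uses.
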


\begin{proof}[Proof of Lemma~\ref{lem:T-s}]
Using a change of variables,
\begin{align*}
u=\bar{x}^2,\ s-t=\bar{\rho}^2,\ 2\sqrt{x}=\bar{\alpha},\ 2\sqrt{y}=\bar{\beta},\ \alpha=\bar{p},
\end{align*} 
we have
\begin{align*}
\int_0^\infty du\,  e^{-(s-t)u} J_{\alpha}(2\sqrt{ xu})J_{\alpha}(2\sqrt{ yu}) =&\int_0^\infty d\bar{x}\, 2\bar{x} e^{-\bar{\rho}^2\bar{x}^2}J_{\bar{p}}(\bar{\alpha}\bar{x})J_{\bar{p}}(\bar{\beta}\bar{x}).
\end{align*}
In view of \cite[(6.633-2), page 707]{GR},
\begin{multline*}
\int_0^\infty d\bar{x}\, 2\bar{x} e^{-\bar{\rho}^2\bar{x}^2}J_{\bar{p}}(\bar{\alpha}\bar{x})J_{\bar{p}}(\bar{\beta}\bar{x})=\frac{1}{\bar{\rho}^2}\exp\left( -\frac{\bar{\alpha}^2+\bar{\beta}^2}{4\bar{\rho}^2} \right)I_{\bar{p}}\left( \frac{\bar{\alpha}\bar{\beta}}{2\bar{\rho}^2} \right)\\
=(s-t)^{-1}e^{-(x+y)/(s-t)}I_\alpha\left(\frac{2\sqrt{xy}}{s-t}\right) .
\end{multline*}
As a result,
\begin{multline*}
 (y/x)^{\alpha/2} \int_0^\infty du\,  e^{-(s-t)u} J_{\alpha}(2\sqrt{ xu})J_{\alpha}(2\sqrt{ yu})\\
 =(s-t)^{-1}(y/x)^{\alpha/2}e^{-(x+y)/(s-t)} I_\alpha \left( \frac{2\sqrt{xy} }{s-t} \right)=\overline{T}_{\alpha}((t,x);(s,y)).
\end{multline*}
This finishes the proof of Lemma~\ref{lem:T-s}. 
\end{proof}

\begin{proof}[Proof of Lemma~\ref{lem:W-s}]
As in the proof of Lemma~\ref{lem:W}, we have
\begin{align*}
\int_0^\infty du\, u^{-(\alpha-\beta)/2}J_\alpha(2\sqrt{xu})J_\beta(2\sqrt{yu})=\mathbbm{1}(y\leq x)x^{-\alpha/2}y^{\beta/2}\frac{(x-y)^{\alpha-\beta-1}}{\Gamma(\alpha-\beta)}.
\end{align*}
Therefore,
\begin{multline*}
y^{\beta/2}x^{-\alpha/2}   \int_0^\infty  du\, u^{-(\alpha-\beta)/2}J_{\alpha}(2\sqrt{  xu})J_{\beta}(2\sqrt{  yu}) \\=\mathbbm{1}(y\leq x)\frac{y^{\beta}x^{-\alpha} (x-y)^{\alpha-\beta-1}}{\Gamma(\alpha-\beta)}=\overline{W}_t((\alpha,x);(\beta,y) ).
\end{multline*}
This finishes the proof of Lemma~\ref{lem:W-s}. 
\end{proof} 
 
In the next lemma, we express $\overline{T}_\alpha((t,x);(s,y))$ and $\overline{W}_t((\alpha,x);(\beta,y) )$ as Hankel transforms. Then we use the Parseval identity \eqref{equ:Parseval} to prove Lemma~\ref{lem:Q-s}. For $t<s$ with $t,s\in (0,\infty)$, $\alpha>\beta$ with $\alpha,\beta\in\mathbb{N}_0$ and $x, y\in (0,\infty)$, define the functions
\begin{align*}
\overline{ G}_{\alpha,t,s,y}(u)&\coloneqq \exp\left( -\frac{ (s-t)}{4 }u^2 \right) u^{1/2} J_{\alpha}(  y^{1/2}u),\\
\overline{F}_{\alpha,\beta,x}(u)&\coloneqq u^{-(\alpha-\beta)+1/2} J_{\alpha}( x^{1/2}u),
\end{align*} 
 
\begin{lemma}\label{lem:Hankel-s}
\begin{align*}
\overline{T}_\alpha((t,x);(s,y))&=2^{-1}x^{-\alpha/2-1/4}y^{\alpha/2 }H_\alpha[\overline{G}_{\alpha,t,s,y}](x^{1/2}),\\
\overline{W}_t((\alpha,x);(\beta,y) )&=2^{\alpha-\beta-1}x^{-\alpha/2}y^{\beta/2-1/4}H_\beta[\overline{F}_{\alpha,\beta,x}](y^{1/2}).
\end{align*}
\end{lemma}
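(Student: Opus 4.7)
The plan is to start from the integral identities already established in Lemma~\ref{lem:T-s} and Lemma~\ref{lem:W-s} and convert them into the form of a Hankel transform by a change of variable. Recall that for $f\in L^2(\mathbb{R}_+)$,
\begin{align*}
H_\alpha[f](z)=\int_0^\infty \sqrt{zu}\, J_\alpha(zu)\, f(u)\, du.
\end{align*}
Both identities we wish to prove have the Hankel transform evaluated at $x^{1/2}$ or $y^{1/2}$, so the natural substitution is $u=v^2/4$, which converts $2\sqrt{xu}$ into $x^{1/2} v$ and $2\sqrt{yu}$ into $y^{1/2} v$.

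For the first identity, I would apply this substitution in \eqref{equ:TQ-s}:
\begin{align*}
\overline{T}_\alpha((t,x);(s,y))=(y/x)^{\alpha/2}\int_0^\infty \tfrac{1}{2}v\, e^{-(s-t)v^2/4}J_\alpha(x^{1/2}v)J_\alpha(y^{1/2}v)\, dv.
\end{align*}
The integrand is precisely $\sqrt{x^{1/2} v}\,J_\alpha(x^{1/2}v)\cdot \overline{G}_{\alpha,t,s,y}(v)$ divided by $x^{1/4}$, so
\begin{align*}
\overline{T}_\alpha((t,x);(s,y))=\tfrac{1}{2}\,x^{-\alpha/2-1/4}\,y^{\alpha/2}\,H_\alpha[\overline{G}_{\alpha,t,s,y}](x^{1/2}),
\end{align*}
as desired. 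The same change of variable applied to \eqref{equ:WQ-s} gives
\begin{align*}
\overline{W}_t((\alpha,x);(\beta,y))=y^{\beta/2}x^{-\alpha/2}\cdot 2^{\alpha-\beta-1}\int_0^\infty v^{-(\alpha-\beta)+1}J_\alpha(x^{1/2}v)J_\beta(y^{1/2}v)\, dv,
\end{align*}
and the integrand is $\sqrt{y^{1/2} v}\, J_\beta(y^{1/2}v)\cdot \overline{F}_{\alpha,\beta,x}(v)$ divided by $y^{1/4}$, yielding the second identity.

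I expect no serious obstacle: the only subtlety is to verify that $\overline{G}_{\alpha,t,s,y}$ and $\overline{F}_{\alpha,\beta,x}$ lie in $L^2(\mathbb{R}_+)$ so the Hankel transforms are well-defined pointwise through the defining integral, but this follows immediately from the asymptotics \eqref{equ:asymptoticzero}--\eqref{equ:asymptoticinfinity} together with the Gaussian decay (respectively the polynomial decay $v^{-(\alpha-\beta)+1/2}$ with $\alpha-\beta\geq 1$) at infinity. Thus the proof is essentially a bookkeeping exercise once the correct substitution is identified.
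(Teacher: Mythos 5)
Your proposal is correct and follows essentially the same route as the paper: both proofs rely on Lemma~\ref{lem:T-s} and Lemma~\ref{lem:W-s} and convert between the integrals there and the Hankel transform via the substitution $u\leftrightarrow v^2/4$; the paper merely runs the computation in the opposite direction (evaluating $H_\alpha[\overline{G}]$ directly and matching it against \eqref{equ:TQ-s}, rather than starting from \eqref{equ:TQ-s} and recognizing the Hankel transform). The only nit is that your phrase ``the integrand is precisely $\ldots$ divided by $x^{1/4}$'' drops the factor $\tfrac12$ already present in the displayed integrand, but you track it correctly in the final line, so the constants all land where they should.
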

\begin{proof}
Through a direct computation,
\begin{align*}
H_\alpha[\overline{G}_{\alpha,t,s,y}](x^{1/2})=2x^{1/4}\int_0^\infty du\, e^{-(s-t)u}J_\alpha(2\sqrt{xu})J_\alpha(2\sqrt{yu}).
\end{align*}
Therefore,
\begin{multline*}
2^{-1}x^{-\alpha/2-1/4}y^{\alpha/2 }H_\alpha[\overline{G}_{\alpha,t,s,y}](x^{1/2})\\
=  (y/x)^{\beta/2 }\int_0^\infty du\, e^{-(s-t)u}J_\alpha(2\sqrt{xu})J_\alpha(2\sqrt{yu})=\overline{T}_\alpha((t,x);(s,y)).
\end{multline*}
We used \eqref{equ:TQ-s} in the second equality.
\vspace{\baselineskip}

Through a direct computation,
\begin{align*}
H_\beta[\overline{F}_{\alpha,\beta,t,x}](y^{1/2})=2^{-(\alpha-\beta)+1}y^{1/4}\int_0^\infty du\, u^{-(\alpha-\beta)/2} J_{\alpha}(2\sqrt{xu})J_{\beta}(2\sqrt{yu}).
\end{align*}
Therefore,
\begin{multline*}
2^{\alpha-\beta-1}x^{-\alpha/2}y^{\beta/2-1/4}H_\beta[\overline{F}_{\alpha,\beta,t,x}](y^{1/2})\\
= x^{-\alpha/2}y^{\beta/2 } \int_0^\infty du\, u^{-(\alpha-\beta)/2} J_{\alpha}(2\sqrt{xu})J_{\beta}(2\sqrt{yu})=\overline{W}_t((\alpha,x);(\beta,y) ).
\end{multline*}
We used \eqref{equ:WQ-s} in the second equality.
\end{proof}

\begin{proof}[Proof of Lemma~\ref{lem:Q-s}]
Fix $t<s$ with $t,s\in (0,\infty) $, $\alpha>\beta$ with $\alpha,\beta\in\mathbb{N}_0$ and $x,y\in (0,\infty)$. For simplicity, we write $\overline{G}(u) $ for $\overline{G}_{\beta,t,s,y}(u)$ and $\overline{F}(u)$ for $\overline{F}_{\alpha,\beta,x}(u)$. From Lemma~\ref{lem:Hankel},
\begin{align*}
\int_0^\infty dz\,  \overline{W}_t((\alpha,x);(\beta,z) ) \overline{T}_\beta((t,z);(s,y))= 2^{\beta-\alpha-2}  x^{-\alpha/2}y^{\beta/2} \int_0^\infty dz\, z^{-1/2} H_\beta[\overline{G}](z^{1/2})H_\beta[\overline{F}](z^{1/2}).
\end{align*}
Through a change of variable, the above equals
\begin{align*}
2^{\beta-\alpha-1}  x^{-\alpha/2}y^{\beta/2} \int_0^\infty dz\,  H_\beta[\overline{G}](z)H_\beta[\overline{F}](z).
\end{align*}

Suppose that $\overline{G}(u)$ and $\overline{F}(u)$ are in $L^2(\mathbb{R}_+)$. Applying the Parseval identity \eqref{equ:Parseval}, we get
\begin{align*}
\int_{0}^\infty dz\,  H_\beta[\overline{G}](z)H_\beta[\overline{F}](z)=\int_0^\infty du\, \overline{G}(u)\overline{F}(u)  .
\end{align*}
By a direct computation, 
\begin{align*}
\int_0^\infty du\, \overline{G}(u)\overline{F}(u) =  2^{-(\beta-\alpha)+1} \int_0^\infty du\, e^{-(s-t)u}u^{-(\alpha-\beta)/2}J_{\alpha}(2\sqrt{ xu})J_{\beta}(2\sqrt{  yu}).
\end{align*}
As a result,
\begin{align*}
\int_0^\infty dz\,  \overline{W}_t((\alpha,x);(\beta,z) ) \overline{T}_\beta((t,z);(s,y)) =& x^{-\alpha/2}y^{ \beta/2}\int_0^\infty du\, e^{-(s-t)u}u^{-(\alpha-\beta)/2}J_{\alpha}(2\sqrt{ xu})J_{\beta}(2\sqrt{  yu}).
\end{align*}

It remains to check that $\overline{G}(u)$ and $\overline{F}(u)$ are in $L^2(\mathbb{R}_+)$. Using $s-t>0,\alpha-\beta\geq 1$ and the asymptotics in \eqref{equ:asymptoticzero} and \eqref{equ:asymptoticinfinity}, it is straightforward to show that
\begin{align*}
\overline{ G}(u)\coloneqq &\exp\left( -\frac{ (s-t)}{4 }u^2 \right) u^{1/2} J_{\beta}(  y^{1/2}u),\ \overline{F} (u)\coloneqq u^{-(\alpha-\beta)+1/2} J_{\alpha}( x^{1/2}u),
\end{align*} 
are both in $L^2(\mathbb{R}_+)$. This finishes the proof. 
\end{proof}

\end{appendix}
\bibliography{BesselField0919.bib}
\bibliographystyle{alpha}
\end{document}